\newcommand{\R}{\mathbb{R}}
\newcommand{\C}{\mathbb{C}}
\newcommand{\E}{\mathbb{E}\,}
\newcommand{\PP}{\mathbb{P}}
\newcommand{\RP}{\mathbb{R}\mathrm{P}}
\newcommand{\CP}{\mathbb{C}\mathrm{P}}
\newcommand{\be}{\begin{equation}}
\newcommand{\ee}{\end{equation}}
\def\s{\sigma}
\def\ti{\times}
\def\ot{\otimes}
\def\e{\varepsilon}
\def\diag{\mathrm{diag}}
\newcommand{\sv}{\mathrm{sv}}
\newcommand{\edeg}{\mathrm{edeg}\,} 
\newcommand{\Hy}{\mathcal{H}}   
\newcommand{\rdeg}{\mathrm{rdeg}}
\newcommand{\GL}{\mathrm{GL}}
\newcommand{\sgn}{\mathrm{sgn}}
\newcommand{\Prob}{\mathrm{Prob}}
\newcommand{\KB}{\mathcal{K}}
\newcommand{\cZ}{\mathcal{Z}}  
\newcommand{\cM}{\mathcal{M}}  
\newcommand{\cN}{\mathcal{N}}  
\newcommand{\Reg}{\mathrm{Reg}}  
\newcommand{\Hom}{\mathrm{Hom}} 
\newcommand{\Sing}{\mathrm{Sing}} 
\newcommand{\Tu}{\mathcal{T}^\perp}
\newcommand{\pb}{}
\newcommand{\as}{\bar{\sigma}}
\newcommand{\codim}{\mathrm{codim}}
\newtheorem{thm}{Theorem}
\newtheorem{lemma}[thm]{Lemma}
\newtheorem{cor}[thm]{Corollary}
\newtheorem{prop}[thm]{Proposition}
\newtheorem*{cori}{Corollary}
\newtheorem*{thmi}{Theorem}
\newtheorem*{defii}{Definition}
\newtheorem{defi}[thm]{Definition}
\theoremstyle{remark} 
\newtheorem{remark}[thm]{Remark}
\newtheorem{example}[thm]{Example}
\numberwithin{equation}{section}
\numberwithin{thm}{section}
\title{Probabilistic Schubert Calculus}
\date{\today}
\author{Peter B\"urgisser}
\thanks{First author partially supported by DFG grant BU 1371/2-2.}
\address{Institute of Mathematics, Technische Universit\"at Berlin}
\email{pbuerg@math.tu-berlin.de}
\author{Antonio Lerario}
\address{SISSA (Trieste)}
\email{lerario@sissa.it}
\keywords{enumerative geometry, real Grassmannians, Schubert calculus, integral geometry, random polytopes}
\subjclass[2010]{14N15, 14Pxx, 52A22, 60D05} 
\begin{document}

\maketitle

\begin{abstract}
We initiate the study of \emph{average} intersection theory in real Grassmannians.
We define the \emph{expected degree}~$\edeg G(k,n)$ of the real Grassmannian $G(k,n)$
as the average number of real $k$-planes meeting nontrivially $k(n-k)$ random subspaces of $\R^n$, 
all of dimension $n-k$, where these subspaces are sampled uniformly and independently from $G(n-k,n)$. 
We express $\edeg G(k,n)$ in terms of  
the volume of an invariant convex body 
in the tangent space to the Grassmanian, 
and prove that for fixed $k\ge 2$ and $n\to\infty$,
$$ 
\edeg G(k,n) = \deg G_\C(k,n)^{\frac12 \e_k + o(1)},  
$$
where $\deg G_\C(k,n)$ denotes the degree of the corresponding complex Grassmannian
and $\e_k$ is monotonically decreasing with $\lim_{k\to\infty} \e_k = 1$.
In the case of the Grassmannian of lines, we prove the finer asymptotic
\begin{equation*}
 \edeg G(2,n+1) = \frac{8}{3\pi^{5/2}\sqrt{n}}\, \left(\frac{\pi^2}{4} \right)^n 
 \left(1+\mathcal{O}(n^{-1})\right).
\end{equation*}
The expected degree 
turns out to be the key quantity governing questions of the random enumerative geometry of flats. 
We associate with a semialgebraic set
$X\subseteq\RP^{n-1}$ of dimension $n-k-1$ 
its Chow hypersurface $Z(X)\subseteq G(k,n)$,
consisting of the $k$-planes~$A$ in $\R^n$ such 
whose projectivization intersects $X$. 
Denoting $N:=k(n-k)$, we show that 
$$
\E\#\left(g_1Z(X_1)\cap\cdots\cap g_N Z(X_N)\right)
 = \edeg G(k,n)  \cdot \prod_{i=1}^{N} \frac{|X_i|}{|\RP^{m}|},
$$ 
where each $X_i$ is of dimension $m=n-k-1$, the expectation is taken with respect to independent uniformly distributed 
$g_1,\ldots,g_m\in O(n)$ and $|X_i|$ denotes the $m$-dimensional volume of $X_i$.
\end{abstract}


\section{Introduction}

\subsection{Motivation}\label{se:motivation}

Classical enumerative geometry deals with questions like: 
{\em``How many lines intersect four lines in three-dimensional space in general position?''} or more generally:  
{\em``How many lines intersect four curves of degrees $d_1, \ldots, d_4$ in three-space in general position?''}

The answer to this type of questions is provided by a beautiful, sophisticated machinery, 
which goes under the name of \emph{Schubert calculus}. 
The problem is reduced to a computation in the cohomology ring of the Grassmann manifold: 
the set of lines intersecting a given line (or curve) represents a codimension one cycle (a Schubert variety) 
in the complex Grassmannian $G_\C(2,4)$ 
of $2$-dimensional vector subspaces in $\C^4$ 
(i.e., lines in $\CP^3$),  and one has to count the number of points of intersection of four generic copies of this cycle.
When counting complex solutions, the answer to the first question is~$2$; for the second 
the answer is  $2 \cdot d_1\cdots d_4$.

We refer to the survey~\cite{kleiman-laksov:72} and 
any of the monographs~\cite{gri-ha:94,fulton-intersection:98,manivel:01,eisenbud-harris:16} 
for a treatment of Schubert calculus.

Over the real numbers this approach fails---there is no generic number of real solutions, which 
can already be seen in the basic problem of counting the real solutions to a polynomial equation.   
Understanding even the possible outcomes for higher dimensional versions of the problem becomes increasingly complicated. 
A main question considered in real enumerative geometry is whether the number of complex solutions 
can be realized over the reals, in which case one speaks of a {\em fully real problem}, 
see Sottile~\cite{sottile:03,sottile:11}.
Schubert calculus is known to be fully real~\cite{sottile-99,vakil:06}. 
Recently Finashin and Kharlamov \cite{abundance} have identified a class of enumerative problems for which one can get an analog
of Schubert calculus over the reals, where solutions are counted with signs (these are Schubert problems which
are expressible in terms of what in \cite{abundance} is called Euler-Pontryagin ring). 

The goal of our work is to study enumerative problems over the reals in a probabilistic way, by answering questions like:
\be \label{eq:real4}
\textrm{\emph{``On average, how many \emph{real} lines intersect four \emph{random} lines in \emph{$\RP^3$}?''}}.
\ee
This approach offers a broad point of view to the subject, by seeking typical properties of random arrangements.
As we will see, the theory we will present will blend ideas from (real) algebraic geometry with integral geometry 
and the theory of random polytopes.


Classical Schubert calculus deals with the intersection of Schubert varieties 
in general position. 
Our paper is a first attempt at developing such a theory over the reals from the probabilistic point of view. 
By {\em probabilistic Schubert calculus} we understand the 
investigation of the expected number of points of intersection of
real Schubert varieties in random position.
We confine our study to special Schubert varieties, thus ignoring
the more complicated flag conditions. For the sake of simplicity, 
we also restrict our presentation to the codimension one case, i.e., intersection of 
hypersurfaces in the real Grassmannian, even though 
our methods work in more generality, as pointed out 
in different remarks throughout the text. 
It may be interesting to note that many results in Schubert calculus, 
going back to Schubert and Pieri, were first shown in the special cases of codimension one,
for $G_\C(2,n+1)$, and for special Schubert varieties, before reaching complete generality. 
In that sense, our paper fits in nicely with that tradition.
We plan to treat the case of higher codimension 
in a future publication.

\subsection{Expected degree of real Grassmannians} 

Generalizing the question~\eqref{eq:real4}, 
the number of lines  meeting $2n-2$ generic projective subspaces of dimension $n-2$ in $\CP^n$
equals the degree of the complex Grassmann manifold $G_\C(2,n+1)$, which is known to be the 
Catalan number $\frac{1}{n} {2n-2 \choose n-1}$ and asymptotically, when $n\to \infty$, behaves as:
\be\label{eq:degasy} 
  \deg G_\C(2,n+1) = \frac{4^{n-1}}{n^{3/2}\sqrt{\pi}}\left(1+\mathcal{O}(n^{-1})\right). 
\ee
Similarly, the degree of $G_\C(k,n)$ equals the number of $k$-planes in $\C^n$ meeting nontrivially $k(n-k)$ generic subspaces, 
all of dimension $n-k$.

For the study of the corresponding problem over the reals, we can introduce the following probabilistic setup. 
Recall that for every $(k,n)$ there is a unique probability distribution on the real Grassmannian $G(k,n)$, 
which is invariant under the action of $O(n)$; we call this distribution the \emph{uniform distribution}. 
A random $k$-dimensional linear subspace of $\R^n$ is obtained by sampling from this distribution. 

\begin{defii}
We define the \emph{expected degree} of $G(k,n)$ as the average number $\edeg G(k,n)$ of real $k$-planes 
meeting nontrivially $k(n-k)$ many random, real, independent subspaces
of $\R^n$, all of dimension $n-k$.
\end{defii}

More specifically, 
we define the special Schubert variety $\Sigma(k,n)\subseteq G(k,n)$ as the set of $k$-planes in $\R^n$ 
intersecting a fixed linear subspace~$V$ 
of dimension $n-k$ nontrivially. Then $\Sigma(k,n)$ is a codimension-one real algebraic subset of $G(k,n)$, 
and the expected degree equals 
\be  
 \edeg G(k,n) = \E \#\left(g_1\Sigma(k,n)\cap \cdots \cap g_{N}\Sigma(k,n)\right),
\ee
where $N:=k(n-k)=\dim G(k,n)$, 
and the expectation is taken over independent uniformly distributed $g_1, \ldots, g_{N}\in O(n)$.

The expected number of lines meeting four random lines in $\RP^3$ equals 
$\edeg(2,4) = 1.7262...$ (the exact value is not known, see Proposition \ref{prop:twolines} 
for an explicit expression involving an iterated integral). 
More generally, for the Grassmannian of lines, 
we prove the following asymptotic (Theorem \ref{thm:edlines}), 
which should be compared with its complex analogue \eqref{eq:degasy}.

\begin{thmi}
The expected degree of the Grassmannian of lines equals
\be
  \edeg G(2,n+1) = \frac{8}{3\pi^{5/2}\sqrt{n}}\, \Big(\frac{\pi^2}{4}\Big)^n 
         \left(1+\mathcal{O}(n^{-1})\right).
\ee
\end{thmi}

For the general case, we define $\rho_k :=\E\|X\|$, where $X\in\R^k$ is a standard normal Gaussian $k$-vector. 
(It is known that $\rho_k \le \sqrt{k}$, which is asymptotically sharp; see~\eqref{eq:def-rho}.)  
We show the following explicit upper bound 
\be 
 \edeg G(k,n) \ \le\ \frac{|G(k,n)|}{|\RP^{k(n-k)}|}\cdot 
    \left(\sqrt{\frac{\pi}{2}}\, \frac{\rho_k}{\sqrt{k}}\right)^N .
\ee
Note that this specializes to $\edeg G(2,n) \le (\pi^2/4)^{n-2}$. 
Moreover, we prove that (in the logarithmic scale) this upper bound is asymptotically sharp, deducing the following 
(see Theorem~\ref{thm:ased}).

\begin{thmi}
For fixed $k$, as $n\to \infty,$ the following asymptotic holds: 
\be 
\log\edeg G(k,n) = kn\log\left(\frac{\sqrt{\pi}\,\Gamma\left(\frac{k+1}{2}\right)}{\Gamma\left(\frac{k}{2}\right)}\right) 
    -\mathcal{O}(\log n).
\ee
\end{thmi}


A general, guiding theme of our work is to compare the number of complex solutions 
with the expected number of real solutions. With this regard, we obtain: 

\begin{cori}
For fixed $k\ge 2$, we have  
$\edeg G(k,n) = \deg G_\C(k,n)^{\frac12 \e_k + o(1)}$ for $n\to\infty$,
where $\e_k$ is monotonically decreasing with $\lim_{k\to\infty} \e_k = 1$. 
(See Corollary~\ref{cor:exponent} for an explicit expression for $\e_k$.) 
\end{cori}

This means that for large $n$, the expected degree of the real Grassmannian
exceeds the square root of the degree of the corresponding complex Grassmannian,
and $\frac12(\e_k-1)$ measures the deviation in the exponent. 
If also $k\to\infty$, we have an {\em asymptotic square root law},  
$\edeg G(k,n) = \deg G_\C(k,n)^{\frac12 +o(1)}$,
saying that the expected number of solutions is roughly the 
square root of the number of complex solutions. 

\subsection{Random incidence geometry over the reals} 

The expected degree of Grassmannians turns out to be the key quantity governing questions of random incidence geometry, 
as we discuss now. Given semialgebraic subsets $X_1, \ldots, X_N\subset \RP^{n-1}$ of dimension $n-k-1$, 
where $N=k(n-k)$, we will say that they are in \emph{random position} if they are randomly translated 
by elements $g_1, \ldots, g_N$ sampled independently from the orthogonal group $O(n)$ with the uniform distribution.  
The problem of counting the number of $k$-planes whose projectivization intersects $X_1, \ldots, X_N$ 
in random position can be geometrically described as follows. 

We associate with a semialgebraic subset $X\subseteq \RP^{n-1}$ of dimension $n-k-1$ 
the \emph{Chow hypersurface} 
\be 
  Z(X) := \{\textrm{$k$-planes in $G(k,n)$ whose projectivization intersects $X$}\} \subseteq G(k,n) ,
\ee
which is the real analog of the associated Chow variety in complex algebraic geometry.
The special real Schubert variety $\Sigma(k,n)$ is obtained by taking for $X$ a linear space. 
In the following, we denote by $|X|$ the $(n-k-1)$-dimensional volume of 
the set of smooth points of $X$. 

A translation of $X$ by an element $g\in O(n)$ corresponds to a translation of the associated Chow hypersurface $Z(X)$ 
by the induced action of the same element on $G(k,n)$. 
In the geometry of the Grassmannian, the random incidence problem is equivalent to the computation of 
the average number of intersection points of random translates of the sets $Z(X_1), \ldots, Z(X_N)$.

The following theorem decouples the problem into the computation of the volume of $X_1, \ldots, X_N$  
and the determination of the expected degree of the Grassmann manifold 
(see Theorem~\ref{thm:RIG}). Note that over the complex numbers, 
the same decoupling result is a consequence of the ring structure of the cohomology of the Grassmannian 
(compare \S\ref{se:IS-C} below).

\begin{thmi}
The average number of $k$-planes whose projectivization intersects semialgebraic sets $X_1, \ldots, X_N$ 
of dimension $n-k-1$ in random position in $\RP^{n-1}$ equals
\be\label{eq:interi}  
 \E\#\left(g_1Z(X_1)\cap\cdots\cap g_N Z(X_N)\right)=\edeg G(k,n)  \cdot \prod_{i=1}^{N} \frac{|X_i|}{|\RP^{n-k-1}|} .
\ee
\end{thmi}

\begin{example}[Lines intersecting four random curves in three-space]
Let us consider the problem of counting the number of lines intersecting four random curves in $\RP^3$. 
A possible model for random algebraic varieties, which has attracted a lot of attention 
over the last years~\cite{buerg:07,Letwo,LeLu:gap,FyLeLu, GaWe1,GaWe3,GaWe2}, 
is the so called \emph{Kostlan model}, which we describe now for the case of space curves. 
First let us recall that a random Kostlan polynomial $f\in \R[x_0, \ldots, x_3]_{(d)}$ is defined as
\be 
 f(x)=\sum_{|\alpha|=d}\xi_\alpha x_0^{\alpha_0}\cdots x_3^{\alpha_3},
\ee
where the $\xi_\alpha$ are independent centered gaussian variables with variance $\frac{d!}{\alpha_0!\cdots \alpha_3!}$. 
This model is invariant under the action of $O(4)$ by change of variables (there are no preferred points or directions 
in projective space). A random Kostlan curve is defined as the zero set of two independent Kostlan polynomials
\be  
 C=\{f(x)=g(x)=0\}.
\ee 
It is well known \cite{kostlan:93} that the expectation of the length of $C$ equals 
$\left(\deg f \deg g\right)^{\frac{1}{2}}|\RP^1|$. 
Using this fact in the above theorem, applied to four random independent curves $C_1, \ldots, C_4$, 
and combined with the $O(4)$-invariance of the model, 
it is easy to show that the expected number of lines intersecting the four curves equals
\be 
 \E\#\{\textrm{lines intersecting $C_1, \ldots, C_4$}\} 
 = \edeg G(2,4) \cdot \prod_{i=1}^4\left(\deg f_i \deg g_i\right)^{\frac{1}{2}}.
\ee
Observe that the randomness in this problem does not come from putting the curves in random position, but rather from sampling them from a distribution which is invariant under random translations: this allows to exchange the order of the expectations without changing the result.
(More generally a similar argument can be applied to other invariant models and in higher dimensions.)
\end{example}

\subsection{Iterated kinematic formula and volume of Schubert varieties}

A key technical ingredient of our work is a generalization of the kinematic formula  
for homogeneous spaces in Howard~\cite{Howard} to multiple intersections (Theorem~\ref{thm:GIGF}). 
This generalization is novel, but the proof is technically inspired by~\cite{Howard},
so that we have moved it to the Appendix~\ref{se:GIGF}. 
What one should stress at this point is that there is no ``exact'' kinematic formula for the Grassmann manifold $G(k,n)$, 
meaning that the evaluation of kinematic integrals depends on the class of submanifolds we consider. 
This is in sharp contrast with the case of spheres or projective spaces, and is essentially due to the fact that 
the stabilizer $O(k)\times O(n-k)$ of the action is too small to enforce transitivity at level of tangent spaces. 
In the context of enumerative geometry, however, one can still produce an explicit formula, 
thanks to the observation that the orthogonal group acts transitively 
on the tangent spaces of Chow hypersurfaces (and in particular Schubert varieties).

We use the resulting formula for the derivation of \eqref{eq:interi}. 
In practice this requires the computation of the volume of the associated Chow hypersurfaces (Theorem \ref{thm:IGF}), 
as well as the evaluation of an average angle between them (see \S\ref{se:bodies} below).

By means of classical integral geometry in projective spaces, in Proposition \ref{prop:chow} we prove that
\be\label{eq:volratioR}
  \frac{|Z(X)|}{|\Sigma(k,n)|} =\frac{|X|}{\textrm{\emph{$|\RP^{\dim X}|$}}} .
\ee
We note this is analogous to the corresponding result over $\C$;  
see \eqref{eq:chow-volratioC}. 
The computation of the volume of the special Schubert variety $\Sigma(k,n)$ requires more work, 
and constitutes a result of independent interest (Theorem~\ref{prop:vspe}), 
due to its possible applications in numerical algebraic geometry~\cite{bu-co-is}. 

\begin{thmi}\label{thmi:volsch}
The volume of the special Schubert variety $\Sigma(k,n)$ satisfies:
\be 
 \frac{|\Sigma(k,n)|}{|G(k,n)|} 
   = \pi \cdot \frac{|\RP^{k-1}|}{|\RP^k|} \cdot \frac{|\RP^{n-k-1}|}{|\RP^{n-k}|} ,
\ee
where the volume $|G(k,n)|$ of the Grassmann manifold is given by \eqref{eq:Gr-vol}.
\end{thmi}

This result shows a striking analogy with the corresponding result over $\C$; see~\eqref{eq:HlinC}.

\begin{remark}
A known result in integral geometry, \cite[Eq.~(17.61)]{Santalo}, gives the measure of the set of 
projective $(k-1)$-planes meeting a fixed spherically convex set $Q$ in $\RP^{n-1}$.
A natural idea would be to apply this result to the $\epsilon$-neighborhood $Q_\epsilon$ in $\RP^{n-1}$ 
around a fixed $(n-k-1)$-plane and to consider the limit for $\epsilon\to 0$ (after some scaling).
However, this argument is flawed since neighborhoods $Q_\epsilon$ are not spherically convex!
\end{remark}


Finally, let us mention that the integral geometry arguments can 
also be applied to the classical setting over $\C$. 
There, the situation is considerably simpler, since the scaling constant 
$\alpha_\C(k,m)$ appearing in the corresponding 
integral geometry formula (Theorem~\ref{thm:IGFC})
can be explicitly determined. On the other hand, 
we do not have a closed formula for its 
real version $\alpha(k,m)$, which is captured 
by the expected degree of Grassmannians. 
As a consequence, it turns out that classical results 
of enumerative geometry over $\C$ can be obtained 
via the computation of volumes and the evaluation of integrals.
We illustrate this general observation by the computation of 
the degree of $G_\C(k,n)$ in the proof of Corollary~\ref{cor:degGC}. 

\begin{remark}
It is interesting to observe that over the complex numbers the degree of $G_\C(k,n)$ can be obtained in two different ways: 
(1) intersecting the Grassmannian in the Pl\"ucker embedding with $N=k(n-k)$ generic hyperplanes; 
(2) intersecting $N$ many generic copies of the Schubert variety $\Sigma_\C(k,n)$ inside $G_\C(k,n)$ itself. 
The second method is equivalent to intersecting the image of the Pl\"ucker embedding with $N$ very nongeneric hyperplanes.

By contrast, over the real numbers the two procedures give very different answers: 
(1) averaging the intersection of the real Grassmannian in the Pl\"ucker embedding with $N$ random hyperplanes gives 
$\frac{|G(k,n)|}{|\RP^N|}$ (by classical integral geometry);
(2)  averaging the intersection of $N$ random copies of $\Sigma(k,n)$ produces an \emph{intrinsic answer} and gives the expected degree.
\end{remark}

\subsection{Link to random convex bodies}\label{se:bodies}

The proof of our results on the expected degree employs an 
interesting and general connection between expected absolute random determinants 
and random polytopes that are zonoids (Vitale~\cite{Vitale}). 
This allows to express the average angle between random Schubert varieties in terms of the volume of 
certain convex bodies, for which we coined the name 
{\em Segre zonoids}~$C(k,m)$, see \S\ref{se:coiso-zonoid}.
These are zonoids of matrices in $\R^{k\ti m}$ that are invariant under the action 
of the group $O(k)\ti O(m)$.
Via the singular value decomposition, the Segre zonoid
fibers over a zonoid $D(k)$ of singular values in $\R^k$ (if $k\le m$) 
and its volume can be studied this way. 
Via a variant of Laplace's method~\cite{FulksSather} 
we prove that, asymptotically for $m\to\infty$ 
and up to a subexponential factor $e^{o(m)}$, 
the volume of $C(k,m)$ equals the volume of the 
smallest ball including $C(k,m)$. 
The volume of this ball is not the same, but 
related to what one gets when carrying out 
the analogous argument for computing the 
degree of the complex Grassmann manifold.

\subsection{Related work}
A square root law (in the form of an equality) was for the first time discovered by 
Kostlan~\cite{kostlan:93} and Shub and Smale~\cite{Bez2}, 
who found a beautiful result on the expected number of real solutions 
to random polynomial systems. This work has strongly inspired ours. 
Paul Breiding~\cite{breiding:16} has recently discovered results in the same spirit 
for the (expected) number of (real) eigenvalues of tensors. 
Related to this work, the second author of the present paper, together with S. Basu, E. Lundberg and C. Peterson have recently obtained results on counting real lines on random real hypersurfaces \cite{BLLP}, obtaining a similar square root law (already conjectured by the first author of the current paper).
When the zero set of a random system of polynomial equations is not 
zero dimensional, its size may be measured either in terms of volume 
or Euler characteristic, which was investigated by the first author of the current paper in~\cite{buerg:07}. 
We refer to the papers \cite{dhost:13} and \cite{draisma-horobet:14} 
that investigate the average number of real solutions in different contexts.
More recently, the subject of \emph{random algebraic geometry}, meant as the study of topological properties of random real algebraic sets, 
has become very popular. Asymptotic square root laws have been found for Betti numbers of Kostlan hypersurfaces  
by Gayet and Welschinger \cite{GaWe1,GaWe3,GaWe2} (and in general for invariant models by the second author of the current paper, 
Fyodorov and Lundberg \cite{FyLeLu}); similar results for Betti numbers of intersection of random quadrics have been obtained 
by the second author of the current paper and Lundberg \cite{ Letwo,LeLu:gap}.

\subsection{Acknowledgments}
We are very grateful to Frank Sottile who originally suggested this line of research. 
We thank Paul Breiding, Kathl\'en Kohn, Chris Peterson, and Bernd Sturmfels for discussions. 
We also thank the anonymous referees for their comments.







\section{Preliminaries}

\subsection{Real Grassmannians}
\label{se:GrassR}



Suppose that $E$ is a Euclidean vector space of dimension~$n$.
We have an inner product on the exterior algebra
$\Lambda(E)=\bigoplus_{k=0}^{\infty}\Lambda^k(E)$,  
which in coordinates is described as follows.  
Let $e_1,\ldots,e_n$ be an orthonormal basis of $E$ 
and put $e_I := e_{i_1}\wedge\cdots\wedge e_{i_k}$ for 
$I=\{i_1,\ldots,i_k\}$ with $i_1 < \cdots < i_k$. Then 
the $e_I$ form an orthonormal basis of $\Lambda^k(E)$, 
where $I$ runs over all subsets of~$[n]$ with $|I| =k$.  
Hence, for $a_I\in\R$, 
$$
 \| \sum_U a_I e_I \|^2 = \sum_I |a_ I|^2 .
$$
Let $v_i =\sum_{j=1}^p x_{ij} e_j$ with $x_{ij}\in\R$. 
Then 
$v_1 \wedge\cdots\wedge v_k = \sum_I \det(X_I) e_I$,
where $X_I$ is the $k\times k$ submatrix obtained from $X=(x_{ij})$ 
by selecting the columns indexed by the numbers in~$I$. 
Hence 
\be\label{eq:norm-fomula}
 \|v_1 \wedge\cdots\wedge v_k \|^2= \sum_I |\det(X_I)|^2 
   = \det (\langle v_i, v_j \rangle)_{1\le i,j\le k} ,
\ee
where the second equality is the well known Binet-Cauchy identity. 
(Clearly, the above quantity does not change if we substitute the 
$v_i$ by $\sum_{j=1}^k g_{ij}v_j$ with $(g_{ij})\in\mathrm{SL}_k$.) 
If $w_i =\sum_{j=1}^p y_{ij} e_j$ with $y_{ij}\in\R$, we obtain 
for the scalar product between two simple $k$-vectors
(compare~\cite{Kozlov}) 
\be\label{eq:scalar} 
  \langle v_1\wedge\cdots\wedge v_k, w_1\wedge\cdots\wedge w_k\rangle 
  = \det (\langle v_i, w_j\rangle)_{1\le i,j\le k}  .
\ee


Let $G^+(k,E)$ denote the set of oriented $k$-planes in $E$.
Let $L\in G^+(k,E)$ and $v_1,\ldots,v_k$ be an oriented orthonormal basis of $E$. 
Then $v_1\wedge\cdots\wedge v_k$ is independent of the choice of the basis. 
Via the (injective) Pl\"ucker embedding 
$G^+(k,E) \to \Lambda^k(E), L \mapsto v_1\wedge\cdots\wedge v_k$
we can identify $G^+(k,E)$ with the set of norm-one, simple $k$-vectors in $\Lambda(E)$.
We note that the orthogonal group $O(E)$ acts isometrically on $\Lambda(E)$ 
and restricts to the natural action on $G^+(k,E)$ that, for $g\in O(E)$, 
assigns to an oriented $k$-plane $L$ its image $gL$. 
By this identification, $G^+(k,E)$ is a smooth submanifold of $\Lambda^k(E)$ 
and inherits an $O(E)$-invariant Riemannian metric from the ambient euclidean space. 
We denote by $|U|$ the volume of a measurable subset of $G^+(k,E)$. 

The {\em real Grassmann manifold} $G(k,E)$ is defined as the set of $k$-planes in $E$. 
We can identify $G(k,E)$ with the image of $G^+(k,E)$ under the quotient map 
$q\colon S(\Lambda(E))\to \PP(\Lambda(E))$ from the unit sphere of $\Lambda(E)$, 
which forgets the orientation.
The Riemannian metric on $G(k,E)$ is defined as the one for which 
$q\colon G^+(k,E) \to G(k,E)$ is a local isometry. 
(Note that this map is a double covering.)

The {\em uniform probability} measure on $G(k,E)$ is defined 
by setting: \be\Prob(V) := \frac{|V|}{|G(k,E)|}, \quad \textrm{for all measurable $V\subseteq G(k,E)$.}\ee
We just write $G(k,n):=G(k,\R^n)$ and $G^+(k,n):=G^+(k,\R^n)$
where $E=\R^n$ has the standard inner product. 

The Pl\"ucker embedding allows to view the tangent space 
$T_A G(k,E)$ at $A\in G(k,n)$ as a subspace of $\Lambda^k(E)$. 
While this is sometimes useful for explicit calculations, it is often helpful 
to take a more invariant viewpoint. We canonically have 
$T_A G(k,n) = \Hom(A,A^\perp)$, compare~\cite[Lect.~16]{harris}.
To $\alpha\in\Hom(A,A^\perp)$ there corresponds the tangent vector 
$\sum_{i=1}^k a_1\wedge\cdots\wedge a_{i-1}\wedge \alpha(a_i)\wedge a_{i+1}\wedge\cdots\wedge a_k$, 
where $A=a_1\wedge\cdots\wedge a_k$. 
Moreover, if $g_*\colon G(k,E)\to G(k,E)$ denotes the action corresponding 
to $g\in O(E)$, then its derivative 
$D_A g_*\colon T_A G(k,E)\to T_{g(A)} G(k,E)$ at $A\in G(k,E)$
is the map  
$\Hom(A,A^\perp) \to \Hom(g(A),g(A)^\perp),\,  
\alpha\mapsto g\circ \alpha\circ g^{-1}$. 
It is easy to verify that the norm of a tangent vector
$\alpha\in\Hom(A,A^\perp)$ in the previously defined 
Riemannian metric equals the Frobenius norm of $\alpha$.

\begin{remark}
It is known that (see \cite[Section 6.4]{Kozlov}) that for $(k,n)\neq (2,4)$, there is a unique (up to multiples) 
$O(n)$-invariant Riemannian metric on $G^{+}(k,n)$; 
moreover for all $k,n$ there is a unique (up to multiples)
$O(n)$-invariant volume form and consequently 
a unique $O(n)$-invariant probability distribution on $G^{+}(k,n)$. 
\end{remark}

\subsection{Regular points of semialgebraic sets}\label{sec:semialg} \pb

We refer to \cite{BCR:98} as the standard reference for real algebraic geometry
and recall here some not so well known notion from real algebraic geometry. 
Let $M$ be a smooth real algebraic variety, e.g., 
$\R^n$ or a real Grassmannian, and 
let $S\subseteq M$ be a semialgebraic subset. 
A point $x\in S$ is called {\em regular of dimension~$d$} 
if $x$ has an open neighborhood $U$ in $M$ such that 
$S\cap U$ is a smooth submanifold of $U$ of dimension~$d$. 
The dimension of $S$ can be defined as the maximal dimension of 
regular points in $S$ (which is well defined since regular points 
always exist). The points which fail to be regular are called 
{\em singular}. We denote the set of singular points by $\Sing(S)$
and say that $S$ is {\em smooth} if $\Sing(S)=\emptyset$.
Moreover, we write $\Reg(S):= S\setminus\Sing(S)$. 
The following is stated in~\cite[\S 4.2]{dimca:87} 
without proof, refering to~\cite{lovasi:65}. 
A proof can be found in~\cite{stasica:03}. 

\begin{prop}\label{prop:singular}
The set $\Sing(S)$ is semialgebraic and $\dim \Sing(S) < \dim S$. 
In particular, $S\ne \Sing(S)$.
\end{prop}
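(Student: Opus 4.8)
The plan is to reduce the statement to the classical fact that a semialgebraic set can be decomposed into finitely many smooth semialgebraic cells (a semialgebraic stratification or $C^\infty$ cell decomposition), and then to argue dimension-theoretically. First I would recall that by the cylindrical algebraic decomposition theorem (see \cite{BCR:98}), the semialgebraic set $S\subseteq M$ admits a finite partition $S=\bigsqcup_{i} S_i$ into semialgebraic sets $S_i$, each of which is a smooth submanifold of $M$, Nash-diffeomorphic to an open cube $(0,1)^{d_i}$; one may carry this out inside a fixed semialgebraic chart of $M$, so there is no loss of generality in assuming $M=\R^n$. Set $d:=\dim S=\max_i d_i$. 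The strata of maximal dimension $d$ are, in particular, open in $S$ near each of their points, hence consist entirely of regular points of dimension $d$; this already shows that regular points exist and that $\dim S$ is well defined, as claimed in the statement preceding the proposition.

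Next I would establish $\Reg(S)$ is semialgebraic, equivalently $\Sing(S)$ is semialgebraic. The cleanest route is to show that ``$x$ is a regular point of $S$ of dimension $e$'' is a first-order condition over the reals, uniformly in $x$: namely there exist $r>0$ and a semialgebraic diffeomorphism of $B(x,r)\cap S$ onto an open subset of $\R^e$, and the existence of such data can be expressed by a first-order formula in the language of ordered fields with parameters describing $S$. By quantifier elimination (Tarski--Seidenberg), the set of such $x$ is semialgebraic; taking the union over $e$ gives $\Reg(S)$ semialgebraic, and then $\Sing(S)=S\setminus\Reg(S)$ is semialgebraic as well. Alternatively, and perhaps more transparently, one observes that $\Sing(S)$ is contained in $\overline{\bigcup_{d_i<d}S_i}\cup\bigcup_{i\ne j}(\overline{S_i}\cap \overline{S_j})$ together with the frontier set $\overline{S}\setminus S$, all of which are semialgebraic by the Tarski--Seidenberg principle and closure operations preserving semialgebraicity; this handles semialgebraicity without invoking the full strength of the logical formulation.

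For the dimension bound $\dim\Sing(S)<\dim S=d$, the key point is that any $d$-dimensional stratum $S_i$ consists of regular points of $S$ of dimension $d$: indeed near a point $x\in S_i$ the set $S$ contains the $d$-manifold $S_i$, and since $d=\dim S$ is maximal, $S$ cannot be strictly larger than $S_i$ in a neighborhood of $x$ — more precisely, if some other stratum $S_j$ accumulated at $x$ then $S_i\cup S_j$ near $x$ would still have to be a $d$-manifold, forcing $S_j$ to agree with $S_i$ locally, a contradiction with the partition being into disjoint pieces unless $x$ is in the frontier of $S_j$ but not in $S_j$ itself. Thus $\bigcup_{d_i=d}S_i\subseteq\Reg(S)$, whence $\Sing(S)\subseteq \bigcup_{d_i<d}S_i \cup (\text{lower-dimensional frontier pieces})$. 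Since each $S_i$ with $d_i<d$ has dimension $<d$, and the dimension of a semialgebraic set equals the dimension of its closure (a standard property in \cite{BCR:98}), a finite union of semialgebraic sets each of dimension $<d$ has dimension $<d$; therefore $\dim\Sing(S)<d$, and in particular $\Sing(S)\ne S$.

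The main obstacle I anticipate is the local uniqueness argument showing that a point lying in a maximal-dimensional stratum is genuinely regular for $S$ — i.e., that no lower-dimensional-looking stratum, nor a higher-dimensional configuration, can attach along it in a way that destroys smoothness. This is where one must be a little careful: a priori $x\in S_i$ with $\dim S_i=d$ could be a limit of points of another $d$-stratum $S_j$, and then $S$ near $x$ is $S_i\cup(S_j\cap U)$, which need not be a manifold. The resolution is to refine the cell decomposition so that it is a \emph{stratification} satisfying the frontier condition ($\overline{S_i}\cap S_j\ne\emptyset\Rightarrow S_j\subseteq\overline{S_i}\setminus S_i$, forcing $\dim S_j<\dim S_i$), which is available in the semialgebraic category; then no two strata of the same maximal dimension can be adjacent, and the argument goes through. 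I would cite \cite{BCR:98} for the existence of such a stratification rather than reprove it, and then the dimension count is routine.
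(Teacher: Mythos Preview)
The paper does not prove this proposition; it only cites \cite{dimca:87}, \cite{lovasi:65}, and \cite{stasica:03}. Your stratification approach is the natural one, and the dimension bound is correctly handled: once you pass to a semialgebraic stratification with the frontier condition (available in \cite{BCR:98}), a point $x$ in a top-dimensional stratum $S_i$ cannot lie in the closure of any other stratum $S_j$ (else $\dim S_i<\dim S_j\le d$), so $S\cap U=S_i\cap U$ for small $U$ and $x$ is regular. This gives $\Sing(S)\subseteq\bigcup_{d_i<d}S_i$, a semialgebraic set of dimension $<d$, which already suffices for every application made in the paper.

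There is, however, a real gap in your argument for the \emph{semialgebraicity} of $\Sing(S)$ itself. Your first route asserts that ``$x$ is $C^\infty$-regular'' is first-order, but it is not: $C^\infty$ is an infinite conjunction of $C^k$ conditions, and ``there exists a semialgebraic diffeomorphism'' quantifies over maps, not over real parameters, so Tarski--Seidenberg does not apply as stated. The distinction matters: for each fixed $k$ the $C^k$-singular locus is semialgebraic by a genuine first-order argument, but these loci can differ (the graph of $y=|x|^{3}$ is a $C^2$ submanifold of $\R^2$ at the origin but not a $C^\infty$ one). Your second route only shows $\Sing(S)$ is \emph{contained} in a semialgebraic set of lower dimension; containment alone does not yield semialgebraicity. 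The nontrivial content of \cite{stasica:03} is precisely to bridge this gap, by showing that for a semialgebraic set $C^\infty$-regularity at a point coincides with Nash-regularity (equivalently, with smoothness of the Zariski closure of the right local dimension), and the latter \emph{is} a first-order condition. You should invoke that equivalence explicitly rather than treat it as immediate.
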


\pb
In the sequel it will be convenient to say that generic points 
of a semialgebraic set $S$ satisfy a certain property if this property 
is satisfied by all points except in a semialgebraic subset of positive codimension in $S$. 
(For example, in view of the previous proposition, 
generic points of a semialgebraic set $S$ are regular points.) 


\subsection{Complex Grassmannians}	
\label{se:GrassC}

The {\em complex Grassmann manifold} $G_\C(k,n)$ is defined as the set of complex $k$-planes in $\C^n$. 
It can be identified with its image under the (complex) Pl\"ucker embedding 
$\iota\colon G_\C(k,n) \to \PP(\Lambda(\C^n))$, that is defined 
similarly as over $\R$ in \S\ref{se:GrassR}. 
We note that the unitary group $U(n)$ acts isometrically on $\Lambda(\C^n)$ 
and restricts to the natural action on $G_\C(k,n)$ that, for $g\in U(n)$, 
assigns to a complex $k$-plane $L$ its image $gL$. 
Since the image of $\iota$ is Zariski closed, we can view $G_\C(k,n)$ 
as a complex projective variety, which inherits an $U(n)$-invariant 
Hermitian metric from the ambient space. The real part of this Hermitian 
metric defines a Riemannian metric on $G_\C(k,n)$.
We denote by $|V|$ the volume of a measurable subset $V$ of $G_\C(k,n)$. 
The degree of $G_\C(k,n)$ as a projective variety is well known, 
see Corollary~\ref{cor:degGC}.

Let $\Hy\subseteq G_\C(k,n)$ be an irreducible algebraic hypersurface in $G_\C(k,n)$. 
It is known that the vanishing ideal of $\Hy$ in the homogeneous
coordinate ring of $G_\C(k,n)$ is generated by a single equation 
in the Pl\"ucker coordinates; cf.~\cite[Chap.~3, Prop.~2.1]{GKZ}. 
We shall call its degree the {\em relative degree} $\rdeg\Hy$ of $\Hy$. 
This naming is justified by the following observation. 
The image $\iota(\Hy)$ of $\Hy$ under the Pl\"ucker embedding $\iota$ 
is obtained by intersecting 
$\iota(G_\C(k,n))$ with an irreducible hypersurface of degree~$\rdeg\Hy$ 
and hence, by B\'ezout's theorem, 
\begin{equation}\label{eq:rdegH}
 \deg\iota(\Hy) = \rdeg\Hy\cdot \deg G_\C(k,n) .
\end{equation}


For dealing with questions of incidence geometry, we introduce the 
{\em Chow variety} $Z_\C(X)\subseteq G_\C(k,n)$ associated with 
an irreducible complex projective variety $X\subseteq \CP^{n-1}$ of dimension $n-k-1$.
One defines $Z_\C(X)$ as the set of $k$-dimensional linear subspaces~$A\subseteq\C^n$  
such that their projectivization satisfies $\PP(A) \cap X \ne\emptyset$. 
Is it known that $Z(X)$ is an irreducible algebraic hypersurface in $G_\C(k,n)$; 
moreover $\rdeg Z_\C(X)= \deg X$; cf.~\cite[Chap.~3, \S2.B]{GKZ}. 
If we choose $X$ to be a linear space, then $Z_\C(X)$ is a 
{\em special Schubert variety} of codimension one that we denote by $\Sigma_\C(k,n)$ 
(notationally ignoring the dependence on $X$). 

\subsection{Degree and volume}

A fundamental result in complex algebraic geometry due to Wirtinger states that 
degree and volume of an irreducible projective variety~$X\subseteq \CP^{n}$ are linked via 
\be\label{th:degvol} 
  |X| = \deg X \cdot |\CP^m|, \quad\mbox{where $m=\dim X$} ;
\ee 
see \cite[\S5.C]{mumford} or \cite[Chap.~VIII, \S4.4]{shaf-2}. 
This is a key observation, since the interpretation of the degree as a volume ratio paves 
the way for arriving at analogous results over the reals. 
%
%

Equation~\eqref{th:degvol}, 
combined with the fact that the Pl\"ucker embedding is isometric, implies 
\be\label{eq:vol-deg-G}
 |G_\C(k,n)| = \deg G_\C(k,n) \cdot |\CP^{N}| ,
\ee
where $N :=\dim G_\C(k,n) = k(n-k)$. 
If $X\subseteq \CP^{n-1}$ is an irreducible projective variety of dimension $n-k-1$, 
we obtain for the Chow hypersurface $Z_\C(X)$ by the same reasoning, using~\eqref{eq:rdegH}, 
\be\label{eq:A}
|Z_\C(X)| = \deg \iota(Z_\C(X)) \cdot |\CP^{N-1}| 
   = \rdeg Z_\C(X) \cdot \deg G_\C(k,n) \cdot |\CP^{N-1}| .
\ee
In particular, taking for $X$ a linear space, we get for the special Schubert variety: 
\be\label{eq:chow-C} 
 |\Sigma_\C(k,n))|  = \deg G_\C(k,n) \cdot |\CP^{N-1}| .
\ee
Dividing this equation by \eqref{eq:vol-deg-G} we obtain for the volume of $\Sigma_\C(k,n)$, 
cf.~\cite{bu-co-is}:
\be\label{eq:HlinC}
 \frac{|\Sigma_\C(k,n)|}{ |G_\C(k,n))|} = \frac{|\CP^{N-1}|}{|\CP^{N}|} = \frac{N}{\pi} 
 = \pi \cdot \frac{|\CP^{k-1}|}{|\CP^{k}|} \cdot \frac{|\CP^{n-k-1}|}{|\CP^{n-k}|} .
\ee
We remark that this formula can also by obtained in a less elegant way 
by calculus only (see first version of \cite{bu-co-is}).
In Theorem~\ref{prop:vspe} 
we derive a real analogue of~\eqref{eq:HlinC},
based on the calculation of the volume of the tube around the special Schubert 
variety $\Sigma(k,n).$

We can easily derive more conclusions from the above:
dividing the equations \eqref{eq:A} and \eqref{eq:chow-C}, we get:
\be\label{eq:chow-volratioC}
 \frac{|Z_\C(X)|}{ |\Sigma_\C(k,n))|} =  \rdeg Z_\C(X) = \deg X = \frac{|X|}{|\CP^{\dim X}|} .
\ee
In Proposition~\ref{prop:chow} we derive an analogous result over $\R$.

\subsection{Intersecting hypersurfaces in complex Grassmannians}
\label{se:IS-C}

For the sake of comparison, we describe the classical intersection theory 
in the restricted setting of intersecting hypersurfaces of~$G_\C(k,n)$.


Many results in enumerative geometry over $\C$ can be obtained from the following 
well known consequence of B\'ezout's theorem. 
In Theorem~\ref{thm:RIG} we provide a real analogue of this result. 

\begin{thm}\label{th:intersect_C}
Let $\Hy_1,\ldots,\Hy_N$ be irreducible algebraic hypersurfaces in $G_\C(k,n)$, 
where $N:=k(n-k)$.  
Then we have:  
$$
 \#(g_1\Hy_1\cap\ldots\cap g_N\Hy_N) 
  = \deg G_\C(k,n) \cdot \rdeg\Hy_1\cdot \ldots \cdot\rdeg\Hy_N 
$$
for almost all $(g_1,\ldots,g_N)\in (\GL_{n})^N$. 
In particular, if $X_1,\ldots,X_N\subseteq \PP^n(\C)$ are irreducible projective subvarieties 
of dimension $n-k-1$, then: 
$$
 \#(g_1 Z_\C(X_1)\cap\ldots\cap g_N Z_\C(X_N)) 
 = \deg G_\C(k,n) \cdot \deg X_1\cdot\ldots\cdot\deg X_N  
$$
for almost all $(g_1,\ldots,g_N)\in (\GL_{n})^N$.
\end{thm}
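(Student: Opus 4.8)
The plan is to combine Kleiman's transversality theorem with a short computation in the cohomology of $G_\C(k,n)$; equivalently, one can transport the whole argument to the Pl\"ucker space and use B\'ezout's theorem together with the relation \eqref{eq:rdegH}. I would start from the observation that $\GL_n$ acts transitively on $G_\C(k,n)$, since any two $k$-dimensional subspaces of $\C^n$ are interchanged by some linear automorphism. Working over $\C$ --- hence in characteristic zero, where generic transversality holds --- Kleiman's transversality theorem, applied inductively to the irreducible hypersurfaces $\Hy_1,\dots,\Hy_N$ (their singular loci are proper closed subvarieties, of dimension $<N-1$, and a dimension count shows a generic intersection avoids them), produces a Zariski-dense open set $\mathcal U\subseteq(\GL_n)^N$ such that for every $(g_1,\dots,g_N)\in\mathcal U$ the translates $g_1\Hy_1,\dots,g_N\Hy_N$ meet transversally. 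Since $\sum_{i=1}^N\operatorname{codim}\Hy_i=N=\dim G_\C(k,n)$, the intersection $Y:=g_1\Hy_1\cap\cdots\cap g_N\Hy_N$ is then a finite set of reduced points, each contributing multiplicity one, so $\#Y$ equals the intersection number $\langle[g_1\Hy_1]\cup\cdots\cup[g_N\Hy_N],[G_\C(k,n)]\rangle$; and because $\GL_n(\C)$ is connected, $[g_i\Hy_i]=[\Hy_i]$.

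The next step is to identify the divisor classes $[\Hy_i]$. The second integral cohomology group of $G_\C(k,n)$ is infinite cyclic, with generator the hyperplane class $h$ of the Pl\"ucker embedding; since $\Hy_i$ is an effective divisor we may write $[\Hy_i]=m_i\,h$ for a positive integer $m_i$. Pairing with $h^{N-1}$ and using $\langle h^N,[G_\C(k,n)]\rangle=\deg G_\C(k,n)$ (the degree of $G_\C(k,n)$ in its Pl\"ucker embedding), I get $\deg\iota(\Hy_i)=m_i\deg G_\C(k,n)$, whence $m_i=\rdeg\Hy_i$ by comparison with \eqref{eq:rdegH}. Therefore
\[
\#Y=\Big\langle\prod_{i=1}^N[\Hy_i],\,[G_\C(k,n)]\Big\rangle=\Big(\prod_{i=1}^N\rdeg\Hy_i\Big)\,\langle h^N,[G_\C(k,n)]\rangle=\deg G_\C(k,n)\cdot\prod_{i=1}^N\rdeg\Hy_i,
\]
and since this count is the same for all tuples in the dense open --- hence full-measure --- set $\mathcal U$, the first assertion follows.

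For the second assertion I would specialize to $\Hy_i:=Z_\C(X_i)$. By \S\ref{se:GrassC}, $Z_\C(X_i)$ is an irreducible hypersurface of $G_\C(k,n)$ with $\rdeg Z_\C(X_i)=\deg X_i$; moreover the $\GL_n$-action on $\CP^{n-1}$ induces the action on $G_\C(k,n)$ compatibly with the Chow construction, so that $g_iZ_\C(X_i)=Z_\C(g_iX_i)$, and $g_iX_i$ is again an irreducible projective variety of dimension $n-k-1$ with $\deg g_iX_i=\deg X_i$. Hence translating the $X_i$ is the same as translating the $Z_\C(X_i)$, and the first assertion gives $\#(g_1Z_\C(X_1)\cap\cdots\cap g_NZ_\C(X_N))=\deg G_\C(k,n)\cdot\prod_{i=1}^N\deg X_i$.

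I expect the only genuine obstacle to be the transversality step: one must invoke Kleiman's theorem in the correct form (transitivity of $\GL_n$ on $G_\C(k,n)$ and characteristic zero), deal with the singular loci of the $\Hy_i$, and observe that the generic intersection is nonempty with precisely the predicted number of simple points --- nonemptiness being automatic here, since the intersection number $\deg G_\C(k,n)\cdot\prod_i\rdeg\Hy_i$ is strictly positive. Everything else is formal, using that $H^2(G_\C(k,n))$ has rank one and the already-established identity \eqref{eq:rdegH}.
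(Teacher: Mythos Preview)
Your proof is correct and follows essentially the same strategy as the paper: Kleiman's transversality theorem reduces the count to an intersection number, which you then compute. The only difference is in how that number is evaluated: the paper works directly in the Pl\"ucker ambient space and applies B\'ezout's theorem to $\iota(G_\C(k,n))\cap g_1H_1\cap\cdots\cap g_NH_N$ (exactly the alternative you mention at the outset), whereas you compute cohomologically inside $G_\C(k,n)$ using that $\mathrm{Pic}(G_\C(k,n))\cong\mathbb{Z}\langle h\rangle$ and the identity \eqref{eq:rdegH}. Both routes are standard and equivalent; your version has the small advantage of making the role of the divisor class explicit, while the paper's is slightly more elementary in that it avoids appealing to the structure of $H^2$.
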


\begin{proof}
A general result by Kleiman~\cite{kleiman:74} implies that
$g_1\Hy_1,\ldots,g_N\Hy_N$ meet transversally, for almost all $g_1,\ldots,g_n$. 
Recall the Pl\"ucker embedding 
$\iota\colon G_\C(k,n) \hookrightarrow \PP(\Lambda^{k}\C^{n})$. 
Note that $\iota(\Hy_i)$ is obtained by intersecting 
$\iota(G_\C(k,n))$ with an irreducible hypersurface $H_i$ in $\PP(\Lambda^{k}\C^{n})$ 
of degree $\rdeg\Hy_i$. 
We thus have 
$\iota(g_1\Hy_1)\cap\ldots\cap \iota(g_1\Hy_N) = \iota(G_\C(k,n)) \cap g_1\Hy_1\cap\ldots\cap g_N\Hy_N$
and the assertion follows from B\'ezout's theorem.
\end{proof}

\subsection{Convex bodies}
\label{se:CB}

For the following see \cite{Schneider}.  
By a {\em convex body} $K$ we understand a nonempty, compact, convex subset of $\R^d$ with nonempty interior. 
The {\em support function} $h_K$ of $K$ is defined as: 
\be 
 h_K(u) := \max \big\{\langle x,u\rangle \mid x \in K \big\} \quad \mbox{ for $u\in\R^d$.}
\ee
One calls 
$\{ x\in\R^d \mid \langle x,u \rangle = h_K(u) \}$ 
the {\em supporting hyperplane} in the direction $u\in\R^d\setminus\{0\}$. 
The support function $h_K$ is a positively homogeneous, subadditive function that, by 
the hyperplane separation theorem, characterizes $K$ as follows:
\be 
  x \in K \Longleftrightarrow \forall u\in \R^d\quad \langle x,u\rangle \le h_K(u) .
\ee
If $0\in K$ one defines the {\em radial  function} $r_K$ of $K$ by: 
\be
 r_K(u) := \max\big\{ t\ge 0 \mid tu \in K \big\} \quad \mbox{ for $u\in\R^d\setminus\{0\}$.}
\ee
The {\em radius} $\|K\|$ of $K$ is defined as the maximum of the function~$r_K$ 
on the unit sphere $S^{d-1}$. 
		
\begin{lemma}\label{lemma:r=h}
Let $K\subseteq \R^d$ be a convex body containing the origin. 
Then the radial function~$r_K$ and the support function $h_K$ of $K$ 
have the same maximum on $S^{d-1}$. 
Moreover, a direction $u\in S^{d-1}$ is maximizing 
for $r_K$ if and only $u$ is maximizing for $h_K$. 
\end{lemma}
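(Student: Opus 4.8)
The plan is to establish both claims by a direct geometric argument relating, for each direction $u\in S^{d-1}$, the value $r_K(u)$ of the radial function to the values of the support function. The key observation is that $r_K(u)\, u$ is a boundary point of $K$, hence lies on some supporting hyperplane of $K$; this gives a pointwise inequality that, upon maximizing over $u$, yields the equality of maxima. I will phrase everything in terms of the quantity $\rho:=\max_{u\in S^{d-1}} r_K(u) = \|K\|$, so that the closed ball $\overline{B}(0,\rho)$ is the smallest origin-centered ball containing $K$ (this last fact is essentially the definition of the radius, using that $K$ is compact and convex with $0\in K$, so $K=\{r_K(u)u : u\in S^{d-1}\}\cup\{0\}$ scaled appropriately — more precisely $x\in K$ implies $\|x\|\le r_K(x/\|x\|)\le\rho$).

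First I would show $\max_{S^{d-1}} h_K \le \rho$. Fix $u\in S^{d-1}$. For any $x\in K$ we have $\langle x,u\rangle \le \|x\|\,\|u\| \le \rho$ by Cauchy–Schwarz and the containment $K\subseteq\overline{B}(0,\rho)$; taking the max over $x\in K$ gives $h_K(u)\le\rho$. Conversely, to show $\max_{S^{d-1}} h_K \ge \rho$, pick a direction $u_0\in S^{d-1}$ achieving $r_K(u_0)=\rho$ (such $u_0$ exists by compactness of $S^{d-1}$ and continuity of $r_K$). Then the point $x_0:=\rho\, u_0$ lies in $K$, so $h_K(u_0)\ge\langle x_0,u_0\rangle = \rho$. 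Combining the two inequalities, $\max_{S^{d-1}} h_K = \rho = \max_{S^{d-1}} r_K$, which is the first assertion.

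For the second assertion (matching of maximizers), I argue both directions. If $u_0\in S^{d-1}$ maximizes $r_K$, the computation just given shows $h_K(u_0)\ge\rho=\max h_K$, so $u_0$ maximizes $h_K$. Conversely, suppose $u_1\in S^{d-1}$ maximizes $h_K$, i.e.\ $h_K(u_1)=\rho$. Since $K$ is compact, the max defining $h_K(u_1)$ is attained at some $x_1\in K$ with $\langle x_1, u_1\rangle = \rho$. But $\|x_1\|\le\rho$ (as $K\subseteq\overline{B}(0,\rho)$), and Cauchy–Schwarz forces equality $\langle x_1,u_1\rangle = \|x_1\|\,\|u_1\|$ with both factors equal to $\rho$; hence $x_1=\rho\, u_1$. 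Then $r_K(u_1)\ge\rho$, so $r_K(u_1)=\rho$ and $u_1$ maximizes $r_K$.

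The only place that requires a little care — and the step I expect to be the main (minor) obstacle — is justifying cleanly that $K\subseteq\overline{B}(0,\rho)$, i.e.\ that the radius genuinely bounds the norm of every point of $K$, rather than merely the norms of the ``radially extremal'' points. This follows because for $x\in K\setminus\{0\}$ the point $x$ lies on the ray through $x/\|x\|$, so by definition of $r_K$ we get $\|x\|\le r_K(x/\|x\|)\le\rho$; no convexity beyond $0\in K$ is even needed for this inclusion, though convexity and compactness of $K$ guarantee $r_K$ is finite, positive and continuous on $S^{d-1}$ so that the maxima are attained. Once this inclusion is in hand, the rest is the elementary Cauchy–Schwarz bookkeeping above.
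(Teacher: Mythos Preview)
Your proof is correct and follows essentially the same approach as the paper: both use the inclusion $K\subseteq B(0,\rho)$ to bound $h_K$ from above, and the fact that $r_K(u)u\in K$ to bound from below. Your treatment of the maximizers via the equality case of Cauchy--Schwarz is more explicit than the paper's, which simply says the claim ``follows easily by tracing our argument''; the paper's version also records the slightly stronger pointwise inequality $r_K(u)\le h_K(u)$ for all $u\in S^{d-1}$, but this is not needed for the stated lemma.
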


\begin{proof}
Let $\overline{r}$ denote the maximum of $r_K$. 
Then $K\subseteq B(0, \overline{r})$ and hence $h_{K}\leq h_{B(0, \overline{r})}$,
which implies 
$\max h_K \le \max h_{B(0, \overline{r})} = \overline{r}$.
For the other direction let $u\in S^{d-1}$. Then 
$\langle x,u\rangle\leq h_{K}(u)$ for all $x\in K$.  
In particular, setting $x := r_K(u)u$, which lies in $K$, 
we obtain 
$r_K(u)=\langle r_K(u)u, u\rangle \leq h_{K}(u)$, 
and consequently $\max r_K\leq \max h_K$. 
The claim about the maximizing directions follows easily
by tracing our argument.
\end{proof}

We recall also the following useful fact \cite[Corollary 1.7.3]{Schneider}.

\begin{prop}\label{prop:nabla}
Let $K\subseteq\R^d$ be a convex body and $u\in\R^d\setminus\{0\}$. 
Then the support function~$h_K$ of $K$ is differentiable at $u$ 
if and only if the intersection of $K$ with 
the supporting hyperplane of~$K$ in the direction~$u$ 
contains only one point $x$. 
In this case $x=(\nabla h_K)(u)$. 
In particular, if $S$ 
denotes the set of nonzero points at which $h_K$ is smooth, 
we have $\nabla h_K(S)\subseteq \partial K$. 
\end{prop}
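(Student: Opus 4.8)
\emph{Proof proposal.} The plan is to identify the gradient of $h_K$ at $u$ with the unique point of the intersection of $K$ with its supporting hyperplane in direction $u$, by routing everything through the subdifferential of the convex function $h_K$. Since $K$ is bounded, $h_K$ is finite on all of $\R^d$; being positively homogeneous and subadditive, it is convex, so at every $u$ the subdifferential $\partial h_K(u) = \{x\in\R^d : h_K(v)\ge h_K(u) + \langle x, v-u\rangle \text{ for all } v\in\R^d\}$ is a nonempty closed convex set. Write $F(K,u) := K\cap\{x : \langle x,u\rangle = h_K(u)\}$; this is nonempty by compactness of $K$.

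First I would prove the identity $\partial h_K(u) = F(K,u)$. For $\supseteq$: if $x\in F(K,u)$, then $x\in K$ gives $\langle x,v\rangle\le h_K(v)$ for all $v$, while $\langle x,u\rangle = h_K(u)$; subtracting yields $h_K(v) - h_K(u) \ge \langle x, v-u\rangle$, so $x\in\partial h_K(u)$. For $\subseteq$: let $x\in\partial h_K(u)$. Taking $v=2u$ and using $h_K(2u)=2h_K(u)$ gives $h_K(u)\ge\langle x,u\rangle$; taking $v=0$ and using $h_K(0)=0$ gives $\langle x,u\rangle\ge h_K(u)$; hence $\langle x,u\rangle = h_K(u)$. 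The defining inequality then reduces, for arbitrary $v$, to $h_K(v)\ge\langle x,v\rangle$, i.e.\ $x\in K$ by the hyperplane-separation characterization of $K$ recalled above. Thus $x\in F(K,u)$.

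Next I would invoke the standard fact from convex analysis that a finite convex function on $\R^d$ is differentiable at a point $u$ if and only if its subdifferential there is a singleton, in which case $\partial h_K(u) = \{\nabla h_K(u)\}$ (and such a function is then automatically $C^1$ near $u$, so ``differentiable'' and ``smooth'' agree here). Combined with the identity just established, $h_K$ is differentiable at $u$ iff $F(K,u)$ consists of a single point $x$, and then $\nabla h_K(u) = x$ --- which is the main assertion. If a self-contained argument is preferred, the same conclusion follows from the directional-derivative formula $h_K'(u;v) = \max\{\langle x,v\rangle : x\in F(K,u)\}$ (a valid linear functional of $v$ precisely when $F(K,u)$ is a point).

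Finally, for the ``in particular'' clause: if $u\ne 0$ and $h_K$ is smooth at $u$, then $\nabla h_K(u)\in F(K,u)$ by the above, so it suffices to note $F(K,u)\subseteq\partial K$; indeed, if $x\in F(K,u)$ were an interior point of $K$ then $x+\varepsilon u\in K$ for small $\varepsilon>0$, giving $\langle x+\varepsilon u,u\rangle = h_K(u) + \varepsilon\|u\|^2 > h_K(u)$, a contradiction. Hence $\nabla h_K(S)\subseteq\partial K$. The only real content is the identification $\partial h_K(u) = F(K,u)$; the rest is either the textbook differentiability criterion for convex functions or a one-line verification, so I do not anticipate a serious obstacle.
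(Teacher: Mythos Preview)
Your argument is correct. The paper does not give its own proof of this proposition; it simply cites \cite[Corollary~1.7.3]{Schneider}. Your route via the identification $\partial h_K(u) = F(K,u)$ and the singleton criterion for differentiability of convex functions is exactly the standard proof (and is essentially what one finds in Schneider), so there is nothing to compare. One small remark: the parenthetical that differentiability at $u$ makes $h_K$ ``automatically $C^1$ near $u$'' is not quite right as stated---differentiability at a single point does not force differentiability nearby---but you do not actually use this; for the ``in particular'' clause you only need that smoothness at $u$ implies differentiability at $u$, which is immediate.
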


The set $\KB^d$ of convex bodies in $\R^d$ can be turned into a metric space 
by means of the Hausdorff metric. The {\em Hausdorff distance}
of the sets $K,L\in\KB^d$ is defined by: 
\be 
 \delta(K,L) := \min\big\{ t\ge 0 \mid K\subseteq L + tB^d \mbox{ and } L\subseteq K + tB^d \big\} ,
\ee
where $B^d$ denotes the closed unit ball and $+$ the Minkowski addition.

We introduce now special classes of convex bodies that will naturally arise in our work. 
A {\em zonotope} is the Minkowski sum of finitely many line segments. 
A {\em zonoid} is obtained as the limit (with respect to the Hausdorff metric)
of a sequence of zonotopes. It is easy to see that closed balls are zonoids.

\subsection{The zonoid associated with a probability distribution}
\label{se:zonoid}


The following is from Vitale~\cite{Vitale}.
A {\em random convex body} $X$ is a Borel measurable map from 
a probability space to $\KB^d$. Suppose that the radius $\|X\|$ 
of $X$ has a finite expectation. Then one can associate with $X$ 
its {\em expectation} $\E X\in \KB^d$, which is characterized by 
its support function as follows:
\be\label{eq:suppEP}
  h_{\E X}(u) = \E h_X(u) \quad \mbox{ for all $u\in\R^d$.}
\ee
Suppose now $Y$ is a random vector taking values in $\R^d$.
We associate with $Y$ the {\em zonoid} $C_Y:=\E [0,Y]$ in $\R^d$, 
which is defined if $\E\|Y\| < \infty$. In the following we 
assume that $Y$ and $-Y$ have the same distribution. 
By \eqref{eq:suppEP}, the support function $h$ of $C_Y$ is given by: 
\be\label{eq:support-h}
  h_{C_Y}(u) = \frac12\,\E |\langle u,Y\rangle| \quad\mbox{ for $u\in\R^d$.}
\ee 

\begin{example}\label{ex:zono}
Suppose that $Y\in\R^d$ is standard Gaussian. The support function~$h_1$ 
of $C_Y$ satisfies by~\eqref{eq:support-h}:
$$
h_1(e_1) = \frac12\, \E |\langle e_1,Y\rangle| 
 = \frac12\, \E\, |Y_1| = \frac{1}{\sqrt{2\pi}} .
$$ 
The function $h_1$ is $O(d)$-invariant and 
hence $h_1(u) =  \|u\|/\sqrt{2\pi}$. 
It follows that the associated zonoid~$C_Y$ 
is the ball $\frac{1}{\sqrt{2\pi}}\, B^d$ with radius $1/\sqrt{2\pi}$ 
and centered at the origin.

The random variable $\tilde{Y}:= Y/\|Y\|$ is uniformly distributed 
in the sphere~$S^{d-1}$ and $\tilde{Y}$ and $\|Y\|$ are independent. 
Hence the support function~$h_2$ of $C_{\tilde{Y}}$ satisfies:
$$
 h_2(u) = \frac12 \,\E\, \frac{|\langle u,Y\rangle|}{\|Y\|} = \frac{1}{\rho_d} h_1(u) .
$$
Hence  the zonoid associated with $\tilde{Y}$ equals the ball 
$\frac{1}{\rho_d}\frac{1}{\sqrt{2\pi}} B^d$. 
Therefore, if we sample uniformly in the sphere of radius~$\rho_d$ 
around the origin, the associated zonoid is the same as for the 
standard normal distribution, namely $\frac{1}{\sqrt{2\pi}} B^d$. 
\end{example}

Let $M_Y\in\R^{d\ti d}$ be the random matrix whose columns are 
i.i.d.\ copies of $Y$. Our goal is to analyze the expectation 
of the absolute value of the determinant of $M_Y$. 
The following result due to Vitale~\cite{Vitale} is 
crucial for our analysis in \S\ref{se:RCB}. 

\begin{thm}\label{thm:vitale}
If $Y$ is a random vector taking values in $\R^d$ such that 
$\E\|Y\| < \infty$, then 
$$
 \E | M_Y | = d!\, |C_Y| .
$$
\end{thm}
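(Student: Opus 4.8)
The plan is to prove Vitale's theorem $\E|M_Y| = d!\,|C_Y|$ by expressing both sides as the same multilinear integral and reducing to the deterministic (zonotope) case by a limiting argument, or more directly by exploiting the connection between mixed volumes and expected absolute determinants. I would proceed as follows.

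\textbf{Step 1: Reduce to the zonotope case.} First I would observe that when $Y$ takes finitely many values $\pm y_1,\ldots,\pm y_m$ with probabilities $p_1,\ldots,p_m$ (summing to $1$ over the symmetrized list, using that $Y$ and $-Y$ are equidistributed), the zonoid $C_Y = \E[0,Y]$ is the zonotope $\sum_{j} p_j\,[0,y_j]$ (up to translation; one may center the segments so that $[0,Y]$ is replaced by $\frac12[-Y,Y]$, which only shifts $C_Y$ and changes neither $|C_Y|$ nor $\E|M_Y|$). A general random vector $Y$ with $\E\|Y\|<\infty$ can be approximated in the appropriate sense by such finitely-supported $Y_n$ so that $C_{Y_n}\to C_Y$ in the Hausdorff metric, hence $|C_{Y_n}|\to|C_Y|$ by continuity of volume, while $\E|M_{Y_n}|\to\E|M_Y|$ by dominated convergence (the bound $|M_Y|\le \prod\|(\text{columns})\|$ and $\E\|Y\|<\infty$, together with independence of the columns, controls the expectation). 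So it suffices to treat zonotopes.

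\textbf{Step 2: The zonotope volume formula.} For a zonotope $Z = \sum_{j=1}^m [0,v_j]$ in $\R^d$, the classical formula gives
\begin{equation}
 |Z| = \sum_{1\le j_1<\cdots<j_d\le m} |\det(v_{j_1},\ldots,v_{j_d})| .
\end{equation}
Applying this with $v_j := p_j y_j$ (the scaled generators coming from $C_Y=\sum_j p_j[0,y_j]$, now with $m$ ranging over the support of $Y$), we get $|C_Y| = \sum_{j_1<\cdots<j_d} p_{j_1}\cdots p_{j_d}\,|\det(y_{j_1},\ldots,y_{j_d})|$.

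\textbf{Step 3: Match with the expected determinant.} On the other side, expanding $\E|M_Y|$ where the $d$ columns of $M_Y$ are i.i.d.\ copies of $Y$: conditioning on the values of the columns, $\E|M_Y| = \sum_{(i_1,\ldots,i_d)} p_{i_1}\cdots p_{i_d}\,|\det(y_{i_1},\ldots,y_{i_d})|$, the sum over all ordered $d$-tuples of indices. Terms with a repeated index vanish (the determinant is $0$), so only tuples with distinct entries survive, and each unordered $d$-subset $\{j_1,\ldots,j_d\}$ contributes $d!$ ordered tuples, all with the same absolute determinant. Hence $\E|M_Y| = d!\sum_{j_1<\cdots<j_d} p_{j_1}\cdots p_{j_d}\,|\det(y_{j_1},\ldots,y_{j_d})| = d!\,|C_Y|$, completing the finitely-supported case, and then Step 1 finishes the general case.

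\textbf{Main obstacle.} The combinatorial identity in Steps 2--3 is essentially a bookkeeping exercise once one has the zonotope volume formula; the genuinely delicate point is the approximation argument in Step 1 — specifically, producing finitely-supported approximations $Y_n$ for which one simultaneously controls the Hausdorff convergence $C_{Y_n}\to C_Y$ \emph{and} the convergence of the expected absolute determinants, uniformly enough to pass to the limit. The cleanest route is to truncate $Y$ on $\{\|Y\|\le R\}$, discretize the truncated law, send the discretization mesh to $0$ and then $R\to\infty$, using $\E\|Y\|<\infty$ for the dominated-convergence estimates on both sides. One should also handle the harmless normalization (replacing $[0,Y]$ by the centered segment) carefully, since only then is $Y\sim -Y$ exactly compatible with the symmetric zonoid picture.
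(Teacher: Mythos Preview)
The paper does not prove this theorem; it states the result and attributes it to Vitale~\cite{Vitale}. Your proposal is correct and is essentially the standard argument (and close to Vitale's own): reduce to finitely-supported $Y$ by approximation, invoke the classical zonotope volume formula $|\sum_j [0,v_j]| = \sum_{j_1<\cdots<j_d}|\det(v_{j_1},\ldots,v_{j_d})|$, and match against the expansion of $\E|M_Y|$ over ordered $d$-tuples of column values, where repeated indices die and unordered subsets are counted $d!$ times. The only point requiring genuine care is your Step~1, and the outline you give (truncate to $\{\|Y\|\le R\}$, discretize, use $\E\|Y\|<\infty$ plus independence of the columns for dominated convergence on $\E|M_Y|$, and continuity of volume under Hausdorff convergence for $|C_Y|$) is sound.
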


More information on this can be found in Schneider's book~\cite[\S3.5]{Schneider}, 
where $C_Y$ is called the zonoid whose generating measure is the distribution of $Y$. 
(This is under the assumption that $Y\in S^{d-1}$.)
More results in the spirit of Theorem~\ref{thm:vitale} can be 
found in \cite[\S5.3]{Schneider}. 





\subsection{Various volumes}
\label{se:vols}


In order to express the volume of orthogonal groups etc.,
we introduce the {\em multivariate Gamma function} $\Gamma_k(a)$, defined by:
\be 
 \Gamma_k(a) := \int_{A>0} e^{-\textrm{tr} A}(\det A)^{a-\frac{k+1}{4}}\,dA ,
\ee
for $\Re(a) > \frac12 (k-1)$, 
where the integral is over the cone of positive definite $k\times k$ real symmetric matrices. 
This clearly generalizes the classical Gamma function: $\Gamma_1(a)=\Gamma(a)$. 
It is known that~\cite[Theorem 2.1.12]{Muirhead}:
\be 
 \Gamma_k(a) = \pi^{k(k-1)/4}\, \Gamma(a) \Gamma(a-1/2) \cdots \Gamma(a-(k-1)/2) .
\ee

The orthogonal group 
$O(k) :=\{Q\in\R^{k\times k} \mid Q^TQ=\textbf{1}\}$ 
is a compact smooth submanifold of~$\R^{k\times k}$. 
Its tangent space at $\textbf{1}$ is given by the space of skew-symmetric matrices
$T_{\textbf{1}} O(k) = \{A\in\R^{k\times k} \mid A^T+ A = 0 \}$, 
on which we define the inner product 
$\langle A,B\rangle := \frac12 \mathrm{tr}(A^TB)$ 
for $A,B\in T_{\textbf{1}} O(k)$. 
We extend this to a Riemannian metric on $O(k)$
by requiring that the left-multiplications are isometries 
and call the resulting metric the canonical one. 
(Note that this differs by a factor $\frac12$ from the metric 
on $O(k)$ induced by the Euclidean metric on $\R^{k\times k}$.) 
Then, for $v$ in the unit sphere $S^{k-1} := \{ x\in \R^{k} \mid \|x\| = 1 \}$,
the orbit map 
$O(k)\to S^{k-1},\, Q\mapsto Qv$ 
is a Riemannian submersion. 

Similarly, if $G(k,n)$ is given the Riemannian metric defined above, it is easy to verify that the quotient map $O(n)\to G(k,n)$ is a Riemannian submersion.
We also define a Riemannian metric on the Stiefel manifold 
$S(k,m) := \{Q\in\R^{m\ti k} \mid  Q^TQ = \textbf{1}\}$ 
for $1\le k\le m$ such that 
$O(m) \to S(k,m),\, Q=[q_1,\ldots,q_m]\mapsto [q_1,\ldots,q_k]$ 
is a Riemannian submersion, 
where $q_i$ denotes the $i$th column of $Q$. 
The following is well known \cite[Thm.~2.1.15, Cor.~2.1.16]{Muirhead}:
\be\label{eq:Ovol}
 |O(k)|=\frac{2^k \pi^{\frac{k^2+k}{4}}}{\Gamma(k/2)\Gamma((k-1)/2)\cdots\Gamma(1/2)}
  = \frac{2^k \pi^{\frac{k^2}{2}}}{\Gamma_k\left(\frac{k}{2}\right)}, \quad
  |S(k,m)|=\frac{2^k \pi^{\frac{km}{2}}}{\Gamma_k\left(\frac{m}{2}\right)}. 
\ee
In particular, we note that 
$|S^{k-1}| = 2\pi^{\frac{k}{2}}/\Gamma\left(\frac{k}{2}\right)$. 


The unitary group 
$U(k) :=\{Q\in\C^{k\times k} \mid Q^*Q=\textbf{1}\}$ 
is a compact smooth submanifold of $\C^{k\times k}$. 
Its tangent space at $\textbf{1}$ is given by the space of skew-hermitian matrices
$$
 T_{\textbf{1}} U(k) = \{A\in\C^{k\times k} \mid A^* + A = 0 \}.
$$
We define an inner product on $T_{\textbf{1}}U(k)$ 
by setting for $A=[a_{ij}], B=[b_{ij}]$: 
\be\label{eq:ug-can-metric} 
\langle A,B\rangle := \sum_{i} \Im(a_{ii}) \Im(b_{ii}) 
  + \frac12 \sum_{i\ne j} a_{ij} \bar{b}_{ij} 
  = \sum_{i} \Im(a_{ii}) \Im(b_{ii})  + \sum_{i< j} a_{ij} \bar{b}_{ij} ,
\ee 
where $\Im(z)$ denotes the imaginary part of $z\in\C$. 
We extend this to a Riemannian metric on $U(k)$
by requiring that the left-multiplications are isometries 
and call the resulting metric the canonical one. 
It is important to realize that this metric is essentially different from the 
Riemannian metric on $U(k)$ that is induced by the Euclidean metric 
of $\C^{k\times k}$.  
(The reason is the contribution in~\eqref{eq:ug-can-metric}
from the imaginary elements on the diagonal; 
in the analogous situation of the orthogonal group, the Riemannian metrics 
differ by a constant factor only.) 
A useful property of the canonical metric (whose induced volume form is proportional to the Haar measure) is that for 
$v$ in the unit sphere $S(\C^k)$ of $\C^k$, the orbit map 
$U(k) \to S(\C^k),\, Q\mapsto Qv$ 
is a Riemannian submersion. 
This implies $|U(k)| = |U(k-1)|\cdot |S(\C^k)|$ 
and we obtain from this: 
\be\label{eq:Uvol}
|U(k)| = \frac{2^k \pi^{\frac{k^2+k}{2}}}{\prod_{i=1}^{k-1} i!}  .
\ee
We also state the following well known formulas:
\be\label{eq:Gr-vol}
 |G(k,n)| = \frac{|O(n)|}{|O(k)|\cdot |O(n-k)|}, \quad  |G_\C(k,n)| = \frac{|U(n)|}{|U(k)|\cdot |U(n-k)|} .
\ee

\subsection{Some useful estimates}\label{se:useful-estim}

We define $\rho_k :=\E\|X\|$, where $X\in\R^k$ is a standard normal Gaussian $k$-vector. 
It is well known that:
\be\label{eq:def-rho}
 \rho_k = \frac{\sqrt{2}\, \Gamma(\frac{k+1}{2})}{\Gamma(\frac{k}{2})} .
\ee
E.g., $\rho_2= \sqrt{\frac{\pi}{2}}$. 
For the following estimate see \cite[Lemma 2.25]{BuCu}: 
\be\label{eq:rho-estim}
 \sqrt{\frac{k}{k+1}}\cdot \sqrt{k} \ \le\ \rho_k \ \le\  \sqrt{k} .
\ee
Hence $\rho_k$ is asymptotically equal to $\sqrt{k}$. 

\begin{lemma}\label{le:volG}
For fixed $k$ and $n\to\infty$ we have:
\begin{eqnarray*}
\log |G(k,n)| &=& -\frac12 kn \log n + \frac12 kn \log (2e\pi)+\mathcal{O}(\log n),\\
\log |G_\C(k,n)| &=& -kn\log n + kn\log (e\pi)+\mathcal{O}(\log n),\\
\log \deg G_\C(k,n) &=& kn\log k + \mathcal{O}(\log n). 
\end{eqnarray*}
\end{lemma}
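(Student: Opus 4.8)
The plan is to compute each of the three asymptotics by Stirling's formula applied to the closed-form expressions already collected in the preliminaries. For the volume of the real Grassmannian, I would start from \eqref{eq:Gr-vol}, $|G(k,n)| = |O(n)|/(|O(k)|\,|O(n-k)|)$, and substitute the formula \eqref{eq:Ovol} for $|O(n)|$ in terms of the multivariate Gamma function, or equivalently as the product $2^n\pi^{(n^2+n)/4}/\prod_{j=1}^{n}\Gamma(j/2)$. Since $k$ is fixed, $|O(k)|$ is an absolute constant contributing only $\mathcal{O}(1)$, so the task reduces to estimating $\log|O(n)| - \log|O(n-k)|$. The power-of-$\pi$ part contributes $\frac{n^2-(n-k)^2}{4}\log\pi = \frac{k n}{2}\log\pi + \mathcal{O}(1)$ after also accounting for the linear term in the exponent $\frac{n^2+n}{4}$; the power of $2$ gives $(n-(n-k))\log 2 = \mathcal{O}(1)$; and the ratio of Gamma products telescopes to $\prod_{j=n-k+1}^{n}\Gamma(j/2)^{-1}$, whose logarithm is $-k\cdot\log\Gamma(n/2) + \mathcal{O}(\log n)$ since each of the $k$ factors is $\Gamma(n/2+\mathcal{O}(1))$. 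By Stirling, $\log\Gamma(n/2) = \frac{n}{2}\log\frac{n}{2} - \frac{n}{2} + \mathcal{O}(\log n)$, so $-k\log\Gamma(n/2) = -\frac{kn}{2}\log n + \frac{kn}{2}\log 2 + \frac{kn}{2} + \mathcal{O}(\log n)$. Collecting the $\pi$-term and this gives $\log|G(k,n)| = -\frac12 kn\log n + \frac12 kn(\log\pi + \log 2 + 1) + \mathcal{O}(\log n) = -\frac12 kn\log n + \frac12 kn\log(2e\pi) + \mathcal{O}(\log n)$, as claimed.

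For the complex Grassmannian I would argue in exactly the same way using $|G_\C(k,n)| = |U(n)|/(|U(k)|\,|U(n-k)|)$ together with \eqref{eq:Uvol}, $|U(n)| = 2^n\pi^{(n^2+n)/2}/\prod_{i=1}^{n-1} i!$. Again $|U(k)|$ is $\mathcal{O}(1)$, the power of $2$ is $\mathcal{O}(1)$, the $\pi$-exponent difference is $\frac{n^2+n-(n-k)^2-(n-k)}{2}\log\pi = kn\log\pi + \mathcal{O}(1)$, and the factorial ratio telescopes to $\prod_{i=n-k}^{n-1} i!^{-1}$ with logarithm $-k\log\Gamma(n) + \mathcal{O}(\log n)$. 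Stirling gives $-k\log\Gamma(n) = -kn\log n + kn + \mathcal{O}(\log n)$, so altogether $\log|G_\C(k,n)| = -kn\log n + kn(\log\pi + 1) + \mathcal{O}(\log n) = -kn\log n + kn\log(e\pi) + \mathcal{O}(\log n)$.

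The third asymptotic then follows from the first two almost for free via \eqref{eq:vol-deg-G}: $\log\deg G_\C(k,n) = \log|G_\C(k,n)| - \log|\CP^{N}|$ with $N = k(n-k)$. Here $|\CP^{N}| = \pi^{N}/N!$, so $\log|\CP^{N}| = N\log\pi - \log\Gamma(N+1)$; since $N = kn - k^2 = kn + \mathcal{O}(1)$, Stirling gives $\log\Gamma(N+1) = N\log N - N + \mathcal{O}(\log N) = kn\log(kn) - kn + \mathcal{O}(\log n) = kn\log n + kn\log k - kn + \mathcal{O}(\log n)$, and $N\log\pi = kn\log\pi + \mathcal{O}(1)$. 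Therefore $\log|\CP^N| = kn\log n + kn\log k - kn + kn\log\pi + \mathcal{O}(\log n) = kn\log n + kn\log k - kn + kn\log\pi + \mathcal{O}(\log n)$. Subtracting this from $\log|G_\C(k,n)| = -kn\log n + kn\log\pi + kn + \mathcal{O}(\log n)$, the $\pm kn\log n$, $\pm kn\log\pi$ and $\pm kn$ terms all cancel, leaving $\log\deg G_\C(k,n) = kn\log k + \mathcal{O}(\log n)$.

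There is no serious obstacle here — the proof is bookkeeping with Stirling's formula — but the one point requiring care is the uniformity of the error terms: I must ensure that, because $k$ is fixed, each of the $k$ telescoped Gamma (or factorial) factors $\Gamma((n-j)/2)$ for $0\le j<k$ really contributes $\log\Gamma(n/2) + \mathcal{O}(\log n)$ rather than a larger error, and that the finitely many fixed-$k$ quantities ($|O(k)|$, $|U(k)|$, the $k^2$-shift in $N$) are genuinely absorbed into $\mathcal{O}(1)\subseteq\mathcal{O}(\log n)$. This is routine but worth stating explicitly so the three displayed equalities are on solid ground.
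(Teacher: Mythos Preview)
Your proposal is correct and, for the first two asymptotics, follows exactly the paper's proof: write $|G(k,n)|$ (resp.\ $|G_\C(k,n)|$) as a ratio of volumes of orthogonal (resp.\ unitary) groups, telescope the Gamma/factorial products down to $k$ factors, and apply Stirling. For the third asymptotic the paper instead plugs in the explicit formula $\deg G_\C(k,n) = \frac{0!\cdots(k-1)!}{(n-k)!\cdots(n-1)!}\,(k(n-k))!$ from Corollary~\ref{cor:degGC} and applies Stirling directly; your route via \eqref{eq:vol-deg-G} together with the already-established asymptotic of $\log|G_\C(k,n)|$ is equally valid and has the minor advantage of avoiding a forward reference. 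One slip to fix: your displayed value of $\log|\CP^N|$ has the sign of the $\log\Gamma(N+1)$ contribution wrong---it should read $-kn\log n - kn\log k + kn + kn\log\pi + \mathcal{O}(\log n)$---though your subsequent cancellation and final conclusion $kn\log k + \mathcal{O}(\log n)$ are correct.
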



\begin{proof}
By \eqref{eq:Gr-vol} we have $|G(k,n)| = \frac{|O(n)|}{|O(k)|\cdot |O(n-k)|}$.
From \eqref{eq:Ovol} we get:
\be 
\frac{|O(n)|}{|O(n-k)|}
  = 2^k \pi^{\frac{kn}{2}+\frac{k-k^2}{4}} 
      \cdot \prod_{j=n-k+1}^n \frac{1}{\Gamma(j/2)} .
\ee
Consequently, using the asymptotic 
$\log\Gamma(x)=x\log x-x+\mathcal{O}(\log x)$ as $x\to \infty$, we get:
\begin{align} 
 \log\frac{|O(n)|}{|O(n-k)|} 
        & = \frac12 kn \log\pi-k\log\Gamma(n/2)+\mathcal{O}(\log n).\\
	& = \frac12 kn \log\pi-k\left(\frac{n}{2}\log\left(\frac{n}{2}\right)-\frac{n}{2}\right)+\mathcal{O}(\log n)\\
		&=\frac{kn}{2} \log\pi - \frac12 kn \log n + \frac12 kn\log 2 +\frac12 kn+\mathcal{O}(\log n),
\end{align}
from which the assertion on $\log |G(k,n)|$ follows.

For $|G_\C(k,n)|$ we argue similarly. Using \eqref{eq:Uvol}, we get: 
\be 
\frac{|U(n)|}{|U(n-k)|}
  = 2^k \pi^{kn + \frac{k-k^2}{2}} \prod_{i=0}^{k-1} \frac{1}{(n-k+i)!} .
\ee
and the assertion on $\log |G_\C(k,n)|$ follows. 

The third assertion follows easily from the formula for $\deg G_\C(k,n)$ in Corollary~\ref{cor:degGC}.  
\end{proof}

\section{Integral geometry of real Grassmannians} 

\subsection{Joint density of principal angles  between random subspaces}

Let $A\simeq\R^k$ and $B\simeq\R^\ell$ be two subspaces of $\R^n$ with corresponding orthonormal bases 
$(a_1, \ldots, a_k)$ and $(b_1, \ldots, b_\ell)$. By slightly abusing notation we denote by 
$A=[a_1, \ldots, a_k]$ and $B=[b_1, \ldots, b_\ell]$ 
also the matrices with the columns $a_i$ and $b_j$, respectively.
By the singular value decomposition theorem, there exist $U\in O(k)$ and $V\in O(\ell)$ such that:
\be\label{eq:principal} 
 U^TA^TBV=\left[
	\begin{array}{cc}
	D &0 \\ 
	0 &0
	\end{array}\right]
\ee
where $D=\textrm{diag}(\sigma_1, \ldots, \sigma_r)$ and $r=\min\{k,\ell\}$; 
we adopt the convention $1\geq \sigma_1\geq\cdots\geq \sigma_r\geq 0$.
We recall that the {\em principal angles} $0\leq\theta_1\leq \cdots\leq \theta_r\leq \pi/2$ between $A$ and $B$ 
are defined by
$\sigma_i=\cos \theta_i$ for $i=1,\ldots, r$. 
Clearly, they are independent of the choice of the orthonormal bases. 
We note that $m:=\dim(A\cap B)$ equals the number of zero principal angles between $A$ and $B$.


\begin{lemma}\label{le:bases}
There exist orthonormal bases $(a_1, \ldots, a_k)$ and $(b_1, \ldots,
b_\ell)$ of $A$ and $B$, respectively, such that
$a_i=b_i$ for $i\le m$ and 
$\langle a_i, b_j\rangle =\delta_{ij}\cos \theta_i$ 
for all $i$ and $j$.
\end{lemma}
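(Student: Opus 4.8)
The plan is to read off the required bases directly from the singular value decomposition \eqref{eq:principal}. Starting from arbitrary orthonormal bases $(a_1,\dots,a_k)$ of $A$ and $(b_1,\dots,b_\ell)$ of $B$, with associated matrices $A=[a_1,\dots,a_k]$ and $B=[b_1,\dots,b_\ell]$, I would pick $U\in O(k)$ and $V\in O(\ell)$ as in \eqref{eq:principal} and then replace these bases by the columns of the matrices $AU$ and $BV$. Since $(AU)^T(AU)=U^TA^TAU=\textbf{1}$ and $U$ is invertible, the columns of $AU$ form again an orthonormal basis of $A$, and similarly for $BV$; hence, after relabelling, we may assume from the outset that $A^TB=\left[\begin{smallmatrix}D&0\\0&0\end{smallmatrix}\right]$ with $D=\mathrm{diag}(\sigma_1,\dots,\sigma_r)$ and $\sigma_i=\cos\theta_i$.

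Next I would observe that the $(i,j)$-entry of the matrix $A^TB$ is precisely $\langle a_i,b_j\rangle$. Thus the block form above says exactly that $\langle a_i,b_j\rangle=\delta_{ij}\cos\theta_i$ for $i\le r$, while $\langle a_i,b_j\rangle=0$ whenever $i>r$; the latter can only happen when $\ell<k$, in which case $\delta_{ij}=0$ anyway since $j\le\ell<i$, so the single formula $\langle a_i,b_j\rangle=\delta_{ij}\cos\theta_i$ remains valid throughout. This already establishes the claimed orthogonality relations.

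It remains to match up the first $m$ vectors. Here I would use the fact noted just before the lemma that $m=\dim(A\cap B)$ equals the number of vanishing principal angles; since the singular values are ordered non-increasingly, this forces $\sigma_1=\dots=\sigma_m=1$, i.e.\ $\langle a_i,b_i\rangle=1$ for $i\le m$. Then $\|a_i-b_i\|^2=\|a_i\|^2-2\langle a_i,b_i\rangle+\|b_i\|^2=1-2+1=0$, so $a_i=b_i$ for $i\le m$, which completes the construction (and incidentally makes the inclusion $\mathrm{span}(a_1,\dots,a_m)\subseteq A\cap B$ transparent).

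I do not expect a genuine obstacle: the statement is essentially a repackaging of the singular value decomposition. The two points that deserve a moment of care are the bookkeeping of the zero blocks of $A^TB$ when $k\ne\ell$, and the elementary but crucial passage from $\cos\theta_i=1$ to $a_i=b_i$.
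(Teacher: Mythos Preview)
Your proof is correct and uses the same key idea as the paper, namely reading off the desired bases from the columns of $AU$ and $BV$. The only organizational difference is that the paper first handles the case $A\cap B=0$ and then reduces the general case to it by passing to the orthogonal complements of $C:=A\cap B$ inside $A$ and $B$; you instead do everything in one stroke by invoking the fact (stated just before the lemma) that $m$ equals the number of zero principal angles, and then using $\langle a_i,b_i\rangle=1\Rightarrow a_i=b_i$. Both routes are equally short; yours avoids the case split at the cost of relying on that stated fact.
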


\begin{proof}
Put $C:=A\cap B$.
If $C=0$, then the columns of the matrices $AU$ and $BV$ satisfy the desired property. 
In the general case, we apply this argument to the orthogonal complement 
of~$C$ in $A$ and $B$. 
\end{proof}


The following theorem is a generalization of \cite[Equation~(3)]{AEK}
and can be found in ~\cite[Appendix D.3]{amelunxen-diss}. 
We present a different and considerably shorter proof in Appendix~\ref{app:density}.
	
\begin{thm}
\label{thm:density}
Let $k\le\ell$ and $k+\ell \le n$. We fix an $\ell$-dimensional subspace $B\subseteq \R^n$ 
and sample $A\in G(k,n)$ with respect to to the uniform distribution.
Then the joint probability density of the principal angles $0\leq \theta_1\leq \cdots\leq \theta_k\leq \pi/2$ between $A$ and $B$ 
is given by
\be\label{eq:density} 
 p(\theta_1, \ldots, \theta_k) 
  = c_{k, l, n}\prod_{j=1}^k(\cos \theta_j)^{l-k}(\sin \theta_j)^{n-l-k}\prod_{i<j}
    \left((\cos \theta_i)^2-(\cos \theta_j)^2\right) ,
\ee
where
\be\label{eq:gengamma} 
c_{k,l,n} :=\frac{2^k \pi^\frac{k^2}{2} \Gamma_{k}\left(\frac{n}{2}\right)}
   {\Gamma_{k}\left(\frac{k}{2}\right)\Gamma_{k}\left(\frac{l}{2}\right)\Gamma_{k}\left(\frac{n-l}{2}\right)}.
\ee
\end{thm}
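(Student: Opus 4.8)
The plan is to derive the joint density of principal angles by pushing forward the uniform distribution on the Stiefel/Grassmann manifold through the singular value decomposition. First I would fix the $\ell$-plane $B=\R^\ell\times\{0\}$ and represent a uniformly random $A\in G(k,n)$ by an orthonormal frame $X\in S(k,n)$, i.e., an $n\times k$ matrix with $X^TX=\mathbf 1$, distributed uniformly with respect to the invariant measure of $|S(k,n)|$ given in \eqref{eq:Ovol}. Writing $X=\binom{X_1}{X_2}$ with $X_1\in\R^{\ell\times k}$ and $X_2\in\R^{(n-\ell)\times k}$, the cosines of the principal angles are exactly the singular values of $X_1$ (equivalently $A^TB$ after the identification). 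So the quantity of interest is the distribution of the singular values of the top block $X_1$ of a uniformly random orthonormal $k$-frame in $\R^n$.

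The key computation is a change of variables. I would decompose $X_1=U\Sigma W^T$ and $X_2=QR$ via a QR-type factorization, parametrizing $X$ by $(U,W)\in O(\ell)/O(\ell-k)\times O(k)$ (or Stiefel factors), the diagonal matrix $\Sigma=\mathrm{diag}(\sigma_1,\dots,\sigma_k)$, and an orthonormal $k$-frame $Q$ in $\R^{n-\ell}$ whose "radial" part is fixed by $X_1^TX_1+X_2^TX_2=\mathbf 1$, which forces $X_2^TX_2=W(\mathbf 1-\Sigma^2)W^T$. The Jacobian of this factorization is a standard (if technical) matrix-variate calculation: one gets a factor $\prod_j (\cos\theta_j)^{\ell-k}$ from the $U$-Stiefel piece (dimension count $\ell-k$ extra directions per singular vector), a factor $\prod_j(\sin\theta_j)^{n-\ell-k}$ from the $X_2$-block of size $(n-\ell)\times k$ constrained to have prescribed Gram matrix, and the Vandermonde-type factor $\prod_{i<j}(\sigma_i^2-\sigma_j^2)$ coming from the eigenvalue decomposition of the symmetric matrix $X_1^TX_1$ (the classical Jacobian for the map from a symmetric matrix to its eigenvalues/eigenvectors). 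Integrating out all the group variables $U,W,Q$ contributes the ratio of volumes of Stiefel manifolds, which by \eqref{eq:Ovol} assemble into the multivariate Gamma functions appearing in $c_{k,\ell,n}$; substituting $\sigma_j=\cos\theta_j$ and $d\sigma_j=-\sin\theta_j\,d\theta_j$ adjusts one power of $\sin\theta_j$ to yield the stated exponent $n-\ell-k$.

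An alternative, and in some ways cleaner, route is to start from a Gaussian matrix: let $\tilde X\in\R^{n\times k}$ have i.i.d.\ standard Gaussian entries, so $A:=\mathrm{col}(\tilde X)$ is uniform in $G(k,n)$ by orthogonal invariance. Then the cosines-squared of the principal angles are the eigenvalues of $(\tilde X^T\tilde X)^{-1}\tilde X_1^T\tilde X_1$, i.e., of a matrix-variate Beta (MANOVA) distribution $\mathrm{Beta}_k(\ell/2,(n-\ell)/2)$, whose density on the cone of $k\times k$ symmetric matrices $0<S<\mathbf 1$ is the classical one, with normalizing constant $\Gamma_k(n/2)/(\Gamma_k(\ell/2)\Gamma_k((n-\ell)/2))$. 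Applying the standard eigenvalue Jacobian (Muirhead, Theorem 3.2.17) to pass from $S$ to its ordered eigenvalues introduces $\prod_{i<j}(\lambda_i-\lambda_j)$ and the factor $\pi^{k^2/2}2^k/\Gamma_k(k/2)$ from integrating out the eigenvector matrix over $O(k)$, and substituting $\lambda_j=\cos^2\theta_j$ produces \eqref{eq:density} directly.

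The main obstacle is bookkeeping the Jacobian factors and the volume constants correctly: it is easy to be off by powers of $2$, by a factor of $\pi^{k^2/2}$, or by swapping $\cos$ and $\sin$ exponents. I would pin down the constant $c_{k,\ell,n}$ by the normalization $\int p\,d\theta=1$ only as a consistency check, but derive it honestly from the product of Stiefel volumes so that the formula \eqref{eq:gengamma} is obtained rather than merely verified; the hypotheses $k\le\ell$ and $k+\ell\le n$ are exactly what guarantee all the multivariate Gamma functions $\Gamma_k(\ell/2)$, $\Gamma_k((n-\ell)/2)$, $\Gamma_k(n/2)$ are finite and the generic frame has full-rank blocks, so the change of variables is valid on a full-measure set.
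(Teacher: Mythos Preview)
Your proposal is correct, and the second route you outline---realizing $A$ via a Gaussian $n\times k$ matrix, recognizing the $\cos^2\theta_j$ as the eigenvalues of a matrix-variate $\mathrm{Beta}_k(\ell/2,(n-\ell)/2)$, then quoting Muirhead's eigenvalue density and substituting $\lambda_j=\cos^2\theta_j$---is exactly the paper's proof (the paper cites \cite[Theorem~3.3.4]{Muirhead}, the Beta-specific statement, rather than 3.2.17). Your first route via a direct Stiefel Jacobian would also work but is more laborious; the paper bypasses it precisely by appealing to the known Beta eigenvalue density.
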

\begin{example}
For a fixed $2$-plane $B\subseteq \R^4$, the joint density of the principal angles $0\leq\theta_1\leq\theta_2\leq \pi/2$ between $B$ 
and a uniform $A\in G(2, 4)$ is given by $p(\theta_1, \theta_2)=2((\cos\theta_1)^2 -(\cos\theta_2)^2)$ and we can easily verify that
\be 
  \int_{0}^{\pi/2}\int_{\theta_1}^{\pi/2}2\left((\cos\theta_1)^2-(\cos\theta_2)^2\right)d\theta_2d\theta_1=1.
\ee
More generally, the joint density $p(\theta_1, \theta_2)$ of the principal angles $0\leq\theta_1\leq \theta_2\leq \pi/2$ 
between a random $2$-plane in $G(2, n+1)$ and a fixed subspace of $\R^{n+1}$ of dimension $n-1$ is given by
\be 
  p(\theta_1, \theta_2)=(n-1)(n-2)(\cos \theta_1)^{n-3}(\cos \theta_2)^{n-3} \left((\cos \theta_1)^2-(\cos \theta_2)^2\right).
\ee
\end{example}

\subsection{Subvarieties of Grassmannians with transitive action on tangent spaces}
\label{se:TATS}

The orthogonal group $O(n)$ acts transitively and isometrically on $G(k,n)$. 
For an element $g\in O(n)$, let $g_*\colon G(k,n)\to G(k,n)$ denote the corresponding action and 
$D_A g_*\colon T_A G(k,n)\to T_{g_*(A)} G(k,n)$ its derivative at $A\in G(k,n)$. 
See \S\ref{sec:semialg} for the definition of generic points. 

\begin{defi}\label{def:trans-act}
Let $\cZ\subseteq G(k,n)$ be a semialgebraic set.  
We say that $\cZ$ has {\em transitive action on its tangent spaces} 
if, for generic regular points $A_1,A_2$ of $\cZ$, there exists $g\in O(n)$ such that 
$g_*(A_1)=A_2$ and $D_{A_1} g_*(T_{A_1}\cZ) = T_{A_2}\cZ$. 
\end{defi} 


\begin{example}\label{ex:Glines}
Consider the subvariety 
$\Omega := \{ A\in G(k,n) \mid \R^{k-1}\subseteq A \subseteq \R^{k+1} \}$, 
which is isomorphic to a real projective line. For determining the tangent space 
of $\Omega$ at $A$ we can assume w.l.og.\ that $A=\R^k$. 
Then the tangent vectors $\alpha\in\Hom(A,A^\perp)$ to $\Omega$ are 
characterized by $\alpha(\R^{k-1})= 0$ and $\alpha(e_k) \in \R e_{k+1}$, so that 
$T_A\Omega = \R\, e_k \ot e_{k+1}$. This implies that $\Omega$ has 
transitive action on its tangent spaces.\end{example}

Tangent spaces of Grassmannians have a product structure: in fact
we have a canonical isometry (cf.\ \S\ref{se:GrassR}):
\be 
 T_A G(k,n) \simeq \Hom(A,A^\perp) \simeq A\ot A^\perp 
\ee
and note that $\dim A =k$ and $\dim A^\perp = n-k$. 

We will focus on a special type of hypersurfaces, whose normal spaces exploit 
the product structure of $T_A G(k,n)$.

\begin{defi}\label{re:coisotropic}
A semialgebraic subset $\cM\subseteq G(k,n)$ of codimension one 
is called a {\em coisotropic hypersurface}, 
if, for generic points $A\in\cM$, 
the normal space $N_A \cM$ is spanned by a rank one vector 
(see~\cite[\S4.3]{GKZ} and \cite{kohn:16}). 
\end{defi}

\begin{lemma}\label{le:factor-trans}
A coisotropic hypersurface
has transitive action on its tangent spaces. 
\end{lemma}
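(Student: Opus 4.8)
The plan is to show that a coisotropic hypersurface $\cM\subseteq G(k,n)$ satisfies Definition~\ref{def:trans-act}, i.e., that $O(n)$ acts transitively on the pairs $(A, T_A\cM)$ for generic regular $A\in\cM$. The key point is that at a generic point $A\in\cM$ the normal line $N_A\cM \subseteq \Hom(A,A^\perp) \simeq A\ot A^\perp$ is spanned by a \emph{rank one} tensor $u\ot v$ with $u\in A$, $v\in A^\perp$; conversely, the tangent space $T_A\cM$ is the orthogonal complement (with respect to the Frobenius inner product) of $\R\,(u\ot v)$ inside $A\ot A^\perp$.

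First I would fix two generic regular points $A_1, A_2\in\cM$, with rank-one normal generators $u_1\ot v_1$ and $u_2\ot v_2$ respectively, where $u_i$ is a unit vector in $A_i$ and $v_i$ a unit vector in $A_i^\perp$. Since $O(n)$ acts transitively on flags of the shape (line $\subseteq$ $k$-plane $\subseteq$ $\R^n$ together with a unit normal-to-the-$k$-plane vector), there exists $g\in O(n)$ with $g(A_1)=A_2$, $g(u_1)=u_2$, and $g(v_1)=v_2$; concretely, extend $u_1$ to an orthonormal basis of $A_1$ and $v_1$ to an orthonormal basis of $A_1^\perp$, do the same for $A_2$, and let $g$ be the orthogonal map sending one basis to the other in the obvious matched way. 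Then $g_*(A_1)=A_2$.

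Next I would check that $D_{A_1}g_*$ carries $T_{A_1}\cM$ onto $T_{A_2}\cM$. Recall from \S\ref{se:GrassR} that $D_{A_1}g_*\colon \Hom(A_1,A_1^\perp)\to\Hom(A_2,A_2^\perp)$ is $\alpha\mapsto g\circ\alpha\circ g^{-1}$, and that this map is an isometry for the Frobenius inner products. On rank-one tensors it acts by $x\ot y \mapsto g(x)\ot g(y)$, so it sends $u_1\ot v_1$ to $u_2\ot v_2$, hence maps the normal line $N_{A_1}\cM = \R\,(u_1\ot v_1)$ isometrically onto $N_{A_2}\cM = \R\,(u_2\ot v_2)$. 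Being an isometry, it therefore also maps the orthogonal complement $T_{A_1}\cM = (N_{A_1}\cM)^\perp$ onto $T_{A_2}\cM = (N_{A_2}\cM)^\perp$, which is exactly the required condition.

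The only genuine subtlety — and the step I would be most careful about — is the genericity bookkeeping: Definition~\ref{re:coisotropic} only asserts that the normal space is rank-one-spanned at generic points, and Definition~\ref{def:trans-act} only requires the transitivity statement at generic regular points, so I must verify that the set of $A\in\cM$ where $\cM$ is a smooth hypersurface \emph{and} $N_A\cM$ is spanned by a rank-one tensor is the complement of a semialgebraic subset of positive codimension in $\cM$; this follows from Proposition~\ref{prop:singular} together with the defining property of a coisotropic hypersurface. Once that is in place, the construction of $g$ above applies verbatim to any two such generic points and the proof is complete.
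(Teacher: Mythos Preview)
Your proposal is correct and follows essentially the same approach as the paper: pick rank-one normal generators $u_i\ot v_i$ at generic regular points $A_i$, choose $g\in O(n)$ sending $(A_1,u_1,v_1)$ to $(A_2,u_2,v_2)$, and observe that the isometry $D_{A_1}g_*$ then carries the normal line to the normal line and hence the tangent hyperplane to the tangent hyperplane. Your treatment is in fact more explicit about the construction of $g$ and the genericity bookkeeping than the paper's version.
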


\begin{proof}\pb 
Let $A_1,A_2 \in\cM$ be regular points. 
By assumption, there are unit length vectors $u_i \in A$ 
and $v_i \in A^\perp$ for $i=1,2$ such that
$N_{A_1}\cM =\R u_1\ot v_1$ and $N_{A_2}\cM = \R u_2\ot v_2$.
There is $g\in O(n)$ such that 
$g(A_1) = A_2$, $g(u_1)= u_2$, and $g(v_1)=v_2$. 
Let $g_*\colon G(k,n)\to G(k,n)$ denote the multiplication with $g$. 
Then $D_{A_1}g_*$ maps $N_{A_1}\cM$ to $N_{A_2}\cM$,  
and hence it maps $T_{A_1}\cM$ to $T_{A_2}\cM$ 
(compare \S\ref{se:GrassR}).
\end{proof}

\begin{remark}
Proposition \ref{pro:Chow-TA} below implies that codimension one Schubert varieties 
(and more generally Chow hypersurfaces) have transitive actions on their tangent space.
One can show that the same is true for all Schubert varieties. \pb 
This shows that the method of this paper in principle extends to include random intersections of general Schubert varieties, 
as we plan to discuss in a future work.
\end{remark}

The integral geometry formula to be discussed involves a certain average scaling factor between 
random subspaces, that we introduce next. 

\subsection{Relative position of real subspaces}

The relative position of two subspaces $V,W$ 
of a Euclidean vector space~$E$ can be quantified by a volume like quantity that we define now. This quantity is crucial in the study of integral geometry in homogeneous spaces (see \cite{Howard}).

\subsubsection{Relative position of two real subspaces}
\label{se:RP-2}

Suppose that $E$ is a Euclidean vector space of dimension~$n$.
We have an induced inner product on $\Lambda(E)$, cf.~\S\ref{se:GrassR}.  
Let $V,W$ be linear subspaces of $E$ of dimensions $k,m$, respectively, 
such that $k+m\le n$. 
We define the following quantity:
\begin{equation} \label{eq:def-sigma}
 \s(V,W) :=\|v_1\wedge\ldots \wedge v_k\wedge w_1\wedge\ldots\wedge w_m \|,
\end{equation}
where $v_1,\ldots,v_k$ and $w_1,\ldots,w_m$ are orthonormal bases 
of $V$ and $W$, respectively. (This is clearly independent of the 
choice of the orthonormal bases.) 
Note that if $k=m$ and oriented versions of $V$ and $W$ are 
interpreted as simple $k$-vectors in $\Lambda^k(E)$, then 
$\s(V,W)$ is the sine of the angle between $V$ and $W$ (viewed as vectors in the Euclidean space $\Lambda^k(E)$).
So the quantity $\s(V,W)$ in a sense measures the relative position of $V,W$. 
We can make this more precise by expressing $\s(V,W)$ in terms 
of the principal angles between $V$ and $W$. 

\begin{lemma}\label{le:pos-subspace-R} 
The following is true:
\begin{enumerate}
\item $\s(W,V)=\s(V,W)$.
\item $\s(V,W) = \sin\theta_1\cdot\ldots\cdot\sin\theta_k$ if $k\le m$ and 
the $\theta_i$ are the principal angles between $V$ and~$W$. 
\item $0\le\s(V,W)\le 1$. 
\item We have $\s(V,W)=0$ iff $V\cap W\ne 0$ and 
$\s(V,W)=1$ iff $V$ and $W$ are orthogonal.
\end{enumerate}
\end{lemma}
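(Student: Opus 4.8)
The plan is to reduce everything to the singular value decomposition of the pair $(V,W)$ and the formula~\eqref{eq:norm-fomula} for the norm of a simple multivector. First I would establish (1): the quantity $\s(V,W)$ is defined as the norm of the wedge $v_1\wedge\cdots\wedge v_k\wedge w_1\wedge\cdots\wedge w_m$, and since the norm on $\Lambda^{k+m}(E)$ is insensitive to reordering the factors up to sign, swapping the roles of $V$ and $W$ changes the multivector only by the sign $(-1)^{km}$, which does not affect the norm. Hence $\s(W,V)=\s(V,W)$, and in particular it suffices to prove (2) under the assumption $k\le m$.

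For (2), I would invoke Lemma~\ref{le:bases}: choose orthonormal bases $(a_1,\ldots,a_k)$ of $V$ and $(b_1,\ldots,b_m)$ of $W$ with $\langle a_i,b_j\rangle = \delta_{ij}\cos\theta_i$, where $\theta_1,\ldots,\theta_k$ are the principal angles. Now apply the Gram determinant formula~\eqref{eq:norm-fomula}:
\[
 \s(V,W)^2 = \det\bigl(\langle u_p, u_q\rangle\bigr)_{1\le p,q\le k+m},
\]
where $(u_1,\ldots,u_{k+m}) = (a_1,\ldots,a_k,b_1,\ldots,b_m)$. The Gram matrix is
\[
 \begin{pmatrix} \mathbf{1}_k & C \\ C^T & \mathbf{1}_m \end{pmatrix},
\]
where $C\in\R^{k\times m}$ has entries $C_{ij}=\delta_{ij}\cos\theta_i$, i.e.\ $C = [\,\mathrm{diag}(\cos\theta_1,\ldots,\cos\theta_k)\mid 0\,]$. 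Using the Schur complement, its determinant equals $\det(\mathbf{1}_m)\cdot\det(\mathbf{1}_k - C C^T) = \det\bigl(\mathrm{diag}(\sin^2\theta_1,\ldots,\sin^2\theta_k)\bigr) = \prod_{i=1}^k \sin^2\theta_i$. Taking square roots (all $\sin\theta_i\ge 0$ since $\theta_i\in[0,\pi/2]$) gives $\s(V,W)=\prod_{i=1}^k\sin\theta_i$, which is (2).

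Parts (3) and (4) then follow immediately from (2). Since each $\sin\theta_i\in[0,1]$, the product lies in $[0,1]$, giving (3). For (4): the product vanishes iff some $\sin\theta_i=0$, i.e.\ some $\theta_i=0$; by the remark preceding the lemma, the number of vanishing principal angles equals $\dim(V\cap W)$, so $\s(V,W)=0$ iff $V\cap W\neq 0$. Likewise the product equals $1$ iff every $\sin\theta_i=1$, i.e.\ every $\theta_i=\pi/2$, which happens exactly when $V\perp W$ (each basis vector $a_i$ is orthogonal to all of $W$, and since $k+m\le n$ such a configuration is possible). There is no real obstacle here; the only mild care needed is the bookkeeping in the Schur complement computation and the appeal to Lemma~\ref{le:bases} to put the Gram matrix in the stated block-diagonal-plus-rectangle form.
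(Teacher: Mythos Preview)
Your proof is correct and follows essentially the same approach as the paper: both reduce (2) to the principal-angle decomposition of the pair $(V,W)$ and then read off (3) and (4) immediately. The only minor difference is computational---the paper chooses an orthonormal basis of all of $E$ adapted to the principal angles (citing \cite[Cor.~2.4]{bu-co-is}) and expands the wedge directly, whereas you stay with bases of $V$ and $W$ via Lemma~\ref{le:bases} and compute the Gram determinant by a Schur complement; both routes give $\prod_i\sin\theta_i$ with equal ease.
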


\begin{proof}
The first assertion is obvious. For the second we use the well known fact 
(cf. \cite[Corollary ~2.4]{bu-co-is}) that there is an orthonormal basis 
$e_1,f_1,\ldots,e_k,f_k,g_1,\ldots,g_{n-2k}$ of $E$ such that 
$V$ is the span of $e_1,\ldots,e_k$ and 
$$
 W= \mathrm{span}\{e_1\cos\theta_1 + f_1\sin\theta_1,\ldots,
        e_k\cos\theta_k + f_k\sin\theta_k,g_1,\ldots,g_{m-k}\} .
$$
This implies the second claim. 
The remaining assertions follow immediately from this. 
\end{proof}


\subsubsection{Relative position of two complex subspaces}
\label{se:CP-2} 

We briefly discuss how to extend the previous discussion to complex spaces.
Suppose that $F$ is a hermitian vector space with $n=\dim_\C F$ and 
$V,W\subseteq F$ are $\C$-subspaces of $\C$-dimensions $k$ and $m$, respectively. 
We note that an analogue of Lemma~\ref{le:pos-subspace-R} holds. 
As in~\eqref{eq:def-sigma} we define 
$\s_\C(V,W):= \|v_1\wedge\ldots \wedge v_k\wedge w_1\wedge\ldots\wedge w_m \|$ 
where $v_1,\ldots,v_k$ and $w_1,\ldots,w_m$ are orthonormal bases of $V$ and $W$, 
respectively. On the other hand, we can view $F$ as a Euclidean vector space 
of dimension $2n$ with respect to the induced inner product 
$\langle v,w\rangle_\R := \Re \langle v,w\rangle$. 
Then $v_1,iv_1,\ldots,v_k,iv_k$ is an orthonormal basis of $V$, seen as 
an $\R$-vector space (note that $v$ and $iv$ are orthogonal for all $v\in F$).  
We conclude that 
\begin{equation}\label{eq:sC-q} 
\s_\C(V,W) = \s(V,W)^2 .
\end{equation}



\subsubsection{Average relative position of two subspaces in a tensor product}
\label{se:av-pos-tp}
We assume now that $E$ is a product $E=E_1 \ot E_2$ of Euclidean vector spaces $E_i$
such that $\langle x\ot y, x'\ot y'\rangle = \langle x,x' \rangle \langle y, y' \rangle$
for $x,x' \in E_1$, $y,y'\in E_2$.   
The product of orthogonal groups $K :=O(E_1)\ti O(E_2)$ acts isometrically on $E$.
Clearly, $K$ is a compact group with a uniform invariant measure.

If $V$ and $W$ are given linear subspaces of $E$, we measure their average 
relative position by moving $V$ and $W$ with uniformly random transformations $k_1,k_2\in K$ 
and taking the average value of $\sigma(k_1V,k_2W)$. Of course, it suffices to move 
one of the subspaces. 

\begin{defi}\label{def:av-sigma}
Let $V$ and $W$ be linear subspaces of $E$. 
The {\em average scaling factor} of $V$ and~$W$ 
is defined as $\as(V,W) := \E_{k\in K} \s(kV,W)$, 
taken over a uniform $k\in K=O(E_1)\times O(E_2)$. 
\end{defi} 

From the definition it is clear that $\as(V,W)$ only depends on the 
$K$-orbits of $V$ and $W$, respectively.

\begin{remark}\label{re:factoring-spaces}
In many cases, $V$ and $W$ factor themselves into subspaces as 
$V=V_1\ot V_2$ and $W=W_1\ot W_2$. 
In this case, it is clear that $\as(V,W)$ only depends on the dimensions 
of the involved spaces $V_i,W_i,E_i$, $i=1,2$.  
\end{remark}

\subsection{Intersecting random semialgebraic subsets of Grassmannians} 

We now provide tailor made formulations of general results in
Howard~\cite{Howard} in the situation of real Grassmannians. 

\subsubsection{Intersecting two random semialgebraic sets} 
We begin with a general observation. 
Let $A\in G(k,n)$ be the subspace spanned by the first $k$ 
standard basis vectors, $A:=e_1\wedge\ldots\wedge e_k\in G(k,n)$, 
and note that the tangent space 
$E := T_A G(k,n) \simeq A \ot A^\perp$ can be seen as a tensor product of vector spaces. 
The stabilizer subgroup of $A$ in $O(n)$ can be identified with 
$K:=O(A)\times O(A^\perp)$ and acts on $T_A G(k,n)$. 
Recall that 
$D_A g_*\colon T_A G(k,n)\to T_A G(k,n)$ denotes the derivative of 
the action $g_*\colon G(k,n)\to G(k,n)$ induced by $g\in O(n)$. 
Moreover, $N_A \cM\subseteq T_A G(k,n)$ denotes the normal subspace 
of a semialgebraic subset $\cM\subseteq G(k,n)$ at a regular point $A$. 

\begin{lemma}\label{le:as-welldefined}
Suppose that $\cM$ is a semialgebraic subset of $G(k,n)$
with transitive action on its tangent spaces. 
Let $B_i\in\cM$ be generic regular points of $\cM$ and 
$g_i\in O(n)$ such that $B_i$ is mapped to~$A$, 
for $i=1,2$. 
Then $D_{B_1} {g_1}_* (N_{B_1} \cM)$ and $D_{B_2} {g_2}_* (N_{B_1} \cM)$ 
are subspace of $T_A G(k,n)$ that lie in the same $K$-orbit.
\end{lemma}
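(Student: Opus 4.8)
The plan is to exploit the transitivity hypothesis together with the compatibility of the $O(n)$-action with the tensor structure on the tangent space. First I would unwind the setup: since $\cM$ has transitive action on its tangent spaces, for generic regular points $B_1, B_2$ of $\cM$ there is $h \in O(n)$ with $h_*(B_1) = B_2$ and $D_{B_1} h_* (T_{B_1}\cM) = T_{B_2}\cM$; applying this to the normal spaces (which are the orthogonal complements, and $D_{B_1} h_*$ is an isometry) gives $D_{B_1} h_* (N_{B_1}\cM) = N_{B_2}\cM$. The goal is to compare $D_{B_1}{g_1}_*(N_{B_1}\cM)$ and $D_{B_2}{g_2}_*(N_{B_2}\cM)$ as subspaces of $E = T_A G(k,n)$ (note the statement as written has a typo, writing $N_{B_1}\cM$ twice; the intended second term is $N_{B_2}\cM$).

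The key step is to form the composite $\varphi := (D_{B_2}{g_2}_*) \circ (D_{B_1} h_*)^{-1} \circ \cdots$; more carefully, consider $g := g_2 \circ h \circ g_1^{-1} \in O(n)$. Then $g_*$ fixes $A$ (since $g_1$ maps $B_1 \mapsto A$, so $g_1^{-1}$ maps $A \mapsto B_1$, then $h$ maps $B_1 \mapsto B_2$, then $g_2$ maps $B_2 \mapsto A$), hence $g$ lies in the stabilizer $K = O(A) \times O(A^\perp)$. Its derivative $D_A g_* \colon E \to E$ is precisely the tensor action of an element of $K$ on $E = A \otimes A^\perp$, as recalled in \S\ref{se:GrassR} (the action $\alpha \mapsto g \circ \alpha \circ g^{-1}$ on $\Hom(A, A^\perp)$). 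Now I would compute, using the chain rule and $h_*(B_1) = B_2$:
\[
 D_A g_* \big( D_{B_1}{g_1}_*(N_{B_1}\cM) \big) = D_{B_2}{g_2}_* \big( D_{B_1} h_* (N_{B_1}\cM) \big) = D_{B_2}{g_2}_* (N_{B_2}\cM),
\]
where the last equality is the consequence of transitivity applied to normal spaces noted above. Since $D_A g_*$ is the action of an element of $K$, this exhibits the two subspaces as lying in the same $K$-orbit.

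The main obstacle — really the only subtle point — is justifying that the element $g = g_2 h g_1^{-1}$ genuinely stabilizes $A$ and that its induced map on $E$ is honestly the $K$-action, i.e., correctly matching the two viewpoints on $T_A G(k,n)$ (as a subspace of $\Lambda^k E$ via Plücker, versus as $\Hom(A, A^\perp) \simeq A \otimes A^\perp$) and the fact recalled in \S\ref{se:GrassR} that $D_A g_*$ acts by $\alpha \mapsto g \circ \alpha \circ g^{-1}$; for $g \in K = O(A)\times O(A^\perp)$ this is exactly the tensor-product action $O(A)\times O(A^\perp)$ on $A\otimes A^\perp$. A secondary bookkeeping point is that "generic regular points" is preserved under the operations involved and that the existence of $h$ is guaranteed precisely on such points — but this is exactly what Definition~\ref{def:trans-act} provides, so no extra work is needed. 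I would also remark that the well-definedness this lemma establishes is what makes $\as(N_A\cM, N_A\cM')$ (and hence the average scaling factor appearing in the integral geometry formula) depend only on $\cM$ and $\cM'$, not on the chosen regular points.
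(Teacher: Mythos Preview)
Your proof is correct and follows essentially the same argument as the paper: both produce an element of the stabilizer $K$ by composing $g_1$, $g_2$, and the transitivity witness $h$ (you take $g = g_2 h g_1^{-1}$, the paper takes $k = g_1(g_2 h)^{-1} = g^{-1}$) and then use the chain rule to show this element carries one translated normal space to the other. Your observation about the typo ($N_{B_1}\cM$ should be $N_{B_2}\cM$ in the second subspace) is also correct.
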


\begin{proof}
There is $h\in O(n)$ such that $h B_1 = B_2$ and 
$D_{B_1} h_*$ maps $T_{B_1} \cM$ to $T_{B_2} \cM$, 
since $\cM$ has transitive action on its tangent spaces. 
We have $g_1 = k g_2 h$, where
$k:= g_1 (g_2 h)^{-1} \in K$.
Moreover, 
$$
 D_{B_1} {g_1}_* (T_{B_1}\cM) = D_A k_* (D_{B_2} {g_2}_* (D_{B_1} h_* (T_{B_1} \cM))) 
  = D_A k_* (D_{B_2} {g_2}_* (T_{B_2} \cM)) .
$$ 
Therefore, $D_A k_*$, which can be seen as an element of $K$, 
maps $D_{B_2} {g_2}_* (N_{B_2}\cM)$ to $D_{B_1} {g_1}_* (N_{B_1}\cM)$.  
\end{proof}

We can now define an important quantity that enters 
the integral geometry formula to be discussed. 

\begin{defi}\label{def:av-SF}
Let $\cM$ and $\cN$ be semialgebraic subsets of $G(k,n)$ 
with transitive action on their tangent spaces. 
We pick a regular point $B$ of $\cM$, a regular point $C$ of $\cN$, 
and $g,h\in O(n)$ such that $g(B)=A$ and $h(C)=A$.  
Then we define 
$\as(\cM,\cN) := \as\Big(D_B g_* (N_B\cM), D_C h_* (N_C\cN)\Big)$.   
\end{defi}

It is important to note that $\as(\cM,\cN)$ does not depend on the 
choice of the regular points $B,C$ and the maps $g,h$ 
and hence is well defined. This is a consequence of Lemma~\ref{le:as-welldefined}.

\begin{remark}\label{re:on-as}
Suppose that the normal spaces $N_B \cM$ of $\cM$ 
at regular points $B$ of $\cM$ are tensor products 
$N_B \cM = V_B \ot W_B$, 
with subspaces $V_B\subseteq B$ and $W_B\subseteq B^\perp$ 
of a fixed dimension $\mu_1$ and~$\mu_2$ respectively. 
Similarly, we assume that at regular points $C$ of $\cN$ 
we have $N_C \cN = V'_C \ot W'_C$ with subspaces  
$V'_C\subseteq C$ and $W'_C\subseteq C^\perp$ of 
fixed dimensions $\nu_1$ and $\nu_2$, respectively. 
Then $\as(\cM,\cN)$ only depends on the 
dimensions $\mu_1,\mu_2,\nu_1,\nu_2$ and $k,n$; 
see Remark~\ref{re:factoring-spaces}. 
For instance, this assumption is satisfied if we are 
considering semialgebraic subsets of the projective 
space $\RP^{n-1} =G(1,n)$.
\end{remark}

The following result follows by combining Howard~\cite[Theorem 3.8]{Howard}
with Lemma~\ref{le:factor-trans}. 

\begin{thm}\label{th:is-2-howard}
Let $\cM$ and $\cN$ be semialgebraic subsets  of $G(k,n)$ of codimension $\mu$ and $\nu$ respectively, 
with transitive action on their tangent spaces.
Then we have: 
$$
 \E_{g\in O(n)} |\cM\cap g\cN|  
  = \overline\s(\cM, \cN) \cdot \frac{1}{|G(k,n)|}\cdot |\cM|\cdot |\cN| .
$$
Here, $|\cM\cap g\cN|$ denotes the volume in the expected dimension 
$k(n-k) -\mu-\nu$, $|\cM|$ and $|\cN|$ denote 
the volumes in the dimensions of $\cM$ and $\cN$, respectively. 
\end{thm}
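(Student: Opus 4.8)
\textbf{Proof proposal for Theorem~\ref{th:is-2-howard}.}

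The plan is to derive this result as a direct specialization of Howard's general kinematic formula for homogeneous spaces. Recall that $G(k,n)$ is a homogeneous space under the transitive isometric action of the compact group $G = O(n)$, with isotropy $K = O(k)\times O(n-k)$ at the base point $A = e_1\wedge\cdots\wedge e_k$. Howard's formula \cite[Theorem 3.8]{Howard} states that for submanifolds $\cM,\cN$ of a homogeneous space, the kinematic average $\E_{g\in G}|\cM\cap g\cN|$ equals the product of the volumes $|\cM|\cdot|\cN|$, divided by the total volume of the ambient space, times an \emph{averaged angle factor} $\sigma_{A}$ that encodes, at each pair of incident points, the relative position of the two normal spaces transported to $T_A G(k,n)$ and then averaged over the isotropy group~$K$. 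The content of the theorem is therefore to verify that, under the transitive-action hypothesis, this pointwise-varying angle factor in Howard's formula is in fact a \emph{constant}, equal to $\overline{\sigma}(\cM,\cN)$ as defined in Definition~\ref{def:av-SF}.

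The key steps, in order, are as follows. First, I would recall precisely the form of Howard's integral geometry formula in our setting: the angle factor appearing there is, at a point of incidence $B\in\cM$, $C\in\cN$ with $gC = B$, the average over $k\in K$ of $\sigma\big(k\,\nu_1, \nu_2\big)$, where $\nu_1$ is the normal space of $g\cN$ at $B$ and $\nu_2$ that of $\cM$ at $B$, both carried into $T_A G(k,n)$ by an isometry sending $B\mapsto A$ --- this is exactly the quantity $\sigma$ from \S\ref{se:RP-2} applied inside the tensor product $E = A\otimes A^\perp$, averaged as in \S\ref{se:av-pos-tp}. Second, I would invoke Lemma~\ref{le:as-welldefined}: since both $\cM$ and $\cN$ have transitive action on their tangent spaces, the $K$-orbit of the transported normal space $D_B g_*(N_B\cM)$ in $T_A G(k,n)$ is independent of the choice of regular point $B$ and of the transporting element, and likewise for $\cN$. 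Third, since $\overline{\sigma}$ of two subspaces depends only on their $K$-orbits (immediate from Definition~\ref{def:av-sigma}), the angle factor in Howard's formula takes the constant value $\overline{\sigma}(\cM,\cN)$ of Definition~\ref{def:av-SF} at \emph{every} incidence point. Fourth, pulling this constant out of Howard's integral and substituting the total volume $|G(k,n)|$ for the ambient normalization yields the claimed identity. Finally, I would note that the transitive-action hypothesis also guarantees that $\cM$ and $\cN$, away from their singular loci of lower dimension (Proposition~\ref{prop:singular}), are smooth submanifolds to which Howard's formula --- stated for submanifolds --- genuinely applies, and that the singular parts contribute zero to the expected intersection volume.

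The main obstacle is the careful bookkeeping of \emph{which} average-angle quantity Howard's theorem actually produces and matching it, term for term, with our $\overline{\sigma}(\cM,\cN)$. Howard works with general homogeneous spaces and phrases the angle factor via an integral over the isotropy group of a certain Jacobian-type density; one must check that, for the Riemannian Grassmannian with its $O(n)$-invariant metric, this density is precisely $\sigma(kV,W)$ in the sense of~\eqref{eq:def-sigma} --- i.e.\ the norm of a wedge product of orthonormal bases of the transported normal spaces inside $\Lambda^{\bullet}(T_A G(k,n))$ --- rather than some other normalization. This is essentially a compatibility check between Howard's conventions and ours, and once it is settled the rest is formal. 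A secondary point requiring attention is that $\cM\cap g\cN$ need not be smooth for all $g$, but Kleiman-type transversality (or directly Howard's measure-theoretic setup) ensures that for almost all $g$ the intersection is a submanifold of the expected dimension $k(n-k)-\mu-\nu$, so the volume $|\cM\cap g\cN|$ on the left-hand side is well defined almost everywhere, which is all the formula requires.
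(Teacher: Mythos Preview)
Your proposal is correct and follows the same approach as the paper: apply Howard's Theorem~3.8 to the regular loci of $\cM$ and $\cN$, use the transitive-action hypothesis (via Lemma~\ref{le:as-welldefined}) to see that the angle factor in Howard's formula is the constant $\overline{\sigma}(\cM,\cN)$, and argue that the singular loci contribute nothing to the expected volume. The paper's own justification is a one-liner plus a remark handling the semialgebraic-versus-smooth issue exactly as you describe; your write-up is in fact more explicit about the bookkeeping than the paper itself.
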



\begin{remark}
Howard~\cite[Theorem 3.8]{Howard} states his result under the assumption of 
two smooth compact submanifolds of $G(k,n)$ (possibly with boundary).  
However, the compactness assumption is not needed so that we can apply his result to the regular loci $\Reg(\cM)$ and $\Reg(\cN)$
of $\cM$ and $\cN$, respectively. For this, 
note that the singular locus $\Sing(\cM)$ has dimension strictly 
smaller than $\cM$ (see \S\ref{sec:semialg}). Moreover, 
$\E_{g\in G} |\cM\cap g\cN| = \E_{g\in G} |\Reg(\cM)\cap g\Reg(\cN)|$
since, almost surely, the dimension of the difference 
$(\cM\cap g\cN) \setminus (\Reg(\cM)\cap g\Reg(\cN))$ is strictly smaller than 
the typical dimension of $\cM\cap g\cN$.
\end{remark}

\begin{example}\label{ex:IGproj-space}
Here is a typical application of Theorem~\ref{th:is-2-howard} 
in the special case $G(1,n)=\RP^{n-1}$. It is clear that 
any semialgebraic subset $\cM$ of $G(1,n)$ has transitive action on its 
tangent spaces. Let $m:=\dim\cM$. 
Applying Theorem~\ref{th:is-2-howard} 
to $\cM$ and $\cN := \RP^{n-m-1}$, we obtain
\be 
  \E_{g\in O(n)} \#(\cM\cap g\RP^{n-m-1})  
  = \overline\s \cdot \frac{1}{|G(1,n)|}\cdot |\cM|\cdot |\RP^{n-m-1}| 
\ee
where the average scaling factor $\overline\s := \overline\s(\cM, \cN)$
only depends on $m$ and $n$; see Remark~\ref{re:on-as}. 
In particular, this gives for $\cM= \RP^{m}$, 
\be 
  \E_{g\in O(n)} \#(\RP^{m}\cap g\RP^{n-m-1}) 
  = \overline\s \cdot \frac{1}{|G(1,n)|}\cdot |\RP^{m}|\cdot |\RP^{n-m-1}| ,
\ee
the left-hand side of which clearly equals one. 
Solving this for $\overline\s$ and plugging it in in the previous equation gives 
the well known formula (compare \cite[A.55]{BuCu})
\be\label{eq:IG-proj} 
  \E_{g\in O(n)} \#(\cM\cap g\RP^{n-m-1})  
  = \frac{|\cM|}{|\RP^{m}|} . 
\ee
\end{example}

\subsubsection{Intersecting many random coisotropic hypersurfaces} 

The previous discussion extends to the situation of intersecting many semialgebraic sets; 
we focus here on 
the special case of intersecting coisotropic hypersurfaces 
in the Grassmannian, which is particularly simple. The general case is discussed in the Appendix. 

Let $E$ be a Euclidean vector space of dimension~$n$. 
Let $V_1,\ldots,V_s$ be linear subspaces of the dimensions $m_1,\ldots,m_s$, 
respectively, such that $\sum_{j=1}^s m_j \le n$. 
We define the quantity
\begin{equation} \label{eq:def-sigma-many}
 \s(V_1,\ldots,V_s) :=\|v_{11}\wedge\ldots \wedge v_{1m_1}\wedge\ldots
     \wedge v_{s1}\wedge\ldots\wedge v_{sm_s} \|,
\end{equation}
where $v_{j1},\ldots,v_{jm_j}$ are orthonormal bases of $V_j$. 
(This is clearly independent of the choice of the orthonormal bases.) 
Note that $\s(V_1,\ldots,V_s) \le 1$.
In the special case where $m_j=1$ for all $j$, we can interpret 
$\s(V_1,\ldots,V_s)$ as the volume of the parallelepiped spanned by 
the unit vectors $v_{11},\ldots,v_{s1}$. 
In order to deal with the coisotropic case, we introduce the following definition.
\begin{defi}\label{def:alpha}
For $k,m\ge 1$ we define the {\em (real) average scaling factor} 
$\alpha(k,m)$ as 
\be
 \alpha(k,m) := \E \| (u_1\ot v_1) \wedge \ldots \wedge (u_{km} \ot v_{km}) \|,
\ee
where $u_j \in S(\R^k)$ and $v_j \in S(\R^{m})$, for $1\le j\le km$, 
are independently and uniformly chosen at random. 
\end{defi}

Note that $\alpha(k,m)$ is nothing but the generalization to the case of many coisotropic submanifolds 
of the quantity $\overline\sigma(\cM, \cN)$ introduced in Definition~\ref{def:av-SF}. 
It has the special property that in the coisotropic case, it does not depend on the choice of the hypersurfaces; 
compare Remark~\ref{re:on-as}.  

We will need the following result in the spirit of Theorem~\ref{th:is-2-howard} 
about intersecting codimension many coisotropic hypersurfaces in $G(k,n)$. 

\begin{thm}
\label{thm:IGF}
Let $\Hy_1, \ldots, \Hy_{N}$ be coisotropic hypersurfaces of $G(k,n)$, 
where $N :=k(n-k)$. 
Then we have:
\be 
\E_{(g_1,\ldots,g_N)\in O(n)^N} 
\#\left(g_1\Hy_1\cap\cdots\cap g_N \Hy_N \right)dg_1\cdots dg_N 
 =  \alpha(k,n-k) \cdot |G(k,n)| \cdot\prod_{i=1}^{N} \frac{|\Hy_i|}{|G(k,n)|} .
\ee
\end{thm}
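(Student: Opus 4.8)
\textbf{Proof proposal for Theorem~\ref{thm:IGF}.}

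The plan is to iterate the two-set integral geometry formula of Theorem~\ref{th:is-2-howard}, pushing the combinatorics of the scaling factors until they collapse to the single constant $\alpha(k,n-k)$. First I would set up notation: write $N=k(n-k)=\dim G(k,n)$, and for a tuple $(g_1,\ldots,g_j)\in O(n)^j$ consider the partial intersection $\mathcal{I}_j:=g_1\Hy_1\cap\cdots\cap g_j\Hy_j$. By Kleiman-type transversality (applied to the regular loci, as in the remark following Theorem~\ref{th:is-2-howard}), for generic $g_i$ this is a smooth semialgebraic subset of codimension~$j$, so of dimension $N-j$. The key structural point, coming from Proposition~\ref{pro:Chow-TA} (cited in the excerpt), is that each $\Hy_i$ is coisotropic, hence has transitive action on its tangent spaces by Lemma~\ref{le:factor-trans}, and its normal space at a generic point is a \emph{rank-one} tensor $\R u\ot v$ with $u\in B$, $v\in B^\perp$. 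I would then verify that a transverse intersection of coisotropic hypersurfaces again has transitive action on its tangent spaces: at a generic point $A$ of $\mathcal{I}_j$ the normal space is the span of $j$ rank-one vectors $u_i\ot v_i$, and since $O(n)$ acts transitively on configurations of a point $A\in G(k,n)$ together with an ordered tuple of unit vectors $u_i\in A$, $v_i\in A^\perp$ (up to the relevant stabilizer symmetry), any two generic normal configurations are $O(n)$-equivalent; this is exactly the hypothesis needed to apply Theorem~\ref{th:is-2-howard} to $\mathcal{I}_j$ and $g_{j+1}\Hy_{j+1}$.

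Next I would run the induction. Taking expectations over $g_{j+1}$ first (Fubini), Theorem~\ref{th:is-2-howard} gives
\begin{equation*}
 \E_{g_{j+1}} |\mathcal{I}_j\cap g_{j+1}\Hy_{j+1}|
 = \overline{\s}(\mathcal{I}_j,\Hy_{j+1})\cdot\frac{|\mathcal{I}_j|\cdot|\Hy_{j+1}|}{|G(k,n)|}.
\end{equation*}
Iterating from $j=N-1$ down to $j=0$ (with $\mathcal{I}_0=G(k,n)$), and using at each stage that $\overline{\s}(\mathcal{I}_j,\Hy_{j+1})$ depends only on the $K$-orbits of the two normal spaces and hence only on $k$, $n$, and the number $j$ of rank-one factors in $N_A\mathcal{I}_j$ (Remark~\ref{re:on-as}, Remark~\ref{re:factoring-spaces}), I get
\begin{equation*}
 \E_{(g_1,\ldots,g_N)}\#\bigl(g_1\Hy_1\cap\cdots\cap g_N\Hy_N\bigr)
 = \Bigl(\prod_{j=0}^{N-1}\overline{\s}_j\Bigr)\cdot|G(k,n)|\cdot\prod_{i=1}^N\frac{|\Hy_i|}{|G(k,n)|},
\end{equation*}
where $\overline{\s}_j$ is the scaling factor at level $j$. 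It remains to identify $\prod_{j=0}^{N-1}\overline{\s}_j$ with $\alpha(k,n-k)$. Here I would invoke the cocycle/transitivity structure of Howard's formula: the product of the successive scaling factors equals a single scaling factor for the full configuration of $N$ mutually random rank-one normal directions, which is precisely the expectation $\E\|(u_1\ot v_1)\wedge\cdots\wedge(u_N\ot v_N)\|$ defining $\alpha(k,n-k)$ in Definition~\ref{def:alpha}. Concretely, one checks that $\overline{\s}(D g_*(N\mathcal{I}_j), D h_*(N\Hy_{j+1})) = \E_{K}\,\s\bigl(\text{(span of $j$ random rank-one tensors)},\ \text{(one more random rank-one tensor)}\bigr)$, and the tower of conditional expectations telescopes by the definition of $\s(V_1,\ldots,V_s)$ in~\eqref{eq:def-sigma-many} as the norm of a single wedge product.

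The main obstacle I anticipate is the last identification step: making rigorous that the product of the $N$ successive two-set scaling factors is exactly the $N$-fold quantity $\alpha(k,n-k)$, rather than merely proportional to it or equal up to reindexing. This requires care because $\overline{\s}$ at level $j$ is an average over $K=O(k)\times O(n-k)$ of $\s$ of an \emph{already-averaged} normal space against one fresh rank-one tensor, and one must confirm that the inner randomizations at earlier stages, combined via the tower property of conditional expectation, reproduce exactly the fully independent uniform sampling of $u_1,\ldots,u_N\in S(\R^k)$ and $v_1,\ldots,v_N\in S(\R^{n-k})$. The cleanest route is probably to bypass the explicit iteration entirely and instead prove the multi-set version of Howard's formula directly (as the excerpt indicates is done in Appendix~\ref{se:GIGF}, Theorem~\ref{thm:GIGF}): apply that general iterated kinematic formula to the $N$ coisotropic hypersurfaces, observe that transitivity on tangent spaces makes the global scaling constant well defined and independent of the choice of the $\Hy_i$, and then evaluate that constant on the model case where each $\Hy_i$ is a translate of the special Schubert variety, whose normal space is a single $u\ot v$ — yielding $\alpha(k,n-k)$ by Definition~\ref{def:alpha}. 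With Theorem~\ref{thm:GIGF} in hand the remaining work is bookkeeping on volumes, and the stated formula follows.
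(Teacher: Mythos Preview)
Your fallback plan---invoke the multi-set kinematic formula Theorem~\ref{thm:GIGF} and then read off the constant as $\alpha(k,n-k)$ via Definition~\ref{def:alpha}---is exactly what the paper does, and it works. Your primary plan, however, has a genuine gap that cannot be repaired as written.

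The problematic step is the claim that the partial intersection $\mathcal{I}_j=g_1\Hy_1\cap\cdots\cap g_j\Hy_j$ has transitive action on its tangent spaces. At a generic point $A\in\mathcal{I}_j$ the normal space is $\mathrm{span}\{u_1\ot v_1,\ldots,u_j\ot v_j\}$ with $u_i\in A$, $v_i\in A^\perp$ unit vectors. The $K=O(k)\times O(n-k)$--orbit of such a span depends on the mutual angles among the $u_i$'s and among the $v_i$'s, and these angles vary as $A$ moves along $\mathcal{I}_j$ (for $j\ge 2$). So two generic points of $\mathcal{I}_j$ typically have normal spaces in \emph{different} $K$-orbits, and Theorem~\ref{th:is-2-howard} cannot be applied with a single constant $\overline{\s}_j$; the scaling factor becomes a genuine function on $\mathcal{I}_j$. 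This is precisely why Howard's two-set formula does not iterate naively in $G/K$ when $K$ fails to act transitively on tangent directions.

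The paper's appendix does iterate, but at the level of $G$ rather than $G/K$, and with pointwise (not averaged) scaling factors. The telescoping is driven by the algebraic identity of Lemma~\ref{eq:VWZ}, $\sigma(V_1,\ldots,V_\ell,W,Z)=\sigma(V_1,\ldots,V_\ell,W+Z)\cdot\sigma(W,Z)$, used inside Corollary~\ref{le:bif}: one applies the basic two-set formula (Proposition~\ref{prop:basic}) with the integrand $h$ chosen so that the normal directions from previous steps are carried along explicitly. Only after the full product integral over $X_1\times\cdots\times X_m$ is obtained does one push down to $G/K$ via the coarea formula, at which point the $K$-average producing $\sigma_K$ (Definition~\ref{def:sigma-many}) appears, and for coisotropic hypersurfaces this average is the constant $\alpha(k,n-k)$. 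So the ``tower of conditional expectations'' you were hoping for does exist, but it lives upstairs in $G$ and is mediated by Lemma~\ref{eq:VWZ}, not by constancy of $\overline{\s}$ on the $\mathcal{I}_j$.
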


This theorem is a consequence of a generalized Poincar\'e formula in homogeneous spaces,  
stated as Theorem~\ref{thm:GIGF} in the Appendix~\ref{se:GIGF}. 
Its statement and proof is very similar to Howard~\cite[Theorem 3.8]{Howard}.
Since we have been unable to locate this result in the literature, 
we provide a proof in the Appendix.


\section{Probabilistic enumerative geometry} 

\subsection{Special Schubert varieties}\label{se:RChow}

By the {\em special real Schubert variety} 
associated with a linear subspace $B\in G(n-k,n)$
we understand the subvariety 
\be 
 \Omega(B) := \big\{ A \in G(k,n) \mid A \cap B \ne 0\big\} .
\ee 
If the dependence on~$B$ is not relevant, which is mostly the case, 
we write $\Sigma(k,n) := \Omega(B)$.
The Schubert cell\footnote{We observe, as pointed out by an anonymous referee, 
that in general $e(B)$ is not a topological cell, but instead a vector bundle over a product of Grassmannians, 
see \cite[Theorem 2.1]{WongY}.} 
associated with~$B$ is the open subset
\be 
 e(B) := \big\{ A \in G(k,n) \mid \dim(A \cap B) = 1\big\}
\ee 
of $\Omega(B)$.
It is a well known fact that $e(B)$ is the regular locus of $\Omega(B)$, 
e.g., see \cite{milnor-stasheff, WongY}. 
Moreover, $e(B)$ is a hypersurface in $G(k,n)$. 
Its normal spaces can be described by the following result, 
which is a special case of~\cite[\S2.7]{sottile-pieri}. 

\begin{lemma}\label{le:prop-Sigmakn}
Let $A\in e(B)$ and $a_1,a_2,\ldots,a_k$ be an orthonormal basis of $A$ such that 
$A\cap B =\R a_1$. Moreover, let 
$(A+B)^\perp = \R f$ with $\|f\| =1$. 
Then $\alpha\in \Hom(A,A^\perp)$ 
lies in the normal space $N_A \Omega(B)$ iff 
$\alpha(a_1)\in\R f$ and $\alpha(a_i) = 0$ for $i>1$.
In other words, 
$f\wedge a_2\wedge\cdots\wedge a_k$
spans the normal space $N_A \Omega(B)$, when interpreted 
as a subspace of $\Lambda^k \R^n$ as in~\S\ref{se:GrassR}. 
\end{lemma}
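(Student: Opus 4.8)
The plan is to compute the tangent space $T_A\Omega(B)$ explicitly in an affine chart of $G(k,n)$ around~$A$, and then read off the normal space using the Frobenius inner product on $\Hom(A,A^\perp)$. First I would record the dimension count: since $\dim(A\cap B)=1$ we have $\dim(A+B)=k+(n-k)-1=n-1$, so $(A+B)^\perp=A^\perp\cap B^\perp=\R f$ is a line and in particular $f\in A^\perp$. Near $A$ every $k$-plane is the graph $\mathrm{graph}(\beta):=\{a+\beta(a):a\in A\}$ of a unique $\beta\in\Hom(A,A^\perp)$, and in the identification $T_AG(k,n)=\Hom(A,A^\perp)$ used in \S\ref{se:GrassR} the velocity at $t=0$ of the curve $t\mapsto\mathrm{graph}(t\beta)$ is $\beta$ itself. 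Now $\mathrm{graph}(\beta)\cap B\ne 0$ precisely when the linear map $\phi_\beta\colon A\to B^\perp,\ a\mapsto\pi_{B^\perp}(a+\beta(a))$ fails to be injective, i.e.\ $\det\phi_\beta=0$ once orthonormal bases of $A$ and of $B^\perp$ are fixed. Since $A\in e(B)$ is a regular point of $\Omega(B)$, the set $\Omega(B)$ agrees with $e(B)$ in a neighborhood of $A$ and is therefore, in this chart, the zero set of the single function $\beta\mapsto\det\phi_\beta$.

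Next I would linearize at $\beta=0$. The map $\phi_0=\pi_{B^\perp}|_A$ has kernel $A\cap B=\R a_1$; computing its adjoint as $\phi_0^{*}=\pi_A|_{B^\perp}$ shows $(\mathrm{im}\,\phi_0)^\perp=\ker\phi_0^{*}=A^\perp\cap B^\perp=\R f$, so $\phi_0$ has rank $k-1$. In fixed coordinates $\phi_0$ is then a $k\times k$ matrix of rank $k-1$, whose adjugate is rank one, of the form $\mathrm{adj}(\phi_0)=c\,u\,w^{T}$ with $c\ne 0$ and $u,w$ unit vectors spanning $\ker\phi_0=\R a_1$ and $(\mathrm{im}\,\phi_0)^\perp=\R f$; consequently $\tfrac{d}{dt}\big|_{0}\det(\phi_0+t\dot M)=\mathrm{tr}(\mathrm{adj}(\phi_0)\dot M)=c\,\langle w,\dot M u\rangle$ for any perturbation $\dot M$. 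Applying this with $\dot M=\pi_{B^\perp}\circ\dot\beta$, $u=a_1$, $w=f$, and using $f\in B^\perp$, one obtains
\[
 \frac{d}{dt}\Big|_{t=0}\det\phi_{t\dot\beta}=c\,\langle f,\dot\beta(a_1)\rangle ,
\]
which is a nonzero linear functional of $\dot\beta$. Hence $T_A\Omega(B)=\{\dot\beta\in\Hom(A,A^\perp):\langle\dot\beta(a_1),f\rangle=0\}$, a hyperplane, as it must be for a codimension-one variety that is smooth at~$A$.

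It then remains to pass to the normal space. With respect to the inner product $\langle\alpha,\beta\rangle=\sum_{i}\langle\alpha(a_i),\beta(a_i)\rangle$ on $\Hom(A,A^\perp)$, the orthogonal complement of the hyperplane found above is the line spanned by the homomorphism $\nu$ defined by $\nu(a_1)=f$ and $\nu(a_i)=0$ for $i>1$ (well defined since $f\in A^\perp$); this is exactly the asserted description of $N_A\Omega(B)$. Finally, feeding $\nu$ into the Plücker correspondence $\alpha\mapsto\sum_{i}a_1\wedge\cdots\wedge\alpha(a_i)\wedge\cdots\wedge a_k$ of \S\ref{se:GrassR}, all summands with $i>1$ vanish and one is left with the spanning normal vector $f\wedge a_2\wedge\cdots\wedge a_k$.

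There is no deep difficulty here; the step requiring the most care is the linear-algebraic identification of $\ker\phi_0$ and $(\mathrm{im}\,\phi_0)^\perp$ with $\R a_1$ and $\R f$, together with the differential-of-determinant formula at the rank-deficient point $\phi_0$. An alternative route avoiding charts is to take a smooth curve $A(t)\subseteq e(B)$ with $A(0)=A$, choose a smooth generator $v(t)$ of the line $A(t)\cap B$ with $v(0)=a_1$ (possible because $\dim(A(t)\cap B)$ is locally constant), observe $\langle v(t),f\rangle\equiv 0$ since $f\perp B$, and differentiate at $t=0$, using that the $A^\perp$-component of $\dot v(0)$ is the value at $a_1$ of the tangent vector $\dot A(0)\in\Hom(A,A^\perp)$.
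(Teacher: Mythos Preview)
Your argument is correct. The dimension count, the graph-chart description of $\Omega(B)$ near $A$ as the vanishing locus of $\beta\mapsto\det\phi_\beta$, the identification of $\ker\phi_0=\R a_1$ and $(\mathrm{im}\,\phi_0)^\perp=\R f$, and the Jacobi formula at the corank-one point $\phi_0$ all combine cleanly to give the hyperplane $\{\dot\beta:\langle\dot\beta(a_1),f\rangle=0\}$ as tangent space and hence the line $\R\nu$ as normal space. The translation to $f\wedge a_2\wedge\cdots\wedge a_k$ via the Pl\"ucker description is also fine.

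As for comparison: the paper does not give its own proof of this lemma; it simply records the statement as a special case of \cite[\S2.7]{sottile-pieri}. Your write-up therefore supplies a self-contained argument where the paper outsources it. The graph-chart/determinant approach you take is exactly the kind of computation one would extract from the cited reference (Sottile's description of tangent spaces to Schubert varieties via local Pl\"ucker-type equations), so there is no genuine methodological divergence---just the difference between giving the details and citing them. The alternative curve-based argument you sketch at the end is slightly more geometric and equally valid; either route would be acceptable here.
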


\subsection{Volume of special Schubert varieties}
\label{se:vol-SSV} 

We can determine now the volume of the special Schubert  varieties $\Sigma(k,n)$.
It is remarkable that the result is completely analogous to the corresponding result 
\eqref{eq:HlinC} over~$\C$. 

\begin{thm}\label{prop:vspe}
The volume of the special Schubert variety $\Sigma(k,n)$ satisfies:
\be 
 \frac{|\Sigma(k,n)|}{|G(k,n)|} 
   =\frac{\Gamma\left(\frac{k+1}{2}\right)}{\Gamma\left(\frac{k}{2}\right)} \cdot
      \frac{\Gamma\left(\frac{n-k+1}{2}\right)}{\Gamma\left(\frac{n-k}{2}\right)} 
   = \pi \cdot \frac{|\RP^{k-1}|}{|\RP^k|} \cdot \frac{|\RP^{n-k-1}|}{|\RP^{n-k}|} .
\ee
\end{thm}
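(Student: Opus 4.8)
The plan is to compute the volume of $\Sigma(k,n)$ by computing the volume of its regular locus, the Schubert cell $e(B)$, using the coarea formula applied to a natural fibration. First I would fix $B\in G(n-k,n)$ and parametrize $e(B)$ roughly as follows: a $k$-plane $A$ with $\dim(A\cap B)=1$ is determined by the line $\ell = A\cap B \subseteq B$ (a point of $\RP(B)\cong\RP^{n-k-1}$), together with the image of $A$ in the quotient $\R^n/\ell$, which is a $(k-1)$-plane in $\ell^\perp/\ell$ (roughly an $(n-2)$-dimensional space) that is transverse to $B/\ell$. So there is a map $\pi\colon e(B)\to \RP(B)$, $A\mapsto A\cap B$, and I want to understand its fibers and its Jacobian. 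The fiber over a line $\ell$ consists of the $k$-planes $A$ with $A\cap B = \ell$; modulo $\ell$ these are the $(k-1)$-planes in an $(n-1)$-dimensional space meeting a fixed $(n-k-1)$-dimensional subspace (namely $B/\ell$) only at the origin. The complement of that bad locus is an open dense subset of $G(k-1,n-1)$, so generically the fiber is (an open dense subset of) $G(k-1,n-1)$.

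The key computation is then the coarea/Jacobian factor. By $O(n)$-invariance (the stabilizer of $B$ acts transitively on lines in $B$ and, fixing $\ell$, acts transitively on the fibers) the normal Jacobian of $\pi$ is constant on a full-measure set, so $|e(B)|$ equals $|\RP^{n-k-1}|\cdot|G(k-1,n-1)|\cdot J$ for a single constant $J$ measuring how the ambient metric splits between the base direction (moving $\ell$ inside $B$) and the fiber direction (moving $A$ mod $\ell$). Concretely I would work at a base point $A=\mathrm{span}(a_1,\dots,a_k)$ with $A\cap B = \R a_1$, use the identification $T_A G(k,n)=\Hom(A,A^\perp)$, and split an infinitesimal deformation $\alpha$ into the part $\alpha(a_1)$ (which governs both moving $\ell$ inside $B$ and leaving $e(B)$) and the parts $\alpha(a_i)$, $i\ge2$ (which stay tangent to the cell and correspond to moving the fiber). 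Tangency to $e(B)$ is exactly $\alpha(a_1)\perp f$ where $(A+B)^\perp=\R f$, by Lemma~\ref{le:prop-Sigmakn}; the tangent direction of $\pi$ singles out the component of $\alpha(a_1)$ lying in $B\cap a_1^\perp\cong\R^{n-k-1}$, while the fiber tangent space is the copy of $\Hom(A/\ell, (A/\ell)^\perp)$ inside. Since the metric on the Grassmannian is the Frobenius norm on $\Hom(A,A^\perp)$, these pieces are orthogonal and the Jacobian factor $J$ is simply $1$; the only subtlety is matching the metric on $G(k-1,n-1)$ used for the fiber volume with the restriction of the Frobenius metric, but both are the standard $O$-invariant ones so they agree. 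This yields the recursion
\begin{equation*}
 |\Sigma(k,n)| = |\RP^{n-k-1}|\cdot |G(k-1,n-1)| .
\end{equation*}

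Finally I would divide by $|G(k,n)|$ and simplify. Using \eqref{eq:Gr-vol}, $|G(k-1,n-1)|/|G(k,n)| = \frac{|O(n-1)|\,|O(k)|\,|O(n-k)|}{|O(n)|\,|O(k-1)|\,|O(n-k)|} = \frac{|O(n-1)|}{|O(n)|}\cdot\frac{|O(k)|}{|O(k-1)|}$, and by \eqref{eq:Ovol} the ratio $|O(m)|/|O(m-1)| = |S^{m-1}| = 2\pi^{m/2}/\Gamma(m/2)$. Putting $m=n$ and $m=k$ and combining with $|\RP^{n-k-1}| = \tfrac12|S^{n-k-1}| = \pi^{(n-k)/2}/\Gamma(\tfrac{n-k}{2})$, everything collapses to
\begin{equation*}
 \frac{|\Sigma(k,n)|}{|G(k,n)|} = \frac{\Gamma\!\left(\frac{k+1}{2}\right)}{\Gamma\!\left(\frac{k}{2}\right)}\cdot\frac{\Gamma\!\left(\frac{n-k+1}{2}\right)}{\Gamma\!\left(\frac{n-k}{2}\right)},
\end{equation*}
and rewriting each Gamma-ratio via $|S^{m-1}|$ and $|\RP^{m-1}|=\tfrac12|S^{m-1}|$ gives the stated product $\pi\cdot\frac{|\RP^{k-1}|}{|\RP^k|}\cdot\frac{|\RP^{n-k-1}|}{|\RP^{n-k}|}$. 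The main obstacle I anticipate is the careful bookkeeping in the coarea step — verifying that the normal Jacobian of $\pi$ really is $1$, i.e.\ that the base-direction, fiber-direction, and exit-direction pieces of $\Hom(A,A^\perp)$ are mutually orthonormal in the Frobenius metric and that the fiber inherits precisely the standard metric of $G(k-1,n-1)$ (a double-cover factor of $2$ between the oriented and unoriented Grassmannians must be tracked consistently on both sides, but cancels). An alternative route, should the direct fibration argument prove delicate, is to compute $|\Sigma(k,n)|$ via the volume of a tube of radius $\pi/2$ around $\Sigma(k,n)$ or via Weyl's tube formula / the integral-geometric identity $|\Sigma(k,n)| = \int_{G(k,n)}\#\{\text{something}\}$, but I expect the fibration computation above to be the cleanest.
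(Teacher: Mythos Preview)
Your fibration approach is a genuinely different route from the paper's proof, which instead computes $|\Sigma(k,n)|$ via the volume of an $\epsilon$-tube around $e(B)$: the paper shows that the tube $\{A:\vartheta_1(A)\le\epsilon,\ \vartheta_2(A)\ge\delta\}$ has probability expressible through the explicit joint density of principal angles (Theorem~\ref{thm:density}), and a limit $\epsilon\to 0$ followed by $\delta\to 0$ reduces the answer to the ratio $\tfrac12\,c_{k,n-k,n}/c_{k-1,n-k-1,n}$ of normalizing constants.

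Your argument, however, has a genuine gap: the normal Jacobian of $\pi\colon e(B)\to\RP(B)$, $A\mapsto A\cap B$, is \emph{not} equal to $1$. Working at a point $A$ with orthonormal bases $(a_1,\dots,a_k)$ of $A$ and $(a_1,b_2,\dots,b_{n-k})$ of $B$ as in Lemma~\ref{le:bases} (so $\langle a_i,b_j\rangle=\delta_{ij}\cos\theta_i$), write $b_j=\cos\theta_j\,a_j+\sin\theta_j\,c_j$ for $2\le j\le k$ with $c_j\in A^\perp$. Then $A^\perp\cap f^\perp=\mathrm{span}(c_2,\dots,c_k,b_{k+1},\dots,b_{n-k})$, and for a horizontal $\alpha$ with $\alpha(a_1)=\sum_{j=2}^{k}s_jc_j+\sum_{j>k}s_jb_j$ a direct computation of $A(t)\cap B$ gives
\[
 D\pi(\alpha)\;:\;a_1\;\longmapsto\;\sum_{j=2}^{k}\frac{s_j}{\sin\theta_j}\,b_j+\sum_{j>k}s_jb_j \ \in\ T_\ell\RP(B)=\Hom(\R a_1,\,B\cap a_1^\perp).
\]
Thus the normal Jacobian is $\prod_{j=2}^{k}\frac{1}{\sin\theta_j}$, not $1$; in particular $D\pi$ restricted to the horizontal space is \emph{not} the orthogonal projection of $\alpha(a_1)$ onto $B\cap a_1^\perp$ (that projection would produce the factors $\sin\theta_j$ instead). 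Consequently your recursion $|\Sigma(k,n)|=|\RP^{n-k-1}|\cdot|G(k-1,n-1)|$ is false already for $(k,n)=(2,4)$: it gives $\pi\cdot|\RP^2|=2\pi^2$, whereas the correct value is $|G(2,4)|\cdot\tfrac{\pi}{4}=2\pi^2\cdot\tfrac{\pi}{4}=\tfrac{\pi^3}{2}$.

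The approach is salvageable: coarea yields
\[
 |\Sigma(k,n)|=|\RP^{n-k-1}|\cdot|G(k-1,n-1)|\cdot
 \E_{A'\in G(k-1,n-1)}\Big[\prod_{j=2}^{k}\sin\theta_j(A',B')\Big],
\]
where $A'=A/\ell$, $B'=B/\ell$ live in $\ell^\perp\cong\R^{n-1}$ and $\theta_2,\dots,\theta_k$ are their principal angles. The expectation is exactly $\E\,\sigma(A',B')$ (Lemma~\ref{le:pos-subspace-R}), and evaluating it again requires the principal-angle density of Theorem~\ref{thm:density}, so one lands on essentially the same computation the paper performs.
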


\begin{proof}
The isometry $G(k,n)\to G(n-k,n),\, A\mapsto A^\perp$ maps 
$\Omega(B)$ to $\Omega(B^\perp)$. 
Therefore, we may assume that $k\le n/2$ without loss of generality.  

The proof relies on a general principle.
Let $M$ be a compact Riemannian manifold, 
$H$ be a smooth hypersurface of $M$, 
and $K$ be a nonempty compact subset of $H$.  
We define the {\em $\epsilon$-tube} $\Tu(K, \epsilon)$ 
of~$K$ in $M$ by 
\be 
 \Tu(K, \epsilon) := \big\{\exp_x(\nu) \mid x\in K,\ \nu\in N_x H,\ \|\nu\| \le \e\big\} ,
\ee 
where $\exp_x\colon T_xM\to M$ denotes the exponential map of $M$ at~$x$ 
and $N_x H$ is the (one dimensional) orthogonal complement of the tangent space $T_xH$ in $T_xM$;  
cf.~\cite[\S21.2]{BuCu} and~\cite{spivak-1}.
Let $m:=\dim H$. 
The $m$-dimensional volume $|K|$ of $K$ can be computed from the 
$m+1$-dimensional volumes of the tubes $\Tu(K,\epsilon)$ as follows (see~\cite{gray:81}):
\be
  \label{eq:volumes} |K| = \lim_{\epsilon\to 0}\,\frac{1}{2\epsilon}\, |\Tu(K,\epsilon)| .
\ee
We shall apply this formula to a Schubert cell $H=e(B)$ for a fixed $B\in G(n-k,n)$
in the Grassmann manifold $M=G(k,n)$ and $K$ belonging to an increasing family of compact sets exhausting $e(B)$ (see below).

Let us denote by $\vartheta_1,\vartheta_2\colon G(k,n)\to \R$ the functions giving, respectively, 
the smallest and the second smallest principal angle between $A\in G(k,n)$ and the fixed~$B$.
We note that 
$e(B) = \{A\in G(k,n) \mid \vartheta_1(A) = 0\}$. 
For $0<\delta<\pi/2$, we consider the following compact subset of $e(B)$: 
\be 
  e(B)_\delta := \big\{ A\in G(k,n) \mid  \vartheta_1(A)=0,\ \vartheta_2(A) \ge \delta  \big\} .
\ee
Note that 
$e(B)_{\delta_2}\subseteq e(B)_{\delta_1}$ if $\delta_1<\delta_2$ and $e(B)=\bigcup_{\delta>0}e(B)_\delta$. 
Therefore, we have $|\Omega(B)|=|e(B)|=\lim_{\delta\to 0}|e(B)_\delta|$. 
Combining this with~\eqref{eq:volumes}, we can write the volume of the Schubert variety as 
the double limit
\be
 \label{eq:volume2}|\Omega(B)|=\lim_{\delta\to 0}|e(B)_\delta|\\
 =\lim_{\delta\to 0}\lim_{\epsilon\to 0}\, \frac{1}{2\epsilon}\, |\Tu(e(B)_\delta, \epsilon)| .
\ee
We need the following technical result, whose proof will 
be provided in Appendix~\ref{app:tech-lemma}. 

\begin{lemma}\label{le:tube} 
In the above situation, we have for $0<\e \le \delta< \pi/2$ that 
\be 
 \Tu(e(B)_\delta, \epsilon) = \{ A\in G(k,n) \mid \vartheta_1(A) \le \epsilon,\ \vartheta_2(A) \ge\delta \big\} .
\ee
\end{lemma}

Let now $p(\theta_1,\ldots,\theta_k)$ denote the joint density of 
the (ordered) principal angles $\theta_1\le\cdots\le\theta_k$ 
between $A\in G(k,n)$ and $B$, as in Theorem~\ref{thm:density}.  
Due to Lemma~\ref{le:tube} we can write for $\epsilon\le\delta <\pi/2$, 
\be
  \frac{|\Tu(e(B)_\delta, \epsilon) )|}{|G(k,n)|} 
= \Prob\big\{ A\in G(k,n) \mid \vartheta_1(A)\leq \epsilon,\ \vartheta_2(A) \geq \delta \big\} 
   = \int_{R(\epsilon, \delta)} p(\theta)\, d\theta,
\ee 
where $\Prob$ refers to the uniform distribution on $G(k,n)$ and 
$$
 R(\epsilon, \delta) := \big\{\theta\in\R^k \mid 0\le\theta_1\le\ldots\le\theta_k\le\pi/2,\ 
  0\le\theta_1\le \epsilon,\ \delta\leq \theta_2\big\} .
$$
Thus, from \eqref{eq:volume2} 
we see that
\be 
  \frac{|\Omega(B)|}{|G(k,n)|}
   =\frac{1}{2}\lim_{\delta\to 0}\lim_{\epsilon\to 0}\frac{1}{\epsilon}\int_{R(\epsilon, \delta)} p(\theta)\, d\theta,
\ee 
and we have reduced the problem to the evaluation of the last limit.
We have 
\begin{eqnarray*} 
 \lim_{\delta\to 0}\lim_{\epsilon\to 0} \frac{1}{\epsilon} \int_{R(\epsilon, \delta)} p(\theta)\, d\theta 
 &=&  \lim_{\delta\to 0}\lim_{\epsilon\to 0} \frac{1}{\epsilon} \int_0^\epsilon \left(
  \int_{\theta_1\le\delta\leq\theta_2\le\ldots\le\theta_k\le\pi/2} p(\theta)\,d\theta_2\cdots d\theta_k\right) d\theta_1 \\
 &=&  \lim_{\delta\to 0}\int_{0\le\delta\leq\theta_2\le\ldots\le\theta_k\le\pi/2} p(0,\theta_2,\ldots,\theta_k)\,d\theta_2\cdots d\theta_k\\
  &=& \int_{0\leq\theta_2\le\ldots\le\theta_k\le\pi/2} p(0,\theta_2,\ldots,\theta_k)\,d\theta_2\cdots d\theta_k .
\end{eqnarray*}
Theorem~\ref{thm:density} on the joint density~$p$ gives 
\begin{eqnarray*} 
 p(0,\theta_2,\ldots,\theta_k) 
  &=& c_{k,n-k,n} \cdot \prod_{j=2}^k (\cos \theta_j)^{n-2k} 
          \prod_{j=2}^k \big(1- (\cos\theta_j)^2\big) \prod_{2\le i<j\le k} 
           \left((\cos \theta_i)^2-(\cos \theta_j)^2\right) \\
 &=& \frac{c_{k,n-k,n}}{c_{k-1,n-k+1,n}} \cdot \tilde{p}(\theta_2,\ldots,\theta_k),
\end{eqnarray*}
where $\tilde{p}(\theta_2,\ldots,\theta_k)$ is the joint density of the principal 
angles between a random $(k-1)$-plane and a fixed $(n-k-1)$-plane in $\R^n$. 
We conclude that 
\be
 \frac{|\Omega(B)|}{|G(k,n)|} = \frac12 \frac{c_{k,n-k,n}}{c_{k-1,n-k-1,n}} 
 = \frac{\Gamma\left(\frac{k+1}{2}\right)}{\Gamma\left(\frac{k}{2}\right)} \cdot
      \frac{\Gamma\left(\frac{n-k+1}{2}\right)}{\Gamma\left(\frac{n-k}{2}\right)} ,
\ee
where the last equality follows by a tedious calculation after 
plugging in the formulas for the constants $c_{k,n-k,n}$ and $c_{k-1,n-k-1,n}$
from Theorem~\ref{thm:density}.
\end{proof}


\subsection{Chow hypersurfaces}

We generalize now the definition of special Schubert varieties of codimension one.
Let $X\subseteq \RP^{n-1}$ be a semialgebraic subset of dimension $n-k-1$. 
We associate with $X$ the set of $(k-1)$-dimensional projective subspaces of $\RP^{n-1}$ 
that intersect~$X$.
We interpret this as a subset of $G(k,n)$ and define  
\be 
  Z(X) :=\{A\in G(k, n)\,|\, \PP(A)\cap X\neq \emptyset\}\ .
\ee
In analogy with the situation over $\C$, we call $Z(X)$ the \emph{Chow hypersurface} 
associated with $X$. 
(The fact that $Z(X)$ is a hypersurface, i.e., a semialgebraic subset of codimension one, will be proved 
in Lemma~\ref{pro:TangSpaceChow} below.) 
We note that $Z(X) = \Omega(B)$ if $X$ is projective linear and $B$ the corresponding linear space.

\begin{example}\label{ex:Chow-not-alg}
Let $X\subseteq\RP^3$ be the real twisted cubic, i.e., 
the image of the map 
$\RP^1 \to \RP^3, (s:t)\mapsto (s^3:s^2t:st^2:t^3)$.
We claim that $Z(X)$ is {\em not an algebraic subset} 
of~$G(2,4)$. For seeing this, we intersect the affine part 
$X_{\mathrm{aff}} := \{(t,t^2,t^3) \mid t\in\R\}$ of $X$ with the 
open subset 
$U:= \{L_{u_1 u_2 v_1 v_2} \mid (u_1,u_2,v_1,v_2) \in \R^4 \}\subseteq G(2,4)$ 
of lines 
$$
 L_{u_1 u_2 v_1 v_2} := \left\{ (0,u_1,u_2) + s(1,v_1,v_2) \mid s\in\R\right\} .
$$
We have 
\be
 X_{\mathrm{aff}} \cap U \ne\emptyset \Longleftrightarrow 
\exists t\in\R\quad  
  t^2 -v_1 t -u_1 =0,\ 
  t^3 -v_2 t -u_2 = 0 .
\ee
The right-hand condition is equivalent to 
$v_2^2 + 4u_2 \ge 0, \mathrm{res}=0$, 
where 
$$\mathrm{res} := 
 u_1 v_1^2 v_2 - u_2 v_1^3 - u_1^3 +2 u_1^2 v_2 - 3 u_1 u_2 v_1 -u_1 v_2^2+u_2 v_1 v_2 +u_2^2 = 0,
$$ 
is the resultant of the above quadratic and cubic equation. 
Intersecting with the 2-plane $u_2=v_2=0$ gives 
$\{(u_2,v_2)\in\R^2 \mid v_2^2 + 4u_2 \ge 0 \}$, 
which is not algebraic. 
\end{example}

It is essential that the tangent spaces of Chow hypersurfaces coincide with the 
tangent spaces of closely related special Schubert varieties. 
We postpone the somewhat technical proof to Appendix~\ref{app:Chow}. 

\begin{lemma}\label{pro:TangSpaceChow}
The Chow hypersurface $Z(X)\subseteq G(k,n)$ is semialgebraic of codimension one; 
if $X$ is compact, then $Z(X)$ is compact and if $X$ is connected, then $Z(X)$ is connected.
Moreover, for generic points $A\in Z(X)$ the following is true: the intersection $\PP(A)\cap X$ consists of one point only; 
let us denote this point by $p$; then $p$ is a regular point of $X$ and, denoting by $B\subseteq\R^n$ 
the linear space of dimension $n-k$ corresponding to the tangent space $T_p X$, we have:
\be\label{eq:Tchow} 
 T_A Z(X) = T_A \Omega(B).
\ee
\end{lemma}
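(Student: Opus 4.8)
The plan is to reduce the statement to a local computation near a generic point $A \in Z(X)$, using the defining incidence condition to identify the tangent space of $Z(X)$ with that of a special Schubert variety. First I would establish the structural claims: semialgebraicity of $Z(X)$ follows because $Z(X)$ is the image under a projection of the semialgebraic incidence variety $\{(A,p) \in G(k,n)\times X \mid p \in \PP(A)\}$, and images of semialgebraic sets under projections are semialgebraic (Tarski–Seidenberg). Compactness when $X$ is compact is clear since $Z(X)$ is then the image of a compact set under a continuous map, and connectedness when $X$ is connected follows because the incidence variety fibers over $X$ with connected (projective-space-like) fibers, hence is connected, and $Z(X)$ is its continuous image. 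The codimension-one claim will come out of the tangent space computation, or can be seen directly by a dimension count on the incidence variety: it has dimension $(n-k-1) + (k-1)(n-k) = k(n-k) - 1$, and the projection to $G(k,n)$ is generically finite onto its image.

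Next I would address the genericity statements. By Proposition~\ref{prop:singular}, generic points of $X$ are regular, and I want to show that for generic $A \in Z(X)$ the intersection $\PP(A) \cap X$ is a single regular point $p$ of $X$ at which $\PP(A)$ meets $X$ transversally in $\RP^{n-1}$. This is a standard Bertini/Kleiman-type transversality argument: the set of $A$ for which $\PP(A)$ passes through two points of $X$, or through a singular point of $X$, or is tangent to $X$ at a regular point, is a semialgebraic subset of strictly smaller dimension in $Z(X)$ — each such condition cuts down dimension by the relevant excess, which one checks by the same incidence-variety bookkeeping. So I may assume $\PP(A) \cap X = \{p\}$ with $p \in \Reg(X)$.

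The heart of the argument is then \eqref{eq:Tchow}. Let $B \subseteq \R^n$ be the $(n-k)$-dimensional linear subspace with $\PP(\hat B) \supseteq$ the projective tangent space $T_p X$, i.e. $B$ is the affine cone over that tangent space (using $\dim T_p X = n-k-1$, so $\dim B = n-k$). Since $p \in \PP(A) \cap X$, we have $A \cap \hat p \ne 0$ where $\hat p$ is the line in $\R^n$ over $p$; in particular $A \cap B \ne 0$, so $A \in \Omega(B)$, and moreover $A \in e(B)$ generically (the intersection is one-dimensional). Now I would write the incidence condition locally: choosing a smooth path $A(t)$ in $Z(X)$ through $A = A(0)$, there is a path $p(t) \in X$ with $p(t) \in \PP(A(t))$; differentiating, the velocity $\dot A(0) \in \Hom(A, A^\perp)$ applied to the direction $a_1$ spanning $A \cap \hat p$ must, up to the motion of $p$ within $T_pX$, land in the direction forced by $\dot p(0) \in T_p X$. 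Matching this against Lemma~\ref{le:prop-Sigmakn}, which says $N_A\Omega(B)$ is spanned by $f \wedge a_2 \wedge \cdots \wedge a_k$ with $\R f = (A+B)^\perp$, one sees that a tangent vector $\alpha$ lies in $T_A Z(X)$ exactly when $\alpha(a_1)$ has no component along $f$ and $\alpha(a_i)$ is unconstrained for $i \ge 2$ — which is precisely the condition $\alpha \perp (f \wedge a_2 \wedge \cdots \wedge a_k)$, i.e. $\alpha \in T_A\Omega(B)$. Since both $T_A Z(X)$ and $T_A \Omega(B)$ have dimension $k(n-k)-1$, the inclusion of tangent spaces forced by the incidence relation is an equality.

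The main obstacle is the tangent space identification: one must argue carefully that the constraint imposed on $\alpha$ by "$\PP(A)$ continues to meet $X$" is *exactly* one linear equation (no more, no fewer), and that this equation is the same as the one defining $T_A\Omega(B)$. The subtlety is that $p$ is allowed to move along $X$, so the naive condition "$\alpha$ keeps $\hat p$ inside $A$" is too strong; the correct condition is that the motion of the line $\hat p \subseteq A$ induced by $\alpha$ is compensable by a motion of $p$ within the tangent space $T_p X = \PP(B)$, and this is what produces the single equation involving only the $f$-component. Handling this compensation cleanly — ideally by working in a coordinate chart where $X$ is a graph over $T_p X$, or by invoking the implicit function theorem on the incidence variety to get that the projection $Z(X) \leftarrow \{(A,p)\}$ is a local diffeomorphism near $(A,p)$ and transporting tangent spaces through it — is the technical crux, and is presumably why the authors defer it to Appendix~\ref{app:Chow}.
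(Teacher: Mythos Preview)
Your outline matches the paper's proof closely in its structural parts: both use the incidence variety $C(X)=\{(A,p)\in G(k,n)\times X\mid p\in\PP(A)\}$ for semialgebraicity, compactness, and connectedness, and both carry out the same dimension count (fibers over $X$ are copies of $G(k-1,n-1)$, giving $\dim C(X)=k(n-k)-1$). Your genericity discussion is also in the same spirit, though the paper is more explicit: it constructs the auxiliary variety $C^{(2)}(X)=\{(A,y_1,y_2)\mid y_1\ne y_2,\ y_i\in\PP(A)\}$, counts its dimension, and collects all the bad loci into an explicit set $S(X)=Z^{(2)}(X)\cup Z(\Sing X)\cup\Sing(Z(X))\cup\pi_1(\Sing C(X))$ of strictly smaller dimension.

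Where your approach genuinely diverges is the tangent space identification. You propose to differentiate a path $(A(t),p(t))$ and argue directly that the single constraint on $\alpha=\dot A(0)$ is the vanishing of the $f$-component of $\alpha(a_1)$, matching Lemma~\ref{le:prop-Sigmakn}. The paper instead works upstairs in the incidence variety and avoids the ``compensation'' issue you flag: in local coordinates it writes $C(X)$ as $\{F(w,x)=0,\ G(x)=0\}$ (with $F$ encoding $x\in\PP(W)$ and $G$ encoding $x\in X$) and observes that $C(B)$ is given by $\{F(w,x)=0,\ (D_0G)x=0\}$, i.e.\ the \emph{same} $F$ with $G$ replaced by its linearization at $p$. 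Since tangent spaces only see linearizations, $T_{(A,p)}C(X)=T_{(A,p)}C(B)$, and projecting via $\pi_1$ gives $T_AZ(X)=T_A\Omega(B)$ immediately. This trick sidesteps any explicit analysis of how the motion of $p$ compensates for $\alpha$; the compensation is absorbed into the equality of linearized incidence conditions. Your path-differentiation argument is correct in principle but would require exactly the careful bookkeeping you anticipate; the paper's route is shorter because it never unpacks $\alpha$ at all.
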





The following result makes sure that we can apply the methods from 
integral geometry to Chow hypersurfaces.  

\begin{prop}\label{pro:Chow-TA}
Chow hypersurfaces are coisotropic and hence 
have transitive action on their tangent spaces. 
In particular, this applies to special Schubert varieties of codimension one.
\end{prop}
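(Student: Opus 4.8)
The goal is to show that the Chow hypersurface $Z(X)$ is coisotropic in the sense of Definition~\ref{re:coisotropic}, i.e.\ that at generic regular points $A\in Z(X)$ the normal space $N_A Z(X)$ is spanned by a rank one vector in $A\ot A^\perp$; the second assertion (transitive action on tangent spaces) then follows immediately from Lemma~\ref{le:factor-trans}, and the specialization to special Schubert varieties is the case where $X$ is projective linear.

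\smallskip
\emph{Step 1.} Invoke Lemma~\ref{pro:TangSpaceChow}. For a generic point $A\in Z(X)$ the intersection $\PP(A)\cap X$ is a single point $p$, which is a regular point of $X$, and if $B\subseteq\R^n$ is the $(n-k)$-dimensional linear space corresponding to $T_p X$, then $T_A Z(X) = T_A\Omega(B)$. In particular $Z(X)$ has codimension one at $A$, and $N_A Z(X) = N_A\Omega(B)$.

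\smallskip
\emph{Step 2.} Identify $N_A\Omega(B)$ as rank one. The line $\R p = \PP^{-1}(p)$ lies in $A\cap B$, so $A\cap B\neq 0$ and in fact, for generic $A$, $\dim(A\cap B)=1$, i.e.\ $A\in e(B)$. Now apply Lemma~\ref{le:prop-Sigmakn}: choosing an orthonormal basis $a_1,\dots,a_k$ of $A$ with $A\cap B=\R a_1$ and writing $(A+B)^\perp=\R f$ with $\|f\|=1$, the normal space $N_A\Omega(B)$ is spanned by $f\wedge a_2\wedge\cdots\wedge a_k$. Under the identification $T_A G(k,n)=\Hom(A,A^\perp)\simeq A\ot A^\perp$ described in~\S\ref{se:GrassR}, this corresponds to the rank one tensor $a_1\ot f$ (the homomorphism sending $a_1\mapsto f$ and $a_i\mapsto 0$ for $i>1$). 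Hence $N_A Z(X)$ is spanned by a single rank one vector, so $Z(X)$ is a coisotropic hypersurface.

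\smallskip
\emph{Step 3.} Conclude. By Lemma~\ref{le:factor-trans}, a coisotropic hypersurface has transitive action on its tangent spaces, which gives the remaining claims. Since special Schubert varieties of codimension one are exactly the $Z(X)$ with $X$ a projective linear subspace (equivalently $\Omega(B)$ for $B\in G(n-k,n)$), they are covered as a special case.

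\smallskip
\textbf{Main obstacle.} The analytic content — that generically $\PP(A)\cap X$ is a single regular point of $X$ and that the tangent space of $Z(X)$ agrees with that of the osculating special Schubert variety $\Omega(B)$ — is precisely what Lemma~\ref{pro:TangSpaceChow} supplies, so once that lemma is granted the proof is essentially bookkeeping: matching the wedge-product description of $N_A\Omega(B)$ from Lemma~\ref{le:prop-Sigmakn} with the $A\ot A^\perp$ description of the tangent space and checking the resulting tensor has rank one. The only subtlety worth stating carefully is the genericity reduction: one must pass to the regular locus of $Z(X)$, discard the lower-dimensional semialgebraic locus where $\PP(A)\cap X$ is not a single regular point (which is legitimate by Proposition~\ref{prop:singular} and the genericity conventions of \S\ref{sec:semialg}), and observe that the coisotropy condition is required only at generic points.
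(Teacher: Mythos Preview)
Your proposal is correct and follows essentially the same approach as the paper: the paper's proof is a terse two-sentence argument that invokes Lemma~\ref{le:prop-Sigmakn} for special Schubert varieties and Lemma~\ref{pro:TangSpaceChow} for general Chow hypersurfaces, and you have simply unpacked these references in detail, including the verification that $A\in e(B)$ and the explicit identification of the normal vector as the rank one tensor $a_1\ot f$.
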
 

\begin{proof}
For codimension one special Schubert varieties, the assertion follows from 
Lemma~\ref{le:prop-Sigmakn}. The assertion on Chow hypersurfaces 
is a consequence of Lemma~\ref{pro:TangSpaceChow}.
\end{proof}


\begin{remark}\label{re:Schubert-hc}\label{pro:HAV-factor}
Special Schubert varieties of higher codimension are defined by 
$\Omega(B,m) := \big\{ A \in G(k,n) \mid \dim(A \cap B)\ge m\big\}$,
where $B$ is an $\ell$-dimensional subspace such that 
$k+\ell -n \le m \le \ell$. The codimension of $\Omega(B,m)$ 
equals $m(n+m-k-\ell)$. 
For incorporating touching conditions, one can 
extend the definition of the Chow hypersurface
and study {\em higher associated semialgebraic sets},  
\be
Z_m(X) :=\{A\in G(k, n)\,|\, \exists p\in\Reg(X)\ 
  \dim T_pX \cap T_p (\PP(A)) \ge m\} .
\ee 
This is done in \cite[\S3.2, \S4.3]{GKZ}, 
\cite{sturmfels:14}, and \cite{kohn:16} 
for {\em complex} projective varieties $X$ 
in the case $\dim X= n-k-1+m$, 
where $Z_m(X)$ typically is a hypersurface.
Note that $Z_0(X) = Z(X)$.  
The proof of Proposition~\ref{pro:Chow-TA}
generalizes in a straightforward way 
to show that $\Omega(B,m)$ and $Z_m(X)$  
have transitive action on their tangent spaces.


\end{remark} 

As an application, we derive the following result that 
relates the volume of a semialgebraic subset $X\subseteq \RP^{n-1}$ 
of dimension $n-k-1$ with the volume of the associated Chow hypersurface $Z(X)\subseteq G(k,n)$.
(This is analogous to equation~\eqref{eq:chow-volratioC} holding over $\C$.) 

\begin{prop}\label{prop:chow}
Let $X$ be a semialgebraic subset of  $ \RP^{n-1}$ of dimension $n-k-1$.
Then we have: 
\be 
  \frac{|Z(X)|}{|\Sigma(k,n)|} =\frac{|X|}{\textrm{\emph{$|\RP^{\dim X}|$}}} .
\ee
\end{prop}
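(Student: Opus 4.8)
The plan is to reduce this to a classical integral-geometry statement in projective space, via the coisotropy of Chow hypersurfaces and a fibration argument. The starting point is Theorem~\ref{th:is-2-howard}, which applies because $Z(X)$ is coisotropic (Proposition~\ref{pro:Chow-TA}) and hence has transitive action on its tangent spaces, just like the special Schubert variety $\Sigma(k,n)=\Omega(B)$. First I would recall the key fact from Lemma~\ref{pro:TangSpaceChow}: at a generic point $A\in Z(X)$, the tangent space $T_A Z(X)$ coincides with $T_A\Omega(B)$ for the appropriate $(n-k)$-dimensional space $B$ associated with the tangent space $T_pX$ at the unique intersection point $p=\PP(A)\cap X$. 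Consequently the \emph{average scaling factor} $\overline\s(Z(X),\cN)$ that enters Theorem~\ref{th:is-2-howard} equals $\overline\s(\Sigma(k,n),\cN)$ for any fixed comparison set $\cN$, since this quantity depends only on the $K$-orbit of the normal spaces, which are rank-one tensors of the same type in both cases (Remark~\ref{re:on-as}).

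Next I would fix a convenient auxiliary set $\cN$ to probe both $Z(X)$ and $\Sigma(k,n)$. The natural choice is the sub-Grassmannian $\Omega:=\{A\in G(k,n)\mid \R^{k-1}\subseteq A\subseteq\R^{k+1}\}\cong\RP^1$ from Example~\ref{ex:Glines}, which has transitive action on its tangent spaces and complementary dimension one. Applying Theorem~\ref{th:is-2-howard} twice gives
\be
  \E_{g\in O(n)}\#\big(Z(X)\cap g\,\Omega\big)=\overline\s\cdot\frac{|Z(X)|\cdot|\Omega|}{|G(k,n)|},\qquad
  \E_{g\in O(n)}\#\big(\Sigma(k,n)\cap g\,\Omega\big)=\overline\s\cdot\frac{|\Sigma(k,n)|\cdot|\Omega|}{|G(k,n)|},
\ee
with the \emph{same} $\overline\s$. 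Dividing, the ratio $|Z(X)|/|\Sigma(k,n)|$ equals the ratio of the two expected intersection numbers. Now $\#(Z(X)\cap g\,\Omega)$ counts, for a random one-parameter family of $k$-planes sweeping between two random hyperplanes, how many of them meet $X$; translating through the Plücker/projectivization dictionary, a generic $g\Omega$ is (projectively) the pencil of $(k-1)$-planes through a random $(k-2)$-plane contained in a random $k$-plane, and $\#(Z(X)\cap g\,\Omega)$ becomes the number of members of a random projective pencil of $(k-1)$-planes in $\RP^{n-1}$ meeting $X$. For $X=\RP^{n-k-1}$ linear this number is deterministically one (a pencil of $(k-1)$-planes in $\RP^{n-1}$ meets a fixed $(n-k-1)$-plane in exactly one member, by dimension count), which pins down $\overline\s$.

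Alternatively — and this is probably cleaner to write — I would bypass $\Omega$ and directly invoke the projective integral-geometry formula \eqref{eq:IG-proj} from Example~\ref{ex:IGproj-space}. The incidence variety $\{(p,A)\mid p\in\PP(A),\ p\in X\}$ fibers over $X$ with fiber the Schubert-type set $\{A\in G(k,n)\mid p\in\PP(A)\}\cong G(k-1,n-1)$, and over $Z(X)$ generically one-to-one by Lemma~\ref{pro:TangSpaceChow}. A coarea/Fubini computation then expresses $|Z(X)|$ as an integral over $X$ of a local density that, by $O(n)$-invariance, is constant and equals exactly the density making $|\Sigma(k,n)|$ come out when $X$ is a linear $\RP^{n-k-1}$. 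Concretely: write $|Z(X)|=\int_X J(p)\,dp$ where $J(p)$ is the Jacobian factor of the projection from the incidence variety; $J$ is $O(n)$-equivariant hence constant, say $J\equiv c(k,n)$; applying the same to $X=\RP^{n-k-1}$ gives $|\Sigma(k,n)|=c(k,n)\cdot|\RP^{n-k-1}|$; dividing yields the claim. The main obstacle is the honest verification that the projection from the incidence correspondence onto $Z(X)$ is generically a diffeomorphism with the \emph{correct} Jacobian — i.e., that the ``local volume contribution'' of a patch of $X$ to $Z(X)$ depends only on the dimension and not on the local geometry of $X$. This is exactly the content one extracts from the equality of tangent spaces $T_AZ(X)=T_A\Omega(B)$ in Lemma~\ref{pro:TangSpaceChow} combined with homogeneity, so the real work is organizing the coarea bookkeeping so that this tangent-space identity translates into the Jacobian identity; once that is in place, the normalization constant is forced by the linear case and no further computation is needed.
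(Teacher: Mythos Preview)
Your first approach is exactly the paper's, but you stopped one step short. After dividing the two applications of Theorem~\ref{th:is-2-howard} you correctly obtain $|Z(X)|/|\Sigma(k,n)|=\E_g\#(Z(X)\cap g\Omega)$, and you correctly identify this as the expected number of members of the pencil $g\Omega$ meeting $X$. The missing observation is that the projective $(k-1)$-planes of the pencil $g\Omega$ sweep out exactly $g\RP^k=\PP(g\R^{k+1})$, and for generic $g$ the map sending $p\in X\cap g\RP^k$ to the unique member $g\R^{k-1}+\R\tilde p\in g\Omega$ through it (where $\tilde p$ is any lift of $p$) is a bijection onto $Z(X)\cap g\Omega$. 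Hence $\#(Z(X)\cap g\Omega)=\#(X\cap g\RP^k)$ almost surely, and now the projective formula \eqref{eq:IG-proj} that you already cite gives $\E_g\#(X\cap g\RP^k)=|X|/|\RP^{n-k-1}|$. That finishes the proof with no Jacobian bookkeeping at all; you had all the ingredients and simply did not splice them together.

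Your second approach via the incidence correspondence is a genuinely different route. It can be made to work, but the obstacle you name is real: the coarea Jacobian at $(p,A)$ depends on the pair $(T_pX,A)$, and one must argue that its integral over the fiber $\pi_1^{-1}(p)\cong G(k-1,n-1)$ is independent of $T_pX$. This is essentially a repackaging of the transitivity/coisotropy facts you already have, but it is more work than the paper's trick of probing with the one-dimensional $\Omega$, which reduces everything to a zero-dimensional count in $\RP^{n-1}$ where the integral geometry is already known.
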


\begin{proof}
By~\eqref{eq:IG-proj} we can express the volume ratio 
$|X|/|\RP^{\dim X}|$ as the expectation 
$\E_{g\in O(n)} \#(X\cap g\RP^{k})$. 
Consider now the subvariety
$\Omega := \{ A\in G(k,n) \mid \R^{k-1}\subseteq A \subseteq \R^{k+1} \}$  
from Example~\ref{ex:Glines}, which has transitive action on 
its tangent spaces. 
For almost all $g$, we have a finite intersection
$X\cap g\RP^{k}=\{p_1,\ldots,p_d\}$. 
On the other hand,
$A\in g\Omega$ implies $A\subseteq g\R^{k+1}$, hence 
$\PP(A)\cap X = \{p_1,\ldots,p_d\}$. 
Therefore, we have for almost all $g$,
\be 
  \#(X\cap g\RP^{k} ) = \#(Z(X) \cap g\Omega) ,
\ee
and it remains to determine the expectation of the right-hand side. 

The Chow hypersurface $Z(X)$ has transitive action on 
its tangent spaces by Proposition~\ref{pro:Chow-TA}. 
Applying Theorem~\ref{th:is-2-howard} to $Z(X)$ and 
$\Omega$, we obtain: 
$$
 \E_{g\in O(n)} \#(Z(X)\cap g\Omega )  
  = \overline\s(Z(X), \Omega) \cdot \frac{1}{|G(k,n)|}\cdot |Z(X)|\cdot |\Omega| .
$$
Applying this formula to a linear space $X$ yields:
\be\label{eq:siint}
 \E_{g\in O(n)} \#( \Sigma(k,n)\cap g\Omega ) 
  = \overline\s(\Sigma(k,n), \Omega)\cdot \frac{1}{|G(k,n)|}\cdot |\Sigma(k,n)|\cdot |\Omega| .
\ee
However, it is clear that the left-hand side equals one. 
According to Definition~\ref{def:av-SF}, the average scaling factor $\overline\s(Z(X), \Omega)$
is defined in terms of tangent (or normal) spaces of $\s(Z(X))$ and $\Omega$. 
Taking into account Lemma \ref{pro:TangSpaceChow}, we see that 
\be  
 \overline\s(Z(X), \Omega)= \overline\s(\Sigma(k,n), \Omega),
\ee
hence we can solve for this quantity and plugging this into \eqref{eq:siint} 
yields the assertion.
\end{proof}

\subsection{Random incidence geometry}\label{se:rand_inc-geo}

%

As already indicated in the introduction, we study the following problem. 
We fix semialgebraic subsets $X_1,\ldots,X_{N} \subseteq \RP^{n-1}$
of the same dimension $\dim X_i = n-k-1$, where $0 < k < n$ and $N:=k(n-k)$ 
and we ask how many $(k-1)$-dimensional projective linear 
subspaces intersect all random translations of the $X_1,\ldots,X_{k(n-k)}$.
More specifically, the task is to determine the expectation
$\E\#\left(g_1Z(X_1)\cap\cdots\cap g_N Z(X_N)\right)$ 
with respect to independent uniformly distributed $g_i\in O(n)$. 
In the case where all $X_i$ are linear subspaces, we call the answer the 
expected degree of $G(k,n)$. 

\begin{defi}\label{def:edeg} 
The {\em expected degree} of $G(k,n)$ is defined as the average number of 
intersection points of $N=k(n-k)$ many random copies of $\Sigma(k,n)$, i.e., 
$$
 \edeg G(k,n) := \E \#(g_1\Sigma(k,n)\cap\ldots \cap g_N \Sigma(k,n))
$$
with respect to independent uniformly distributed $g_i\in O(n)$. 
\end{defi}

We note that $\edeg G(1,n) = 1$. 
In Theorem~\ref{thm:ased} we will provide an asymptotically sharp upper bound 
on $\edeg G(k,n)$. 

Theorem~\ref{thm:IGF} combined with the fact that 
$\Sigma(k,n)$ is coisotropic implies (recall that $\alpha(k,n-k)$ was defined in Definition \ref{def:alpha})
\begin{align} \label{eq:ided}
\edeg(G(k,n))&=\alpha(k,n-k)\cdot |G(k,n)|\cdot\left(\frac{|\Sigma(k,n)|}{|G(k,n)|}\right)^{k(n-k)}\\
&=\alpha(k,n-k)\cdot |G(k,n)|\cdot\left(\frac{\Gamma\left(\frac{k+1}{2}\right)}{\Gamma\left(\frac{k}{2}\right)} 
        \frac{\Gamma\left(\frac{n-k+1}{2}\right)}{\Gamma\left(\frac{n-k}{2}\right)}\right)^{k(n-k)},
\end{align}
where the last line follows from Theorem \ref{prop:vspe}.
Combining Theorem~\ref{thm:IGF} with Proposition~\ref{pro:Chow-TA}
and Proposition~\ref{prop:chow},
we obtain a real version of Theorem~\ref{th:intersect_C}.

\begin{thm}\label{thm:RIG}
Let $X_1, \ldots, X_{N}$ be semialgebraic subsets of $\RP^{n-1}$ of dimension $n-k-1$, 
where $N:=k(n-k)$. 
Then we have
\be 
\E\#\left(g_1Z(X_1)\cap\cdots\cap g_N Z(X_N)\right)
 = \edeg G(k,n)  \cdot \prod_{i=1}^{N} \frac{|X_i|}{|\RP^{n-k-1}|} ,
\ee
with respect to independent uniformly distributed $g_1,\ldots,g_N\in O(n)$. 
\end{thm}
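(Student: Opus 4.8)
The plan is to derive Theorem~\ref{thm:RIG} by combining the integral geometry formula of Theorem~\ref{thm:IGF} with the volume ratio for Chow hypersurfaces from Proposition~\ref{prop:chow}, using Proposition~\ref{pro:Chow-TA} to guarantee that the hypotheses of Theorem~\ref{thm:IGF} are met. First I would note that by Proposition~\ref{pro:Chow-TA} each Chow hypersurface $Z(X_i)\subseteq G(k,n)$ is coisotropic; hence Theorem~\ref{thm:IGF} applies verbatim with $\Hy_i := Z(X_i)$, giving
\be
 \E\#\left(g_1Z(X_1)\cap\cdots\cap g_N Z(X_N)\right)
  = \alpha(k,n-k)\cdot |G(k,n)| \cdot \prod_{i=1}^{N} \frac{|Z(X_i)|}{|G(k,n)|}.
\ee
The key point here is that the scaling factor $\alpha(k,n-k)$ appearing in Theorem~\ref{thm:IGF} is \emph{intrinsic}: in the coisotropic case it depends only on $k,n$ and not on the particular hypersurfaces, precisely because the normal spaces of coisotropic hypersurfaces are all rank-one tensors and thus lie in a single $K$-orbit (Lemma~\ref{le:factor-trans} together with Remark~\ref{re:on-as}). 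So the same constant $\alpha(k,n-k)$ governs the intersection of the $Z(X_i)$ and of $N$ generic copies of $\Sigma(k,n)$.

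Next I would substitute the volume ratio from Proposition~\ref{prop:chow}, namely $|Z(X_i)| = |\Sigma(k,n)| \cdot \frac{|X_i|}{|\RP^{n-k-1}|}$ (using $\dim X_i = n-k-1$), into the product, obtaining
\be
 \E\#\left(g_1Z(X_1)\cap\cdots\cap g_N Z(X_N)\right)
  = \alpha(k,n-k)\cdot |G(k,n)| \cdot \left(\frac{|\Sigma(k,n)|}{|G(k,n)|}\right)^{N} \cdot \prod_{i=1}^{N} \frac{|X_i|}{|\RP^{n-k-1}|}.
\ee
Finally I would recognize the prefactor $\alpha(k,n-k)\cdot |G(k,n)| \cdot \left(|\Sigma(k,n)|/|G(k,n)|\right)^{N}$ as exactly $\edeg G(k,n)$: this is the content of~\eqref{eq:ided}, which is itself the special case of Theorem~\ref{thm:IGF} obtained by taking all $\Hy_i = \Sigma(k,n)$ (legitimate since $\Sigma(k,n)$ is coisotropic by Proposition~\ref{pro:Chow-TA}, specialized to a linear space $X$). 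Substituting yields the claimed identity, and the proof is complete.

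I do not expect a genuine obstacle, as all the substantive work has been front-loaded into the cited results. The one place requiring a little care is the appeal to Theorem~\ref{thm:IGF} for possibly non-compact, non-smooth semialgebraic $Z(X_i)$: one should invoke the remark following Theorem~\ref{th:is-2-howard} (the compactness hypothesis in Howard's formula is inessential and one may pass to the regular loci, whose singular parts have strictly smaller dimension and hence do not affect the generic intersection count). The other subtlety is that the scaling factors attached to $Z(X_i)$ and to $\Sigma(k,n)$ must be \emph{literally the same} constant $\alpha(k,n-k)$; this is where one uses that $Z(X_i)$ is coisotropic rather than merely having transitive action on its tangent spaces — coisotropy pins the normal direction to a rank-one tensor, and Remark~\ref{re:on-as} then forces the average scaling factor to depend on nothing but $k$ and $n$. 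Once that is observed, the identity drops out by the chain of substitutions above.
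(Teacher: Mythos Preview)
Your proposal is correct and follows essentially the same approach as the paper, which simply states that the theorem is obtained by combining Theorem~\ref{thm:IGF} with Proposition~\ref{pro:Chow-TA} and Proposition~\ref{prop:chow}. You have fleshed out precisely those steps, including the identification of the prefactor with $\edeg G(k,n)$ via~\eqref{eq:ided}, and your added remarks about why $\alpha(k,n-k)$ is independent of the particular coisotropic hypersurfaces and about passing to regular loci are apt and match the paper's reasoning.
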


This result allows to decouple a random incidence geometry problem into 
a volume computation in $\RP^{n-1}$ 
and the determination of the expected degree of the Grassmann manifold (the ``linearized'' problem).


\subsection{Classical Schubert calculus revisited}

In Definition~\ref{def:alpha} we defined the {\em real average scaling factor} $\alpha(k,m)$. 
Following the reasonings in \S\ref{se:CP-2}, it is natural to define
its complex variant as follows.

\begin{defi}\label{def:alphaC}
For $k,m\ge 1$ we define the {\em complex average scaling factor}
$\alpha_\C(k,m)$ as 
\be
 \alpha_\C(k,m) := \E \| (u_1\ot v_1) \wedge \ldots \wedge (u_{km} \ot v_{km}) \|^2
\ee
where $u_i \in S(\C^k)$ and $v_i \in S(\C^{m})$ 
are independently and uniformly chosen at random, 
for $i=1,\ldots,km$. 
\end{defi}

Unlike for $\alpha(k,m)$, we can give a closed formula for its 
complex variant $\alpha_\C(k,m)$. 

\begin{prop}\label{pro:alphaC}
We have $\alpha_\C(k,m) = N!/N^N$, where $N=km$.  
\end{prop}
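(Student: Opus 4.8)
The plan is to compute $\alpha_\C(k,m)=\E\|(u_1\ot v_1)\wedge\cdots\wedge(u_N\ot v_N)\|^2$, where $N=km$, by expanding the squared norm of a simple $N$-vector via the Gram determinant formula \eqref{eq:scalar}. First I would write $\|(u_1\ot v_1)\wedge\cdots\wedge(u_N\ot v_N)\|^2=\det\big(\langle u_i\ot v_i,\,u_j\ot v_j\rangle\big)_{1\le i,j\le N}=\det\big(\langle u_i,u_j\rangle\langle v_i,v_j\rangle\big)_{ij}$, using that the inner product on $\C^k\ot\C^m$ is the tensor product of the inner products. Thus the $(i,j)$ entry of the Gram matrix is the Hadamard product $G^{u}_{ij}G^{v}_{ij}$ of the two Gram matrices $G^u=(\langle u_i,u_j\rangle)$ and $G^v=(\langle v_i,v_j\rangle)$. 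The quantity to evaluate is therefore $\E\det(G^u\circ G^v)$, where $\circ$ denotes the entrywise (Schur) product and the expectation is over independent uniform $u_i\in S(\C^k)$, $v_i\in S(\C^m)$.

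Next I would expand this determinant using the Leibniz formula, $\det(G^u\circ G^v)=\sum_{\pi\in S_N}\operatorname{sgn}(\pi)\prod_{i=1}^N\langle u_i,u_{\pi(i)}\rangle\langle v_i,v_{\pi(i)}\rangle$, and take the expectation term by term, using independence of the $u$'s and $v$'s to factor $\E[\cdots]=\operatorname{sgn}(\pi)\cdot\E\big[\prod_i\langle u_i,u_{\pi(i)}\rangle\big]\cdot\E\big[\prod_i\langle v_i,v_{\pi(i)}\rangle\big]$. The key computational input is the evaluation, for a permutation $\pi$ with cycle type $\lambda=(\lambda_1,\lambda_2,\dots)$, of $\E_{u}\prod_i\langle u_i,u_{\pi(i)}\rangle$ where the $u_i$ are i.i.d.\ uniform on the unit sphere of $\C^k$. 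Decomposing along the cycles of $\pi$, this factors as $\prod_c \E\big[\langle w_1,w_2\rangle\langle w_2,w_3\rangle\cdots\langle w_{\ell},w_1\rangle\big]$ over the cycles $c$ of length $\ell=\ell(c)$, with fresh i.i.d.\ $w_1,\dots,w_\ell\in S(\C^k)$ for each cycle. I expect the standard fact that this cyclic expectation equals $1/\big(k(k+1)\cdots(k+\ell-2)\big)$ for $\ell\ge1$; in particular it is $1$ for $\ell=1$ and $1/k$ for $\ell=2$. (This follows, e.g., from integrating moments of uniform spherical vectors, or by conditioning one vector at a time and using that $\E[w w^*]=\tfrac1k\mathbf 1$.) Hence $\E_u\prod_i\langle u_i,u_{\pi(i)}\rangle=\prod_{c}\frac{1}{k^{(\ell(c)-1)}}$ in the appropriate rising-factorial sense, and likewise with $k$ replaced by $m$ for the $v$-part.

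Assembling, the contribution of $\pi$ is $\operatorname{sgn}(\pi)$ times a product over cycles $c$ of $\frac{1}{k(k+1)\cdots(k+\ell(c)-2)}\cdot\frac{1}{m(m+1)\cdots(m+\ell(c)-2)}$. At this point the main obstacle is purely combinatorial: showing that the resulting signed sum over $S_N$ collapses to $N!/N^N$. I would handle this by recognizing the per-cycle weight through the identity $\frac{1}{k(k+1)\cdots(k+\ell-2)}=\frac{(k-1)!}{(k+\ell-2)!}$ and noting that the generating-function bookkeeping of "sum over permutations weighted multiplicatively by a function of cycle lengths, times the sign" is governed by the exponential formula; concretely, $\sum_{\pi\in S_N}\operatorname{sgn}(\pi)\prod_c g(\ell(c))=N!\,[z^N]\exp\!\Big(\sum_{\ell\ge1}(-1)^{\ell-1}g(\ell)\,z^\ell/\ell\Big)$. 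One then checks that with $g(\ell)=\frac{(k-1)!(m-1)!}{(k+\ell-2)!(m+\ell-2)!}$ the exponential generating function simplifies so that $N!\,[z^N](\cdots)=N!/N^N$. A cleaner route, which I would try first to avoid the messy EGF manipulation, is to identify the sum directly with a known integral: namely $\E\det(G^u\circ G^v)$ is, after unfolding the definitions, exactly the expected squared volume computed over the Stiefel manifolds, and one can alternatively write $\alpha_\C(k,m)$ as a Selberg-type integral whose value is classical. Either way, the endgame is to verify the clean closed form $N!/N^N$; this normalization-constant identity is the one step I expect to require genuine care, whereas everything upstream is routine linear algebra and independence.
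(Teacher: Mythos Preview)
Your overall strategy---write the squared wedge norm as the Gram determinant $\det(G^u\circ G^v)$, expand by Leibniz, factor over cycles---is sound, but there is a concrete error in the one computation you label ``standard fact.'' For independent uniform $w_1,\ldots,w_\ell\in S(\C^k)$ and a cycle of length $\ell$, one has
\[
\E\big[\langle w_1,w_2\rangle\langle w_2,w_3\rangle\cdots\langle w_\ell,w_1\rangle\big]
\;=\;\frac{1}{k^{\ell-1}},
\]
not $1/\big(k(k+1)\cdots(k+\ell-2)\big)$. This follows immediately by integrating out one vertex at a time using $\E[ww^*]=\tfrac1k I$; for instance for $\ell=3$ one gets $1/k^2$, not $1/(k(k+1))$. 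With the wrong per-cycle weight your EGF manipulation cannot close up, which is why you found that step ``messy.'' With the correct weight the contribution of $\pi$ is $\sgn(\pi)\,(km)^{c(\pi)-N}=\sgn(\pi)\,N^{c(\pi)-N}$, and the required identity
\[
\sum_{\pi\in S_N}\sgn(\pi)\,N^{c(\pi)}\;=\;N!
\]
follows at once from $\sum_{\pi\in S_N}x^{c(\pi)}=x(x+1)\cdots(x+N-1)$ together with $\sgn(\pi)=(-1)^{N-c(\pi)}$, by setting $x=-N$.

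By contrast, the paper avoids all of this. It views the squared wedge norm as $|\det Z|^2$, where the \emph{rows} of $Z\in\C^{N\times N}$ are the independent random vectors $u_j\otimes v_j$, expands $|\det Z|^2=\sum_{\pi,\sigma}\sgn(\pi)\sgn(\sigma)\prod_i z_{i\pi(i)}\bar z_{i\sigma(i)}$, and uses only the two elementary facts $\E|z_{ij}|^2=1/N$ and $\E z_{ij}\bar z_{ik}=0$ for $j\ne k$ (together with independence across $i$) to kill all terms with $\pi\ne\sigma$. This collapses the sum to $N!/N^N$ in two lines (Lemma~\ref{le:Edet2}), with no cycle analysis, Stirling identity, or Selberg integral needed. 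Your route, once corrected, is a valid alternative, but it is strictly harder than the paper's argument.
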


The proof follows immediately from the following lemma.

\begin{lemma}\label{le:Edet2}
Let $Z=[z_{ij}]\in \C^{N\ti N}$ be a random matrix such that 
for all $i,j,k$
$$
 \E \big |z_{ij}|^2 = \frac{1}{N} \quad\mbox{and}\quad
 \E z_{ij} \bar{z}_{ik} = 0 \quad\mbox{if $j\ne k$} . 
$$
Then $\E |\det Z |^2 = N!/N^N$.
\end{lemma}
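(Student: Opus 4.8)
The plan is to prove Lemma~\ref{le:Edet2} by a direct expansion of $|\det Z|^2$ using the Leibniz formula and then exploiting the hypotheses on the second moments of the entries. Writing $\det Z = \sum_{\pi\in S_N} \operatorname{sgn}(\pi) \prod_{i=1}^N z_{i\pi(i)}$, we have
\[
 |\det Z|^2 = \det Z \cdot \overline{\det Z} = \sum_{\pi,\tau\in S_N} \operatorname{sgn}(\pi)\operatorname{sgn}(\tau) \prod_{i=1}^N z_{i\pi(i)} \bar{z}_{i\tau(i)} .
\]
Taking expectations and using independence of the rows (which is implicit in the setup, since the columns of the underlying matrix are i.i.d.; more precisely we only need that the entries in distinct rows are independent), the expectation factors as $\prod_{i=1}^N \E\big[z_{i\pi(i)}\bar{z}_{i\tau(i)}\big]$. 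By the second hypothesis, each factor vanishes unless $\pi(i)=\tau(i)$; hence the only surviving terms are those with $\pi=\tau$, and for those $\operatorname{sgn}(\pi)\operatorname{sgn}(\tau)=1$ and each factor equals $\E|z_{i\pi(i)}|^2 = 1/N$. Therefore $\E|\det Z|^2 = \sum_{\pi\in S_N} \prod_{i=1}^N \frac{1}{N} = N!\cdot N^{-N}$, as claimed.

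The one point that deserves care is the independence of distinct rows: the lemma as stated only lists conditions on second moments of entries within a single row (indices $i,j,k$ with $\E z_{ij}\bar z_{ik}$), so to make the factorization of $\E\prod_{i} z_{i\pi(i)}\bar z_{i\tau(i)}$ legitimate one uses that the rows are independent random vectors. In the application (Proposition~\ref{pro:alphaC}) the matrix $Z$ is built so that its rows correspond to the independent random rank-one generators $u_j\ot v_j$, so this independence holds; I would simply state it as a standing hypothesis, or equivalently phrase the lemma for a matrix whose rows are independent. Granting that, no term with $\pi\neq\tau$ can contribute, because such a pair must differ in at least one row $i$, producing a factor $\E z_{i\pi(i)}\bar z_{i\tau(i)}=0$.

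To deduce Proposition~\ref{pro:alphaC} from the lemma: one interprets $\|(u_1\ot v_1)\wedge\cdots\wedge(u_N\ot v_N)\|^2$ via the Binet--Cauchy identity \eqref{eq:norm-fomula} (in its complex form, cf.\ \eqref{eq:scalar}) as $\sum_I |\det Z_I|^2$ where $Z$ is the $N\times km$ matrix whose $j$-th row is the coordinate vector of $u_j\ot v_j \in \C^k\ot\C^m \cong \C^{km}$, and $Z_I$ ranges over the maximal square submatrices. But since $N = km$ there is only one such submatrix, namely $Z$ itself, so the squared norm is exactly $|\det Z|^2$. It remains to check that $Z$ satisfies the moment hypotheses of Lemma~\ref{le:Edet2}: if $u\in S(\C^k)$ and $v\in S(\C^m)$ are independent and uniform, then the entries of $u\ot v$ are $u_a v_b$ for $1\le a\le k$, $1\le b\le m$, and one computes $\E|u_a v_b|^2 = \E|u_a|^2\,\E|v_b|^2 = \frac1k\cdot\frac1m = \frac1N$ by symmetry of the uniform distribution on the complex unit sphere, while for $(a,b)\neq(a',b')$ one gets $\E (u_a v_b)\overline{(u_{a'}v_{b'})} = \E u_a\bar u_{a'}\cdot \E v_b\bar v_{b'} = 0$, again using that off-diagonal second moments of a uniform vector on the complex sphere vanish. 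Thus the hypotheses hold and $\alpha_\C(k,m) = \E|\det Z|^2 = N!/N^N$.

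The main obstacle, such as it is, is purely bookkeeping: making sure the independence structure is correctly invoked so that $\E\prod_i(\cdots)$ factors over $i$, and correctly computing the entrywise second moments of $u\ot v$ on the product of complex spheres. There is no serious analytic difficulty — the whole argument is the orthogonality of distinct monomials under the Leibniz expansion, which collapses the double sum over $S_N\times S_N$ to the diagonal. I would present the lemma's proof first (the three-line computation above), then the short reduction of Proposition~\ref{pro:alphaC} to it.
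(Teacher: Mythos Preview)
Your proof is correct and follows essentially the same approach as the paper: Leibniz expansion of $|\det Z|^2$ as a double sum over $S_N\times S_N$, factorization of the expectation over rows, and collapse to the diagonal $\pi=\tau$ using the orthogonality hypothesis. Your caveat about the implicit row-independence assumption is well taken---the paper's proof makes exactly the same silent assumption when passing from $\E\prod_i z_{i\pi(i)}\bar z_{i\sigma(i)}$ to $\prod_i \E z_{i\pi(i)}\bar z_{i\sigma(i)}$, and as you note it is satisfied in the intended application.
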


\begin{proof}
We have 
$$
 |\det Z |^2 = \sum_{\pi,\s\in S_N} \sgn(\pi)\,\sgn(\s) \prod_{i=1}^N z_{i\pi(i)}\, \bar{z}_{i\s(i)} .
$$
Taking expectations yields
$$
 \E |\det Z |^2 
   =\sum_{\pi,\s\in S_N} \sgn(\pi)\,\sgn(\s) \prod_{i=1}^N \E z_{i\pi(i)}\, \bar{z}_{i\s(i)} \\
   = \sum_{\pi\in S_N} \E |z_{i\pi(i)}|^2 = \frac{N!}{N^N} ,
$$
which completes the proof.
\end{proof}

The following theorem is analogous to Theorem~\ref{thm:IGF} and can be 
proved similarly. 
However, we can easily derive this result 
from Theorem~\ref{th:intersect_C},
which even shows that the coisotropy assumption is not needed here.

\begin{thm}\label{thm:IGFC}
Let $\Hy_1, \ldots, \Hy_{N}$ be irreducible algebraic hypersurfaces of $G_\C(k,n)$, 
where $N :=k(n-k)$. 
Then we have:
\be 
 \frac{1}{|U(n)|^{N}} \int_{U(n)^{N}}\#\left(g_1\Hy_1\cap\cdots\cap g_N \Hy_N \right)dg_1\cdots dg_N 
 =  \alpha_\C(k,n-k) \cdot |G_\C(k,n)| \cdot\prod_{i=1}^{N} \frac{|\Hy_i|}{|G_\C(k,n)|} .
\ee
\end{thm}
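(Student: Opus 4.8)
The plan is to deduce the formula from the classical Schubert-calculus result Theorem~\ref{th:intersect_C} rather than from the kinematic formula, exactly as the text announces. The key observation is that in Theorem~\ref{th:intersect_C} the intersection number $\#(g_1\Hy_1\cap\cdots\cap g_N\Hy_N)$ is \emph{constant} (equal to $\deg G_\C(k,n)\cdot\prod_i\rdeg\Hy_i$) for almost all $(g_1,\ldots,g_N)\in(\GL_n)^N$. Since $U(n)$ is Zariski dense in $\GL_n(\C)$ and carries a probability measure absolutely continuous with respect to its Haar measure, the same intersection number holds for almost all $(g_1,\ldots,g_N)\in U(n)^N$. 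Hence the left-hand side of the asserted identity, being an average of an almost-surely constant quantity, equals $\deg G_\C(k,n)\cdot\prod_{i=1}^N\rdeg\Hy_i$.

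Next I would rewrite the right-hand side in the same terms. By Wirtinger's theorem in the form \eqref{eq:A}, for an irreducible hypersurface $\Hy_i$ we have $|\Hy_i| = \rdeg\Hy_i\cdot\deg G_\C(k,n)\cdot|\CP^{N-1}|$, while \eqref{eq:vol-deg-G} gives $|G_\C(k,n)| = \deg G_\C(k,n)\cdot|\CP^N|$. Therefore
\[
 \frac{|\Hy_i|}{|G_\C(k,n)|} = \rdeg\Hy_i\cdot\frac{|\CP^{N-1}|}{|\CP^N|} .
\]
Multiplying over $i=1,\ldots,N$ and using $|G_\C(k,n)| = \deg G_\C(k,n)\cdot|\CP^N|$ once more, the right-hand side of the claimed formula becomes
\[
 \alpha_\C(k,n-k)\cdot\deg G_\C(k,n)\cdot|\CP^N|\cdot\Big(\frac{|\CP^{N-1}|}{|\CP^N|}\Big)^N\cdot\prod_{i=1}^N\rdeg\Hy_i .
\]
Comparing with the left-hand side, the theorem is equivalent to the numerical identity
\[
 \alpha_\C(k,n-k)\cdot|\CP^N|\cdot\Big(\frac{|\CP^{N-1}|}{|\CP^N|}\Big)^N = 1 .
\]

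It remains to verify this identity, and this is the only real computation. By Proposition~\ref{pro:alphaC} we have $\alpha_\C(k,n-k) = N!/N^N$. Using $|\CP^m| = \pi^m/m!$ one computes $|\CP^{N-1}|/|\CP^N| = N/\pi$ and $|\CP^N| = \pi^N/N!$, so
\[
 \alpha_\C(k,n-k)\cdot|\CP^N|\cdot\Big(\frac{|\CP^{N-1}|}{|\CP^N|}\Big)^N
 = \frac{N!}{N^N}\cdot\frac{\pi^N}{N!}\cdot\frac{N^N}{\pi^N} = 1 ,
\]
as required. The main (and only) obstacle is bookkeeping: making sure the powers of $\pi$, the factorials, and the factors of $\deg G_\C(k,n)$ cancel correctly, and confirming the measure-theoretic passage from $\GL_n$ to $U(n)$ — but the latter is routine since the generic intersection number from Kleiman's theorem and B\'ezout is locally constant on a Zariski-open dense set, which meets $U(n)^N$ in a full-measure set. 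Alternatively, one could mimic the proof of Theorem~\ref{thm:IGF} via a complex analogue of the generalized Poincar\'e formula together with Lemma~\ref{le:Edet2}; this is mentioned as possible but is strictly more work than the B\'ezout route above.
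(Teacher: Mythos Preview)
Your proof is correct and follows essentially the same route as the paper's: both evaluate the left-hand side via Theorem~\ref{th:intersect_C} (Kleiman transversality plus B\'ezout), rewrite the right-hand side using the degree--volume relations \eqref{eq:vol-deg-G} and \eqref{eq:rdegH}/\eqref{eq:A}, and reduce everything to the numerical identity $\alpha_\C(k,n-k)\,|\CP^N|\,(|\CP^{N-1}|/|\CP^N|)^N=1$, which is then checked from Proposition~\ref{pro:alphaC}. One small remark: the formula \eqref{eq:A} you cite is stated only for Chow hypersurfaces $Z_\C(X)$, but its derivation (Wirtinger's theorem \eqref{th:degvol} combined with \eqref{eq:rdegH} and the fact that the Pl\"ucker embedding is isometric) applies verbatim to any irreducible hypersurface $\Hy_i\subseteq G_\C(k,n)$; the paper makes the same observation in its proof.
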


\begin{proof}
By Theorem~\ref{th:intersect_C},
the left-hand side in the assertion of Theorem~\ref{thm:IGFC} equals
$\deg G_\C(k,n) \cdot\prod_i\rdeg\Hy_i$.
We analyze now the right-hand side. 
We note that 
$|\Hy_i| = \rdeg\Hy_i \cdot |\Sigma_\C(k,n)|$ 
since the proof of Equation~\eqref{eq:chow-volratioC}  
works for any irreducible hypersurface in $G_\C(k,n)$. 
Using \eqref{eq:HlinC} we get 
\begin{equation*}
|\Hy_i| 
 = \rdeg\Hy_i \cdot |G_\C(k,n)| \cdot\frac{|\CP^{N-1}|}{|\CP^{N}|} .
\end{equation*}
Moreover, 
$|G_\C(k,n)| = \deg G_\C(k,n) \cdot |\CP^N|$
by~\eqref{eq:vol-deg-G}.  
Therefore, it suffices to verify that 
$$
 \alpha_\C(k,n-k) = \frac{1}{|\CP^{N}|} \cdot \Big(\frac{|\CP^{N}|}{|\CP^{N-1}|}\Big)^N .
$$
This follows from Proposition~\ref{pro:alphaC}
using $|\CP^N| = \pi^N/N!$.
\end{proof}

This argument showed that complex algebraic geometry 
implies the integral geometry formula of Theorem~\ref{thm:IGFC}.
One can also argue in the reverse direction. 
For instance, we can give a new, integral geometric derivation of 
the formula for $\deg G_\C(k,n)$. 
(This formula was first obtained by Schubert~\cite{schubert:86}.)

\begin{cor}\label{cor:degGC}
We have $\deg G_\C(k,n) = \frac{0!1!\cdots (k-1)!}{(n-k)!(n-k+1)!\cdots (n-1)!}\cdot (k(n-k))!$.
\end{cor}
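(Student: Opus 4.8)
The plan is to specialize Theorem~\ref{thm:IGFC} to the case $\Hy_1 = \cdots = \Hy_N = \Sigma_\C(k,n)$ and then evaluate every quantity appearing in the resulting identity. Since $\Sigma_\C(k,n) = Z_\C(X)$ for a linear space $X$, we have $\rdeg\Sigma_\C(k,n) = \deg X = 1$, so Theorem~\ref{th:intersect_C} shows that the left-hand side of Theorem~\ref{thm:IGFC} equals $\deg G_\C(k,n)$. On the right-hand side, the volume ratio is $|\Sigma_\C(k,n)|/|G_\C(k,n)| = |\CP^{N-1}|/|\CP^N| = N/\pi$ by~\eqref{eq:HlinC} (with $N := k(n-k)$), and the scaling factor is $\alpha_\C(k,n-k) = N!/N^N$ by Proposition~\ref{pro:alphaC}. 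Substituting these and cancelling the two factors $N^N$ yields
\[
 \deg G_\C(k,n) = \frac{N!}{\pi^N}\cdot |G_\C(k,n)| .
\]

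The second step is to compute $|G_\C(k,n)|$ \emph{intrinsically}, i.e.\ without invoking Wirtinger's identity $|G_\C(k,n)| = \deg G_\C(k,n)\cdot|\CP^N|$ from~\eqref{eq:vol-deg-G}, which would make the argument circular. I would instead use~\eqref{eq:Gr-vol}, $|G_\C(k,n)| = |U(n)|/(|U(k)|\,|U(n-k)|)$, together with the closed formula~\eqref{eq:Uvol} for the volume of the unitary groups. In the quotient the powers of $2$ cancel ($2^n = 2^k\cdot 2^{n-k}$); the powers of $\pi$ combine to $\pi^N$ since $(n^2+n) - (k^2+k) - ((n-k)^2+(n-k)) = 2k(n-k) = 2N$; and the products of factorials telescope, giving, after writing $\prod_{i=1}^{m-1} i! = \prod_{j=0}^{m-1} j!$,
\[
 \frac{|U(n)|}{|U(k)|\,|U(n-k)|} = \pi^N\cdot\frac{0!\,1!\cdots(k-1)!}{(n-k)!\,(n-k+1)!\cdots(n-1)!} .
\]

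Combining the two displayed identities, the factor $\pi^N$ cancels and one obtains $\deg G_\C(k,n) = (k(n-k))!\cdot\frac{0!\,1!\cdots(k-1)!}{(n-k)!\,(n-k+1)!\cdots(n-1)!}$, which is the assertion. The only genuinely delicate point is the bookkeeping of the exponents of $2$ and $\pi$ and the telescoping of the factorial products in the second step, together with the conceptual care needed not to reintroduce $\deg G_\C(k,n)$ through~\eqref{eq:vol-deg-G}; everything else is a direct substitution into formulas already established above. (One could even bypass Theorem~\ref{thm:IGFC} and read the result straight off $|G_\C(k,n)| = \deg G_\C(k,n)\cdot|\CP^N|$, but the point of the corollary is precisely that the integral-geometry machinery of Theorem~\ref{thm:IGFC} recovers the classical count of Schubert.)
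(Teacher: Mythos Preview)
Your proposal is correct and follows essentially the same route as the paper's proof: specialize Theorem~\ref{thm:IGFC} to $\Hy_i=\Sigma_\C(k,n)$, identify the left-hand side as $\deg G_\C(k,n)$ via B\'ezout, and evaluate the right-hand side using~\eqref{eq:HlinC}, Proposition~\ref{pro:alphaC}, and~\eqref{eq:Gr-vol} together with~\eqref{eq:Uvol}. The paper leaves the last step as ``after some calculations''; you carry it out explicitly, and your bookkeeping of the powers of $2$, $\pi$, and the telescoping factorials is correct. One minor remark: the paper is careful to note that~\eqref{eq:HlinC} admits a calculus-only derivation (without Wirtinger), which is what makes the argument genuinely independent of classical algebraic geometry; you flag the analogous concern for~\eqref{eq:vol-deg-G}, which is the same point.
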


\begin{proof}
We apply Theorem~\ref{thm:IGFC} in the case $Z_i :=\Sigma_\C(k,n)$ 
and argue similarly as for Theorem~\ref{th:intersect_C}.
Let $\iota$ denote the Pl\"ucker embedding. Then 
$\iota(\Sigma_\C(k,n))$ is obtained by intersecting 
$\iota(G_\C(k,n))$ with a hyperplane $H_i$. 
The translates $g_1\Hy_1,\ldots,g_N\Hy_N$ meet transversally
for almost all $g_1,\ldots,g_n\in U(n)$. 
(This follows from Sard's Theorem as for 
\cite[Prop.~A.18]{BuCu}.)
By B\'ezout's theorem, the left-hand side of 
Theorem~\ref{thm:IGFC} equals $\deg G_\C(k,n)$.

From~\eqref{eq:HlinC} we know that 
$\Sigma_\C(k,n)| = |G_\C(k,n)| \cdot\frac{1}{\pi}\cdot N$
(and we already mentioned that this formula was 
obtained in the first version of \cite{bu-co-is} 
without algebraic geometry).  
Plugging in the right-hand side 
of Theorem~\ref{thm:IGFC} the formula~\eqref{eq:Gr-vol} 
for the volume $G_\C(k,n)$ and the formula for 
$\alpha_\C(k,n-k)$ from Proposition~\ref{pro:alphaC}, we obtain 
after some calculations the claimed expression for $\deg G_\C(k,n)$.
\end{proof}

\section{Random convex bodies}
\label{se:RCB}

In this section we will analyze the average scaling factors $\alpha(k,m)$ 
using properties of the singular value decomposition and 
concepts from the theory of random convex bodies. 

\subsection{The Segre zonoid}\label{se:coiso-zonoid}

We consider the embedding 
$\R^k \ti\R^m \to \R^{k\ti m},\ (u,v)\mapsto uv^T$ 
and assume $k\le m$. 
Recall that in~\S\ref{se:zonoid}, we associated a zonoid~$C_Y$ with a random vector $Y$. 

\begin{defi}\label{def:coiso-zono}
The {\em Segre zonoid} $C(k,m)$ is 
the zonoid associated with the random variable 
$xy^T \in \R^{k\ti m}$,  
where $x\in \R^k$ and $y\in \R^m$ 
are independent and standard Gaussian vectors. 
\end{defi}

If we sample instead $x\in S^{k-1}$ and $y\in S^{m-1}$ in the spheres 
independently and uniformly, 
then the corresponding zonoid equals $(\rho_k \rho_m)^{-1} C(k,m)$, 
compare Example~\ref{ex:zono}.

\begin{cor}\label{cor:edeg-im}
We have 
\be 
\edeg G(k,n) = |G(k,n)| \cdot \frac{(k(n-k))!}{2^{k(n-k)}} \cdot |C(k,n-k)| .
\ee
\end{cor}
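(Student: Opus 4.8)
The plan is to start from the identity~\eqref{eq:ided}, namely
\[
 \edeg G(k,n) = \alpha(k,n-k)\cdot |G(k,n)|\cdot\left(\frac{|\Sigma(k,n)|}{|G(k,n)|}\right)^{k(n-k)},
\]
and to rewrite the two nontrivial factors $\alpha(k,n-k)$ and $|\Sigma(k,n)|/|G(k,n)|$ in terms of the Segre zonoid $C(k,n-k)$. The volume ratio is already pinned down by Theorem~\ref{prop:vspe}, so the real content is to relate $\alpha(k,m)$ to $|C(k,m)|$, after which everything is bookkeeping of Gamma factors.

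First I would recall that, by Definition~\ref{def:alpha}, $\alpha(k,m)$ is the expectation of $\|(u_1\ot v_1)\wedge\cdots\wedge(u_N\ot v_N)\|$ with $N=km$, where the $u_j\in S(\R^k)$ and $v_j\in S(\R^m)$ are independent and uniform. Using the norm formula~\eqref{eq:norm-fomula} for a simple $N$-vector in $\Lambda^N(\R^{k\ti m})$, the quantity $\|(u_1\ot v_1)\wedge\cdots\wedge(u_N\ot v_N)\|$ equals $|\det M|$, where $M\in\R^{N\ti N}$ is the matrix whose $j$th column is the coordinate vector of $u_j v_j^T\in\R^{k\ti m}\cong\R^N$. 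Thus $\alpha(k,m) = \E|\det M|$, with the columns of $M$ i.i.d.\ copies of the random vector $Y$ obtained by sampling $u\in S^{k-1}$, $v\in S^{m-1}$ uniformly and independently and forming $uv^T$. Since $Y$ and $-Y$ are equidistributed (flip the sign of $u$), Vitale's theorem (Theorem~\ref{thm:vitale}) applies and gives $\alpha(k,m) = N!\cdot |C'|$, where $C'$ is the zonoid associated with $Y$. By the remark just after Definition~\ref{def:coiso-zono} (which itself follows the computation in Example~\ref{ex:zono}), this zonoid is exactly $(\rho_k\rho_m)^{-1}C(k,m)$; hence, by the scaling of volume under dilation in $\R^N$,
\[
 \alpha(k,m) = N!\cdot (\rho_k\rho_m)^{-N}\cdot |C(k,m)|.
\]

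Now I would substitute this into~\eqref{eq:ided} with $m=n-k$ and $N=k(n-k)$, obtaining
\[
 \edeg G(k,n) = N!\,(\rho_k\rho_{n-k})^{-N}\,|C(k,n-k)|\cdot |G(k,n)|\cdot\left(\frac{|\Sigma(k,n)|}{|G(k,n)|}\right)^{N}.
\]
By Theorem~\ref{prop:vspe} the ratio $|\Sigma(k,n)|/|G(k,n)|$ equals $\frac{\Gamma((k+1)/2)}{\Gamma(k/2)}\cdot\frac{\Gamma((n-k+1)/2)}{\Gamma((n-k)/2)}$, and by~\eqref{eq:def-rho} we have $\rho_k = \sqrt{2}\,\Gamma((k+1)/2)/\Gamma(k/2)$, $\rho_{n-k} = \sqrt{2}\,\Gamma((n-k+1)/2)/\Gamma((n-k)/2)$, so that $\rho_k\rho_{n-k} = 2\cdot\frac{|\Sigma(k,n)|}{|G(k,n)|}$. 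Therefore the two $N$th powers combine into $(\rho_k\rho_{n-k})^{-N}\cdot(\rho_k\rho_{n-k}/2)^{N} = 2^{-N}$, and the formula collapses to
\[
 \edeg G(k,n) = |G(k,n)|\cdot\frac{N!}{2^{N}}\cdot |C(k,n-k)|,
\]
which is the claimed identity with $N=k(n-k)$.

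The only genuinely delicate step is the identification $\alpha(k,m) = \E|\det M|$ with $M$ having i.i.d.\ columns distributed as $uv^T$: one must check that passing from the wedge-product norm of rank-one tensors to the determinant of the associated coordinate matrix is exactly~\eqref{eq:norm-fomula}, and that the resulting random vector meets the integrability and symmetry hypotheses of Vitale's theorem (both are immediate here, since $Y$ is bounded and $\pm$-symmetric). Everything after that is the elementary cancellation of Gamma factors described above, so I expect no further obstacle.
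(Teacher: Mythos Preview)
Your proposal is correct and follows essentially the same approach as the paper: start from~\eqref{eq:ided}, use Vitale's theorem together with the scaling remark after Definition~\ref{def:coiso-zono} to obtain $\alpha(k,m)=(km)!\,(\rho_k\rho_m)^{-km}|C(k,m)|$, and then cancel the $\rho$-factors against $|\Sigma(k,n)|/|G(k,n)|=\tfrac12\rho_k\rho_{n-k}$ from Theorem~\ref{prop:vspe}. Your added justification that the wedge-norm equals $|\det M|$ via~\eqref{eq:norm-fomula} is a welcome clarification but does not change the route.
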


\begin{proof}
Recall Equation~\eqref{eq:ided}, which states:  
\be
 \edeg G(k,n) = \alpha(k,n-k)\cdot |G(k,n)| \cdot \left(\frac{|\Sigma(k,n)|}{|G(k,n)|} \right)^{k(n-k)} .
\ee
Moreover, Theorem~\ref {prop:vspe} states that 
$\frac{|\Sigma(k,n)|}{|G(k,n)|} = \frac12 \rho_k \cdot \rho_{n-k}$
(recall the definition of $\rho_k$ in~\eqref{eq:def-rho}). 
By the definition of $\alpha(k,m)$ (Definition~\ref{def:alpha}), 
Theorem~\ref{thm:vitale} (Vitale),  
the definition of $C(k,m)$ (Definition~\ref{def:coiso-zono}), 
and the comment following it 
(exchanging Gaussian by uniform distributions on spheres), we obtain 
\be\label{eq:char-alpha}
 \alpha(k,m) = \frac{(km)!}{(\rho_k\rho_{m})^{km}} \cdot |C(k,m)|.
\ee
Combining all this, the assertion follows.
\end{proof}

The group $O(k)\ti O(m)$ acts on the space $\R^{k\ti m}$ of matrices~$X$  
via $(g,h)X := gXh^T$. 
It is well known that the orbits under this action 
are determined by the singular values of~$X$.
More specifically, 
we denote by $\sv(X)$ the list of singular values of $X$ (in any order). 
Let the matrix $\diag_{k,m}(\s)$ be obtained from 
the diagonal matrix $\diag(\s)$ by 
appending a zero matrix of format $k\ti (m-k)$. 
Then $X$ is in the same orbit as $\diag_{k,m}(\s)$ 
iff $\s$ equals $\sv(X)$ up to a permutation and sign changes. 

The distribution underlying the definition of the Segre zonoid $C(k,m)$ 
is $O(k)\ti O(m)$-invariant, which implies that $C(k,m)$ is invariant under this action.
Therefore, $C(k,m)$ is determined by the following convex set, 
which we call the {\em singular value zonoid}: 
\be\label{def:sv-body}
 D(k) := \big\{ \s\in\R^k \mid \diag_{k,m}(\s) \in C(k,m) \big\} .
\ee
We will see in Proposition~\ref{pro:prop-D} below that 
$D(k)$ does not depend on $m$, which justifies the notation.
Moreover, $D(k)$ can be realized as a projection of $C(k,m)$, 
hence it is indeed a zonoid (see Remark~\ref{re:D(k)asimage}). 
The {\em hyperoctahedral group} $H_k$ is the subgroup of $O(k)$ generated 
by the permutation of the coordinates and by the sign changes 
$x_i\mapsto \e_i x_i$, where $\e_i \in \{-1,1\}$.
Clearly, the convex set $D_k$ is invariant under the action of $H_k$. 
 
We note that $C(k,m)$ is the union of the $O(k)\ti O(m)$-orbits of the diagonal 
matrices $\diag_{k,m}(\s)$, where $\s\in D(k)$. 

We next determine the support function of the convex body $C(k,m)$.
By the invariance, this is an $O(k)\ti O(m)$-invariant function and hence
it can only depend on $\sv(X)$. 
For describing this in detail, we introduce the function $g_k\colon\R^k\to\R$ by 
\be\label{eq:def-g_k}
 g_k(\s_1,\ldots,\s_k) := \E \big(\s_1^2 z_1^2 + \ldots + \s_k^2 z_k^2\big)^\frac12 ,
\ee
where $z_1,\ldots,z_k$ are i.i.d.\ standard normal. 

\begin{lemma}\label{le:on_gk}
The function $g_k$ is a norm on $\R^k$. 
It is invariant under permutation of its arguments.
If $\|\s\|=1$, we have 
$ \rho_1/\sqrt{k} \le g_k(\s) \le 1/\sqrt{k}$
and the maximum of $g_k$ on $S^{k-1}$ 
is attained exactly on the $H_k$-orbit of the point $\s_u:=\frac{1}{\sqrt{k}}(1,\ldots,1)$.
\end{lemma}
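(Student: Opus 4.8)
The plan is to establish the four claims in turn, starting from the integral representation \eqref{eq:def-g_k}. First I would verify that $g_k$ is a norm. Positive homogeneity is immediate from pulling the scalar out of the square root under the expectation. Symmetry under $\s\mapsto-\s$ is clear since only $\s_j^2$ enters. The triangle inequality is the only nontrivial point: writing $g_k(\s)=\E\,\|(\s_1 z_1,\ldots,\s_k z_k)\|$ and noting that for fixed $z=(z_1,\ldots,z_k)$ the map $\s\mapsto\|(\s_1 z_1,\ldots,\s_k z_k)\|$ is a seminorm in $\s$ (it is the Euclidean norm of the entrywise product, hence subadditive and homogeneous), subadditivity is preserved under the expectation. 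Definiteness ($g_k(\s)=0\Rightarrow\s=0$) follows because if some $\s_j\neq 0$ then $\s_j^2 z_j^2>0$ with positive probability. Invariance under permutation of the arguments is obvious from the i.i.d.\ assumption on $z_1,\ldots,z_k$.

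Next I would prove the two-sided bound on $S^{k-1}$. For the upper bound, Jensen's inequality applied to the concave square root gives
\[
 g_k(\s) = \E\big(\textstyle\sum_j \s_j^2 z_j^2\big)^{1/2}
 \le \big(\E\textstyle\sum_j \s_j^2 z_j^2\big)^{1/2}
 = \big(\textstyle\sum_j \s_j^2\big)^{1/2} = \|\s\| = 1,
\]
using $\E z_j^2 = 1$; but this only yields the bound $1$, not $1/\sqrt{k}$, so a sharper argument is needed, and this is the main obstacle (see below). For the lower bound, since $\sum_j\s_j^2 z_j^2 \ge (\min_j \s_j^2)\sum_j z_j^2$ is too lossy, I would instead use convexity: $g_k$ is a norm, hence convex and $H_k$-invariant, so its minimum over $S^{k-1}$ — equivalently, the reciprocal of the radius of the dual body — is controlled; more directly, by the symmetry one gets $g_k(\s)\ge g_k(e_1) = \E|z_1| = \rho_1/\sqrt{k}\cdot\sqrt{k}$... here one must be careful about the normalization. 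The cleanest route is: among unit vectors, $g_k$ attains its minimum at a coordinate vector $e_i$ (an extreme point consideration, since $g_k$ restricted to the simplex $\{\s\ge 0,\ \sum\s_j^2=1\}$, reparametrized by $t_j=\s_j^2$, is a concave function of $t$ by the same Jensen-type reasoning, hence minimized at a vertex), giving $g_k(\s)\ge g_k(e_1)=\rho_1=\E|z_1|=\sqrt{2/\pi}$, which is $\rho_1/\sqrt{k}$ times $\sqrt{k}$ — so I should double-check whether the intended lower bound reads $\rho_1/\sqrt{k}$ with a different normalization of the domain or simply $g_k(\s)\ge\rho_1$ when $\|\s\|=1$; I will reconcile this with the surrounding text, as the displayed inequality $\rho_1/\sqrt k \le g_k(\s)\le 1/\sqrt k$ suggests the convention $\|\s\|=1$ combined with a hidden $\sqrt k$, or a typo.

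The hard part will be pinning down the maximum. I expect $g_k$, restricted to $S^{k-1}$, to be maximized exactly on the $H_k$-orbit of $\s_u=\frac{1}{\sqrt k}(1,\ldots,1)$ with value $g_k(\s_u)=\frac{1}{\sqrt k}\E\|z\| = \frac{\rho_k}{\sqrt k}$ — wait, that gives $\rho_k/\sqrt k$, not $1/\sqrt k$; again the precise constant must be matched to the paper's conventions, but the structural statement is what matters. To prove the maximum is attained precisely there, I would set $t_j=\s_j^2\ge 0$ with $\sum_j t_j = 1$ and observe that $F(t):=\E\big(\sum_j t_j z_j^2\big)^{1/2}$ is a concave function of $t$ on the simplex (concavity of $s\mapsto s^{1/2}$, preserved under the linear substitution and the expectation), and symmetric under coordinate permutations. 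A concave symmetric function on the standard simplex attains its maximum at the barycenter $t_j\equiv 1/k$; strict concavity of the square root (together with the fact that the random vector $(z_1^2,\ldots,z_k^2)$ is genuinely $k$-dimensional, i.e.\ not supported on a hyperplane) forces the maximizer to be unique on the simplex, hence on the sphere the maximizers form exactly the $H_k$-orbit of $\s_u$ (the sign ambiguities in passing from $t$ back to $\s$). Assembling these pieces — the norm axioms, the Jensen upper bound, the vertex/barycenter optimization on the simplex, and the translation back through $t_j=\s_j^2$ — completes the proof; I would then reconcile any constant-normalization discrepancies with \eqref{eq:def-rho} and the statement as displayed.
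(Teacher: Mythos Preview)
Your proposal is essentially correct, and the constant discrepancies you flag are real, not normalization errors on your part. The stated upper bound $1/\sqrt{k}$ is a typo in the lemma: the maximum of $g_k$ on $S^{k-1}$ is $\rho_k/\sqrt{k}$, as you compute, and this is exactly the value the paper uses downstream (Lemma~\ref{le:radius-D}). Similarly, the lower bound $\rho_1/\sqrt{k}$ is simply not sharp; your argument gives the sharp value $g_k(e_1)=\rho_1$, which of course implies the weaker stated bound. So stop second-guessing your arithmetic.

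Your route to the extrema differs from the paper's. You substitute $t_j=\s_j^2$ and observe that $F(t)=\E(\sum_j t_j z_j^2)^{1/2}$ is concave and $S_k$-symmetric on the simplex, whence the maximum is at the barycenter and the minimum at a vertex; strict concavity (via strict concavity of $\sqrt{\,\cdot\,}$ and linear independence of $z_1^2,\ldots,z_k^2$) pins down uniqueness. The paper instead writes $z=\|z\|u$ with $u$ uniform on $S^{k-1}$, so that $g_k=\rho_k\,\tilde g_k$ with $\tilde g_k(\s)=\E_u(\sum_j\s_j^2u_j^2)^{1/2}$, and then applies Cauchy--Schwarz in the form $\E X\le(\E X^2)^{1/2}$ together with $\E u_j^2=1/k$; equality holds iff $\sum_j\s_j^2u_j^2$ is a.s.\ constant on the sphere, i.e.\ all $\s_j^2$ coincide. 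For the lower bound the paper just picks an index $i$ with $|\s_i|\ge k^{-1/2}$ and drops the other terms. Your approach has the virtue of being unified (one concavity observation handles both extrema and yields the sharp minimum), while the paper's argument is slightly more elementary line-by-line and makes the equality case for the maximum very transparent. Either is fine; your strict-concavity uniqueness step is the only place to be careful, and your justification for it is sound.
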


\begin{proof}
The Cauchy-Schwarz inequality gives 
$\sum_i \s_i\tau_i z_i^2 \le (\sum_i \s_i^2 z_i^2)^\frac12\cdot (\sum_i \tau_i^2 z_i^2)^\frac12$. 
This easily implies $g_k(\s+\tau) \le g_k(\s) + g_k(\tau)$. It follows that $g_k$ is a norm on $\R^k$. 
The $S_k$-invariance is clear. Suppose now $\|\s\|=1$. Then there exists $i$ with 
$|\s_i|\ge k^{-\frac12}$ and hence $g_k(\s) \ge  k^{-\frac12}\E |z_i| =  k^{-\frac12}\rho_1$. 

We consider now the auxiliary function 
$\tilde{g}_k(\s) := \E (\s_1^2 u_1^2 + \ldots + \s_k^2 u_k^2)^\frac12$, 
where $u\in S^{k-1}$ is chosen uniformly at random. Then we have 
$g_k(\s) = \rho_k \tilde{g}_k(\s)$. Thus we need to show that 
$\max_{\|\s\|=1} \tilde{g}_k(\s) = 1/\sqrt{k}$,
and that the maximum is attained exactly on the $H_k$-orbit of of~$\s_u$.
For this, note first that 
$\tilde{g}(\s_u) = \frac{1}{\sqrt{k}}\,\|\s_u\| = \frac{1}{\sqrt{k}}$. 
Moreover, by the Cauchy-Schwarz inequality, 
$$
 \tilde{g}_k(\s) \le \bigg(\E \sum_{i=1}^k \s_i^2 u_i^2 \bigg)^\frac12 
   = \bigg( \sum_{i=1}^k \s_i^2\, \E u_i^2 \bigg)^\frac12 
   = \frac{\|\s\|}{\sqrt{k}} ,
$$
since $\E u_1^2 = \ldots =\E u_k^2 =1/k$. 
Equality holds iff 
$\s_1^2 u_1^2 + \ldots + \s_k^2 u_k^2$ 
is constant for $u\in S^{k-1}$.
This is the case iff $\s$ is in the 
$H_k$-orbit of of~$\s_u$.
\end{proof}


\begin{remark}
The function $g_k(\s)$ can be given the following interpretation in terms of the ellipsoid 
$E(\sigma) :=\{x\in\R^k\mid\sigma_1^2x_1^2+\cdots +\sigma_k^2x_k^2\leq 1\}$:
\be 
 g_k(\sigma)=
\frac{\rho_k}{k}\cdot\frac{|\partial E(\sigma)|}{|E(\sigma)|}.
\ee
This follows from~\cite[eq.~(3)]{Rivin}. The {\em isoperimetric ratio} $\frac{|\partial E(\sigma)|}{|E(\sigma)|}$ 
can be expressed in terms of certain hypergeometric functions, see~\cite[eq.~(13)]{Rivin}. 
\end{remark}


\begin{lemma}\label{le:C-supp-fct}
The support function of the convex body $C(k,m)$ is given by:
$$
 h_{C(k,m)}(X) = \frac{1}{\sqrt{2\pi}} \,g_k(\sv(X))  
 \quad \mbox{ for $X\in\R^{k\ti m}$} .
$$
\end{lemma}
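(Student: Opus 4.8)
The plan is to compute the support function of $C(k,m)$ straight from its definition as the zonoid of a random matrix, then diagonalize by the singular value decomposition and reduce to an elementary Gaussian moment computation.

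By Definition~\ref{def:coiso-zono}, $C(k,m)=C_Y$ is the zonoid associated with the random matrix $Y=xy^T\in\R^{k\ti m}$, where $x\in\R^k$ and $y\in\R^m$ are independent standard Gaussian vectors. This $Y$ is symmetrically distributed (replace $x$ by $-x$), and its radius $\|Y\|=\|x\|\,\|y\|$ has finite expectation, so formula~\eqref{eq:support-h} applies, with the ambient space $\R^{km}$ identified with $\R^{k\ti m}$ equipped with the Frobenius inner product $\langle X,M\rangle=\mathrm{tr}(X^TM)$. Hence, for $X\in\R^{k\ti m}$,
\[
 h_{C(k,m)}(X)=\tfrac12\,\E\,\big|\langle X,xy^T\rangle\big|=\tfrac12\,\E\,\big|x^TXy\big| .
\]

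Next I would substitute the singular value decomposition $X=U\,\diag_{k,m}(\s)\,V^T$ with $U\in O(k)$, $V\in O(m)$ and $\s=\sv(X)\in\R^k$ (using $k\le m$). By orthogonal invariance of the Gaussian distribution, $\tilde x:=U^Tx$ and $\tilde y:=V^Ty$ are again independent standard Gaussian in $\R^k$ and $\R^m$, and $x^TXy=\tilde x^T\diag_{k,m}(\s)\,\tilde y=\sum_{i=1}^k\s_i\,\tilde x_i\,\tilde y_i$. Conditioning on $\tilde x$, the quantity $\sum_i\s_i\tilde x_i\tilde y_i$ is, as a function of $\tilde y$, a centered Gaussian with variance $\sum_i\s_i^2\tilde x_i^2$; since a centered Gaussian $Z$ with variance $v$ satisfies $\E|Z|=\rho_1\sqrt v$ with $\rho_1=\sqrt{2/\pi}$, taking the expectation first over $\tilde y$ and then over $\tilde x$ gives, by the definition~\eqref{eq:def-g_k} of $g_k$,
\[
 h_{C(k,m)}(X)=\tfrac12\,\rho_1\,\E\Big(\sum_{i=1}^k\s_i^2\,\tilde x_i^2\Big)^{1/2}=\tfrac12\,\rho_1\,g_k(\s) .
\]
Since $\tfrac12\rho_1=\tfrac12\sqrt{2/\pi}=1/\sqrt{2\pi}$, this is precisely the claimed identity.

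I do not expect a serious obstacle here. The only points needing a line of justification are the applicability of~\eqref{eq:support-h} (the symmetry and integrability of $xy^T$, already noted above) and the observation, contained in Lemma~\ref{le:on_gk}, that $g_k$ is symmetric in its arguments and depends only on $\s_1^2,\dots,\s_k^2$, so that $g_k(\sv(X))$ is well defined despite the sign and ordering ambiguities in the singular value decomposition.
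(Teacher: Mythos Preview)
Your proof is correct and essentially identical to the paper's: both compute $h_{C(k,m)}(X)=\tfrac12\,\E|x^TXy|$ via~\eqref{eq:support-h}, reduce to the diagonal case using $O(k)\times O(m)$-invariance (the paper invokes invariance of $h$, you invoke invariance of the Gaussian law under $U^T,V^T$), and then condition on one Gaussian factor to obtain $\tfrac12\rho_1\,g_k(\sv(X))$.
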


\begin{proof}
Let $h$ denote the support function of $C(k,m)$ 
and define $\eta(\s) := h(\diag_{k,m}(\s))$. 
We have 
$h(X) = \eta(\sv(X))$ since $X$ is in the same orbit as 
$\diag_{k,m}(\sv(X))$ and $h$ is $O(k)\ti O(m)$-invariant. 
From \eqref{eq:support-h} we obtain
\be 
 \eta(\s) = \frac12\,\E \big|\langle \diag_{k,m}(\s),xy^T\rangle\big|
  = \frac12\,\E \Big|\sum_{i=1}^k \s_i x_i y_i \Big| ,
\ee
where $x\in\R^k$ and $y\in\R^m$ are independent standard normal. 
For fixed~$x$, the random variable 
$\sum_{i=1}^k \s_i x_i y_i$ is normal distributed with 
mean zero and variance $\sum_{i=1}^k \s_i^2 x_i^2$. Therefore, 
the expectation of its absolute value equals 
$(\sum_{i=1}^k \s_i^2 x_i^2)^{\frac12} \rho_1$, 
where we recall that $\rho_1 = \E |z| = \sqrt{2/\pi}$, 
$z\in\R$ denoting a standard Gaussian.  
Summarizing, we obtain 
$$
 \eta(\s) = \frac12\, \rho_1\, \E \Big(\sum_{i=1}^k \s_i^2 x_i^2 \Big)^{\frac12}  
     = \frac{1}{\sqrt{2\pi}}\, g_k(\s) ,
$$
which finishes the proof.
\end{proof}

\begin{cor}\label{cor:int-point-C}
The zero matrix is an interior point of $C(k,m)$.
\end{cor}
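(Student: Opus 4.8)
The plan is to invoke the elementary criterion that, for a nonempty compact convex set $K\subseteq\R^d$, the origin lies in the interior of $K$ if and only if the support function $h_K$ is strictly positive on the unit sphere, and then to deduce this positivity directly from Lemma~\ref{le:C-supp-fct} together with the fact, established in Lemma~\ref{le:on_gk}, that $g_k$ is a norm.

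First I would record that $0\in C(k,m)$: by construction $C(k,m)=\E\,[0,xy^T]$ is an average of segments each of which contains the origin. For the interior criterion, assume $h_{C(k,m)}(X)>0$ for every $X$ in the unit sphere of $\R^{k\ti m}$; since support functions are continuous and the sphere is compact, $\epsilon:=\min_{\|X\|=1}h_{C(k,m)}(X)>0$. Then any $Y\in\R^{k\ti m}$ with $\|Y\|<\epsilon$ satisfies $\langle Y,X\rangle\le\|Y\|\,\|X\|<\epsilon\le h_{C(k,m)}(X)$ for all unit $X$, hence $\langle Y,X\rangle\le h_{C(k,m)}(X)$ for all $X$ by positive homogeneity; by the support-function characterization of convex sets recalled in \S\ref{se:CB} this gives $Y\in C(k,m)$. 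Thus the Euclidean ball of radius $\epsilon$ about the origin lies in $C(k,m)$, so the origin is an interior point (and $C(k,m)$ is indeed full-dimensional).

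It then remains to verify the hypothesis $h_{C(k,m)}>0$ on the sphere. By Lemma~\ref{le:C-supp-fct} we have $h_{C(k,m)}(X)=\frac{1}{\sqrt{2\pi}}\,g_k(\sv(X))$, and by Lemma~\ref{le:on_gk} the function $g_k$ is a norm on $\R^k$, so $g_k(\s)=0$ only for $\s=0$. Since $\sv(X)=0$ forces $X=0$, we obtain $h_{C(k,m)}(X)>0$ whenever $X\ne0$, which closes the argument. In fact the inequality $g_k(\s)\ge(\rho_1/\sqrt{k})\,\|\s\|$ contained in Lemma~\ref{le:on_gk}, combined with $\|\sv(X)\|=\|X\|$, even yields $h_{C(k,m)}(X)\ge\frac{\rho_1}{\sqrt{2\pi k}}\,\|X\|$, so one may take $\epsilon=\rho_1/\sqrt{2\pi k}$ above. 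There is no genuine obstacle in this proof; the only point deserving a word of care is the support-function criterion for the origin to be interior, while the positivity itself is immediate once $g_k$ is known to be a norm.
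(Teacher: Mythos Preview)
Your proof is correct and follows essentially the same route as the paper: both use Lemma~\ref{le:C-supp-fct} together with the lower bound $g_k(\s)\ge\rho_1/\sqrt{k}$ from Lemma~\ref{le:on_gk} to bound the support function from below on the unit sphere, and then conclude that a ball of radius $\rho_1/\sqrt{2\pi k}=1/(\pi\sqrt{k})$ about the origin lies in $C(k,m)$. The only difference is that you spell out the support-function criterion for the origin to be interior, whereas the paper takes this as understood.
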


\begin{proof}
By Lemma~\ref{le:C-supp-fct} and Lemma~\ref{le:on_gk} we have 
\be
 \min_{\|X\| = 1} h_{C(k,m)}(X) 
  = \frac{1}{\sqrt{2\pi}} \min_{\|\s\|=1} g_k(\s) 
  \ \ge\  \frac{1}{\sqrt{2\pi}}\,\frac{\rho_1}{\sqrt{k}} 
   =  \frac{1}{\pi}\,\frac{1}{\sqrt{k}} .
\ee
Hence $C(k,m)$ contains the ball of radius $\pi^{-1} k^{-\frac12}$ around the origin. 
\end{proof}

 
We need a convexity property of singular values in the spirit of the 
Schur-Horn theorem for eigenvalues of symmetric matrices. 

\begin{thm}\label{conj:SH-sv}
Let $A=(a_{ij})\in\R^{k \ti m}$, $k\le m$, with singular values 
$\s_1\ge\ldots\ge \s_k\ge 0$. Then the diagonal
$(a_{11},\ldots,a_{kk})$ lies in the convex hull of the 
$H_k$-orbit of $(\s_1,\ldots,\s_k)$ of the hyperoctahedral group~$H_k$. 
\end{thm}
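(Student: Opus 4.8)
The plan is to reduce the assertion to the Birkhoff--von Neumann theorem by means of the singular value decomposition. Write $A = U\Sigma V^T$ with $U\in O(k)$, $V\in O(m)$ and $\Sigma := \diag_{k,m}(\s)$, where $\s = (\s_1,\ldots,\s_k)$. Expanding the product, one finds $a_{ii} = \sum_{l=1}^k \s_l\, u_{il} v_{il}$, where the $u_{il}$ are the entries of $U$ and the $v_{il}$ those of the top-left $k\ti k$ block $\widehat V$ of $V$. Thus, setting $M := (u_{il}v_{il})_{1\le i,l\le k}\in\R^{k\ti k}$, we obtain $(a_{11},\ldots,a_{kk})^T = M\s$. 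Since the rows and columns of $U$ are unit vectors, while the rows and columns of $\widehat V$ have Euclidean norm at most $1$, the Cauchy--Schwarz inequality gives $\sum_{l}|M_{il}| \le 1$ for every $i$ and $\sum_{i}|M_{il}| \le 1$ for every $l$.

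The core of the argument is then the following elementary claim: \emph{any matrix $M\in\R^{k\ti k}$ with $\sum_{l}|M_{il}|\le 1$ for all $i$ and $\sum_{i}|M_{il}|\le 1$ for all $l$ is a convex combination of $k\ti k$ signed permutation matrices} (the matrices implementing the action of $H_k$ on $\R^k$). To prove this, first note that the matrix $N := (|M_{il}|)$ of absolute values is nonnegative and doubly substochastic. Padding it to the $2k\ti 2k$ matrix $\left(\begin{smallmatrix} N & I-D_r\\ I-D_c & N^T\end{smallmatrix}\right)$, where $D_r$ and $D_c$ are the diagonal matrices of row and column sums of $N$, yields a doubly stochastic matrix; by Birkhoff's theorem it equals a convex combination $\sum_t \lambda_t\,\widehat P_t$ of $2k\ti 2k$ permutation matrices, and restricting each $\widehat P_t$ to its top-left $k\ti k$ block gives $N = \sum_t \lambda_t P_t$ with the $P_t$ partial permutation matrices. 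Reinstating the entrywise signs of $M$ produces $M = \sum_t \lambda_t \widetilde P_t$ with the $\widetilde P_t$ partial signed permutation matrices, and each $\widetilde P_t$ equals the midpoint $\tfrac12(Q_t^+ + Q_t^-)$ of two genuine signed permutation matrices, obtained by completing the rows and columns unused by $\widetilde P_t$ via a common bijection bearing all entries $+1$, respectively $-1$ (these cancel in the average). This exhibits $M$ as a convex combination of signed permutation matrices.

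Granting the claim, write $M = \sum_t \mu_t P_t$ with $P_t$ signed permutation matrices, $\mu_t\ge 0$ and $\sum_t\mu_t = 1$. Then $(a_{11},\ldots,a_{kk})^T = M\s = \sum_t \mu_t\,(P_t\s)$, and each $P_t\s$ is obtained from $(\s_1,\ldots,\s_k)$ by a permutation of the coordinates together with sign changes, hence lies in the $H_k$-orbit of $(\s_1,\ldots,\s_k)$. Therefore the diagonal of $A$ lies in the convex hull of this orbit, which is the assertion.

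The only non-mechanical ingredient is the combinatorial claim on signed-substochastic matrices; within its proof, the step that deserves attention is the completion of a partial signed permutation matrix to a full one, which is possible precisely because a partial permutation leaves equally many rows and columns unused. Everything else is a routine consequence of the singular value decomposition and the Cauchy--Schwarz inequality, so I do not expect a genuine obstacle.
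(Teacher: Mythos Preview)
Your proof is correct, and it takes a genuinely different route from the paper's. The paper argues in two steps: first it truncates $A$ to its leftmost $k\times k$ block $B$ and uses the Courant--Fischer min-max principle to show that the singular values $\tau_i$ of $B$ satisfy $\tau_i\le\s_i$, which implies that the convex hull of the $H_k$-orbit of $\tau$ sits inside that of $\s$; then it invokes a theorem of Thompson~\cite{thompson:77} for the square case, which says exactly that the diagonal of a square matrix lies in the convex hull of the $H_k$-orbit of its singular values. Your approach bypasses both ingredients: you read off from the SVD that the diagonal is $M\s$ with $M$ a signed doubly substochastic matrix, and you prove from scratch (via Birkhoff--von~Neumann after padding, restriction, reinstating signs, and completing partial signed permutations by averaging $\pm1$ completions) that such an $M$ is a convex combination of signed permutation matrices. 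This is more elementary and self-contained---it effectively reproves Thompson's result rather than citing it, and handles the rectangular case directly without the Courant--Fischer truncation step. The paper's proof is shorter because it outsources the hard work; yours makes the combinatorics explicit.
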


\begin{proof}
Let $B\in\R^{k\ti k}$ be obtained from $A\in\R^{k\ti m}$ by 
selecting the first $k$ columns. 
Let $\tau_1\ge\ldots\ge\tau_k\ge 0$
denote the singular values of $B$.
The Courant-Fischer min-max characterization of singular values 
(e.g., see \cite[Exercise 1.3.21]{Tao:12}) implies that 
$\tau_i \le \s_i$ for all $i$. 
Hence the convex hull of the $H_k$-orbit of $\tau:= (\tau_1,\ldots,\tau_k)$ 
is contained in the convex hull of the $H_k$-orbit of $\s:=(\s_1,\ldots,\s_k)$.  

A result by Thompson~\cite{thompson:77} implies that 
the diagonal $(a_{11},\ldots,a_{kk})$ of $B$ lies in the
convex hull of the $H_k$-orbit of $\tau$. 
Hence it lies in the convex hull of the $H_k$-orbit of $\s$. 
\end{proof}

We can now prove that the support functions and radial functions 
of $C(k,m)$ and $D(k)$ are linked in a simple way. 

\begin{prop}\label{pro:prop-D}
The following properties hold:
\begin{enumerate}
\item The support function of $D(k)$ equals $\frac{1}{\sqrt{2\pi}}\, g_k$. 
In particular, $D(k)$ does not depend on $m$. 
\item The radial function of $C(k,m)$ is given by 
$\R^{k\ti m} \to \R,\, X\mapsto r_k(\sv(X))$, where 
$r_k$~denotes the radial function of $D(k)$.
\end{enumerate}
\end{prop}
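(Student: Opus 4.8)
The plan is to derive both assertions from one structural fact: identifying $\R^k$ with the subspace $W\subseteq\R^{k\ti m}$ of matrices $\diag_{k,m}(\sigma)$, $\sigma\in\R^k$, via the isometric map $\sigma\mapsto\diag_{k,m}(\sigma)$ (isometric because the Frobenius inner product of $\diag_{k,m}(\sigma)$ and $\diag_{k,m}(\tau)$ equals $\langle\sigma,\tau\rangle$), the slice $D(k)=C(k,m)\cap W$ coincides with the orthogonal projection $\pi(C(k,m))$ of $C(k,m)$ onto $W$.

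To prove $D(k)=\pi(C(k,m))$, the inclusion $D(k)\subseteq\pi(C(k,m))$ is immediate since $\pi$ restricts to the identity on $W$. For the reverse inclusion, fix $X\in C(k,m)$ and, for $\epsilon\in\{-1,1\}^k$, set $g_\epsilon:=\diag(\epsilon_1,\dots,\epsilon_k)\in O(k)$ and $h_\epsilon:=\diag(\epsilon_1,\dots,\epsilon_k,1,\dots,1)\in O(m)$. A short computation shows that $g_\epsilon X h_\epsilon^T$ has the same diagonal entries as $X$ while each off-diagonal entry is multiplied by some sign depending on $\epsilon$, so that the average of $g_\epsilon X h_\epsilon^T$ over the $2^k$ sign vectors equals exactly $\pi(X)$. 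Since $C(k,m)$ is convex and invariant under the action of $O(k)\ti O(m)$, this average lies in $C(k,m)$, whence $\pi(X)\in C(k,m)\cap W=D(k)$. (Alternatively one may invoke Theorem~\ref{conj:SH-sv}: as $C(k,m)$ is $O(k)\ti O(m)$-invariant and $X$ lies in the orbit of $\diag_{k,m}(\sv(X))$, we have $\sv(X)\in D(k)$; the diagonal of $X$ lies in the convex hull of the $H_k$-orbit of $\sv(X)$, which is contained in $D(k)$ because $D(k)$ is convex and $H_k$-invariant.)

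Part~(1) is then formal. For any convex body $K$ and subspace $W$ the support function of $\pi(K)$ agrees with $h_K$ on $W$, since $\langle\pi(x),u\rangle=\langle x,u\rangle$ for $u\in W$. Hence, using Lemma~\ref{le:C-supp-fct} together with the permutation- and sign-invariance of $g_k$ (Lemma~\ref{le:on_gk} and the definition~\eqref{eq:def-g_k}),
\[
 h_{D(k)}(\tau)=h_{C(k,m)}\big(\diag_{k,m}(\tau)\big)=\frac{1}{\sqrt{2\pi}}\,g_k\big(\sv(\diag_{k,m}(\tau))\big)=\frac{1}{\sqrt{2\pi}}\,g_k(\tau).
\]
This formula does not involve $m$, and a convex body is determined by its support function, so $D(k)$ is independent of $m$. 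For part~(2), note first that $0$ is an interior point of $C(k,m)$ by Corollary~\ref{cor:int-point-C}, so the relevant radial functions are defined. Since $C(k,m)$ is $O(k)\ti O(m)$-invariant and every $Y\in\R^{k\ti m}$ lies in the orbit of $\diag_{k,m}(\sv(Y))$, membership is governed by singular values: $Y\in C(k,m)\iff\sv(Y)\in D(k)$. Applying this to $Y=tX$, whose singular values are $t\,\sv(X)$ for $t\ge0$, gives for $X\neq0$
\[
 r_{C(k,m)}(X)=\max\{t\ge0\mid tX\in C(k,m)\}=\max\{t\ge0\mid t\,\sv(X)\in D(k)\}=r_{D(k)}(\sv(X)).
\]

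The only step that is not routine bookkeeping is the identification $D(k)=\pi(C(k,m))$: for a general $O(k)\ti O(m)$-invariant convex body the projection onto $W$ need not be contained in the body, and it is precisely the structure of the stabilizer of $W$ (equivalently, the Schur--Horn type phenomenon recorded in Theorem~\ref{conj:SH-sv}) that makes it work here.
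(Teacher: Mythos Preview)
Your proof is correct, and it takes a genuinely different route from the paper's. The paper argues directly at the level of support functions: it unwinds $\sigma\in D(k)$ as the condition $\langle\diag_{k,m}(\sigma),Y\rangle\le\frac{1}{\sqrt{2\pi}}g_k(\sv(Y))$ for all $Y$, and then uses Theorem~\ref{conj:SH-sv} (the Schur--Horn type result for singular values, which in turn relies on Thompson's theorem) to reduce this to the diagonal condition $\langle\sigma,\tau\rangle\le\frac{1}{\sqrt{2\pi}}g_k(\tau)$ for all $\tau$. The identity $D(k)=\pi(C(k,m))$ is then recorded afterwards as a consequence (Remark~\ref{re:D(k)asimage}).

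You invert the logic: you first establish $D(k)=\pi(C(k,m))$ by the elementary sign-averaging trick, and then read off the support function of $D(k)$ from the general fact that $h_{\pi(K)}=h_K|_W$. This is cleaner, and in particular your primary argument bypasses Theorem~\ref{conj:SH-sv} entirely. One small remark: your closing comment that ``for a general $O(k)\times O(m)$-invariant convex body the projection onto $W$ need not be contained in the body'' is not quite right --- your own sign-averaging argument shows that $\pi(C)\subseteq C\cap W$ for \emph{every} $O(k)\times O(m)$-invariant convex body $C$, not just for $C(k,m)$. This does not affect the proof, only the commentary.
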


\begin{proof}
(1) We have $\s\in D(k)$ iff $\diag_{k,m}(\s)\in C(k,m)$, which, 
by Lemma~\ref{le:C-supp-fct}, is equivalent to
\be\label{eq:Y-cond}
 \forall Y \in \R^{k\ti m}\quad \sum_{i=1}^k \s_i Y_{ii} 
   \le \frac{1}{\sqrt{2\pi}}\, g_k(\sv(Y)) .
\ee
We need to prove that this is equivalent to
\be\label{eq:tau-cond}
 \forall \tau \in \R^{k}\quad \sum_{i=1}^k \s_i \tau_i 
   \le \frac{1}{\sqrt{2\pi}} \, g_k(\tau) .
\ee 
One direction being trivial, 
assume that $\s$ satisfies \eqref{eq:tau-cond} 
and let $Y\in\R^{k\ti m}$. Then we have for all $\pi\in H_k$,  
$$
 \langle \s, \pi \sv(Y) \rangle \le \frac{1}{\sqrt{2\pi}}\, g_k(\pi \sv(Y))
  = \frac{1}{\sqrt{2\pi}} \, g_k(\sv(Y)) .
$$
Theorem~\ref{conj:SH-sv} states that 
$(Y_{11},\ldots,Y_{kk})$ lies in the convex hull 
of the $H_k$-orbit of $\sv(Y)$. Therefore, 
\eqref{eq:Y-cond} holds, which 
proves the first assertion. 

(2) For the second assertion, let $R$ denote the radial function of $C(k,m)$. 
By invariance, it is sufficient to prove that 
$R(\diag_{k,m}(\s)) = r_k(\s)$. We have, using the above equivalences, 
\begin{eqnarray*}
R(\diag_{k,m}(\s)) &=& \max \big\{ t \mid t\, \diag_{k,m}(\s) \in C(k,m) \big\} \\
       &\stackrel{\eqref{eq:Y-cond}}{=}& \max \big\{ t \mid \forall Y\  
        \sum_{i=1}^k t \s_i Y_{ii} \le \frac{1}{\sqrt{2\pi}}\, g_k(\sv(Y)) \big\} \\
       &\stackrel{\eqref{eq:tau-cond}}{=}& \max \big\{ t \mid \forall \tau\ \langle t\s, \tau\rangle \le \frac{1}{\sqrt{2\pi}}\, g_k(\tau) \big\}  
               = r_k(\s)
\end{eqnarray*}
and the proof is complete. 
\end{proof}

\begin{remark}\label{re:D(k)asimage}
Proposition~\ref{pro:prop-D} 
easily implies that $D(k)$ is the image of $C(k,m)$ under the projection 
taking the diagonal of a rectangular matrix: 
$$
 \R^{k \ti m} \to \R^k,\, X =(x_{ij}) \mapsto (x_{11},\ldots,x_{kk}) . 
$$
In particular, $D(k)$ is a zonoid in $\R^k$. 
\end{remark}

\begin{lemma}\label{le:radius-D}
The maximum of the radial function $r_k$ of $D(k)$ 
on $S^{k-1}$ equals 
$$
 R_k :=\max_{\|\s\|=1} r_k(\s) =\frac{1}{\sqrt{2\pi}} \frac{\rho_k}{\sqrt{k}}
$$
and the maximum is attained exactly 
on the $H_k$-orbit of 
the point
$\s_u:=\frac{1}{\sqrt{k}}(1,\ldots,1)$.
In particular, $C(k,m)$ is contained in the ball $B(k,m)$ of 
radius $R_k$ around in the origin, and this is the ball of smallest radius 
with this property. 
\end{lemma}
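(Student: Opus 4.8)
The plan is to read off both parts of the statement from facts already established: the support function of $D(k)$ is $\tfrac{1}{\sqrt{2\pi}}\,g_k$ (Proposition~\ref{pro:prop-D}(1)) and the support function of $C(k,m)$ is $X\mapsto\tfrac{1}{\sqrt{2\pi}}\,g_k(\sv(X))$ (Lemma~\ref{le:C-supp-fct}), combined with the general principle that for a convex body containing the origin the radial function and the support function have the same maximum on the unit sphere, attained at the same directions (Lemma~\ref{lemma:r=h}).

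First I would check that Lemma~\ref{lemma:r=h} applies to both $D(k)$ and $C(k,m)$, i.e.\ that each contains the origin. For $C(k,m)$ this is Corollary~\ref{cor:int-point-C}, and since $D(k)$ is the image of $C(k,m)$ under the linear coordinate projection $X=(x_{ij})\mapsto(x_{11},\ldots,x_{kk})$ (Remark~\ref{re:D(k)asimage}), it too contains $0$. Then Lemma~\ref{lemma:r=h} gives $R_k=\max_{S^{k-1}}r_k=\max_{S^{k-1}}h_{D(k)}$, and a direction $u\in S^{k-1}$ is maximizing for $r_k$ if and only if it is maximizing for $h_{D(k)}$. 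Since $h_{D(k)}=\tfrac{1}{\sqrt{2\pi}}\,g_k$ and, by Lemma~\ref{le:on_gk}, the maximum of $g_k$ on $S^{k-1}$ equals $\rho_k/\sqrt{k}$ and is attained exactly on the $H_k$-orbit of $\sigma_u$, we get $R_k=\tfrac{1}{\sqrt{2\pi}}\,\tfrac{\rho_k}{\sqrt{k}}$, attained precisely on the $H_k$-orbit of $\sigma_u$. This is the first assertion.

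For the claim about the enclosing ball I would compute the radius $\|C(k,m)\|$ directly. Applying Lemma~\ref{lemma:r=h} in $\R^{k\times m}$ gives $\|C(k,m)\|=\max\{h_{C(k,m)}(X):\|X\|=1\}$, where $\|\cdot\|$ is the Frobenius norm. By Lemma~\ref{le:C-supp-fct} this equals $\tfrac{1}{\sqrt{2\pi}}\max\{g_k(\sv(X)):\|X\|=1\}$; since $\|X\|^2=\|\sv(X)\|^2$, the vector $\sv(X)$ ranges exactly over the unit vectors of $\R^k$ with nonnegative entries as $X$ ranges over the unit sphere (every such vector being realized, e.g.\ by $X=\diag_{k,m}(\sigma)$), and $g_k$ is invariant under permutations and sign changes (Lemma~\ref{le:on_gk}); hence $\max\{g_k(\sv(X)):\|X\|=1\}=\max_{\|\sigma\|=1}g_k(\sigma)=\rho_k/\sqrt{k}$. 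Thus $\|C(k,m)\|=\tfrac{1}{\sqrt{2\pi}}\,\tfrac{\rho_k}{\sqrt{k}}=R_k$. By the definition of the radius this means, on the one hand, that $\|X\|\le R_k$ for every $X\in C(k,m)$, so $C(k,m)\subseteq B(k,m):=B(0,R_k)$; on the other hand, as $C(k,m)$ is compact, its radial function attains the value $R_k$, so $C(k,m)$ contains a point at distance exactly $R_k$ from the origin, whence no ball of smaller radius around the origin contains $C(k,m)$.

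I do not expect a serious obstacle: the proof is an assembly of Lemmas~\ref{lemma:r=h}, \ref{le:on_gk}, \ref{le:C-supp-fct} and Proposition~\ref{pro:prop-D}, all already available. The only points needing care are verifying that the origin lies in $D(k)$ and in $C(k,m)$ so that Lemma~\ref{lemma:r=h} applies, the elementary identity $\|X\|=\|\sv(X)\|$ relating the Frobenius norm to the Euclidean norm of the singular value vector, and invoking compactness of $C(k,m)$ to make the minimality of $B(k,m)$ precise; the substantive content—that the maximizing direction of $g_k$ on the sphere is (up to $H_k$) the point $\sigma_u$, with value $\rho_k/\sqrt{k}$—is already contained in Lemma~\ref{le:on_gk}.
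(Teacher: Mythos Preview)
Your proposal is correct and follows essentially the same route as the paper: combine Proposition~\ref{pro:prop-D}(1) with Lemma~\ref{le:on_gk} and Lemma~\ref{lemma:r=h} to identify $R_k$ and the maximizing directions. You are in fact a bit more careful than the paper, explicitly checking that $0$ lies in $D(k)$ and $C(k,m)$ so that Lemma~\ref{lemma:r=h} applies, and spelling out the ``in particular'' part (the paper's proof leaves this implicit; it also follows at once from Proposition~\ref{pro:prop-D}(2), since the radial function of $C(k,m)$ is $r_k\circ\sv$ and $\|\sv(X)\|=\|X\|$, which is equivalent to your support-function argument).
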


\begin{proof}
According to Proposition~\ref{pro:prop-D}, 
$\frac{1}{\sqrt{2\pi}}\, g_k$ is the support function of $D(k)$. 
According to Lemma~\ref{le:on_gk}, the maximum of $g_k$ 
on $S^{k-1}$ is attained exactly on the $H_k$-orbit of~$\s_u$.
We conclude now with  Lemma~\ref{lemma:r=h}.
\end{proof}

\begin{remark}\label{remark:interior}
Corollary~\ref{cor:int-point-C} implies that $0$ is an interior point of $D(k)$.
This can also be seen in a different way as follows.
A standard transversality argument shows that 
$g_1\Sigma(k,n)\cap\ldots\cap g_N\Sigma(k,n)\ne\emptyset$ 
for all $(g_1,\ldots,g_N)$ in a nonempty open subset of $O(n)^N$; 
cf.~\cite[Prop.~A.18]{BuCu}.
This implies that $\edeg G(k,n)$ is positive. 
Via Theorem~\ref{thm:volconvex} below we conclude that $D(k)$ has full dimension. 
Since $D(k)$ is symmetric with respect to the origin, it again follows that 
$0$ is an interior point of $D(k)$.
\end{remark}
 
\subsection{Volume of the Segre zonoid}

We keep assuming $k\le m$. 
We first state how to integrate a $O(k)\times O(m)$-invariant function 
of matrices in $\R^{k\ti m}$ in terms of their singular values. 
This is certainly known, but we include the proof for lack of a suitable reference.

Let us denote by $S^{km-1}\subseteq \R^{k\times m}$ the unit sphere with respect to the Frobenius norm. 
Moreover, it will be convenient to abbreviate
\be
  S^{k-1}_+ := \{\s \in S^{k-1} \mid \sigma_1\geq\cdots\geq\sigma_k\geq 0\} .
\ee
Also, recall the Stiefel manifold 
$S(k,m) := \{Q\in\R^{m\ti k} \mid  Q^TQ = \textbf{1}\}$.  
We postpone the proof of the following technical result to Appendix~\ref{app:integration}.

\begin{prop}\label{prop:integration}
Let $f\colon S^{km-1}\to \R$ be a continuous $O(k)\times O(m)$-invariant function 
and put $g(\s):=f(\diag_{k,m}(\s))$. 
Then we have 
\be 
\int_{S^{km-1}} f(X)\, dS^{km-1}
  =\frac{|O(k)||S(k,m)|}{2^k}\int_{S^{k-1}_+}
     g(\sigma)\prod_{i=1}^k \sigma_i^{m-k}\prod_{1\leq i<j\leq k}(\sigma_i^2 - \sigma_j^2)\, dS^{k-1} .
\ee
\end{prop}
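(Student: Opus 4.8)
The plan is to parametrize the sphere $S^{km-1}\subseteq\R^{k\times m}$ by the singular value decomposition and to compute the Jacobian of this parametrization.

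\emph{Step 1: the SVD parametrization.} I would consider the map
\[
 \Psi\colon O(k)\times S(k,m)\times S^{k-1}_+ \to S^{km-1},\qquad
 (U,Q,\sigma)\mapsto U\diag(\sigma)Q^{T},
\]
where $\diag(\sigma)\in\R^{k\times k}$ has diagonal entries $\sigma_1,\dots,\sigma_k$, so that $U\diag(\sigma)Q^{T}\in\R^{k\times m}$ has Frobenius norm $\|\sigma\|=1$ and $\Psi(\mathbf 1,[\mathbf 1_k;0],\sigma)=\diag_{k,m}(\sigma)$. The SVD theorem shows that $\Psi$ is surjective and that over the set of $X\in S^{km-1}$ of rank $k$ with pairwise distinct singular values it is a $2^k$-to-one covering: the fibre over such $X$ consists of the pairs $(UD,QD)$ with $D$ a diagonal $\pm1$ matrix, accounting for the sign ambiguity of the singular vectors. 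The complement of this good locus is the intersection with $S^{km-1}$ of a proper real algebraic subset of $\R^{k\times m}$ (where $\det(XX^{T})$ or the discriminant of the characteristic polynomial of $XX^{T}$ vanishes), hence has measure zero; the same holds for its $\Psi$-preimage.

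\emph{Step 2: the Jacobian.} The key computation is that the Jacobian of $\Psi$ at $(U,Q,\sigma)$ equals $\prod_{i=1}^k\sigma_i^{m-k}\prod_{1\le i<j\le k}(\sigma_i^2-\sigma_j^2)$. Since $\Psi$ intertwines left translation on $O(k)$ and the natural $O(m)$-action on $S(k,m)$ with the isometric action $(g,h)\cdot X=gXh^{T}$ of $O(k)\times O(m)$ on $\R^{k\times m}$, the Jacobian depends only on $\sigma$, and I would evaluate it at $U=\mathbf 1$, $Q=[\mathbf 1_k;0]$, where $X=\diag_{k,m}(\sigma)$. A short computation gives, for $\xi$ skew $k\times k$, $\dot Q=\begin{pmatrix}A\\B\end{pmatrix}$ with $A$ skew $k\times k$ and $B\in\R^{(m-k)\times k}$, and $\dot\sigma\perp\sigma$,
\[
 D\Psi(\xi,\dot Q,\dot\sigma)=\bigl[\,\xi\diag(\sigma)-\diag(\sigma)A+\diag(\dot\sigma)\ \bigm|\ \diag(\sigma)B^{T}\,\bigr].
\]
This is block diagonal: the diagonal entries of the left block give the isometry $\dot\sigma\mapsto\diag(\dot\sigma)$ onto the diagonal matrices orthogonal to $X$; for $i<j$ the pair of entries $(i,j),(j,i)$ depends only on $(\xi_{ij},A_{ij})$ through the matrix $\begin{pmatrix}\sigma_j&-\sigma_i\\-\sigma_i&\sigma_j\end{pmatrix}$, of absolute determinant $\sigma_i^2-\sigma_j^2$; and the right block $B\mapsto\diag(\sigma)B^{T}$ scales the $i$-th row by $\sigma_i$, contributing $\prod_i\sigma_i^{m-k}$. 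Here one must use that the canonical metric $\langle A,B\rangle=\tfrac12\operatorname{tr}(A^{T}B)$ on $O(k)$ and the submersion metric on $S(k,m)$ (induced from $O(m)$) make $\{\xi_{ij}\}_{i<j}$ and $\{A_{ij}\}_{i<j}$ orthonormal, so that these coordinates are compatible with the Frobenius structure on the target and the product of the three block determinants is exactly the stated expression, with no spurious power of $2$.

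\emph{Step 3: conclusion.} Restricting to the good locus and applying the change-of-variables formula for the $2^k$-to-one covering $\Psi$ gives
\[
 \int_{S^{km-1}}\!f(X)\,dS^{km-1}=\frac1{2^k}\int_{O(k)}\!\int_{S(k,m)}\!\int_{S^{k-1}_+}\! f\bigl(U\diag_{k,m}(\sigma)Q^{T}\bigr)\prod_{i=1}^k\sigma_i^{m-k}\!\!\prod_{1\le i<j\le k}\!\!(\sigma_i^2-\sigma_j^2)\,dS^{k-1}\,dQ\,dU,
\]
and the $O(k)\times O(m)$-invariance of $f$ yields $f(U\diag_{k,m}(\sigma)Q^{T})=g(\sigma)$, so the $U$- and $Q$-integrations merely produce the factor $|O(k)|\,|S(k,m)|$, giving the claimed identity. (One could instead first prove the analogous formula on $\R^{k\times m}$, where $\dot\sigma$ ranges freely and the diagonal block is literally the identity, and then pass to the sphere by homogeneity of $\Psi$ and of the Jacobian factor, which introduces no new constants.) The one genuinely delicate point is Step 2, specifically the bookkeeping of metric normalizations — checking that the $\tfrac12$ in the canonical metrics on $O(k)$ and $S(k,m)$ is precisely what makes the overall constant equal $1$; the block structure and the individual $2\times2$ and scaling determinants are then routine.
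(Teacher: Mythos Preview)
Your argument is correct, and the Jacobian computation in Step~2 checks out with the paper's metric conventions (the $\tfrac12$ in the canonical metric on $O(k)$ and the submersion metric on $S(k,m)$ do make $\{\xi_{ij}\}_{i<j}$, $\{A_{ij}\}_{i<j}$, and $\{B_{ij}\}$ orthonormal, so no extra constants appear).

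However, the paper takes a different and rather slicker route. Instead of parametrizing the sphere by the SVD and computing the Jacobian directly, it extends $f$ to $\R^{k\times m}\setminus\{0\}$ by homogeneity of degree zero, lets $X\in\R^{k\times m}$ be a standard Gaussian matrix, and computes $\E f(X)$ two ways: on the one hand it equals the spherical average $|S^{km-1}|^{-1}\int_{S^{km-1}}f$; on the other hand, since $f(X)=g(\sv(X))$, it equals the integral of $g$ against the joint density of the ordered singular values, which is read off from the known Wishart eigenvalue density (Muirhead). A passage to polar coordinates $\sigma=r\theta$ then separates the radial integral (a Gamma value) from the spherical integral over $S^{k-1}_+$, and matching constants finishes the proof. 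This avoids any Jacobian bookkeeping on the sphere at the cost of citing the Wishart density. Your direct geometric computation essentially reproves what underlies that density formula; it is more self-contained but requires exactly the metric-normalization care you flag at the end.
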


We can now derive a formula for the volume of $C(k,m)$. 
We introduce the $H_k$-invariant functions $p_k\colon\R^k\to\R$ and $q_k\colon\R^k\to\R$ by 
\be\label{eq:q} 
 p_k(\s_1,\ldots, \s_k) := \prod_{i=1}^k |\s_i| , \quad
 q_k(\s_1,\ldots, \s_k) := p_k(\s_1,\ldots, \s_k)^{-k} 
\cdot\prod_{i<j} |\s_i^2-\s_j^2 | .
\ee

\begin{thm}\label{thm:volconvex}
The volume of $C(k,m)$ equals 
\be 
 |C(k,m)|=\frac{|O(k)|\, |S(k,m)|}{km\, 2^k}
 \int_{S^{k-1}_+} \left( p_k\, r_{k}^k \right)^m q_{k}\, dS^{k-1},
\ee
where $r_k$ denotes the radial function of the 
singular value zonoid $D(k)$.
\end{thm}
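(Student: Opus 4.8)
The plan is to combine the elementary ``cone volume'' formula for star-shaped bodies with the integration formula of Proposition~\ref{prop:integration}. First I would note that $C(k,m)$ is a convex body containing the origin in its interior, by Corollary~\ref{cor:int-point-C}; hence $C(k,m)$ is star-shaped with respect to $0$, and its radial function $R := r_{C(k,m)}$ is finite, positive and continuous on the Frobenius unit sphere $S^{km-1}$. Parametrizing $C(k,m)\setminus\{0\}$ by $(t,X)\mapsto tX$ with $X\in S^{km-1}$ and $0< t\le R(X)$, whose volume element in $\R^{k\ti m}$ is $t^{km-1}\,dt\,dS^{km-1}(X)$, one gets
$$
 |C(k,m)| = \int_{S^{km-1}}\int_0^{R(X)} t^{km-1}\,dt\,dS^{km-1}(X) = \frac{1}{km}\int_{S^{km-1}} R(X)^{km}\,dS^{km-1}(X).
$$

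Next I would invoke Proposition~\ref{pro:prop-D}(2), according to which $R(X) = r_k(\sv(X))$, where $r_k$ denotes the radial function of the singular value zonoid $D(k)$. Thus $X\mapsto R(X)^{km}$ is a continuous, $O(k)\ti O(m)$-invariant function on $S^{km-1}$; continuity here uses that $X\mapsto\sv(X)$ is continuous and that $r_k$ is continuous, the latter because $D(k)$ is a convex body with $0$ in its interior (Remark~\ref{remark:interior}). Applying Proposition~\ref{prop:integration} to $f(X):=R(X)^{km}$, and using that $\sv(\diag_{k,m}(\s)) = \s$ for $\s\in S^{k-1}_+$ so that the associated function of singular values is $g(\s) = r_k(\s)^{km}$, we obtain
$$
 |C(k,m)| = \frac{|O(k)|\,|S(k,m)|}{km\,2^k}\int_{S^{k-1}_+} r_k(\s)^{km}\prod_{i=1}^k \s_i^{m-k}\prod_{1\le i<j\le k}(\s_i^2-\s_j^2)\,dS^{k-1}.
$$

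It then remains to rewrite the integrand in the claimed form. On $S^{k-1}_+$ all $\s_i$ are nonnegative and $\s_i\ge\s_j$ for $i<j$, so $p_k(\s) = \prod_{i=1}^k\s_i$ and $\prod_{i<j}(\s_i^2-\s_j^2) = \prod_{i<j}|\s_i^2-\s_j^2|$; hence by the definition~\eqref{eq:q} of $q_k$,
$$
 r_k(\s)^{km}\prod_{i=1}^k\s_i^{m-k}\prod_{i<j}(\s_i^2-\s_j^2) = r_k(\s)^{km}\,p_k(\s)^{m-k}\prod_{i<j}|\s_i^2-\s_j^2| = \big(p_k(\s)\,r_k(\s)^k\big)^m q_k(\s),
$$
which is exactly the asserted expression. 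I do not expect any substantial obstacle here; the content is essentially bookkeeping. The only points requiring a little care are verifying the hypotheses of Proposition~\ref{prop:integration} — continuity and $O(k)\ti O(m)$-invariance of $f$ — and the identity $\sv(\diag_{k,m}(\s)) = \s$ on the ordered, nonnegative part of the sphere, which pins down the function $g$ appearing in that proposition.
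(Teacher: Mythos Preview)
Your proof is correct and follows essentially the same approach as the paper: both start from the cone-volume formula $|C(k,m)|=\frac{1}{km}\int_{S^{km-1}}R^{km}\,dS^{km-1}$, identify $R=r_k\circ\sv$ via Proposition~\ref{pro:prop-D}(2), apply Proposition~\ref{prop:integration}, and then regroup the factors into $(p_k r_k^k)^m q_k$. If anything, your writeup is slightly more careful---you explicitly check continuity and invariance before invoking Proposition~\ref{prop:integration}, and you cite Proposition~\ref{pro:prop-D}(2) for the radial function identity where the paper's proof cites Lemma~\ref{le:radius-D}.
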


\begin{proof}	
According to Lemma~\ref{le:radius-D}, the radius function of $C(k,m)$
is given by $X\mapsto r_k(\sv(X))$. 
The volume of $C(k,m)$ satisfies (see \cite[Equation (1.53)]{Schneider}) 
\be 
 |C(k,m)|=\frac{1}{km} \int_{S^{km-1}} (r_k\circ \sv)^{km}\, dS^{km-1}.
\ee
Since the radial function is continuous and $O(k)\times O(m)$-invariant, we can apply 
Proposition~\ref{prop:integration} to obtain
\begin{align} 
 |C(k,m)|&=\frac{1}{km}\frac{|O(k)||S(k,m)|}{2^k}
    \int_{S^{k-1}_+} r_k(\s)^{km}\, p_k(\s)^{m-k} 
  \prod_{1\leq i<j\leq k}(\sigma_i^2-\sigma_j^2)\,dS^{k-1}\\
  &=\frac{|O(k)||S(k,m)|}{km 2^k}
   \int_{S^{k-1}_+}\left( r_{k}(\s)^k\, p_k(\s) \right)^m q_{k}(\s)\, dS^{k-1} 
\end{align}
as claimed.
\end{proof}

\begin{remark}\label{re:any-invar-body}
The proof of Theorem~\ref{thm:volconvex} works for any 
$O(k)\ti O(m)$-invariant convex body. In particular, 
for the ball 
$B(k,m) := \{ X\in\R^{k\ti m} \mid \|X\|_F \le R_k\}$ 
containing $C(k,m)$ (see Lemma~\ref{le:radius-D}), we obtain:
\begin{align}
 |B(k,m)|=R_k^{km}\frac{\pi^{\frac{km}{2}}}{\Gamma\left(1+\frac{km}{2}\right)}
 =\frac{|O(k)|\, |S(k,m)|}{km\, 2^k}
 \int_{S^{k-1}_+} \left(p_k\, R_k^k\right)^m q_{k} \, dS^{k-1}.
\end{align}
\end{remark}


\section{Asymptotics}

\subsection{Laplace method}\label{se:laplace}

In this section we recall a useful result for the asymptotic evaluation of integrals depending on a large parameter. 
This technique will be crucial in the proofs of Theorem \ref{thm:estim} and Theorem \ref{thm:edlines} and goes under 
the name of \emph{Laplace's method}.
The following statement is classical (see for example \cite[Section II, Theorem 1]{Wong}).

\begin{thm}\label{thm:Laplace}Consider the integral
\be 
 I(\lambda)=\int_{t_1}^{t_2}e^{-\lambda a(t)}b(t)dt,
\ee
where the functions $a, b\colon [t_1, t_2]\to \R$ satisfy the following conditions:
\begin{enumerate}
\item The function $a$ is smooth on a neighborhood of $t_1$ and there exist $\mu>0$ and $a_0\neq 0$ such that as $t\to t_1$
\be 
 a(t)= a(t_1)+a_0(t-t_1)^\mu+\mathcal{O}(|t-t_1|^{\mu+1}) .
\ee
\item The function $b$ is smooth on a neighborhood of $t_1$ and there exist $\nu\geq 1$ and $b_0\neq 0$ such that as $t\to t_1$ 
\be 
 b(t)= b_0(t-t_1)^{\nu-1}+\mathcal{O}(|t-t_1|^{\nu})\quad  \textrm{with $b_0\neq 0$.}
\ee 
\item We have $a(t)>a(t_1)$ for all $t\in (t_1, t_2)$ and for every $\delta>0$ the infimum of $a(t)-a(t_1)$ in $[t_1+\delta,t_2)$ is positive. 
\item The integral $I(\lambda)$ converges absolutely for all sufficiently large $\lambda.$
\end{enumerate}
Then we have the asymptotic
\be\label{eq:Laplaceasymptotic} I(\lambda) = e^{-\lambda a(t_1)}\cdot 
   \frac{1}{\lambda^{\nu/\mu}}\cdot \frac{b_0\Gamma\left(\frac{\nu}{\mu}\right)}{a_0^{\nu/\mu} \mu}\cdot
       \left(1+\mathcal{O}(\lambda^{-(1+\nu)/\mu})\right)\quad \textrm{as $\lambda\to \infty$}.
\ee
\end{thm}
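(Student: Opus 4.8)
The plan is to reduce the integral to the model $\int_0^{\infty} e^{-\lambda v} v^{\nu/\mu-1}\,dv = \Gamma(\nu/\mu)\,\lambda^{-\nu/\mu}$ by localizing near $t_1$ and straightening the phase $a$ with a change of variables; the contribution away from $t_1$ will turn out to be exponentially small.

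\smallskip\noindent\textit{Normalization and localization.} First I would subtract $a(t_1)$ from $a$, which pulls the factor $e^{-\lambda a(t_1)}$ out of the integral, and translate so that $t_1=0$; thus we may assume $a(0)=0$. Hypothesis~(3) forces $a_0>0$, since $a(t)>0$ for $t$ near $0^+$. By hypothesis~(1) we may fix $\delta\in(0,t_2)$ so small that $a$ is strictly increasing on $[0,\delta]$ and the expansions of $a$ and $b$ hold on $[0,\delta]$. Split $I(\lambda)=\int_0^\delta e^{-\lambda a}b\,dt+\int_\delta^{t_2} e^{-\lambda a}b\,dt$. For the second integral I would use hypotheses~(3) and~(4) in tandem: by~(3) there is $c_\delta>0$ with $a(t)\ge c_\delta$ on $[\delta,t_2)$, and by~(4) the integral converges absolutely for some $\lambda_0$; then for $\lambda\ge\lambda_0$,
\be
 \Big|\int_\delta^{t_2} e^{-\lambda a}b\,dt\Big|\le e^{-(\lambda-\lambda_0)c_\delta}\int_\delta^{t_2} e^{-\lambda_0 a}|b|\,dt = \mathcal O\big(e^{-c_\delta\lambda}\big),
\ee
which is negligible compared to every power of $\lambda$.

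\smallskip\noindent\textit{Straightening the phase.} On $[0,\delta]$ the map $v=a(t)$ is a $C^1$-diffeomorphism onto $[0,v_0]$, $v_0:=a(\delta)>0$, with inverse $t=t(v)$. Inverting the asymptotic relation $v=a_0 t^\mu(1+\mathcal O(t))$ gives $t(v)=(v/a_0)^{1/\mu}\big(1+\mathcal O(v^{1/\mu})\big)$, and combining this with $a'(t)=\mu a_0 t^{\mu-1}(1+\mathcal O(t))$ (from hypothesis~(1)) and hypothesis~(2) yields
\be
 \frac{b(t(v))}{a'(t(v))}=\frac{b_0}{\mu\,a_0^{\nu/\mu}}\,v^{\nu/\mu-1}\big(1+\mathcal O(v^{1/\mu})\big)\qquad(v\to 0^+).
\ee
Hence $\int_0^\delta e^{-\lambda a}b\,dt=\int_0^{v_0} e^{-\lambda v}\,\dfrac{b(t(v))}{a'(t(v))}\,dv$, and it remains to apply a quantitative form of Watson's lemma: for $\gamma'>\gamma>0$ one has $\int_0^{v_0} e^{-\lambda v}v^{\gamma-1}\,dv=\Gamma(\gamma)\lambda^{-\gamma}+\mathcal O(e^{-c\lambda})$ and $\int_0^{v_0} e^{-\lambda v}\,\mathcal O(v^{\gamma'-1})\,dv=\mathcal O(\lambda^{-\gamma'})$ as $\lambda\to\infty$. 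Using this with $\gamma=\nu/\mu$ and $\gamma'=(\nu+1)/\mu$, together with the tail bound, yields the asymptotic~\eqref{eq:Laplaceasymptotic}.

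\smallskip\noindent\textit{Main obstacle.} The delicate point is the bookkeeping in the change of variables: one must show that the remainder in $b(t(v))/a'(t(v))$ is genuinely of order $v^{\nu/\mu-1}\cdot v^{1/\mu}$ \emph{uniformly} on $(0,v_0]$, which forces using the $\mathcal O$-forms of hypotheses~(1)--(2) and a sufficiently small choice of $\delta$ (so that $a$ is monotone and the nonlinear inversion $t\mapsto v$ is controlled). The remaining ingredients --- the elementary estimates making up Watson's lemma and the exponential smallness of the tail --- are routine once hypotheses~(3) and~(4) have been invoked.
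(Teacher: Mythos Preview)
The paper does not give its own proof here: the theorem is quoted as classical with a reference to \cite[Section~II, Theorem~1]{Wong}. Your argument --- localize near $t_1$, kill the tail exponentially via hypotheses~(3)--(4), straighten the phase by the substitution $v=a(t)$, and finish with Watson's lemma --- is exactly the standard textbook route that the citation points to, and it is carried out correctly.

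Two small remarks. First, what your computation actually produces is an \emph{absolute} remainder of order $\lambda^{-(\nu+1)/\mu}$, i.e.\ a relative error $O(\lambda^{-1/\mu})$; the relative error $O(\lambda^{-(1+\nu)/\mu})$ printed in~\eqref{eq:Laplaceasymptotic} is stronger than what the standard method (and your argument) delivers, and appears to be a slip in the paper's statement rather than a gap in your proof. Second, when $\mu\ge 2$ the map $v=a(t)$ is not a $C^1$-diffeomorphism \emph{at} the endpoint $t_1$ (since $a'(t_1)=0$), so your phrase ``$C^1$-diffeomorphism onto $[0,v_0]$'' is slightly off; but the change of variables still holds as an identity of improper integrals on $(t_1,t_1+\delta]$, so this does not affect the argument.
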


\begin{remark}
\begin{enumerate}
\item 
The hypotheses of \cite[Section II, Theorem 1]{Wong} are weaker than those of 
Theorem~\ref{thm:Laplace}: it is only required that $a$ and $b$ are continuous in a neighborhood of $t_1$ 
(except possibly at $t_1$) and that they have asymptotic series at $t_1$ (which is certainly true if they are both smooth).
Moreover the conclusion given there 
is also stronger than what we stated here:
it is proved that $I(\lambda)$ also has an asymptotic series in descending powers of $\lambda$, 
while we are just writing the leading order term and the order of the error of this series. 

\item Note that 
the exponent of the exponential growth in~\eqref{eq:Laplaceasymptotic} is determined 
by the minimum $a(t_1)$, the growth of the pre-exponential factor is determined by 
the orders $\mu,\nu$ of the expansions of the functions $a$ and $b$, 
and the leading constant in~\eqref{eq:Laplaceasymptotic} involves the constants $a_0$ and~$b_0$.
\end{enumerate}
\end{remark}

\subsection{An asymptotically sharp upper bound for the expected degree}

From Lemma~\ref{le:radius-D}, we see that the Segre zonoid $C(k,m)$ is contained in the ball 
\be
 B(k,m) := \Big\{ X\in\R^{k\times m} \mid \|X\|_F \le \frac{1}{\sqrt{2\pi}} \frac{\rho_k}{\sqrt{k}} \Big\} .
\ee
In particular, 
$|C(k,m)| \le |B(k,m)|$,
which implies an upper bound for $\alpha(k,m)$. 
We next show that this inequality, for fixed $k$,  
is asymptotically sharp up to a subexponential factor.

\begin{thm}\label{thm:estim}
For fixed $k$ we have 
$\log|C(k,m)|=\log |B(k,m)|-\mathcal{O}(\log m)$
for $m\to\infty$.		
\end{thm}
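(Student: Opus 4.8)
\emph{Plan.} By Theorem~\ref{thm:volconvex} and Remark~\ref{re:any-invar-body} the volumes $|C(k,m)|$ and $|B(k,m)|$ differ only in that the radial function $r_k$ of $D(k)$ is replaced by the constant $R_k$. Since the common prefactor $\tfrac{|O(k)||S(k,m)|}{km\,2^k}$ cancels and $(p_k\,\phi^k)^m q_k = p_k^{m-k}\,\phi^{km}\,V$ with $V:=\prod_{1\le i<j\le k}(\sigma_i^2-\sigma_j^2)\ge 0$ on $S^{k-1}_+$, one has for $m\ge k$
\[
  \frac{|C(k,m)|}{|B(k,m)|}
   = \frac{1}{R_k^{km}}\cdot
     \frac{\displaystyle\int_{S^{k-1}_+} p_k^{m-k}\, r_k^{km}\, V\, dS^{k-1}}
          {\displaystyle\int_{S^{k-1}_+} p_k^{m-k}\, V\, dS^{k-1}} .
\]
Since $r_k\le R_k$ on $S^{k-1}$ (Lemma~\ref{le:radius-D}), the right-hand side is $\le 1$, which recovers the trivial inequality $|C(k,m)|\le|B(k,m)|$; so the content of the theorem is the reverse bound $\log|C(k,m)|\ge\log|B(k,m)|-\mathcal{O}(\log m)$, equivalently that the quotient of integrals above is at least $R_k^{km}\,m^{-c(k)}$ for large $m$ and some constant $c(k)$.

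The denominator is easy to bound from above: on the unit sphere $p_k\le k^{-k/2}$ by AM--GM, with equality only at $\s_u=\tfrac1{\sqrt k}(1,\dots,1)$ (cf.\ Lemma~\ref{le:on_gk}), and $0\le V\le 1$, whence $\int_{S^{k-1}_+} p_k^{m-k}\,V\,dS^{k-1}\le k^{-k(m-k)/2}\,|S^{k-1}_+|$. The numerator I would bound from below by restricting the integration to the ``corner region''
\[
 U_\delta:=\Big\{\s\in S^{k-1}_+ \ \Big|\ \|\s-\s_u\|\le\delta,\ \ \sigma_i-\sigma_{i+1}\ge\tfrac{\delta}{2k}\ \text{ for }1\le i\le k-1\Big\},
\]
which for small $\delta$ is nonempty and has $(k-1)$-dimensional measure at least $c_1(k)\,\delta^{k-1}$. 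On $U_\delta$ one gets $V\ge c_2(k)\,\delta^{\binom k2}$ (each factor $\sigma_i^2-\sigma_j^2=(\sigma_i-\sigma_j)(\sigma_i+\sigma_j)$ is at least $c\,\delta$ with $c=c(k)>0$), $p_k\ge k^{-k/2}-\mathcal{O}(\delta)$ since $p_k$ is a polynomial attaining its maximum $k^{-k/2}$ at $\s_u$, and $r_k\ge R_k-L\delta$ because $r_k(\s_u)=R_k$ (Lemma~\ref{le:radius-D}) and $r_k$ restricted to $S^{k-1}$ is Lipschitz --- the latter because $D(k)$ is a convex body with $0$ in its interior (Remark~\ref{remark:interior}) contained in $\overline B(0,R_k)$ (Lemma~\ref{le:radius-D}). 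Combining,
\[
 \int_{S^{k-1}_+} p_k^{m-k}\, r_k^{km}\, V\, dS^{k-1}
  \ \ge\ c_1c_2\,\delta^{\binom k2+k-1}\,\big(k^{-k/2}-\mathcal{O}(\delta)\big)^{m-k}\,(R_k-L\delta)^{km},
\]
and choosing $\delta=1/m$ makes the two last factors bounded below by a positive constant times $k^{-k(m-k)/2}R_k^{km}$ for $m$ large, leaving a polynomial factor $m^{-(\binom k2+k-1)}$. Dividing by the denominator bound, the quotient of integrals is $\ge c(k)\,R_k^{km}\,m^{-(\binom k2+k-1)}$, which is exactly what is needed.

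The one genuine difficulty is that the common peak $\s_u$ of the two integrands is a \emph{corner} of the simplex $S^{k-1}_+$ (it lies on every wall $\sigma_i=\sigma_{i+1}$) and the Vandermonde-type density $V$ vanishes there to order $\binom k2$, so the one-variable Laplace method of Theorem~\ref{thm:Laplace} does not apply directly. The systematic cure, which also produces the precise asymptotic prefactor, is to introduce local ``gap coordinates'' near $\s_u$ in which the integrand becomes $\big((p_kr_k^k)(\s_u)\big)^m$ times a monomial times a smooth nonvanishing factor, and then to invoke the multivariate variant of Laplace's method of~\cite{FulksSather}; but for the $\mathcal{O}(\log m)$ statement stated here the crude two-sided estimate above, with $\delta\sim 1/m$, already suffices and bypasses any delicate analysis.
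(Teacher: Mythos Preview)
Your argument is correct and in fact somewhat more elementary than the paper's. Both proofs reduce via Theorem~\ref{thm:volconvex} and Remark~\ref{re:any-invar-body} to comparing the two spherical integrals, and both localize near the common maximizer $\s_u$. The paper then replaces $D(k)$ by an auxiliary cone~$Y$ (so that the radial function becomes explicit), passes to polar coordinates about $\s_u$, and applies the one-variable Laplace method (Theorem~\ref{thm:Laplace}) to the resulting integral; this yields Lemma~\ref{prop:step2} and the asymptotic~\eqref{eq:lowerest2}. You instead bound the denominator crudely by $p_k\le k^{-k/2}$, $V\le 1$, and bound the numerator from below by restricting to the sector $U_\delta$ with the gap conditions; the Lipschitz bound $r_k\ge R_k-L\delta$ (legitimate since $0\in\mathrm{int}\,D(k)$, cf.\ Remark~\ref{remark:interior}) and the choice $\delta=1/m$ then give $|C(k,m)|/|B(k,m)|\ge c(k)\,m^{-\binom{k}{2}-k+1}$ directly, with no asymptotic machinery.

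What your approach buys is brevity: it sidesteps the auxiliary cone, the polar-coordinate change of variables, and the verification of the Laplace hypotheses, at the cost of not identifying any sharp constant. What the paper's approach buys is a template that could in principle be pushed to a genuine asymptotic (as is done for $k=2$ in Theorem~\ref{thm:edlines}), whereas your crude two-sided estimate is inherently limited to the $\mathcal{O}(\log m)$ statement. One small sharpening you could mention: since $\s_u$ is a smooth interior critical point of $p_k$ on $S^{k-1}$, one actually has $p_k\ge k^{-k/2}-\mathcal{O}(\delta^2)$ rather than $\mathcal{O}(\delta)$, though this makes no difference for the final bound.
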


\begin{proof}
We introduce first some useful notation that we will use in the proof. Recall from Lemma~\ref{le:radius-D} that 
$R_k = (2\pi)^{-\frac12} \rho_k/\sqrt{k}$ is the maximum of the 
radial function $r_k$ on $S^{k-1}_+$, 
and that the maximum is attained exactly at $\s_u :=\frac{1}{\sqrt{k}}(1, \ldots, 1)$.
It is convenient to define the normalized radius function
$\tilde{r}_k := r_k/R_k$, which has the maximum value~$1$. 

Theorem \ref{thm:volconvex} and Remark~\ref{re:any-invar-body} imply now that we can rewrite 
the two volumes $|C(k,m)|$ and $|B(k,m)|$ as
\be \label{eq:integrals2}
 |C(k,m)|=\frac{|O(k)|\, |S(k,m)|}{km\, 2^k} R_k^{km}\cdot I_C(k,m),\quad
 |B(k,m)|=\frac{|O(k)|\, |S(k,m)|}{km\, 2^k}  R_k^{km}\cdot I_B(k,m) ,
\ee
where 
\be\label{eq:integrals}
 I_C(k,m) := \int_{S^{k-1}_+} \left( \tilde{r}_k^k\, p_k \right)^m q_{k} \, dS^{k-1}
\quad \textrm{and} \quad 
 I_B(k,m) := \int_{S^{k-1}_+} p_k^m q_{k}\, dS^{k-1}.
\ee

The idea of the proof is now to compute the asymptotics of two integrals $I_B(k,m)$ and $I_C(k,m)$, 
the first one explicitly and the second one using Laplace's method, and then to compare them. 
Because of $|C(k,m)| \le |B(k,m)|$, and  
since the factors in front of the two integrals in~\eqref{eq:integrals2} are the same, 
we just have to verify that
\be\label{eq:lowerest2} 
 \log I_C(k,m)\geq \log I_B(k,m) - \mathcal{O}(\log m)\quad  \textrm{for $m\to \infty$.}
\ee
We first look at the  asymptotic growth of $I_B(k,m)$. Note that 
by rearranging \eqref{eq:integrals2}, we get 
\be 
 I_B(k,m)=\frac{k m 2^k}{|O(k)|\, |S(k,m)|} \cdot |B_{\R^{km}}(0,1)| ,
\ee
since $R_k$ is the radius of the ball $B(k,m)$. 
Using the explicit formulas
\be 
|B_{\R^{km}}(0,1)|=\frac{\pi^{km/2}}{\Gamma\left(1+\frac{km}{2}\right)},\quad \quad 
 |S(k,m)|=\frac{2^k\pi^{km/2}}{\Gamma_k\left(\frac{m}{2}\right)}
\ee
and the asymptotics $\log \Gamma(z)=z\log z-z+\mathcal{O}(\log z)$ and $\log \Gamma_k(z)=k(z\log z-z)+\mathcal{O}(\log z)$ for $z\to \infty$, 
we easily deduce that 
\be\label{eq:esti1} 
 \log I_{B}(k,m)=km \log\left(\frac{1}{\sqrt{k}}\right)+\mathcal{O}(\log m)\quad \textrm{as $m\to \infty$}.
\ee
For the asymptotic growth of $I_C(k,m)$ we will need the following result, whose proof we postpone.

\begin{lemma}\label{prop:step2}
There exist $\kappa_0, \kappa_1, \delta>0$, depending on $k$, such that $\kappa_1\delta \le 1$ and for all~$m$, 
\be
 I_{C}(k,m)\geq \kappa_0 \int_{0}^{\delta}\left( k^{-\frac{k}{2}} (1-\kappa_1\rho) \right)^{m} \rho^{\frac{k^2+k-4}{2}} \, d\rho .
\ee
\end{lemma}

We rewrite the right-hand side integral in Lemma~\ref{prop:step2} as 
$\int_{0}^{\delta_0}e^{-ma(\rho)}b(\rho) d\rho$
with the smooth functions 
\be 
 a(\rho) := -\log\left( k^{-k/2} (1-\kappa_1\rho) \right) = \frac{k}{2} \log k + \kappa_1 \rho +\mathcal{O}(\rho^2), 
\quad b(\rho) := \rho^{\frac{k^2+k-4}{2}} ,
\ee
where the expansion is for $\rho\to 0$.

Note now that both functions $a$ and $b$ are smooth ($b$ is a polynomial since for $k\geq 2$ 
the exponent $\frac{k^2+k-4}{2}$ is a positive natural number). 
We can then apply Theorem \ref{thm:Laplace} with the choices 
$a_0=\kappa_1, \mu=1, b_0=1, \nu=\frac{k^2+k-4}{2} +1$
and obtain for $m\to\infty$
\be  
 \int_{0}^{\delta_0} \left( k^{-\frac{k}{2}} (1-\kappa_1\rho) \right)^{m} \rho^{\frac{k^2+k-4}{2}} \, d\rho
  =\left(\frac{1}{\sqrt{k}}\right)^{km} \frac{1}{m^\nu} \frac{\Gamma(\nu)}{\kappa_1^\nu} \left(1+\mathcal{O}(m^{-(\nu+1)})\right).
\ee
In particular, combining this asymptotic with Lemma~\ref{prop:step2}, 
we deduce that for $m\to\infty$ 
\begin{align}
\log I_{C}(k,m)&\geq\log\left(\kappa_0 
   \int_{0}^{\delta_0} \left( k^{-\frac{k}{2}} (1-\kappa_1\rho) \right)^{m} \rho^{\frac{k^2+k-4}{2}} \, d\rho \right)\\
& \geq km \log\left(\frac{1}{\sqrt{k}}\right) - \mathcal{O}(\log m) .
\end{align}
Together with \eqref{eq:esti1}, this shows~\eqref{eq:lowerest2} and proves Theorem~\ref{thm:estim}. 
\end{proof}


\begin{proof}[Proof of Lemma~\ref{prop:step2}]
It will be convenient to replace the singular value zonoid~$D(k)$ by a convex body $Y\subseteq D(k)$ 
that is simpler to analyze.
By Corollary~\ref{cor:int-point-C} (see also Remark \ref{remark:interior}), 
the zero vector is an interior point of~$D(k)$, 
hence there exists $\epsilon>0$ such that $B_{\R^k}(0, \epsilon)\subseteq D(k)$. 
We define $Y$ as the convex hull of $B_{\R^k}(0, \epsilon)$ and $\overline{y}:= R_k \s_u \in D(k)$. 
Thus $Y$ is the cone with base $B_{\R^k}(0, \epsilon)$ and 
apex $\overline{y}$.
Note that $Y$ is rotation symmetric with respect to the line passing through~$0$ and $\s_u$. 

In the definition~\eqref{eq:integrals} of $I_C(k,m)$, we can can integrate over the full sphere $S^{k-1}$,  
$$
 I_C(k,m) =  \int_{S^{k-1}_+} \left( \tilde{r}_k^k\, p_k \right)^m q_{k} \, dS^{k-1} 
   = \frac{1}{k! 2^k}  \int_{S^{k-1}} \left( \tilde{r}_k^k\, p_k \right)^m q_{k} \, dS^{k-1} ,
$$
by the $H_k$-invariance of the integrand. 
Recall that $\tilde{r}_k = R_k^{-1} r_k$ is the normalized radial function of $D(k)$. 
Denoting by $r_Y$ the radial function of $Y$ and 
setting $\tilde{r}_Y := R_k^{-1} r_Y$, we have 
$\tilde{r}_k\geq \tilde{r}_{Y}$, since $Y\subseteq D(k)$. 
Replacing $\tilde{r}_k$ by the smaller $\tilde{r}_Y$, we obtain 
\be 
 I_C(k,m) \ \ge\ 
  \frac{1}{k! 2^k}  \int_{S^{k-1}} \left( \tilde{r}_Y^k\, p_k \right)^m q_{k} \, dS^{k-1} .
\ee 
For the integration of the right-hand side, we use a coordinate system adapted to the situation. 
The exponential map 
$$ 
 \big\{x\in T_{\s_u}S^{k-1} \mid \|x\| < \pi \big\} \to S^{k-1}\setminus \{-\s_u\},\, x\mapsto \s(x)
$$ 
is a diffeomorphism that maps $0$ to $\s_u$. 
We describe tangent vectors in $T_{\s_u}S^{k-1}$ with polar coordinates 
$(\rho,\theta) \in [0,\infty)\times S^{k-2}$, where $S^{k-2}$ stands here for the unit sphere 
of $T_{\s_u}S^{k-1}$. 
This way, we obtain a parametrization $\s(\rho,\theta)$ of the points in $S^{k-1}\setminus\{-\s_u\}$. 
It is easy to see that its Jacobian is of the form $\rho^{k-2} w(\rho)$ with a smooth function~$w$ 
satisfying $w(0) >0$. By the transformation theorem we have
\be
 \int_{S^{k-1}} \left( \tilde{r}_Y^k\, p_k \right)^m q_{k} \, dS^{k-1} =  
 \int_0^\pi \int_{S^{k-2}} \left( \tilde{r}_Y(\s(\rho,\theta))^k\, p_k(\s(\rho,\theta) \right)^m 
   q_{k}(\s(\rho,\theta))\, \rho^{k-2} w(\rho)\, dS^{k-2} \, d\rho .
\ee
\medskip

\noindent {\bf Claim.} There are $c_1,c_2,c_3,c_4>0$, $\pi>\delta_1>0$ 
and a nonzero continuous function 
$\eta\colon S^{k-2} \to\R$ such that for all $\rho\le\delta_1$ we have  $w(\rho) \ge c_1$ and
\be
 \tilde{r}_Y(\s(\rho,\theta)) \ge 1 - c_2 \rho,\quad
 p_k(\s(\rho,\theta)) \ge k^{-k/2} \big(1- c_3 \rho^2 \big), \quad 
 q_k(\s(\rho,\theta)) \ge \rho^{\frac{k(k-1)}{2}}\, \big( |\eta(\theta)| - c_4 \rho\big) . \quad 
\ee

\begin{proof}
1. The lower bound on $w(\rho)$ follows from the continuity of $w$ and $w(0)>0$. 

2. By the axial symmetry of $Y$, the function $\tilde{r}_Y(\s(\rho,\theta))$ only depends on $\rho$.   
The lower bound on $\tilde{r}_Y(\rho)$ easily follows from the construction of $Y$. 

3. The function $p_k(\s(x))$ is smooth in a neighborhood of the zero vector and 
has the expansion
\be 
 p_k(\s(x))=p_k(\s(0))+\frac{1}{2}xHx^T+\mathcal{O}(\|x\|^3) 
\ee
for $x\in T_{\s_u} S^{k-1}$, $x\to 0$.  
The matrix $H$ is negative semidefinite since $x=0$ 
is a local maximum of $p_k(\s(x))$. 
Note that $p_k(\s(0)) = p_k(\s_u) = k^{-k/2}$. 
In particular, there exists a constant $c'>0$ such that for small enough $\rho=\|x\|$ we have 
$p_k(\s(x))\geq p_k(0)-c'\|x\|^2$, which shows the stated lower bound on $p_k$.  

Recall from \eqref{eq:q} that the function $q_k(\s)$ is defined as the modulus of the function 
$f_k(\sigma) := \prod_{i=1}^k \s_i^{-k} \cdot\prod_{i<j}( \s_i^2-\s_j^2)$. 
The function $f_k$ is smooth at $\sigma_u$ and it vanishes at $\sigma_u$ to order $\ell:={k\choose 2}$. 
We consider the Taylor expansion 
$f_k(\s(x))=h(x)+\mathcal{O}\left(\|x\|^{\ell+1}\right)$, 
where $h$ is a nonzero homogeneous polynomial of degree $\ell$. 
In particular, we get in polar coordinates
\be 
 f_k(\s(\rho, \theta)) = \rho^{\ell} \eta(\theta) + \mathcal{O}\left(\rho^{\ell+1}\right)
\ee
where the function $\eta:S^{k-2}\to \R$ is nonzero. 
The stated lower bound on $q_k$ follows and the claim is shown. 
\end{proof}

To complete the proof of Lemma~\ref{prop:step2}, 
we put $\kappa:= \int_{S^{k-2}} |h| dS^{k-2}$, which is positive. 
Using the claim, we obtain for 
$0 < \delta \le  \min\Big\{\delta_1,\frac{\kappa}{2 c_4|S^{k-2}|} \Big\}$ that 
\begin{eqnarray*}
\lefteqn{\int_{S^{k-1}} \left( \tilde{r}_Y^k\, p_k \right)^m q_{k} \, dS^{k-1} }\\
 &\ge&  
 \int_0^{\delta} \left( (1-c_2\rho)^k\, k^{-k/2}\, (1-c_3\rho^2) \right)^m \rho^{\frac{k(k-1)}{2}+k-2} c_1  
   \int_{S^{k-2}} \Big(|h| -c_4\rho\Big)\, dS^{k-2}\, d\rho \\
 &\ge&  \frac{c_1\kappa}{2} \int_0^{\delta} \left(  k^{-k/2} (1-c_2\rho)^k (1-c_3\rho^2) \right)^m \rho^{\frac{k^2+k-4}{2}} \, d\rho \\
 &\ge&  \frac{c_1\kappa}{2} \int_0^{\delta} \left( k^{-k/2} (1-2kc_2\rho) \right)^m \rho^{\frac{k^2+k-4}{2}} \, d\rho ,
\end{eqnarray*}
where the last inequality holds if $\rho$ is sufficiently small. 
The assertion follows with 
$\kappa_0 := \frac12 c_1\kappa$ and $\kappa_1 := 2kc_2$.
\end{proof}

Finally, we arrive at the main result of this section, an asymptotically sharp upper bound 
for the expected degree of real Grassmannians. 
The result should be compared with the corresponding statement \eqref{eq:vol-deg-G}. 
Recall the quantity $\rho_k \le\sqrt{k}$ from \eqref{eq:def-rho} and \eqref{eq:rho-estim}. 

\begin{thm}\label{thm:ased}
Let $N:=k(n-k) = \dim G(k,n)$. The following statements are true:
\begin{enumerate}
\item For all $n\geq k>0$ we have
\be
 \edeg G(k,n)\ \leq\ E(k,n) := \frac{|G(k,n)|}{|\RP^N|}\cdot 
  \left(\sqrt{\frac{\pi}{2}}\, \frac{\rho_k}{\sqrt{k}}\right)^N .
\ee

\item 
For fixed $k$ and $n\to \infty$, the above inequality is asymptotically sharp in the sense that 
\be 
 \log \edeg G(k,n) = \log E(k,n) - \mathcal{O}(\log n) .
\ee

\item 
For fixed $k$ and $n\to \infty$, we have 
\be 
\log E(k,n) = kn \log\left(\frac{\sqrt{\pi}\Gamma \left(\frac{k+1}{2}\right)}{\Gamma\left(\frac{k}{2}\right)}\right) -\mathcal{O}(\log n).
\ee
\end{enumerate}
\end{thm}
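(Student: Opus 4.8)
The plan is to derive all three statements from the identity
$$\edeg G(k,n) = |G(k,n)|\cdot\frac{N!}{2^N}\cdot|C(k,n-k)|$$
of Corollary~\ref{cor:edeg-im} (with $N:=k(n-k)$), the inclusion $C(k,n-k)\subseteq B(k,n-k)$ of Lemma~\ref{le:radius-D}, and the volume comparison of Theorem~\ref{thm:estim}. For part~(1), I would substitute the explicit ball volume $|B(k,m)| = R_k^{km}\,\pi^{km/2}/\Gamma(1+\tfrac{km}{2})$, with $R_k = (2\pi)^{-1/2}\rho_k/\sqrt{k}$, into $\edeg G(k,n)\le |G(k,n)|\tfrac{N!}{2^N}|B(k,n-k)|$. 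It then remains only to verify that the right-hand side equals $E(k,n)$, and this is a consequence of the Legendre duplication formula: it gives the \emph{exact} identity $\frac{N!}{2^N\Gamma(1+N/2)} = \frac{\Gamma((N+1)/2)}{\sqrt{\pi}}$, and combining this with $|\RP^N| = \pi^{(N+1)/2}/\Gamma(\tfrac{N+1}{2})$ and $\pi R_k = \sqrt{\pi/2}\,\rho_k/\sqrt{k}$ rearranges precisely into $E(k,n)$. Thus part~(1) is just the inclusion $C(k,n-k)\subseteq B(k,n-k)$ in disguise, and part~(2) follows immediately: since $\edeg G(k,n)$ and $E(k,n)$ differ only by the factors $|C(k,n-k)|$ and $|B(k,n-k)|$, we get $\log\edeg G(k,n) - \log E(k,n) = \log|C(k,n-k)| - \log|B(k,n-k)| = -\mathcal{O}(\log n)$ by Theorem~\ref{thm:estim} applied with $m = n-k\to\infty$.

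For part~(3) I would take logarithms in the definition of $E(k,n)$,
$$\log E(k,n) = \log|G(k,n)| - \log|\RP^N| + N\log\!\left(\sqrt{\tfrac{\pi}{2}}\,\tfrac{\rho_k}{\sqrt{k}}\right),$$
insert $\log|G(k,n)| = -\tfrac12 kn\log n + \tfrac12 kn\log(2e\pi) + \mathcal{O}(\log n)$ from Lemma~\ref{le:volG}, and expand $\log|\RP^N| = \tfrac{N+1}{2}\log\pi - \log\Gamma(\tfrac{N+1}{2})$ by Stirling's formula $\log\Gamma(x) = x\log x - x + \mathcal{O}(\log x)$, using $N = kn - k^2 = kn + \mathcal{O}(1)$. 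This yields $\log|\RP^N| = -\tfrac{kn}{2}\log n + \tfrac{kn}{2}\log\tfrac{2e\pi}{k} + \mathcal{O}(\log n)$, so the $n\log n$ terms cancel and $\log|G(k,n)| - \log|\RP^N| = \tfrac{kn}{2}\log k + \mathcal{O}(\log n)$. Since $N\log(\cdot) = kn\log(\cdot) + \mathcal{O}(1)$ for a fixed positive constant, the contributions $\tfrac12\log k$ and $-\log\sqrt{k}$ cancel, leaving $\log E(k,n) = kn\log(\rho_k\sqrt{\pi/2}) + \mathcal{O}(\log n)$; substituting $\rho_k = \sqrt{2}\,\Gamma(\tfrac{k+1}{2})/\Gamma(\tfrac k2)$ from~\eqref{eq:def-rho} turns $\rho_k\sqrt{\pi/2}$ into $\sqrt{\pi}\,\Gamma(\tfrac{k+1}{2})/\Gamma(\tfrac k2)$, which is the assertion.

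There is no serious obstacle: the one genuinely hard input, the lower bound $\log|C(k,m)| \ge \log|B(k,m)| - \mathcal{O}(\log m)$, is Theorem~\ref{thm:estim}, already proved via Laplace's method. The only care required is the bookkeeping of error terms in part~(3) --- in particular, that replacing $N$ by $kn$ inside a logarithm costs only $\mathcal{O}(\log n)$ after multiplication by $\tfrac{kn}{2}$, and that the two $n\log n$ contributions cancel exactly --- together with, for parts~(1)--(2), recognizing the pleasant coincidence that $E(k,n)$ is literally the ball volume $|B(k,n-k)|$ dressed up, which is what makes part~(2) a one-line consequence of Theorem~\ref{thm:estim}.
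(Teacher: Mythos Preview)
Your proposal is correct and follows essentially the same route as the paper: both start from Corollary~\ref{cor:edeg-im}, bound $|C(k,n-k)|$ by $|B(k,n-k)|$ via Lemma~\ref{le:radius-D}, verify with the duplication formula that the resulting expression is exactly $E(k,n)$, invoke Theorem~\ref{thm:estim} for the asymptotic sharpness in~(2), and then for~(3) combine Lemma~\ref{le:volG} with Stirling's expansion of $\log|\RP^N|$ to see that the $n\log n$ terms cancel and the $\sqrt{k}$ factors cancel against $\tfrac{kn}{2}\log k$.
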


\begin{proof}
Recall the formula for $\edeg G(k,n)$ given in Corollary~\ref{cor:edeg-im}.  
In this formula, we replace $|C(k,n-k)|$ by the larger volume of the ball~$B(k,n-k)$, 
obtaining the quantity 
\be\label{eq:defE}
 E(k,n) := |G(k,n)| \cdot\frac{N!}{2^{N}} \cdot |B(k,n-k)| ,
\ee
which satisfies $\edeg G(k,n) \le E(k,n)$. 
Theorem~\ref{thm:estim} implies that 
$\log E(k,n) - \log\edeg G(k,n) = \log |B(k,n-k)| - \log |C(k,n-k)| = \mathcal{O}(\log n)$ 
for fixed~$k$ and $n\to\infty$.

We verify now that the $E(k,n)$ defined in \eqref{eq:defE} indeed has the form stated in assertion~(1). 
Recall that $B(k,n-k)$ is the ball in $\R^N$ with the radius $R_k=\rho_k/\sqrt{2\pi k}$; cf.\ Lemma~\ref{le:radius-D}. 
We have $|B(k,n-k)| = R_k^N |B_{\R^N}(0,1)|$ and 
$$
  |B_{\R^N}(0,1)| = \frac{1}{N} |S^{N-1}| = \frac{2}{N} \frac{\pi^{\frac{N}{2}}}{\Gamma(N/2)}
           = \frac{\pi^{\frac{N}{2}}}{\Gamma(\frac{N+2}{2})} .
$$ 
Plugging this into \eqref{eq:defE}, and simplifying with the help of the duplication formula
$N!=\frac{2^N}{\sqrt{\pi}}\cdot \Gamma\left(\frac{N+1}{2}\right)\cdot\Gamma\left(\frac{N+2}{2}\right)$,  
yields
$$
 E(k,n) = |G(k,n)| \cdot \frac{\Gamma\left(\frac{N+1}{2}\right)}{\pi^{\frac{N+1}{2}}} \cdot \left(\rho_k\sqrt{\frac{\pi}{2k}}\right)^{N} 
 = |G(k,n)| \cdot \frac{1}{|\RP^N|} \cdot \left(\sqrt{\frac{\pi}{2}}\, \frac{\rho_k}{\sqrt{k}}\right)^N ,
$$
as claimed in assertion~(1).  

It remains to prove the asymptotic of $E(k,n)$ stated in assertion~(3). By the first assertion and \eqref{eq:def-rho} 
\begin{align} 
\log E(k,n)
 &=\log\left(\frac{|G(k,n)|}{|\RP^N|}\cdot \frac{1}{k^{N/2}}\cdot 
 \left(\frac{\sqrt{\pi}\,\Gamma \left(\frac{k+1}{2}\right)}{\Gamma\left(\frac{k}{2}\right)}\right)^N\right)\\
 &=\label{eq:logedeg}k n\log\left(\frac{\sqrt{\pi}\,\Gamma \left(\frac{k+1}{2}\right)}{\Gamma\left(\frac{k}{2}\right)}\right)
 +\log \left(\frac{|G(k,n)|}{|\RP^N|}\cdot \frac{1}{k^{N/2}}\right)+\mathcal{O}(\log n).
\end{align}
Lemma~\ref{le:volG} states that for fixed $k$ and $n\to\infty$ 
\be \label{eq:lG}
 \log |G(k,n)| = -\frac12 kn\log n + \frac12 kn\log (2e\pi)+\mathcal{O}(\log n).
\ee
Moreover, for $|\RP^N|=\frac{1}{2}|S^N|$ we obtain that, using 
$\log\Gamma(x) = x\log x - x + \mathcal{O}(\log x)$ for $x\to\infty$, 
\be \label{eq:lP}
 \log |\RP^N|=-\frac12 kn\log n+\frac12 kn\log\left(\frac{2e\pi}{k}\right)+\mathcal{O}(\log n)
\ee
Combining \eqref{eq:lG} and \eqref{eq:lP}, it follows that
$\log\left(\frac{|G(k,n)|}{|\RP^N|}\cdot \frac{1}{k^{N/2}} \right)=\mathcal{O}(\log n)$,
which completes the proof. 
\end{proof}

We compare now the expected degree of the real Grassmannian with the 
degree of the corresponding complex Grassmannian. 

\begin{cor}\label{cor:exponent}
For $k\ge 2$ we have 
$$
 \e_k := \lim_{n\to\infty} \frac{\log\edeg G(k,n)}{\log\sqrt{\deg G_\C(k,n)}} 
          = \log_k \left(\frac{\pi\rho_k^2}{2}\right) .
$$
Moreover, the sequence $\e_k$ is monotonically decreasing and $\lim_{k\to\infty} \e_k = 1$. 
\end{cor}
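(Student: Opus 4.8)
The plan is to extract $\e_k$ from the asymptotics already in hand, and then to study the resulting elementary function of $k$. By Theorem~\ref{thm:ased}(2) and (3) we have $\log\edeg G(k,n) = \log E(k,n) - \mathcal{O}(\log n)$ and $\log E(k,n) = kn\log\big(\sqrt{\pi}\,\Gamma(\tfrac{k+1}{2})/\Gamma(\tfrac{k}{2})\big) - \mathcal{O}(\log n)$; rewriting $\Gamma(\tfrac{k+1}{2})/\Gamma(\tfrac k2) = \rho_k/\sqrt{2}$ via \eqref{eq:def-rho}, this reads $\log\edeg G(k,n) = \tfrac12 kn\log(\pi\rho_k^2/2) - \mathcal{O}(\log n)$. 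Since $\log\sqrt{\deg G_\C(k,n)} = \tfrac12 kn\log k + \mathcal{O}(\log n)$ by Lemma~\ref{le:volG}, dividing and letting $n\to\infty$ (so $kn\to\infty$ and the error terms get absorbed) gives $\e_k = \log(\pi\rho_k^2/2)/\log k = \log_k(\pi\rho_k^2/2)$, as asserted. For the limit $\e_k\to 1$ I would simply insert $\tfrac{k^2}{k+1}\le\rho_k^2\le k$ (the square of \eqref{eq:rho-estim}), which sandwiches $\e_k$ between $\big(\log(\pi/2)+\log k-\log(1+\tfrac1k)\big)/\log k$ and $\big(\log(\pi/2)+\log k\big)/\log k$, both tending to $1$.

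For monotonicity I would start from the identity $\rho_k\rho_{k+1}=k$, immediate from \eqref{eq:def-rho} and $\Gamma(\tfrac k2+1)=\tfrac k2\Gamma(\tfrac k2)$. Writing $a_k:=\pi\rho_k^2/2$ it says $a_ka_{k+1}=(\pi k/2)^2$, hence $\log a_{k+1}=2\log(\pi k/2)-\log a_k$. Substituting this into $\e_{k+1}<\e_k$ (equivalently $\log k\cdot\log a_{k+1}<\log(k+1)\cdot\log a_k$, both logarithms being positive for $k\ge2$) and simplifying, the whole statement $\e_{k+1}<\e_k$ reduces to the single elementary inequality
\[
 \log\frac{k}{\rho_k^2}\ <\ \frac{\log(1+\tfrac1k)}{\log k+\log(k+1)}\,\Big(\log k+\log\tfrac{\pi}{2}\Big)\qquad(k\ge 2).
\]
I would prove this by checking $k$ below an explicit threshold $k_0$ directly (each $\rho_k^2=2\Gamma(\tfrac{k+1}2)^2/\Gamma(\tfrac k2)^2$ being an explicit rational multiple of a power of $\pi$), and for $k\ge k_0$ using the sharper bound $\rho_k^2\ge k-\tfrac12$ — a Kershaw-type strengthening of \eqref{eq:rho-estim}, obtainable from the log-convexity of $\Gamma$ — which bounds the left side by $\log\tfrac{2k}{2k-1}=\tfrac1{2k}+\mathcal{O}(k^{-2})$, while the right side equals $\tfrac1{2k}\big(1+\tfrac{\log(\pi/2)}{\log k}\big)+\mathcal{O}(k^{-2})$ and is therefore strictly larger once $k$ is large, the decisive gain coming from $\log(\pi/2)>0$, i.e.\ from $\pi>2$.

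The main obstacle is precisely this last inequality: the crude estimate $\rho_k^2\ge\tfrac{k^2}{k+1}$ recorded in \eqref{eq:rho-estim} turns out to be too weak here (with it the two sides cross for small $k$), so one genuinely needs both the improved Gamma-function bound and a finite case check. Everything else is routine bookkeeping with the asymptotics already established.
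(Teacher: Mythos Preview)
Your derivation of the formula $\e_k=\log_k(\pi\rho_k^2/2)$ and of the limit $\e_k\to 1$ is exactly the paper's argument: combine Theorem~\ref{thm:ased} with the asymptotic $\log\deg G_\C(k,n)=kn\log k+\mathcal{O}(\log n)$ from Lemma~\ref{le:volG}, and then use \eqref{eq:rho-estim} for the limit. There is nothing to add here.

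For the monotonicity you actually go \emph{further} than the paper, which simply writes ``the verification of the monotonicity of $\e_k$ is left to the reader'' and gives no argument at all. Your reduction via the identity $\rho_k\rho_{k+1}=k$ (hence $a_ka_{k+1}=(\pi k/2)^2$) to the single inequality
\[
\log\frac{k}{\rho_k^2}\ <\ \frac{\log(1+\tfrac1k)}{\log k+\log(k+1)}\Big(\log k+\log\tfrac{\pi}{2}\Big)
\]
is correct algebra, and your diagnosis that \eqref{eq:rho-estim} is too weak while a Kershaw-type bound $\rho_k^2\ge k-\tfrac12$ suffices asymptotically (with a finite check below some $k_0$) is accurate: with $\rho_k^2\ge k-\tfrac12$ the left side is $\tfrac{1}{2k}+\mathcal{O}(k^{-2})$, the right side is $\tfrac{1}{2k}\big(1+\tfrac{\log(\pi/2)}{\log k}\big)+\mathcal{O}(k^{-2})$, and the term $\tfrac{\log(\pi/2)}{2k\log k}$ eventually dominates the $\mathcal{O}(k^{-2})$ error. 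So the outline is sound, but --- as you yourself flag --- it is not yet a complete proof: you would still owe the Kershaw bound (not provided in the paper), an explicit $k_0$, and the finite verification. Since the paper offers no proof at all for this part, there is no alternative approach to compare against; yours is a reasonable way to carry it out.
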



\begin{proof}
The formula for $\e_k$ follows from Theorem~\ref{thm:ased} 
and the asymptotic
$\edeg G_\C(k,n) = kn\log k + \mathcal{O}(\log n)$ 
from Lemma~\ref{le:volG}.
We have $\lim_{k\to\infty} \e_k = 1$ due to~\eqref{eq:rho-estim}.  
The verification of the monotonicity of $\e_k$ is left to the reader.
\end{proof}

Corollary~\ref{cor:exponent} implies that for fixed $k$: 
\be\label{eq:sqr-law}
 \edeg G(k,n) = \deg G_\C(k,n)^{\frac12 \e_k + o(1)} \quad \mbox{for $n\to\infty$.}
\ee 
This means that for large $n$, the expected degree of the real Grassmannian
exceeds the square root of the degree of the corresponding complex Grassmannian 
and in the exponent, the deviation from the square root is measured by $\frac12 (\e_k - 1)$. 
For example, $\e_2= 2\log_2(\pi/2)\approx 1.30$. 
For large~$k$, the exponent~$\e_k$ goes to $1$ so that we get the 
{\em square root law} 
\be\label{eq:squarerootlaw} 
 \edeg G(k,n) = \deg G_\C(k,n)^{\frac12 +o(1)}\quad\mbox{for $k,n\to \infty$.}  
\ee

\subsection{The Grassmannian of lines}\label{se:G-lines}

\subsubsection{The expected degree of $G(2,4)$}
We can express this as an integral of the modulus of the function 
\begin{align} 
a(t_1, t_2, t_3, s_1, s_2, s_3)
 =& \cos s_2 \sin s_1\sin s_3 \sin t_2 \sin(t_1-t_3)\\
   &-(\sin s_2)\left(\cos s_1\sin s_3 \sin t_1\sin(t_2-t_3)+\cos s_3 \sin s_1\sin t_3\sin(t_1-t_2) \right) ,
\end{align}
which can be written in the following symmetric way (this obervation is due to Chris Peterson) 
\be 
| a(t, s)|=\left|\det \left(\begin{array}{ccc}\sin t_1\sin s_1 & \sin t_2\sin s_2 & \sin t_3\sin s_3 \\
 \cos t_1 \sin s_1 & \cos t_2\sin s_2 & \cos t_3 \sin s_3 \\
 \sin t_1 \cos s_1 & \sin t_2\cos s_2 & \sin t_3\cos s_3\end{array}\right)\right|.
\ee

\begin{prop}\label{prop:twolines}
We have $\edeg G(2,4) = \frac{1}{2^{13}}\int_{[0, 2\pi]^6}\left| a(t, s)\right|dtds=1.72....$.
\end{prop}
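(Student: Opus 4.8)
The plan is to make $\edeg G(2,4)$ completely explicit by tracing through the integral-geometric machinery already set up in the paper. Starting from Corollary~\ref{cor:edeg-im}, we have
\be
 \edeg G(2,4) = |G(2,4)| \cdot \frac{4!}{2^4} \cdot |C(2,2)|,
\ee
so everything reduces to computing the volume of the Segre zonoid $C(2,2)$, an $O(2)\times O(2)$-invariant convex body in $\R^{2\times2}\simeq\R^4$. Rather than using the singular-value reduction of Theorem~\ref{thm:volconvex} (which would still leave the radial function $r_2$ of $D(2)$ implicit), the cleaner route for a numerical value is to go back one step further: by Definition~\ref{def:alpha} and Equation~\eqref{eq:ided}, together with Theorem~\ref{prop:vspe} giving $|\Sigma(2,4)|/|G(2,4)| = \tfrac12\rho_2^2 = \pi/4$, we get
\be
 \edeg G(2,4) = \alpha(2,2)\cdot |G(2,4)| \cdot \Big(\frac{\pi}{4}\Big)^4,
\ee
where $\alpha(2,2) = \E\,\|(u_1\otimes v_1)\wedge\cdots\wedge(u_4\otimes v_4)\|$ with $u_j,v_j$ independent uniform on $S^1$. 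The wedge of four vectors in the $4$-dimensional space $\R^2\otimes\R^2$ is, up to sign, the determinant of the $4\times4$ matrix whose columns are the coordinate vectors of the $u_j\otimes v_j$; so $\alpha(2,2) = \E\,|\det M|$ where $M$ has columns $u_j\otimes v_j$.

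The next step is to parametrize: writing $u_j = (\cos t_j, \sin t_j)$ and $v_j = (\cos s_j, \sin s_j)$, each tensor $u_j\otimes v_j$ has the four coordinates $\cos t_j\cos s_j$, $\cos t_j\sin s_j$, $\sin t_j\cos s_j$, $\sin t_j\sin s_j$. One then observes that $\det M$, as a polynomial in $\cos,\sin$ of the angles, has a large common factor: pulling trigonometric identities through the $4\times 4$ determinant (this is precisely the computation Chris Peterson's symmetric form encodes) collapses it to the displayed $3\times3$ determinant $a(t,s)$ times an explicit elementary factor. Concretely I would verify, by a direct expansion, that
\be
 \big|\det\big[\,u_1\otimes v_1\ \big|\ \cdots\ \big|\ u_4\otimes v_4\,\big]\big|
   = |a(t-t_4,\, s-s_4)|\cdot(\text{product of }\sin\text{'s}),
\ee
or more symmetrically that after integrating out the redundant overall rotation in each $S^1$ factor one is left with $|a(t,s)|$ on $[0,2\pi]^6$. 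The uniform measure on $(S^1)^8$ pushed to the angle coordinates is $\frac{1}{(2\pi)^8}\,dt_1\cdots dt_4\,ds_1\cdots ds_4$; after the change of variables that fixes $t_4=s_4=0$ (using $2\pi$-periodicity and translation invariance) the two trivial integrations contribute a factor $(2\pi)^2$, and collecting the explicit constants — $|G(2,4)|$ from \eqref{eq:Gr-vol} and \eqref{eq:Ovol}, the $(\pi/4)^4$, the trig prefactor, and $(2\pi)^{-8}$ — yields exactly the constant $1/2^{13}$ in front of $\int_{[0,2\pi]^6}|a(t,s)|\,dt\,ds$. Finally the numerical value $1.72\ldots$ is obtained by evaluating this $6$-dimensional integral numerically.

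The main obstacle is the bookkeeping in the two middle steps: first, showing cleanly that $\det[u_1\otimes v_1|\cdots|u_4\otimes v_4]$ factors as (Peterson's $3\times3$ determinant) times an explicit trigonometric monomial — this is an identity among degree-$8$ trigonometric polynomials and must be checked by honest expansion (or by a symmetry/antisymmetry argument: the determinant vanishes whenever two of the $t_j$ agree or two of the $s_j$ agree only through the coupled structure, so one must be careful about which factors genuinely divide it); and second, correctly assembling all the normalizing constants — $|O(2)| = 2\cdot(2\pi)$ hence $|G(2,4)| = |O(4)|/|O(2)|^2$, the Vitale factor $4!/2^4$, the $(\pi/4)^4$, and the Jacobian $(2\pi)^{-8}(2\pi)^2$ from the angle parametrization — so that they combine to precisely $2^{-13}$. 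Neither step is conceptually deep, but an error in either the trig factorization or the constant-chasing would change the final answer, so these are where the care goes; the numerical integration itself is routine.
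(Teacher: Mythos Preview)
Your approach is essentially the same as the paper's: both start from \eqref{eq:ided} with $|G(2,4)|=2\pi^2$ and $|\Sigma(2,4)|/|G(2,4)|=\pi/4$, reduce to computing $\alpha(2,2)=\E\,\|(u_1\otimes v_1)\wedge\cdots\wedge(u_4\otimes v_4)\|$, parametrize the $u_j,v_j$ by angles, and use invariance to fix $u_4=v_4=(1,0)$. The only cosmetic difference is that the paper computes the norm via the Gram determinant $\det(\langle w_i,w_j\rangle)$, while you propose to compute $|\det M|$ directly from the $4\times4$ coordinate matrix; these are the same number.

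There is, however, a small misconception in your write-up that would cause trouble if you followed it literally. There is \emph{no} extra ``product of $\sin$'s'' or ``trig prefactor''. Once $t_4=s_4=0$, the fourth column of $M$ is $u_4\otimes v_4=e_1\otimes e_1=(1,0,0,0)^T$, and cofactor expansion along that column gives exactly the $3\times3$ determinant $a(t,s)$, up to sign. Equivalently, for general $t_4,s_4$ one has $|\det M|=|a(t_1-t_4,\ldots,s_3-s_4)|$ with no additional factor, because $u_j\otimes v_j=(R(t_4)\otimes R(s_4))(u_j'\otimes v_j')$ and $R(t_4)\otimes R(s_4)\in SO(4)$. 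So the constant-chasing is simply
\[
\edeg G(2,4)=2\pi^2\cdot\Big(\frac{\pi}{4}\Big)^4\cdot\frac{1}{(2\pi)^6}\int_{[0,2\pi]^6}|a(t,s)|\,dt\,ds
=\frac{1}{2^{13}}\int_{[0,2\pi]^6}|a(t,s)|\,dt\,ds,
\]
with nothing else to track. Remove the phantom prefactor and your argument is clean and correct.
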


\begin{proof}
Formula~\eqref{eq:ided} implies that 
\be
 \edeg G(2,4) = |G(2,4)| \cdot \left(\frac{|\Sigma(2,4)|}{|G(2,4)|} \right)^{4}  \cdot \alpha(2,2).
\ee
By \eqref{eq:Gr-vol} we have $|G(2,4)|=2\pi^2$ and from Theorem~\ref{prop:vspe} 
we deduce $\frac{|\Sigma(2,4)|}{|G(2,4)|} =\frac{\pi}{4}$.
By definition, we have 
$\alpha(2,2)= \E \| (u_1\ot v_1) \wedge \ldots \wedge (u_{4} \ot v_{4})\|$, 
where the $u_1,u_2,u_3,v_1,v_2,v_3\in S^1$ are independently and uniformly chosen at random, 
and we can assume $u_4=v_4=(1,0)$ by the invariance of the problem. 
We can thus write 
$u_i=(\cos t_i, \sin t_i)$ and $v_i=(\cos s_i, \sin s_i)$ 
where $(t_i, s_i)\in [0, 2\pi]^2$ are random with the uniform density.
Setting $w_i :=u_i\ot v_i$, we have 
by \eqref{eq:norm-fomula} that 
$\| (u_1\ot v_1) \wedge \ldots \wedge (u_{4} \ot v_{4})\|^2=\det \left(\langle w_i, w_j\rangle\right)$.
Moreover, 
\be 
\langle w_i, w_j\rangle=\langle u_i, u_j\rangle \langle v_i, v_j\rangle=(\cos t_i \cos t_j+\sin t_i \sin t_j)(\cos s_i \cos s_j+\sin s_i\sin s_j).
\ee
Expanding the determinant, we see after a few simplifications that 
$\det \left(\langle w_i, w_j\rangle\right)=a^2$
with the function~$a$ defined above. 
Consequently:
\begin{align}
 \edeg G(2,4) &= 2\pi^2\cdot \left(\frac{\pi}{4}\right)^4\cdot \frac{1}{(2\pi)^6}\int_{[0, 2\pi]^6}\left| a(t,s)\right|dt ds ,
\end{align}
which proves the claim.
\end{proof}

\subsubsection{The general case}

In the case of $G(2, n+1)$, Theorem \ref{thm:ased} provides the asymptotic
\be 
 \log \edeg G(2,n+1) = 2n \log\frac{\pi}{2}+\mathcal{O}(\log n)\quad \textrm{as $n\to \infty$}.
\ee
We sharpen this by proving the following result. 

\begin{thm}\label{thm:edlines}
The expected degree of the Grassmannian of lines satisfies 
\be 
 \edeg G(2,n+1)= \frac{8}{3\pi^{5/2}}\frac{1}{\sqrt{n}}  \left(\frac{\pi^2}{4} \right)^n \left(1+\mathcal{O}(n^{-1})\right).
\ee
\end{thm}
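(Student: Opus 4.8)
The plan is to follow the same strategy that underlies Theorem~\ref{thm:estim} and Theorem~\ref{thm:ased}, but now for $k=2$ we can push the Laplace-method computation through to extract not only the exponential rate but also the precise polynomial factor and leading constant. The starting point is Corollary~\ref{cor:edeg-im}, which gives
\be
 \edeg G(2,n+1) = |G(2,n+1)| \cdot \frac{N!}{2^{N}} \cdot |C(2,n-1)|, \qquad N := 2(n-1).
\ee
So everything reduces to a sufficiently precise asymptotic for $|C(2,n-1)|$, and then to combining it with the known closed forms for $|G(2,n+1)|$ via \eqref{eq:Gr-vol}, \eqref{eq:Ovol} and the duplication formula for $N!$, which are all routine (Stirling-type) manipulations.

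First I would compute the radial function $r_2$ of the singular value zonoid $D(2)\subseteq\R^2$ explicitly. By Proposition~\ref{pro:prop-D}, the support function of $D(2)$ is $\frac{1}{\sqrt{2\pi}}g_2$, where $g_2(\s_1,\s_2) = \E(\s_1^2 z_1^2 + \s_2^2 z_2^2)^{1/2}$ for i.i.d.\ standard normals $z_1,z_2$; this expectation is an elementary integral (a complete elliptic integral, in fact) and $D(2)$ is a concrete $H_2$-invariant planar convex body whose boundary — hence whose radial function $r_2(\theta)$ in polar coordinates — can be written down. Then I would apply Theorem~\ref{thm:volconvex} with $k=2$, $m=n-1$:
\be
 |C(2,n-1)| = \frac{|O(2)|\,|S(2,n-1)|}{2(n-1)\cdot 4}\int_{S^1_+} \bigl(p_2\, r_2^2\bigr)^{n-1}\, q_2 \, dS^1,
\ee
and parametrize $S^1_+$ by the angle so that the integral becomes $\int_{0}^{\pi/4} e^{-(n-1)a(\theta)}b(\theta)\,d\theta$ for explicit smooth functions $a,b$, where $a(\theta) = -\log\bigl(p_2(\theta)\,r_2(\theta)^2\bigr)$ and $b$ absorbs $q_2$ and the Jacobian. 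The minimum of $a$ on $[0,\pi/4]$ is attained at the endpoint $\theta_u$ corresponding to $\s_u = \frac{1}{\sqrt2}(1,1)$ (this is exactly Lemma~\ref{le:radius-D}), with minimum value $-\log\bigl(\tfrac12 R_2^2\bigr)$ where $R_2 = \rho_2/\sqrt{4\pi} = 1/(2\sqrt 2)$ since $\rho_2=\sqrt{\pi/2}$. I would then expand $a$ and $b$ near that endpoint — $a(\theta) = a(\theta_u) + a_0(\theta-\theta_u)^\mu + \cdots$ and $b(\theta) = b_0(\theta-\theta_u)^{\nu-1}+\cdots$ — determine the exponents $\mu,\nu$ and constants $a_0,b_0$ (the key subtlety being the vanishing order of $q_2 = p_2^{-2}|\s_1^2-\s_2^2|$ at $\s_u$, which is $\binom{2}{2}=1$, and the order of vanishing of $\theta-\theta_u \mapsto \theta_u - \theta$ in $r_2$), and apply Theorem~\ref{thm:Laplace}. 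This yields $|C(2,n-1)|$ up to a factor $(1+\mathcal{O}(n^{-1}))$.

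The main obstacle I anticipate is purely computational stamina rather than conceptual difficulty: one must get the constants exactly right. In particular, the elliptic-integral expression for $g_2$ and hence the radial function $r_2$ must be expanded accurately enough near $\s_u$ (to second order, to read off $\mu$ and $a_0$), and the local behavior of $q_2$ and of the Jacobian near the maximizing direction must be tracked to the right order to pin down $\nu$ and $b_0$. Then the several Gamma-function factors coming from $|O(2)|$, $|S(2,n-1)|$, $|G(2,n+1)|$, $N!$, and the $\Gamma(\nu/\mu)$ from Laplace's method must all be combined and simplified via the duplication formula; any slip of a factor of $2$ or $\pi$ there propagates into the final constant $\frac{8}{3\pi^{5/2}}$ and the power $n^{-1/2}$. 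As a sanity check I would verify that the exponential rate $(\pi^2/4)^n$ reproduces the value $\frac12\rho_2^2\cdot\frac{\pi}{1} = \pi^2/4$ coming from $\frac12\rho_k\rho_{n-k}$-type bookkeeping and matches the $\log$-asymptotic $2n\log(\pi/2)$ already established in Theorem~\ref{thm:ased}, and cross-check the $n\to\infty$ consistency against the exact value $\edeg G(2,4)=1.72\ldots$ from Proposition~\ref{prop:twolines} at $n=3$ only qualitatively (it is not in the asymptotic regime, so only the order of magnitude should agree).
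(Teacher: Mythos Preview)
Your proposal is correct and follows essentially the same route as the paper: start from Corollary~\ref{cor:edeg-im}, express $|C(2,n-1)|$ via Theorem~\ref{thm:volconvex} as a one-dimensional integral over $[0,\pi/4]$, expand $r_2(\theta)^2$ near $\theta=\pi/4$ using the elliptic-integral form of the support function (this is the content of Proposition~\ref{prop:r2}), apply Theorem~\ref{thm:Laplace} with $\mu=\nu=2$, and combine all the Gamma-function factors via the duplication formula. The paper carries out exactly these steps, obtaining $a_0=3$ and $b_0=8$ in the Laplace expansion.
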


\begin{proof}
By Corollary \ref{cor:edeg-im} we have 
\be\label{eq:690} 
 \edeg G(2, n+1)=|G(2, n+1)|\cdot \frac{(2n-2)!}{2^{2n-2}}\cdot |C(2, n-1)|.
\ee
The volume of the Grassmannian can be computed using \eqref{eq:Gr-vol}:
\be\label{eq:691} 
  |G(2, n+1)| = \frac{|O(n+1)|}{|O(2)|\cdot |O(n-1)|} = \frac{|S^n|\cdot |S^{n-1}|}{|O(2)|} 
   =\frac{\pi^{n-\frac12}}{\Gamma\left(\frac{n}{2}\right)\Gamma\left(\frac{n+1}{2}\right)}=\frac{(2\pi)^{n-1}}{(n-1)!} ,
\ee
where we used the duplication formula for the last inequality. Moreover, 
$$
 |S(2,n-1)| \stackrel{\eqref{eq:Ovol}}{=} \frac{4\pi^{n-1}}{\pi^\frac12\Gamma(\frac{n-1}{2})\Gamma(\frac{n-2}{2})} 
  = \frac{2^{n-1}\pi^{n-2}}{(n-3)!} .  
$$
Theorem \ref{thm:volconvex} describes the volume of $C(2, n-1)$:
\begin{align} 
 |C(2,n-1)|&=\frac{|O(2)|\cdot|S(2, n-1)|}{(2n-2)2^2}
   \int_{0}^{\frac{\pi}{4}}\left(r_2(\theta)^2\cos \theta\sin \theta\right)^{n-1} 
           \frac{(\cos \theta)^2-(\sin \theta)^2}{\left(\cos \theta \sin \theta\right)^2}d\theta\\
\label{eq:692}&=\frac{2^{n-2}\pi^{n-1}}{(n-1)\Gamma(n-2)}\int_{0}^{\frac{\pi}{4}}\left(r_2(\theta)^2\cos \theta\sin \theta\right)^{n-1} 
   \frac{(\cos \theta)^2-(\sin \theta)^2}{\left(\cos \theta \sin \theta\right)^2}d\theta,
\end{align}
where here and below, abusing notation, 
we denote by $r_2(\theta)$ the function $r_2(\cos\theta, \sin \theta)$. 

Plugging \eqref{eq:691} and \eqref{eq:692} into \eqref{eq:690}, and simplifying, we obtain:
\be\label{eq:693}  
 \edeg G(2, n+1)=\frac{\pi^{2n-2}\Gamma(2n-1)}{(2n-2)\Gamma(n-2)\Gamma(n)}
   \int_{0}^{\frac{\pi}{4}}\left(r_2(\theta)^2\cos \theta\sin \theta\right)^{n-1} 
   \frac{(\cos \theta)^2-(\sin \theta)^2}{\left(\cos \theta \sin \theta\right)^2}d\theta.
\ee
We will now use Laplace's method for the evaluation of the integral in the above equation; first we will write it as:
\be 
 I(\lambda)=\int_{0}^{\frac{\pi}{4}}e^{-\lambda a(\theta)}b(\theta)d\theta,
\ee
where $\lambda=n-1$ and where we have set
\be 
  a(\theta) := -\log(r_2(\theta )^2\cos \theta \sin \theta), \quad 
  b(\theta) :=\frac{(\cos \theta)^2-(\sin \theta)^2}{\left(\cos \theta \sin \theta\right)^2}.
\ee
In order to apply Laplace's method, we verify now that the hypotheses (1)--(4) of Theorem~\ref{thm:Laplace} are satisfied. 
(Note that the minimum of $a$ is attained at the {\em right} extremum of the interval of integration, 
which leads to minor modifications.) 
\begin{itemize}
\item[(1)] 
By Proposition~\ref{prop:r2} below, 
the function $r_2(\theta)^2$ is smooth in a neighborhood of $\frac{\pi}{4}$ and 
has the following asymptotic expansion for $\theta\to\frac{\pi}{4}$
\be\label{eq:seriesr2} 
  r_2(\theta)^2=\frac{1}{8}-\frac{1}{8}(\theta-\frac{\pi}{4})^2+\mathcal{O}\left( (\theta-\frac{\pi}{4})^3\right).
\ee
Consequently the function $a(\theta)=\log (r_2(\theta)^2\cos \theta \sin \theta)$ 
is smooth in a neighborhood of $\frac{\pi}{4}$ and, 
as $\theta\to \frac{\pi}{4}$, we can easily deduce
\be 
 a(\theta)=4\log 2+3 \left(\theta-\frac{\pi }{4}\right)^2 + \mathcal{O}\left( (\theta-\frac{\pi}{4})^3\right). 
\ee
\item[(2)] 
The function $b$ is smooth in a neighborhood of $\frac{\pi}{4}$, and, as $\theta\to\frac{\pi}{4}$, 
\be 
 b(\theta)=-8\left(\theta-\frac{\pi}{4}\right) + \mathcal{O}\left( (\theta-\frac{\pi}{4})^2\right). 
\ee
\item [(3)] 
The function $\cos \theta\sin\theta$ is monotonically increasing on $[0, \pi/4]$, 
hence it has a unique maximum at $\frac{\pi}{4}$; moreover, by Lemma~\ref{le:radius-D}, 
the function $r_2(\theta)$ also has a unique maximum on $[0, \pi/4]$ at $\frac{\pi}{4}$. 
Therefore, $a$ has a unique minimum at $\theta=\frac{\pi}{4}$ on the interval $[0, \pi/4]$. 

\item[(4)] 
The integrand in $I(\lambda)$ is nonnegative and $I(\lambda)<\infty$ for every $\lambda$, since $\edeg G(2,n+1)$ is finite. 
\end{itemize}

We can now apply Theorem \ref{thm:Laplace} with $\mu=\nu=2$, $a_0=3$ and $b_0=8$ 
(we have to change the sign of $b_0$ because the minimum is now attained at the \emph{right} extremum of the interval of integration). 
This implies for $\lambda\to\infty$: 
\begin{align}
 I(\lambda)&=e^{-\lambda 4\log 2}\frac{1}{\lambda}\cdot \frac{8}{3\cdot 2}\cdot \left(1+\mathcal{O}(\lambda^{-3/2})\right)\\
\label{eq:694}
 &=2^{-4(n-1)}\frac{1}{n-1}\cdot \frac{4}{3}\cdot \left(1+\mathcal{O}(n^{-3/2})\right).
\end{align}
We plug in now \eqref{eq:694} into \eqref{eq:693}, obtaining 
\be\label{eq:edeg2}
 \edeg  G(2, n+1)=\frac{\pi^{2n-2}\Gamma(2n-1)}{(2n-2)\Gamma(n-2)\Gamma(n)}\cdot 2^{-4(n-1)}\frac{1}{n-1}\cdot 
    \frac{4}{3}\cdot \left(1+\mathcal{O}(n^{-3/2})\right).\ee
We use the duplication formula for the Gamma function and write
\be\label{eq:duplication} 
 \Gamma(2n-1)=\frac{\Gamma(2n)}{(2n-1)}=\frac{\Gamma(n)\Gamma\left(n+\frac{1}{2}\right)2^{2n-1}}{\sqrt{\pi}(2n-1)}.
\ee
Using \eqref{eq:duplication} in\eqref{eq:edeg2}, we get
\begin{align}\label{eq:edeg3}
 \edeg  G(2, n+1)&=\left(\frac{\pi}{2}\right)^{2n}\frac{8}{3\pi^{5/2}}\frac{1}{n^3}\frac{\Gamma\left(n+\frac{1}{2}\right)}{\Gamma(n-2)}  
   \left(1+\mathcal{O}(n^{-1})\right)\\
&=\left(\frac{\pi}{2}\right)^{2n}\frac{8}{3\pi^{5/2}}\frac{1}{\sqrt{n}} \left(1+\mathcal{O}(n^{-1})\right) ,
\end{align}
where in the last step we have used the asymptotic $\frac{\Gamma\left(n+\frac{1}{2}\right)}{\Gamma(n-2)}=n^{5/2}(1+\mathcal{O}(n^{-1}))$. 
This concludes the proof.
\end{proof}

\begin{figure}
\includegraphics[scale=0.4]{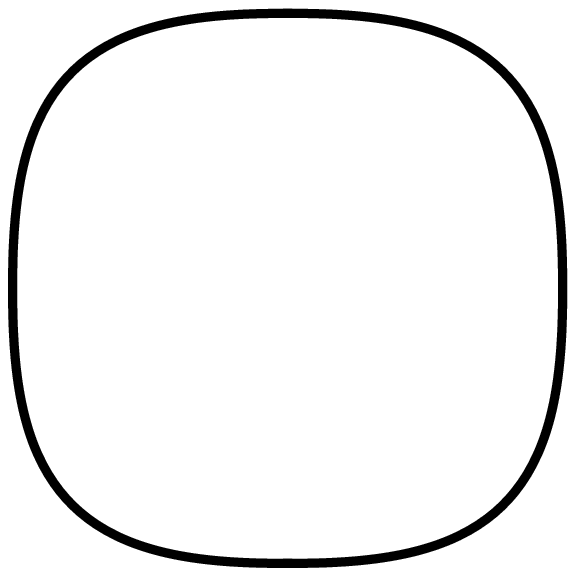} \hspace{2cm}
\includegraphics[scale=0.4]{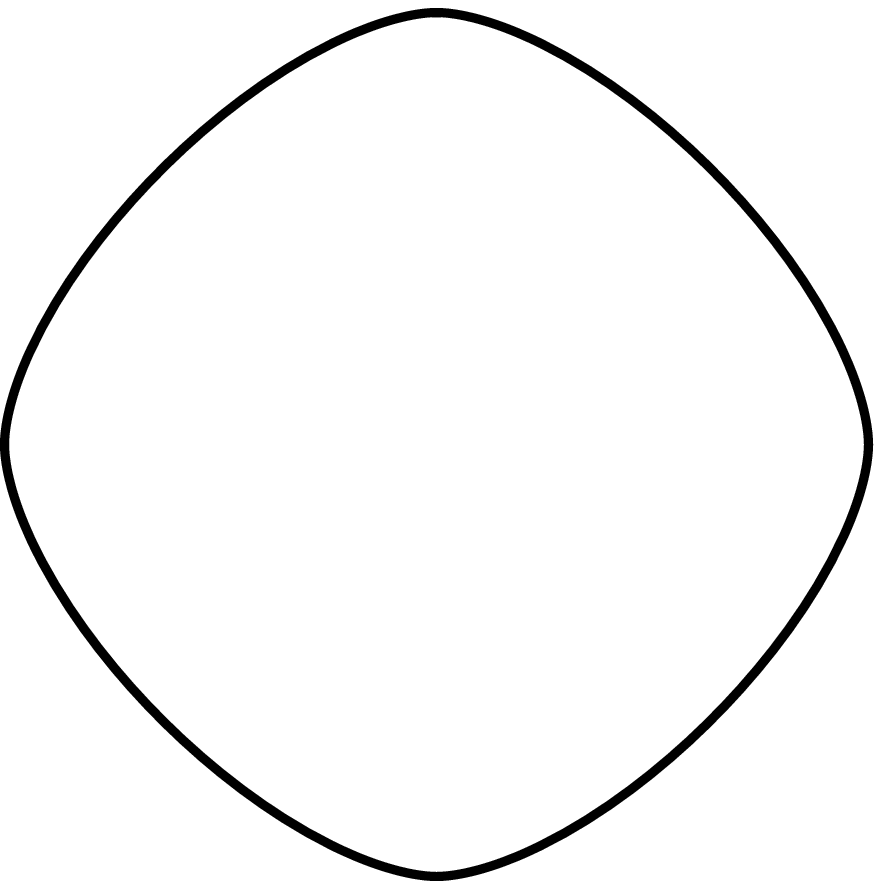}
\caption{\small The convex body $D(2)$ on the left and its polar $D(2)^\circ$ on the right 
 (the bodies are centered at the origin, and the two pictures have the same scale).}\label{fig:D2}
\end{figure}

It remains to show the proposition used in the proof of Theorem \ref{thm:edlines}. 

\begin{prop}\label{prop:r2}
The radial function $r_2(\theta)$ of the singular value zonoid $D(2)$,
parameterized by the angle~$\theta$, 
is smooth in a neighborhood of $\frac{\pi}{4}$.  
Moreover, we have the following expansion  
\be 
r_2(\theta)^2=\frac{1}{8}-\frac{1}{8}\big(\theta -\frac{\pi}{4}\big)^2 
 + \mathcal{O}\left(\big(\theta -\frac{\pi}{4}\big)^4\right) 
\quad \textrm{as $\theta\to \frac{\pi}{4}$}.
\ee
\end{prop}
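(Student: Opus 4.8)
The plan is to combine the explicit description of the support function of $D(2)$ from Proposition~\ref{pro:prop-D} with the duality between radial and support functions of a convex body, and to read off the expansion by an envelope (Laplace-type) computation. First I would make $h_{D(2)}$ explicit on the unit circle: by Proposition~\ref{pro:prop-D}, $h_{D(2)}=\frac{1}{\sqrt{2\pi}}\,g_2$ with $g_2(\sigma_1,\sigma_2)=\E\sqrt{\sigma_1^2 z_1^2+\sigma_2^2 z_2^2}$, and passing to polar coordinates in $(z_1,z_2)$ and using $\E\|(z_1,z_2)\|=\rho_2$ gives, for $G(\psi):=g_2(\cos\psi,\sin\psi)$, the closed form
\be
 G(\psi)=\frac{\sqrt 2\,\rho_2}{\pi}\int_0^{\pi/2}\sqrt{1+\cos(2\psi)\cos(2\phi)}\,d\phi ,
\ee
a complete elliptic integral. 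For $\psi$ near $\frac\pi4$ the integrand is real-analytic in $\psi$ and bounded away from zero, so $G$ is real-analytic near $\frac\pi4$, and it is invariant under $\psi\mapsto\frac\pi2-\psi$ (coordinate swap), hence even about $\frac\pi4$. Expanding $\sqrt{1+t\cos 2\phi}$ in powers of $t=\cos 2\psi$, using $\int_0^{\pi/2}\cos 2\phi\,d\phi=0$, $\int_0^{\pi/2}\cos^2 2\phi\,d\phi=\frac\pi4$, and $\cos 2\psi=-2(\psi-\frac\pi4)+\mathcal{O}((\psi-\frac\pi4)^3)$, I get $G(\frac\pi4)=\frac{\rho_2}{\sqrt2}$, $G'(\frac\pi4)=0$, $G''(\frac\pi4)=-\frac{\rho_2}{2\sqrt2}$.

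Next I would prove that $r_2$ is smooth near $\frac\pi4$; this is the step needing care, since a general zonoid can have corners and flat faces, so one must exploit that $g_2$ is smooth at the exceptional direction $\sigma_u=\frac1{\sqrt2}(1,1)$. Since $h_{D(2)}$ is (real-analytic, hence) smooth near $\sigma_u$, Proposition~\ref{prop:nabla} shows that $\partial D(2)$, near the point $p:=\nabla h_{D(2)}(\sigma_u)$ it exposes, is the curve $\psi\mapsto\gamma(\psi):=\nabla h_{D(2)}(\cos\psi,\sin\psi)$; a direct computation using $1$-homogeneity gives $\gamma'(\psi)=\frac{1}{\sqrt{2\pi}}\big(G(\psi)+G''(\psi)\big)(-\sin\psi,\cos\psi)$, which is nonzero at $\frac\pi4$ because $G(\frac\pi4)+G''(\frac\pi4)=\frac{\rho_2}{2\sqrt2}>0$. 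Thus $\partial D(2)$ is a regular (positively curved) analytic arc near $p$, so $p$ has the single outer normal $\sigma_u$; moreover $|p|=r_2(\frac\pi4)=R_2$ by Lemma~\ref{le:radius-D} and the supporting line at $p$ is at distance $h_{D(2)}(\sigma_u)=R_2>0$ from the origin, so near $p$ this arc is a radial graph over the angle, whence $r_2$ is real-analytic near $\frac\pi4$ and even in $\theta-\frac\pi4$; also $r_2(\frac\pi4)^2=R_2^2=\frac18$ by $\rho_2=\sqrt{\pi/2}$.

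Finally I would compute the second-order coefficient. Since $0\in\mathrm{int}\,D(2)$, the radial/support duality reads $\sqrt{2\pi}\,r_2(\theta)=\min_\psi F(\psi;\theta)$ with $F(\psi;\theta):=G(\psi)\sec(\theta-\psi)$, the minimum over $\psi$ with $\cos(\theta-\psi)>0$; by the uniqueness of the normal at $p$ together with a compactness argument, the minimizer for $\theta$ near $\frac\pi4$ is the unique critical point $\psi^\ast(\theta)$ near $\frac\pi4$, solving $\tan(\theta-\psi)=G'(\psi)/G(\psi)$, which is nondegenerate because $\partial_\psi^2 F(\frac\pi4;\frac\pi4)=G(\frac\pi4)+G''(\frac\pi4)>0$, so $\psi^\ast$ is smooth by the implicit function theorem. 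Writing $\phi(\theta):=\sqrt{2\pi}\,r_2(\theta)=F(\psi^\ast(\theta);\theta)$, the envelope identity gives $\phi'(\theta)=\partial_\theta F(\psi^\ast(\theta);\theta)$, which vanishes at $\frac\pi4$ since $\partial_\theta F\propto\tan(\theta-\psi)$; one more differentiation, using $\frac{d\psi^\ast}{d\theta}=-\partial_\psi\partial_\theta F/\partial_\psi^2 F$, yields
\be
 \phi''(\tfrac\pi4)=\Big(\partial_\theta^2 F-\frac{(\partial_\psi\partial_\theta F)^2}{\partial_\psi^2 F}\Big)\Big|_{(\pi/4,\pi/4)}=\frac{G(\tfrac\pi4)\,G''(\tfrac\pi4)}{G(\tfrac\pi4)+G''(\tfrac\pi4)}=-\frac{\rho_2}{\sqrt2},
\ee
where the partials at $(\frac\pi4,\frac\pi4)$ are $G(\frac\pi4)$, $G(\frac\pi4)+G''(\frac\pi4)$ and $-G(\frac\pi4)$ respectively (elementary, from $\sec 0=1$, $\tan 0=0$ and the data above). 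Hence $\phi(\theta)=\frac{\rho_2}{\sqrt2}-\frac{\rho_2}{2\sqrt2}(\theta-\frac\pi4)^2+\mathcal{O}((\theta-\frac\pi4)^4)$, and squaring and dividing by $2\pi$, with $\rho_2^2=\pi/2$, gives $r_2(\theta)^2=\frac{\rho_2^2}{4\pi}\big(1-(\theta-\frac\pi4)^2\big)+\mathcal{O}((\theta-\frac\pi4)^4)=\frac18-\frac18(\theta-\frac\pi4)^2+\mathcal{O}((\theta-\frac\pi4)^4)$, as claimed. The main obstacle is the smoothness assertion — ruling out a corner or a flat face of the zonoid $D(2)$ at $\sigma_u$, which forces the use of the analyticity and the positive curvature $G+G''>0$ there — and, within the expansion, the bookkeeping of the envelope derivatives, where one must not conflate the radial angle $\theta$ with the normal angle $\psi$.
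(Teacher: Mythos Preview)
Your argument is correct and follows essentially the same strategy as the paper: identify the support function of $D(2)$ explicitly as an elliptic integral, use Proposition~\ref{prop:nabla} to parametrize $\partial D(2)$ near $p$ by $\gamma(\psi)=\nabla h_{D(2)}(\cos\psi,\sin\psi)$, and invoke the implicit function theorem (your non\-degeneracy $G+G''>0$ at $\pi/4$ is exactly the paper's $\theta'(\pi/4)\ne 0$) to obtain smoothness of $r_2$. The only tactical difference is in extracting the coefficient: the paper Taylor-expands $h$ in Cartesian coordinates, computes $\gamma_1,\gamma_2$, inverts $\theta(t)$, and evaluates $\|\gamma\|^2$, whereas you work directly with the polar form $G(\psi)$ and the duality $\sqrt{2\pi}\,r_2(\theta)=\min_\psi G(\psi)\sec(\theta-\psi)$, reading off $\phi''$ via the envelope identity $\phi''=\partial_\theta^2 F-(\partial_\psi\partial_\theta F)^2/\partial_\psi^2 F$; this yields the same numbers with somewhat less bookkeeping.
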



\begin{proof}
Let us first outline the idea. 
The main difficulty is that $r_2(\theta)$ is only implicitly defined.  
However, by Proposition~\ref{pro:prop-D}, 
we explicitly know the support function $h$ of $D(2)$. 
(See Figure~\ref{fig:D2} for the shape of $D(2)$ and its polar body). 

Below we will show that $h$ is smooth on $\mathbb{R}^2\backslash\{0\}$, 
hence on this open set its gradient $\nabla h$ exists and it is a smooth function. 
Proposition~\ref{prop:nabla} tells us that 
$\textrm{im} (\gamma)\subseteq \partial D(2)$, 
where the function~$\gamma\colon (0,\frac{\pi}{2}) \to\R^2$ is defined by 
\be\label{eq:defgamma} 
  \gamma(t) := (\nabla h)(\cos t, \sin t).
\ee 
We use $\gamma$ to obtain local parametrization of $\partial D(2)$ around $\pi/4$. 
Note that $\gamma(\pi/4)= \frac{1}{4}(1,1)$ by Lemma~\ref{le:radius-D}.
Let $\theta(t)$ denote the angle that $\gamma(t)$ makes with the $x$-axis, that is, 
$\theta(t) := \arctan(\gamma_2(t)/\gamma_1(t))$.
Then we have
\be\label{eq:r_2char} 
 r_2(\theta(t)) = \|\gamma(t)\|  
\ee 
and $\gamma(\pi/4)=\pi/4$.  
We will show that the derivative of $\theta(t)$ does not vanish at $\pi/4$. 
Then the implicit function theorem implies that we can locally invert 
the function $\theta(t)$ around $\pi/4$ to obtain a smooth function $t(\theta)$,  
and it follows from~\eqref{eq:r_2char} that $r_2(\theta)$ is indeed smooth in a neighborhood of $\pi/4$. 
The Taylor expansion of $r_2(\theta)$ around $\pi/4$ will be derived by calculating 
the expansions of the gradient of $h$ and the resulting expansion of $\theta(t)$
around $\pi/4$. 

We proceed now to prove the smoothness of $h$. 
By Proposition~\ref{pro:prop-D}, $h$ can be written as
\be \label{eq:ell}
h(\sigma_1, \sigma_2) =\frac{1}{\sqrt{2\pi}}\, g_2(\sigma_1, \sigma_2) 
 =\frac{1}{(2\pi)^{3/2}}\int_{\R^2}\left(\sigma_1^2 u_1^2+\sigma_2^2u_2^2\right)^{\frac{1}{2}} e^{-\frac12(u_1^2+u_2^2)}\,du_1 du_2.
\ee 
The above integral can be expressed in terms of the complete elliptic integral of the second kind
\be 
 E(s) :=\int_{0}^{\pi/2}\sqrt{1-s (\sin t)^2}dt, \quad 0\le s\leq 1,
\ee 
which is a smooth function of $s$.
We obtain after a short calculation that 
\be\label{eq:hseriesE} 
 h(\sigma_1,\sigma_2) = \frac{1}{\pi}\, |\sigma_1|\, E\left(1-\frac{\sigma_2^2}{\sigma_1^2}\right) .
\ee 
This formula, combined with 
$h(\sigma_1,\sigma_2) =h(\sigma_2,\sigma_1)$, 
implies that $h$ is a smooth function on $\R^2\backslash \{0\}$.

The elliptic integral $E(s)$ has the following expansion for $s\to 0$
\be 
\label{eq:ellexpa} E(s)=\frac{\pi}{2}-\frac{\pi}{8}s-\frac{3\pi}{128}s^2-\frac{5 \pi}{512}s^3+\mathcal{O}(s^4).
\ee
We can derive from this the third order Taylor expansion of $h$ at $(\mu,\mu)$,
where $\mu:=\sqrt{2}/2$.
Denoting $x_i := \sigma_i -\mu$ and $x:=(x_1,x_2)$, 
we obtain after some calculation the third order Taylor expansion of $h$ 
around $x=(0,0)$: 
\be
\begin{split}
 h(x_1+ \mu,x_2 +\mu) &= \frac{\mu}{2} + \frac{1}{4}x_1 + \frac{1}{4}x_2 
  + \frac{\mu}{16} x_1^2 - \frac{\mu}{8} x_1x_2 + \frac{\mu}{16} x_2^2  \\
 & -\frac{1}{32} x_1^3 + \frac{1}{32}x_1^2 x_2 + \frac{1}{32} x_1 x_2^2 - \frac{1}{32} x_2^3 +\mathcal{O}(\|x\|^4).
\end{split}
\ee
From this, we obtain the Taylor expansion for the gradient~$\nabla h$ around $x=(0,0)$: 
\begin{equation}\label{eq:nabla} 
\begin{split}
\partial_{x_1}h(x_1+ \mu,x_2+ \mu) 
   &=\frac{1}{4}+\frac{\mu}{8}x_1 -\frac{\mu}{8}x_2 -\frac{3}{32}x_1^2+\frac{1}{16}x_1 x_2+\frac{1}{32}x_2^2 + \mathcal{O}(\|x\|^3)\\
\partial_{x_2}h(x_1+ \mu,x_2+ \mu) 
   &=\frac{1}{4}-\frac{\mu}{8}x_1+\frac{\mu}{8}x_2+\frac{1}{32}x_1^2+\frac{1}{16}x_1 x_2-\frac{3}{32} x_2^2 + \mathcal{O}(\|x\|^3) .
\end{split}
\end{equation}
Taking into account the definition $\gamma_i(t) = \partial_{x_i}h(\cos t,\sin t)$ 
and substituting $x_1= \cos t - \mu$, $x_2= \sin t - \mu$, we obtain 
from the above expansion of $\nabla h$ by a straightforward calculation (best done with a computer) 
the following expansions for $\tau\to 0$, 
\be\label{eq:gammas}
 \gamma_1(\tau + \pi/4) = \frac{1}{4} - \frac{1}{8} \tau - \frac{1}{16} \tau^2 +\mathcal{O}(\tau^3),\quad 
 \gamma_2(\tau + \pi/4) = \frac{1}{4} +\frac{1}{8} \tau - \frac{1}{16} \tau^2 +\mathcal{O}(\tau^3) .
\ee
This implies 
$\frac{\gamma_2}{\gamma_1}(\tau + \pi/4) = 1 +\tau + \frac12 \tau^2 + \mathcal{O}(\tau^3)$
for $\tau\to 0$, and hence 
\be\label{eq:exptheta}
 \theta(\tau + \pi/4) = \arctan\left(\frac{\gamma_2(\tau + \pi/4)}{\gamma_1(\tau + \pi/4)}\right) 
 =  \frac{\pi}{4} + \frac12 \tau + \mathcal{O}(\tau^3) .
\ee
In particular, we see that $\theta'(\pi/4)=1/2\neq 0$, hence the function $\theta(t)$ 
has a smooth inverse function $t(\theta)$ around $\pi/4$.
Equation~\eqref{eq:gammas} implies with 
$r_2(\theta(t))^2 = \gamma_1(t)^2 + \gamma_2(t)^2$ (see~\eqref{eq:r_2char}) 
that for $\tau\to 0$
\be\label{eq:expr2} 
 r_2(\theta(\tau + \pi/4))^2 = \frac{1}{8} - \frac{1}{32} \tau^2 + \mathcal{O}(\tau^4) .
\ee 
From the expansion~\eqref{eq:exptheta} we obtain for $\theta\to\pi/4$ 
\be\label{eq:exp-t}
 \tau(\theta) := t(\theta) -\frac{\pi }{4} = 2 \left(\theta-\frac{\pi }{4}\right) + \mathcal{O}\left  ((\theta-\frac{\pi }{4})^3\right) .
\ee
Plugging this into~\eqref{eq:expr2}, we finally arrive at
\be 
 r_2(\theta)^2=\frac{1}{8}-\frac{1}{8}\left(\theta -\frac{\pi}{4}\right)^2+\mathcal{O}\left((\theta-\frac{\pi }{4})^4\right)
\quad \textrm{as $\theta\to \frac{\pi}{4}$},
\ee
which concludes the proof.
\end{proof}

\section{Appendix}

\subsection{Proof of Theorem \ref{thm:density}}\label{app:density}

Since the density of $A$ is invariant under the orthogonal group $O(n)$, 
we can assume that $B$ is spanned by the first $l$ standard vectors, i.e.,
\be 
	B=\left[
	\begin{array}{c}
	\textbf{1}\\
	\textbf{0}
	\end{array}\right]\quad \textrm{and} \quad
	A=\left[
	\begin{array}{c}
	A_1\\
	A_2\end{array}\right] ,
\ee
where $\textbf{1}$ is the $\ell\times \ell$ identity matrix, $A_1$ is an $\ell\times k$ matrix 
and $A_2$ an $(n-\ell)\times k$ matrix. 
(Again, abusing notation, we denote by $A$ and $B$ also matrices whose columns span 
the corresponding spaces.)
If we sample $A$ with i.i.d. normal gaussians, 
the corresponding probability distribution for the span of its columns is $O(n)$ invariant, 
and consequently it coincides with the uniform distribution. 
In order to compute the principal angles between $A$ and $B$ using~\eqref{eq:principal},  
we need to orthonormalize the columns of $A$. Defining 
\be 
 \hat{A} :=\left[
	\begin{array}{c}
	A_1\\
	A_2\end{array}\right](A_1^TA_1+A_2^TA_2)^{-1/2}
\ee
we see that the span of the columns of $A$ and $\hat{A}$ is the same, and the columns of $\hat{A}$ are orthonormal. 
The cosines of the principal angles between $A$ and $B$ are the singular values 
$1\geq \sigma_1\geq \cdots \geq\sigma_k\geq 0$ of the matrix:
\be 
  \hat{A}^TB=(A_1^TA_1+A_2^TA_2)^{-1/2}A_1^T.
\ee
The $\sigma_1,\ldots,\sigma_k$ coincide with the square roots of the eigenvalues $1\geq u_1\geq \cdots\geq u_k\geq 0$ 
of the positive semidefinite matrix 
\be 
(A_1^TA_1+A_2^TA_2)^{-1/2}A_1^TA_1(A_1^TA_1+A_2^TA_2)^{-1/2} ,
\ee
which are the same as the eigenvalues of $N=(A_1^TA_1+A_2^TA_2)^{-1}A_1^TA_1$ 
(eigenvalues are invariant under cyclic permutations). 
Consider the Cholesky decomposition $M^TM=A_1^TA_1+A_2^TA_2$. 
Then the eigenvalues of $N$ equal the eigenvalues of
\be 
U=(M^T)^{-1}A_1^TA_1M^{-1}.
\ee
We use now some facts about the multivariate Beta distribution (see \cite[\S3.3]{Muirhead}).
By its definition, 
the matrix $U$ has a $\textrm{Beta}_{k}(\frac{1}{2}l, \frac{1}{2}(n-l))$ distribution. 
\cite[Theorem 3.3.4]{Muirhead} states that 
the joint density of the eigenvalues of~$U$ is given by 
\be\label{eq:joint1}
\frac{ \pi^\frac{k^2}{2} \Gamma_{k}\left(\frac{n}{2}\right)}{\Gamma_{k}\left(\frac{k}{2}\right)\Gamma_{k}\left(\frac{l}{2}\right)
      \Gamma_{k}\left(\frac{n-l}{2}\right)}\prod_{j=1}^ku_j^{(l-k-1)/2}(1-u_j)^{(n-l-k-1)/2}\prod_{i<j}(u_i-u_j).
\ee
Recall now that $u_i=(\cos \theta_i)^2$ for $i=1,\ldots, k$, 
which implies the change of variable $du_i = -2\cos \theta_i \sin \theta_i d\theta_i$, 
and thus \eqref{eq:joint1} becomes the stated density in \eqref{eq:density}.

\subsection{Proof of Lemma~\ref{le:tube}}\label{app:tech-lemma}

We begin with a general reasoning. 
Assume that $A\in e(B)$. The unit normal vectors of $e(B)$ at $A$, up to a sign, are uniquely determined by $A$. 
Lemma~\ref{le:prop-Sigmakn} provides an explit description for them as follows. 
Let $a_1,a_2,\ldots,a_k$  and $a_1,b_2,\ldots,b_{n-k}$ be the orthonormal bases 
given by Lemma~\ref{le:bases} for $A$ and~$B$, respectively;
note that $\dim (A\cap B) =1$ since $A\in e(B)$. 
In particular, $\langle a_i,b_i \rangle = \cos\theta_i$ for $i\le k$, 
where $\theta_1\le\ldots\le\theta_k$ are the principal angles between $A$ and $B$. 
Let $\R f = (A+B)^\perp$ with $\|f\|=1$. 
According to Lemma~\ref{le:prop-Sigmakn}, 
the unit vector $\nu := f\wedge a_2\wedge\cdots\wedge a_k$ 
spans the normal space of $e(B)$ at $A$. 
(We use here the representation of elements of 
$G(k,n)$ and its tangent spaces by vectors in $\Lambda^k \R^n$; 
cf.~\S\ref{se:GrassR}.)

Fix now $B\in G(n-k,n)$ and 
recall that the functions $\vartheta_1,\vartheta_2\colon G(k,n)\to \R$ give 
the smallest and second smallest principal angle, respectively, 
between $A\in G(k,n)$ and~$B$. 
Let $0<\epsilon \le \delta<\pi/2$ and put 
$T := \{ A\in G(k,n) \mid \vartheta_1(A) \le \epsilon,\ \vartheta_2(A)\ge\delta\}$. 
We first prove that 
$\Tu(e(B)_\delta, \epsilon)\subseteq T$.

For $A\in e(B)$ we consider the curve
$N_A(t) := (a_1\cos t + f \sin t )\wedge a_2 \wedge\cdots\wedge a_k$
for $t\in\R$. 
We note that $N_A(t)$ arises from $A$ by a rotation with the angle~$t$ 
in the (oriented) plane spanned by $a_1$ and $f$, fixing the vectors 
in the orthogonal complement spanned by $a_2,\ldots,a_k$. 
We have $N_A(0)=A$ and $\dot{N}_{A}(0) = \nu$.
It is well known (see, e.g., \cite{edelman-arias-smith:99}) 
that $N_A(t)$ is the geodesic through $A$ with 
speed vector~$\nu$, that is, $N_A(t) = \exp_A (t\nu)$ and $N_A(0)=A$. 
From the definition of the $\epsilon$-tube, we therefore have 
\be\label{eq:tubedes}
 \Tu(e(B)_\delta, \epsilon) = \{ N_A(t) \mid A \in e(B)_\delta,\ |t| \le \epsilon \} .
\ee
Since both $a_1$ and~$f$ are orthogonal to $b_2, \ldots, b_{n-k}$,
the principal angles between $N_A(t)$ and $B$ are 
$|t|,\theta_2,\ldots,\theta_k$, where 
$\langle a_j,b_j \rangle = \cos\theta_j$. 
By our assumption $A\in e(B)_\delta$, we have 
$\delta\le\theta_2\le\ldots\le\theta_k$. 
Hence we see that $|t|$ is the smallest principle angle 
between $N_A(t)$ and $B$ if $|t| \le \delta$. 
We have thus verified that $\Tu(e(B)_\delta, \epsilon)\subseteq T$.

For the other inclusion, let $C\in T$; assume  $C\notin e(B)_\delta$, otherwise clearly $C\in\Tu(e(B)_\delta, \epsilon)$. 
Let $(c_1, \ldots, c_k)$ and $(b_1, \ldots, b_{n-k})$ be orthonormal bases of $C$ and $B$, 
respectively, as provided by Lemma~\ref{le:bases}. So we have 
$\langle b_j,c_j \rangle = \cos\vartheta_j(C)$ for all~$j$, 
where $\vartheta_1(C)\le\epsilon$ and 
$\delta\le\vartheta_2(C)\le\ldots\le\vartheta_k(C)$ by the assumption $C\in T$.
In particular, $b_1$ is orthogonal to $c_2,\ldots,c_k$ and $b_1,c_1$ are linearly independent.  
We define $A\in G(k,n)$ as the space spanned by $b_1, c_2, \ldots, c_k$. 
By construction, $\vartheta_1(A)=0$ and $\vartheta_j(A) = \vartheta_j(C)$ for $j\ge 2$. 
Hence, $A\in e(B)_\delta$.  
Let $N_A(t)\in G(k,n)$ be defined as above as the space resulting from $A$ 
by a rotation with the angle~$t$ in the oriented plane spanned by $b_1$ and $c_1$.
By construction, we have $C=N_A(\tau)$ where $\tau = \vartheta_1(C) \le \epsilon$.  
Therefore, we indeed have $C\in \Tu(e(B)_\delta, \epsilon)$ by \eqref{eq:tubedes}.
\hfill$\qed$

\subsection{Proof of Lemma~\ref{pro:TangSpaceChow}}\label{app:Chow}

More generally, we consider a semialgebraic set $Y\subseteq \RP^{n-1}$ 
of dimension $d\leq n-k-1$. Consider the semialgebraic set
\be 
 C(Y) :=\{(A, y)\in G(k,n)\times Y\,|\, y\in \PP(A) \},
\ee
together with the projections on the two factors: $\pi_1\colon C(Y)\to G(k,n)$ and $\pi_2\colon B(Y)\to Y$.  
Since $Z(Y)=\pi_1(C(Y))$, the set $Z(Y)$ is semialgebraic.
Note that $C(Y)$ is compact if $Y$ compact, and $C(Y)$ is connected if $Y$ is connected 
(since $\pi_1$ is continuous). 
In order to determine the dimension of $C(Y)$, we note that 
the fiber $\pi_2^{-1}(y)$ over $y\in Y$ is isomorphic to $G(k-1, n-1)$. 
As a consequence, we get 
\be\label{eq:dimCY}
  \dim C(Y) =\dim Y+(k-1)(n-k).
\ee
The fibers $\pi_1^{-1}(A)$ over $A\in  Z(Y)$ consist of exactly one point, except for the 
$A$ lying in the exceptional set 
\be 
 Z^{(2)}(Y) := \{A\in Z(Y) \,|\, \PP(A)\cap Y\textrm{ consists of at least two points}\} .
\ee
Note that $\PP(A)\cap Y$ consists of one point only, 
for all $A\in Z(Y)\setminus Z^{(2)}(Y)$.

In order to show that $\dim Z^{(2)}(Y) < \dim Z(Y)$,  
we consider the semialgebraic set 
\be 
 C^{(2)}(Y) :=\{(A, y_1, y_2)\in G(k,n)\times Y\times Y\,|\, y_1, y_2\in\PP( A), y_1\ne y_2\}
\ee
with the corresponding projections 
$\pi_3\colon C^{(2)}(Y)\to G(k,n)$ and $\pi_4 \colon C^{(2)}(Y)\to Y\times Y$. 
Note that $Z^{(2)}(Y) = \pi_3(C^{(2)}(Y))$.
For $(y_1, y_2)\in Y\times Y$ such that $y_1\ne y_2$, we have 
$\dim \pi_4^{-1}(y_1, y_2) =k(n-k)-2(n-k)$,
since the fibers of $\pi_4$ are isomorphic to $G(k-2,n-2)$.
Therefore, 
\begin{align} \notag
 \dim C^{(2)}(Y) &= 2\dim Y+k(n-k)-2(n-k) \leq \dim Y+n-k-1+k(n-k)-2(n-k) \\ \label{eq:dC}     
                      &= \dim Y + (k-1)(n-k) -1 \stackrel{\eqref{eq:dimCY}}{=} \dim C(Y) - 1 , 
\end{align}
and we see that $\dim (C(Y) \setminus C^{(2)}(Y)) = \dim C(Y)$. 
For the projection $f\colon C^{(2)}(Y)\to C(Y)$ defined by $f(A, y_1, y_2):=(A, y_1)$, 
we have $\pi_1^{-1}(Z^{(2)}(Y)) = f(C^{(2)}(Y))$, hence 
\be
 \dim \pi_1^{-1}(Z^{(2)}(Y)) \le \dim f(C^{(2)}(Y)) \le \dim C^{(2)}(Y) < \dim C(Y) .
\ee
Moreover, the projection 
$C(Y)\setminus C^{(2)}(Y) \to Z(Y) \setminus Z^{(2)}(Y),\, (A,y)\mapsto y$ is bijective, 
hence 
$$
 \dim (Z(Y) \setminus Z^{(2)}(Y)) = \dim (C(Y) \setminus C^{(2)}(Y)) \stackrel{\eqref{eq:dC}}{=} \dim C(Y) .
$$
Using $\dim Z(Y) \le \dim C(Y)$, we see that 
\be\label{eq:dimZY}
 \dim Z(Y) = \dim C(Y) = \dim Y+(k-1)(n-k) ,
\ee
\be\label{eq:dimZY2}
 \dim Z^{(2)}(Y) \le \dim C^{(2)}(Y) < \dim C(Y) = \dim Z(Y) . 
\ee
In the special case where $\dim X = n-k-1$, we conclude that 
$Z(X)$ is a hypersurface.

We consider now the following set of ``bad'' $A\in Z(X)$:
\be 
 S(X) :=Z^{(2)}(X)\cup Z(\Sing(X))\cup \Sing (Z(X)) \cup \pi_1(\Sing(C(X))) .
\ee
We claim that this semialgebraic set has dimension strictly less than $Z(X)$.  
This follows for $Z^{(2)}(X)$ from~\eqref{eq:dimZY2}, 
for $Z(\Sing X)$ from \eqref{eq:dimZY} applied to $Y=\Sing (X)$, 
for $\pi_1(\Sing(C(X))$ from Proposition \ref{prop:singular} and \eqref{eq:dimZY}, 
and finally for $\Sing(Z(X))$ by Proposition \ref{prop:singular}.

Thus generic points $A\in Z(X)$ are not in $S(X)$ and hence satisfy the following:
\begin{enumerate}
\item the intersection $\PP(A)\cap X$ consists of one point only (let us denote this point by $p$);
\item the point $p$ is a smooth point of $X$;
\item the point $A$ is a regular point of $Z(X)$,
\item the point $(A, p)$ is a regular point of $C(X)$.
\end{enumerate}

It remains to prove that for every $A\in Z(X)\backslash S(X)$ we have \eqref{eq:Tchow}.
Let us take $(A, p)\in C(X)$ with $A\not\in S(X)$. 
We work in local coordinates $(w,y)\in \R^{N}\times \R^{n-1}$ 
on a neighborhood $U$ of $(A,p)$ in $G(k,n)\times \RP^{n-1}$, 
where $N=k(n-k)$. For simplicity we center the coordinates on the origin, 
so that $(0,0)$ are the coordinates of $(A,p)$. 
In this coordinates, the set $C(X)\cap U$ can be described as:
\be \label{eq:CU}C(X)\cap U= \{(w, x)\in \R^{N}\times \R^{n-1}\,|\, F(w,x)=0, G(x)=0\},\ee
where $F(w,x)=0$ represents the reduced local equations describing the condition $x\in \PP(W)$ and $G(x)=0$ 
the reduced local equations giving the condition $x\in X.$ Since $(A, p)$ is a regular point of $C(X)$, 
the tangent space of $C(X)$ at $(A,p)$ is described by:
\begin{align} T_{(A,p)}C(X)&=\{(\dot w, \dot x)\,|\, (D_{(0,0)}F)(\dot w, \dot x)=0, \, (D_{0}G)\dot x=0\}.
\end{align}
Note that $p$ is a smooth point of $X$, hence $(D_{0}G)\dot x=0$ is the equation for the tangent space to~$X$ at $p$.  
On the other hand, since $Z(X)=\pi_1(C(X))$, we have:
\begin{align}\label{eq:TT} 
  T_{A}Z(X)&=D_{(0,0)}\pi_1\left(T_{(A,p)}C(X)\right) . 
\end{align}
Let now $B\subseteq\R^n$ be the linear space corresponding to $T_pX$ and 
let us write the equations for $C(B)$ in the same coordinates 
as above (by construction we have $(A,p)\in C(B)$):
\be C(B)\cap U=\{(w, x)\,|\, F(w,x)=0, (D_{0}G)x=0\}.\ee
Note that the same equation $F(w,x)=0$ as in \eqref{eq:CU} appears here (recall that this is the equation describing $x\in\PP(W)$), 
but now $G=0$ is replaced with its linearization \emph{at zero}.
In particular:
\be T_{(A, p)}C(B)=\{(\dot w, \dot x)\,|\, (D_{(0,0)}F)(\dot w, \dot x)=0, \, (D_{0}G)\dot x=0\},\ee
which coincides with \eqref{eq:TT}. Since $\Omega(B)=\pi_1(C(B))$, this finally implies:
\be 
 T_A\Omega(B)=D_{(0,0)}\pi_1\left(T_{(A,p)}C(B)\right)=D_{(0,0)}\pi_1\left(T_{(A,p)}C(X)\right)=T_{A}Z(X),
\ee
which finishes the proof. \hfill $\qed$

\subsection{Proof of Proposition~\ref{prop:integration}}\label{app:integration} 

We extend the function $f$ to $\R^{k\ti m}\setminus\{0\}$ by setting $f(X) := f(X/\|X\|)$,  
denoting it by the same symbol. Similarly, we extend $g$ by setting $g(\s):=f(\diag_{k,m}(\s))$. 
We assume now that $X\in \R^{k\times m}$ has i.i.d.\ standard Gaussian entries. Then we can write 
\be 
  \frac{1}{|S^{km-1}|} \int_{S^{km-1}} f(X)\, dS^{km-1}  = \E f(X) 
  = \int_{\s_1>\ldots>s_k} g(\s)\, p_{\textrm{SVD}}(\sigma_1, \ldots, \sigma_k)\, d\sigma_1\cdots d\sigma_k ,
\ee
where $p_{\textrm{SVD}}$ denotes the joint density of the \emph{ordered} singular values of $X$. 
This density can be derived as follows.
The joint density of the ordered eigenvalues $\lambda_1>\cdots>\lambda_k>0$ of the Wishart distributed matrix~$XX^T$  
is known to be \cite[Corollary 3.2.19]{Muirhead} 
\be 
p(\lambda_1, \ldots, \lambda_k)
  = c_{k,m} \cdot e^{-\frac{1}{2}\sum_{i=1}^k\lambda_i}\prod_{i=1}^k\lambda_i^{\frac{m-k-1}{2}}\prod_{i<j}(\lambda_i-\lambda_j), 
\ee
where 
\be
 c_{k,m} :=\frac{2^k\pi^{\frac{k^2}{2}}}{2^{\frac{km}{2}}\Gamma_k\left(\frac{k}{2}\right) \Gamma_k\left(\frac{m}{2}\right)} .
\ee
From this, using $\lambda_i(XX^T)=\sigma_i(X)^2$ and the change of variable $d\lambda_i=2\sigma_id\sigma_i$, 
we obtain 
\be 
p_{\textrm{SVD}}(\sigma_1, \ldots, \sigma_k) 
 = c_{k,m} \cdot e^{-\frac{1}{2}\sum_{i=1}^k\sigma_i^2}\prod_{i=1}^k\sigma_i^{m-k}\prod_{i<j}(\sigma_i^2-\sigma_j^2).
\ee
As a consequence, we obtain 
\begin{align}
\E f(X) 
 & = c_{k,m}\int_{\sigma_1>\cdots>\sigma_k>0} g(\sigma)\, e^{-\frac{1}{2}\sum_{i=1}^k\sigma_i^2}
   \prod_{i=1}^k\sigma_i^{m-k}\prod_{i<j}(\sigma_i^2-\sigma_j^2) \, d\sigma_1\cdots d\sigma_k\\
 &=c_{k,m}\int_0^\infty r^{km-1} e^{-\frac12 r^2}\, dr\, \int_{\theta\in S^{k-1}_+} g(\theta)
   \prod_{i=1}^k \theta_i^{m-k} \prod_{i<j} (\theta_i^2-\theta_j^2) \, dS^{k-1}(\theta),
\end{align} 
where in the second line we have switched to polar coordinates 
$\s = r \theta$ with $\theta \in S^{k-1}$ and $r\ge 0$. 
Note that the power of the $r$-variable arises as 
\be 
 k(m-k)+2{k\choose 2}+k-1=km-1.
\ee
Using 
$\int_0^\infty r^{km-1} e^{-\frac12 r^2}\, dr = \Gamma(\frac{km}{2}) \, 2^{\frac{km}{2}-1}$,  
we obtain
\be 
  \E f(X) = c_{k,m}\,\Gamma\left(\frac{km}{2}\right) 2^{\frac{km}{2}-1} 
 \int_{S^{k-1}_+} g(\theta) \prod_{i=1}^k\theta_i^{m-k} \prod_{1\leq i<j\leq k}(\theta_i^2-\theta_j^2)\, dS^{k-1}.
\ee
It is immediate to verify that:
\be  
  |S^{km-1}| \cdot c_{k,m}\, \Gamma\left(\frac{km}{2}\right)2^{\frac{km}{2}-1}= \frac{|O(k)||S(k,m)|}{2^k},
\ee
which completes the proof. \hfill$\qed$ 

\subsection{Generalized Poincar\'e's formula in homogeneous spaces}\label{se:GIGF}

The purpose of this section is to prove the kinematic formula in homogeneous spaces for multiple intersections 
and to derive Theorem \ref{thm:IGF}. The proofs are similar to \cite{Howard}, to which we refer the reader for more details. 

\subsubsection{Definitions and statement of the theorem}

In the following, $G$ denotes a compact Lie group with a left and right invariant Riemannian metric.\footnote{The compactness assumption 
is not essential, but simplifies the statements, for example the modular function of~$G$ is constant 
and does not have to be taken into account, see \cite[\S2.3]{Howard}.} 
See~\cite{Broecker} for background on Lie groups. 
We denote by $e\in G$ the identity element and by $L_{g}\colon G\to G,x\mapsto gx$ 
the left translation by $g\in G$.  
The derivatives of $L_g$ will be denoted by 
$g_*\colon T_e G\to T_gG$. By assumption, this map is isometric. 

In~\eqref{eq:def-sigma-many} we defined a quantity for capturing the relative position of 
linear subspaces of a Euclidean vector space. We can extend this notion to linear 
subspaces $V_i\subseteq T_{g_i}G$ in tangent spaces of $G$ at any points 
$g_1, \ldots, g_m\in G$, assuming $\sum_i\dim V_i \le \dim G$. 
This is done by left-translating the~$g_i$ to the identity $e$: so we define 
\be 
 \sigma(V_1, \ldots, V_m) := \sigma\big( (g_1)_*^{-1}V_1,\ldots,(g_m)_*^{-1}V_m\big) .
\ee

Let now $K\subseteq G$ be a closed Lie subgroup and denote by $p\colon G\to G/K$ the quotient map. 
We endow $K$ with the Riemannian structure induced by its inclusion in $G$, and $G/K$ with 
the Riemannian structure defined by declaring $p$ to be a Riemannian submersion. 
For example, when $G=O(n)$ with the invariant metric defined in Section~\ref{se:vols} and $K=O(k)\times O(n-k)$, 
then $G/K$ with the quotient metric is isometric to the Grassmannian $G(k,n)$ 
with the metric defined in Section~\ref{se:GrassR}. 

Note that $G$ acts naturally by isometries on $G/K$; 
if $g\in G$ and $y\in G/K$, we denote by $g y$ the result of the action. 
Further, we denote by $y_0=p(e)$ the projection of the identity element.
The multiplication with an element $k\in K$ fixes the point $y_0$; as a consequence, 
the differential of $k$ induces a map denoted 
$k_*\colon T_{y_0}G/K\to T_{y_0}G/K$, 
so that we have an induced action of $K$ on $T_{y_0}G/K$. 

Given a submanifold $X$ of a Riemannian manifold $M$, we denote by $NX$ its normal bundle in $M$ 
(i.e., for all $x\in X$ the vector space $N_xX$ is the orthogonal complement to $T_xX$ in $T_xM$). 
Also, the restriction of the Riemannian metric of $M$ to $X$ allows to define a volume density on~$X$; 
if $f\colon X\to \R$ is an integrable function, we denote its integral with respect to this density by $\int _X f(x)\,dx$.

\begin{defi}\label{def:sigma-many}
For given submanifolds $Y_1, \ldots, Y_m\subseteq G/K$, we define the function
\be 
 \sigma_K:Y_1\times \cdots \times Y_m\to \R
\ee
as follows. For $(y_1, \ldots, y_m)\in Y_1\times \cdots \times Y_m$ 
let $\xi_i\in G$ be such that $\xi_i y_i=y_0$ for all~$i$. We define 
\be
 \sigma_K(y_1, \ldots, y_m):=\E_{(k_1,\ldots, k_m)\in K^m}\, \sigma({k_1}_*{\xi_1}_*N_{y_1}Y_1, \ldots,{k_m}_{*}{\xi_m}_*N_{y_m}Y_m) ,
\ee
where the expectation is taken over a uniform $(k_1, \ldots, k_m)\in K\times \ldots\times K$.
\end{defi}

The reader should compare this definition with \cite[Def.~3.3]{Howard}, which is just a special case.
The main result of this section is the following generalization of Poincar\'e's kinematic formula for homogeneous spaces,
as stated in \cite[Thm.~3.8]{Howard} for the intersection of two manifolds. 
We provide a proof, since the more general result is crucial for our work and 
we were unable to find it in the literature. 

\begin{thm}\label{thm:GIGF}
Let $Y_1, \ldots, Y_m$ be submanifolds of $G/K$ such that $\sum_{i=1}^m\codim_{G/K} Y_i \leq \dim G/K$. 
Then, for almost all $(g_1, \ldots, g_m)\in G^{m}$, the manifolds $g_1Y_1, \ldots, g_mY_m$ intersect transversely, and
\be
 \E_{(g_1, \ldots, g_m)\in G^m}|g_1Y_1\cap \cdots\cap g_m Y_m|
   =\frac{1}{|G/K|^{m-1}} \int_{Y_1\times \cdots\times Y_m}\sigma_K(y_1, \ldots, y_m)\, dy_1\cdots dy_m,
\ee 
where the expectation is taken over a uniform $(g_1, \ldots, g_m)\in G\times \cdots\times G.$
\end{thm}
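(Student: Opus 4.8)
The plan is to reduce the $m$-fold kinematic formula to an iterated application of the two-fold case, proceeding by induction on $m$. The base case $m=2$ is exactly \cite[Thm.~3.8]{Howard}, so suppose $m\geq 3$ and assume the statement for $m-1$ submanifolds. First I would fix $g_m\in G$ and set $Y:=g_mY_m$; by Sard's theorem, for almost all $g_m$ and then for almost all $(g_1,\ldots,g_{m-1})$, the intersection $g_1Y_1\cap\cdots\cap g_{m-1}Y_{m-1}$ is a transverse intersection, hence a submanifold $W$ of $G/K$ of codimension $\sum_{i<m}\codim Y_i$, and moreover $g_1Y_1\cap\cdots\cap g_{m-1}Y_{m-1}\cap g_mY_m$ is transverse. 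Applying the two-fold formula of Howard to $W$ and $Y_m$ (integrating over $g_m\in G$ for fixed $g_1,\ldots,g_{m-1}$), then taking expectations over $(g_1,\ldots,g_{m-1})$ and invoking the inductive hypothesis, gives
\be
 \E_{(g_1,\ldots,g_m)}|g_1Y_1\cap\cdots\cap g_mY_m|
 = \frac{1}{|G/K|}\,\E_{(g_1,\ldots,g_{m-1})}\int_{W\times Y_m}\sigma_K(w,y_m)\,dw\,dy_m .
\ee
The remaining task is to unwind the inner integral: $W$ itself is a transverse intersection of the $g_iY_i$, so its volume density is weighted by the relative-position factor of the normal spaces of the $g_iY_i$, and one must check that the two weightings compose correctly into the single factor $\sigma_K(y_1,\ldots,y_m)$.

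The heart of the argument, and the step I expect to be the main obstacle, is precisely this bookkeeping of the coarea/Jacobian factors. Concretely, at a point $y$ lying in the transverse intersection $g_1Y_1\cap\cdots\cap g_mY_m$, the normal space $N_y(g_1Y_1\cap\cdots\cap g_mY_m)$ is the (not necessarily orthogonal) sum of the normal spaces $N_y(g_iY_i)$, and the infinitesimal volume comparison that appears when one iterates transverse intersections is governed exactly by the quantity $\sigma(V_1,\ldots,V_m)=\|v_{11}\wedge\cdots\wedge v_{s_ms_m}\|$ of \eqref{eq:def-sigma-many}, where the $v_{ij}$ run over orthonormal bases of the $N_y(g_iY_i)$. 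This is the multiplicativity property: if $V=V_1\oplus\cdots\oplus V_{m-1}$ (internal, possibly oblique) then
\be
 \sigma(V_1,\ldots,V_{m-1},V_m) = \sigma(V,V_m)\cdot \sigma(V_1,\ldots,V_{m-1}) .
\ee
I would prove this by choosing adapted bases and computing the exterior product, essentially the chain rule for the map $G/K\times G/K\to(G/K)^2$ used in the coarea argument; it is the natural generalization of the identity underlying Howard's two-fold case. Given this, the weighted volume of $W$ at $w$ carries the factor $\sigma((\text{normals of }g_iY_i)_{i<m})$, and Howard's formula applied to $W\subseteq G/K$ supplies the extra factor $\sigma_K$ of $W$ against $Y_m$; the product telescopes to $\sigma_K(y_1,\ldots,y_m)$ after one more use of the invariance of $\sigma$ under the group translations $g_i$ (so that the $g_i$ can be absorbed into the averaging over $K^m$).

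Two technical points need care. First, one must justify that the "weighted volume" of $W$ — i.e.\ $\int_W f(w)\sigma(\ldots)(w)\,dw$ with $f=\sigma_K(\cdot,Y_m)$-type integrand — is what the inductive hypothesis actually computes; this follows because the inductive formula for $m-1$ submanifolds, applied with an integrable test function on $G/K$ inserted (a standard extension of the kinematic formula, already implicit in Howard via the coarea formula), yields exactly $\frac{1}{|G/K|^{m-2}}\int_{Y_1\times\cdots\times Y_{m-1}}\sigma_K\cdot(f\circ\text{incl})$. I would either state this weighted version as part of the induction hypothesis from the start, or derive it by monotone-class / approximation arguments from the unweighted one. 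Second, the transversality and the null-set exceptions must be handled uniformly: the bad set of $(g_1,\ldots,g_m)$ is a countable union of lower-dimensional semialgebraic (or at least measure-zero) subsets, by Sard applied to the smooth evaluation maps $G^m\times Y_1\times\cdots\times Y_m\to (G/K)^{m}$, so Fubini legitimately lets us integrate over $g_m$ first. Once multiplicativity of $\sigma$ is in hand, the rest is a direct substitution and the theorem follows; the deduction of Theorem~\ref{thm:IGF} is then immediate by taking $G=O(n)$, $K=O(k)\times O(n-k)$, all $Y_i$ coisotropic hypersurfaces, and evaluating $\sigma_K$ via Remark~\ref{re:on-as} to the constant $\alpha(k,n-k)$, together with Theorem~\ref{prop:vspe}.
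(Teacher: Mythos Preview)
Your core ingredients---the multiplicativity identity
\[
\sigma(V_1,\ldots,V_{m-1},V_m)=\sigma(V_1+\cdots+V_{m-1},V_m)\cdot\sigma(V_1,\ldots,V_{m-1})
\]
and an induction/iteration on $m$---are exactly the ones the paper uses (see Lemma~\ref{eq:VWZ} and Corollary~\ref{le:bif}). But the paper organizes the argument differently, and the difference is not cosmetic: rather than inducting directly in $G/K$, the paper \emph{lifts} to $G$ by setting $X_i:=p^{-1}(Y_i)$, proves the $m$-fold formula there (Lemma~\ref{le:GGG}, where $K=\{e\}$ so no $K$-averaging appears and the weighted basic formula of Proposition~\ref{prop:basic} is available off the shelf from Howard), and only at the very end descends via the coarea formula for the Riemannian submersion $p\colon G\to G/K$. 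In that final step the integral over each fiber $K$ is precisely what produces the averaging in the definition of $\sigma_K$, all at once.

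Working directly in $G/K$ as you propose runs into a genuine obstacle at the telescoping step. When you apply Howard's two-fold formula to $W=g_1Y_1\cap\cdots\cap g_{m-1}Y_{m-1}$ and $Y_m$, the factor $\sigma_K(w,y_m)$ involves a \emph{single} $K$-average on the $W$-side, applied simultaneously to the sum $N_wW=\sum_{i<m}N_w(g_iY_i)$. To reach $\sigma_K(y_1,\ldots,y_m)$ you need $m-1$ \emph{independent} $K$-averages, one on each $N_{y_i}Y_i$. Your sentence ``the $g_i$ can be absorbed into the averaging over $K^m$'' is where this decoupling is supposed to happen, but it is not automatic: it requires a coarea/Fubini argument over the $K$-fibers of $G\to G/K$, and once you write that down you are essentially carrying out the paper's lift-to-$G$ argument anyway. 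Relatedly, the ``weighted induction hypothesis'' you would need is not a weight depending only on the point $w\in G/K$ (which would be a routine extension), but one depending on the full normal space $N_wW$; establishing that weighted version in $G/K$ is real work, whereas in $G$ it is already available as Proposition~\ref{prop:basic}. So your outline is not wrong in spirit, but the step you flagged as ``the main obstacle'' is more serious than you indicate, and the paper's detour through $G$ is precisely the device that dissolves it.
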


We note that when $G$ acts transitively on the tangent spaces to each $Y_i$ 
(formally defined as in Definition~\ref{def:trans-act}), 
then the function $\sigma_K\colon Y_1\times \cdots \times Y_m\to \R$ 
introduced in Definition~\ref{def:sigma-many} is constant.
As a consequence we obtain: 

\begin{cor}\label{cor:GIGF}
Under the assumptions of Theorem \ref{thm:GIGF}, 
if moreover $G$ acts transitively on the tangent spaces to $Y_i$ for $i=1, \ldots, m$, 
then we have 
\be  
  \E_{(g_1, \ldots, g_m)\in G^m}|g_1Y_1\cap \cdots\cap g_m Y_m|
     =\sigma_K(y_1, \ldots, y_m)\cdot |G/K|\cdot \prod_{i=1}^n\frac{|Y_i|}{|G/K|} ,
\ee
where $(y_1, \ldots, y_m)$ is any point of $Y_1\times \cdots\times Y_m.$
\end{cor}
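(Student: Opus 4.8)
The plan is to obtain Corollary~\ref{cor:GIGF} as an immediate consequence of Theorem~\ref{thm:GIGF}: under the extra transitivity hypothesis the integrand $\sigma_K$ in the generalized Poincar\'e formula is \emph{constant} on $Y_1\times\cdots\times Y_m$, so the integral collapses to a product of volumes. Concretely, Theorem~\ref{thm:GIGF} gives
\[
 \E_{(g_1,\ldots,g_m)\in G^m}|g_1Y_1\cap\cdots\cap g_mY_m|
   = \frac{1}{|G/K|^{m-1}}\int_{Y_1\times\cdots\times Y_m}\sigma_K(y_1,\ldots,y_m)\, dy_1\cdots dy_m ,
\]
and the whole task is to evaluate the right-hand integral.

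First I would prove that $\sigma_K$ is constant, which is the heart of the matter; the argument is a direct analogue of Lemma~\ref{le:as-welldefined}. Fix points $(y_1,\ldots,y_m)$ and $(y_1',\ldots,y_m')$ of $Y_1\times\cdots\times Y_m$ at which the transitivity hypothesis applies. For each $i$ choose $h_i\in G$ with $h_iy_i=y_i'$ and $D_{y_i}(h_i)_*\big(T_{y_i}Y_i\big)=T_{y_i'}Y_i$; since $(h_i)_*$ is a linear isometry it maps $N_{y_i}Y_i$ onto $N_{y_i'}Y_i$ as well. Pick $\xi_i,\xi_i'\in G$ with $\xi_iy_i=y_0$ and $\xi_i'y_i'=y_0$. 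Then both $\xi_i$ and $\xi_i'h_i$ carry $y_i$ to $y_0$, so $\xi_i(\xi_i'h_i)^{-1}$ fixes $y_0$, hence lies in the stabilizer of $y_0$, which is exactly $K$; write $\xi_i=k_i\,\xi_i'\,h_i$ with $k_i\in K$. Applying the chain rule to $N_{y_i}Y_i$ gives
\[
 (\xi_i)_*N_{y_i}Y_i = (k_i)_*(\xi_i')_*(h_i)_*N_{y_i}Y_i = (k_i)_*(\xi_i')_*N_{y_i'}Y_i ,
\]
so $(\xi_i)_*N_{y_i}Y_i$ and $(\xi_i')_*N_{y_i'}Y_i$ lie in the same $K$-orbit inside $T_{y_0}G/K$. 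Finally, because $\sigma_K$ is defined by averaging $\sigma\big((k_1)_*V_1,\ldots,(k_m)_*V_m\big)$ over a uniform $(k_1,\ldots,k_m)\in K^m$, the left-invariance of the Haar measure of $K$ shows that this average is unchanged when each $V_i$ is replaced by $k_i'V_i$ for a fixed $k_i'\in K$. Hence $\sigma_K(y_1,\ldots,y_m)$ depends only on the $K$-orbits of the subspaces $(\xi_i)_*N_{y_i}Y_i$, and the previous identity shows these orbits are independent of the chosen point; therefore $\sigma_K(y_1,\ldots,y_m)=\sigma_K(y_1',\ldots,y_m')$.

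Substituting the now-constant value $\sigma_K(y_1,\ldots,y_m)$ (for an arbitrary base point) into Theorem~\ref{thm:GIGF} yields
\[
 \E_{(g_1,\ldots,g_m)\in G^m}|g_1Y_1\cap\cdots\cap g_mY_m|
   = \frac{\sigma_K(y_1,\ldots,y_m)}{|G/K|^{m-1}}\prod_{i=1}^m |Y_i|
   = \sigma_K(y_1,\ldots,y_m)\cdot |G/K|\cdot\prod_{i=1}^m\frac{|Y_i|}{|G/K|} ,
\]
which is the asserted formula. The only genuinely delicate point is the constancy of $\sigma_K$, and within that the precise bookkeeping of which derivative acts on which space together with the identification of the stabilizer of $y_0$ with $K$; if transitivity on tangent spaces is only assumed at generic points, one notes that $\sigma_K$ is then constant off a set of measure zero in $Y_1\times\cdots\times Y_m$, which does not affect the value of the integral. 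All other steps are routine substitutions.
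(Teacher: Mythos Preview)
Your proposal is correct and follows exactly the approach the paper takes: the paper merely remarks (just before stating the corollary) that under the transitivity hypothesis the function $\sigma_K$ is constant and then reads off the corollary from Theorem~\ref{thm:GIGF}. You have supplied the details of the constancy argument (in the spirit of Lemma~\ref{le:as-welldefined}) that the paper leaves to the reader, but otherwise the two arguments coincide.
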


Let us look now at the special case $G=O(n)$ and $K=O(k)\times O(n-k)$. 
If $Y_1, \ldots, Y_{m}$ are coisotropic hypersurfaces of $G(k,n)$, then $G$ acts transitively on their tangent spaces 
by Proposition \ref{pro:Chow-TA}. Moreover, when $m =k(n-k)$,
it is easy to check that the constant value of $\sigma_K$ 
equals the real average scaling factor $\alpha(k, n-k)$ defined in Definition~\ref{def:alpha}. 
Hence in this case, the statement of 
Corollary~\ref{cor:GIGF} coincides with the statement of Theorem~\ref{thm:IGF}.


\subsubsection{The kinematic formula in $G$}

As before, $G$ denotes a compact Lie group.
We derive first Theorem~\ref{thm:GIGF} in the special case $K=\{e\}$,  
which is the following result (we can w.l.o.g.\ assume $g_m=e$). 

\begin{lemma}\label{le:GGG}
Let $X_1,\ldots,X_m$ be submanifolds of $G$ such that $\sum_{i=1}^m\codim_{G} X_i \leq \dim G$. 
Then 
\be
  \int_{G^{m-1}} |g_1 X_1\cap \cdots\cap g_{m-1} X_{m-1} \cap X_m| \, dg_1\cdots dg_{m-1} 
   = \int_{X_1\times \cdots\times X_m} \sigma(N_{x_1} X_1, \ldots, N_{x_m} X_m)\, dx_1\cdots dx_m,
\ee 
\end{lemma}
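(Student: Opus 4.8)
This is the multiple-intersection version of the classical Poincaré kinematic formula on a compact Lie group, and the strategy is the one from Howard~\cite{Howard}: reduce the integral-geometric statement to the pointwise coarea/Jacobian computation of $\sigma$, then integrate. First I would set up notation: write $X := X_1 \times \cdots \times X_m$, let $c_i := \codim_G X_i$, and consider the ``incidence variety''
$$
 I := \big\{ (g_1,\ldots,g_{m-1},x_1,\ldots,x_m) \in G^{m-1}\times X \ \big|\ g_1 x_1 = \cdots = g_{m-1}x_{m-1} = x_m \big\}.
$$
The left-hand side of the claimed identity is $\int_{G^{m-1}} \#\{(x_1,\ldots,x_m) : (g_\bullet, x_\bullet)\in I\}\, dg$, i.e.\ the integral over $G^{m-1}$ of the fiber cardinality of the projection $\pi\colon I \to G^{m-1}$; when $\sum c_i = \dim G$ the fibers are generically finite, and in general one integrates the volume of the fiber of the appropriate dimension. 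The plan is to compute this by the coarea formula applied to $\pi$, comparing with the other projection $\tau\colon I \to X$, whose fibers $\tau^{-1}(x_1,\ldots,x_m)$ are single points $(x_m x_1^{-1},\ldots,x_m x_{m-1}^{-1})$ — so $I$ is in fact a graph over $X$ and carries the volume density pulled back from $X$ via $\tau$.

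Second, the heart of the matter is the pointwise normal Jacobian of $\pi$. At a point of $I$ the tangent space of $I$ can be described using left-translation to identify all tangent spaces with $\mathfrak{g}=T_eG$; the key linear-algebra fact (Howard's ``transversality lemma'', cf.\ \cite[\S3]{Howard}) is that, after left-translating the normal spaces $N_{x_i}X_i$ to $\mathfrak g$, the normal Jacobian of $\pi$ at that point equals exactly $\sigma\big((x_1)_*^{-1}N_{x_1}X_1,\ldots,(x_m)_*^{-1}N_{x_m}X_m\big)$, which by our definition of $\sigma$ on tangent spaces of $G$ is $\sigma(N_{x_1}X_1,\ldots,N_{x_m}X_m)$. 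Concretely, transversality of $g_1 X_1,\ldots,g_{m-1}X_{m-1},X_m$ at a common intersection point is equivalent to the normal spaces (translated to a common tangent space) being in direct sum, and the volume scaling factor between the product of the normal spaces and their span is precisely the wedge-product norm $\sigma$ in~\eqref{eq:def-sigma-many}. The genericity of transversality — ``for almost all $(g_1,\ldots,g_{m-1})$'' — then follows from Sard's theorem applied to $\pi$ together with the homogeneity of $G$ (translating one factor suffices to sweep out a transverse configuration, cf.\ Example~\ref{ex:IGproj-space} and \cite[Prop.~A.18]{BuCu}).

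Third, assembling: by the coarea formula along $\pi$,
$$
 \int_{G^{m-1}} |g_1 X_1\cap\cdots\cap g_{m-1}X_{m-1}\cap X_m|\, dg
  = \int_I \mathrm{NJ}(\pi)\, d\mathrm{vol}_I,
$$
and since $\tau\colon I\to X$ is a volume-preserving diffeomorphism (its fibers are points and, after the left-translation identification, $\tau$ is an isometry onto its image with the product metric restricted from $G^m$ — this requires a small check that the induced density on the graph agrees with $dx_1\cdots dx_m$), the right-hand side becomes $\int_X \mathrm{NJ}(\pi)\circ\tau^{-1}\, dx_1\cdots dx_m$, which by the Jacobian computation equals $\int_X \sigma(N_{x_1}X_1,\ldots,N_{x_m}X_m)\, dx_1\cdots dx_m$. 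That is the claim.

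**Where the difficulty lies.** The integral-geometry bookkeeping (coarea, the graph structure of $I$, Fubini over $G^{m-1}$) is routine and parallels Howard. The one genuinely substantive point — and the step I would write out most carefully — is the Jacobian identity: that the normal Jacobian of $\pi$ at $(g_\bullet,x_\bullet)\in I$ is the wedge-norm $\sigma$ of the left-translated normal spaces. This is where the compactness/bi-invariance of $G$ is used (so that all the $g_*$ are isometries and no modular function appears), and where one must correctly track how the several ``copies'' of $\mathfrak g$ coming from the $m-1$ group factors interact with the $m$ normal directions. In Howard's $m=2$ case this is his Lemma~3.4; the general case is a direct, if slightly more intricate, extension, and I would phrase it as a statement purely about the group action of $G$ on $G^{m-1}$ (thinking of $X_m$ as fixed and $g_1,\ldots,g_{m-1}$ as independent motions of $X_1,\ldots,X_{m-1}$), reducing it cleanly to multilinear algebra in $\mathfrak g$. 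Passing from Lemma~\ref{le:GGG} to Theorem~\ref{thm:GIGF} is then a standard descent argument through the submersion $p\colon G\to G/K$, averaging over $K^m$, exactly as in \cite[\S3]{Howard}; and Theorem~\ref{thm:IGF} is the special case $G=O(n)$, $K=O(k)\times O(n-k)$, $m=k(n-k)$ with $Y_i$ coisotropic, using Proposition~\ref{pro:Chow-TA} to make $\sigma_K$ constant and equal to $\alpha(k,n-k)$.
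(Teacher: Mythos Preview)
Your approach---direct coarea on the incidence variety $I\subset G^{m-1}\times X$---is a valid route, but it is not the one the paper takes. The paper instead \emph{iterates} Howard's $m=2$ formula (Proposition~\ref{prop:basic}), peeling off one submanifold at a time. The device that makes this work is the elementary multiplicative identity (Lemma~\ref{eq:VWZ})
\[
 \sigma(V_1,\ldots,V_\ell,W,Z)=\sigma(V_1,\ldots,V_\ell,W+Z)\cdot\sigma(W,Z),
\]
together with a weighted variant of the basic formula (Corollary~\ref{le:bif}) in which the integrand already carries extra $\sigma$-factors. Writing $Z=g_3X_3\cap\cdots\cap X_m$ and $Y=g_2X_2\cap Z$, the paper applies the $m=2$ case to $X_1$ against $Y$, then uses transversality ($N_yY=N_y(g_2X_2)+N_yZ$) and the identity above to split $\sigma(N_{x_1}X_1,N_yY)$, passing via Corollary~\ref{le:bif} to an integral over $X_1\times X_2\times Z$; repeating $m-2$ times unfolds the claim. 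This trades the multilinear Jacobian computation in $\mathfrak g$ for a one-line algebraic identity, at the cost of invoking Howard's two-manifold result as a black box. Your approach is more self-contained and treats all $m$ factors symmetrically from the outset.

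One point in your outline is not correct as stated: $\tau\colon I\to X$ is a diffeomorphism but \emph{not} an isometry or volume-preserving. A tangent vector to the graph $I$ has nonzero $\dot g$-components (since $g_i=x_mx_i^{-1}$ varies with $x$), so the metric on $I$ induced from $G^{m-1}\times X$ is strictly larger than the pullback of the product metric on $X$. What actually happens is that the Jacobian of $\tau^{-1}$ and the normal Jacobian of $\pi$ \emph{combine} to give $\sigma$; equivalently, if you apply coarea directly to the composite $\Psi=\pi\circ\tau^{-1}\colon X\to G^{m-1}$, you must also account for the scaling between the fiber $\Psi^{-1}(g)\subset X$ (in the product metric) and the intersection $g_1X_1\cap\cdots\cap X_m\subset G$. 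These two corrections together produce the desired $\sigma$-factor, so your strategy survives---but the bookkeeping is exactly the ``slightly more intricate'' linear algebra you flag, and it is not bypassed by the isometry claim.
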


In the special case of intersecting two submanifolds, 
this is an immediate consequence of the 
following ``basic integral formula'' from~\cite[\S2.7]{Howard} 
(take $h=1$). 


\begin{prop}\label{prop:basic}
Let $M_1, M_2$ be submanifolds of $G$ such that $\codim_G M_1 +\codim_G M_2 \leq \dim G$. 
For almost all $g\in G$, the manifolds $M_1$ and $gM_2$ intersect transversely, 
and if $h$ is an integrable function on $M_1\times M_2$, then
\be
  \int_G \int_{gM_1\cap M_2} h(\varphi_g(y))\, dy\,dg  
   =\int_{M_1\times M_2}h(x_1, x_2)\, \sigma( N_{x_1}M_1, N_{x_2}M_2)\, dx_1dx_2,
\ee
where $\varphi_g\colon gM_1\cap M_2\to M_1\times M_2$ is the function given by $\varphi_g(y):=(g^{-1}y,y)$. 
\end{prop}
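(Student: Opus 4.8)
The plan is to derive Proposition~\ref{prop:basic} from the smooth coarea formula applied to the \emph{difference map}
\[
 \mu\colon M_1\times M_2\to G,\qquad \mu(x_1,x_2):=x_2x_1^{-1},
\]
observing that $\mu(\varphi_g(y))=y(g^{-1}y)^{-1}=g$ for $y\in gM_1\cap M_2$, and that $\varphi_g$ is a bijection of $gM_1\cap M_2$ onto the fibre $\mu^{-1}(g)=\{(x_1,gx_1):x_1\in M_1\cap g^{-1}M_2\}$. First I would settle the transversality claim: the map $G\times M_2\to G$, $(g,y)\mapsto gy$, is a submersion --- for fixed $y$ it is the right translation $R_y$, a diffeomorphism --- hence transverse to $M_1$; Thom's parametric transversality theorem then shows that for almost every $g\in G$ the embedding $y\mapsto gy$ of $M_2$ is transverse to $M_1$, i.e.\ $M_1$ and $gM_2$ meet transversely, equivalently $g$ is a regular value of $\mu$. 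For such $g$ the fibre $\mu^{-1}(g)$ is a smooth submanifold of $M_1\times M_2$ of dimension $d:=\dim G-\codim_G M_1-\codim_G M_2$.

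The proof then rests on two computations. The first compares metrics on the fibres: by left invariance the differential $d(\varphi_g)_y$ sends a tangent vector $v$ to $\big((L_{g^{-1}})_*v,\,v\big)$, whose norm in the product metric is $\sqrt2\,\|v\|$, so $\varphi_g$ rescales $d$-dimensional volume by $2^{d/2}$ and $\int_{\mu^{-1}(g)}\Psi\,dp=2^{d/2}\int_{gM_1\cap M_2}\Psi(\varphi_g(y))\,dy$. The second, and the heart of the matter, is the normal Jacobian of $\mu$. Differentiating $\mu$ at $(x_1,x_2)$, writing $g=x_2x_1^{-1}$ and $v_i=(L_{x_i})_*\xi_i$ with $\xi_i\in W_i:=(L_{x_i})_*^{-1}T_{x_i}M_i\subseteq T_eG$, the product and inversion rules in $G$ give
\[
 d\mu_{(x_1,x_2)}(v_1,v_2)=(L_g)_*\,\mathrm{Ad}(x_1)\,(\xi_2-\xi_1).
\]
Since $(L_g)_*$ is an isometry, $\mathrm{Ad}(x_1)$ is an isometry of $T_eG$ (here bi-invariance of the metric enters), and $(v_1,v_2)\mapsto(\xi_1,\xi_2)$ is an isometry of $T_{x_1}M_1\times T_{x_2}M_2$ onto $W_1\times W_2$, the normal Jacobian of $\mu$ equals that of the linear difference map $L\colon W_1\oplus W_2\to T_eG$, $(\xi_1,\xi_2)\mapsto\xi_2-\xi_1$. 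One computes $L L^*=P_{W_1}+P_{W_2}$, and $L$ is onto exactly when $W_1^\perp\cap W_2^\perp=0$, i.e.\ under the transversality hypothesis, in which case $\mathrm{NJ}(\mu)=\sqrt{\det(P_{W_1}+P_{W_2})}$. Diagonalizing $P_{W_1}+P_{W_2}$ in an orthonormal basis of $T_eG$ adapted to the principal vectors of the pair $(W_1^\perp,W_2^\perp)$, and using $W_i^\perp=(L_{x_i})_*^{-1}N_{x_i}M_i$ together with $\dim(W_1\cap W_2)=d$, this determinant evaluates to $2^{d}\,\sigma(W_1^\perp,W_2^\perp)^2$, whence $\mathrm{NJ}_{(x_1,x_2)}\mu=2^{d/2}\,\sigma(N_{x_1}M_1,N_{x_2}M_2)$.

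To conclude I would apply the coarea formula to $\mu$ with integrand $h$, obtaining $\int_{M_1\times M_2}h\cdot\mathrm{NJ}\,\mu\,dx=\int_G\big(\int_{\mu^{-1}(g)}h\,dp\big)\,dg$; inserting the two displayed identities, the factors $2^{d/2}$ on the two sides cancel and one is left precisely with the asserted formula (for general, not necessarily nonnegative or real-valued, $h$ one splits into positive and negative, real and imaginary parts). I expect the main obstacle to be the normal Jacobian step: the bookkeeping of left and right translations and of the adjoint action needed to reduce $d\mu$ to the linear model $L$, and especially the linear-algebra identity $\det(P_{W_1}+P_{W_2})=2^{\dim(W_1\cap W_2)}\,\sigma(W_1^\perp,W_2^\perp)^2$ --- in particular making sure that the powers of $2$ produced here cancel exactly against the $2^{d/2}$ coming from the $\sqrt2$-rescaling of the fibres, so that no spurious constant survives.
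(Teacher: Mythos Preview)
Your argument is correct, and it is precisely Howard's original proof of this ``basic integral formula''. Note, however, that the paper does not give its own proof of Proposition~\ref{prop:basic}: it is quoted verbatim from Howard~\cite[\S2.7]{Howard} and used as a black box to derive the multi-intersection formula (Lemma~\ref{le:GGG} and Theorem~\ref{thm:GIGF}). So there is nothing to compare against except Howard himself, and what you wrote \emph{is} Howard's argument: apply the coarea formula to the difference map $\mu(x_1,x_2)=x_2x_1^{-1}$, identify the fibre $\mu^{-1}(g)$ with $gM_1\cap M_2$ via $\varphi_g$, and compute the normal Jacobian of $\mu$ by reducing (via bi-invariance) to the linear model $L(\xi_1,\xi_2)=\xi_2-\xi_1$ on $W_1\oplus W_2$.

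Two small remarks. First, your transversality sentence conflates $M_1\pitchfork gM_2$ with $gM_1\pitchfork M_2$; it is the latter you need (and obtain), since $g$ is a regular value of $\mu$ exactly when $gM_1$ and $M_2$ meet transversely at every common point. Second, the linear-algebra identity you flagged as the likely sticking point,
\[
 \det(P_{W_1}+P_{W_2})=2^{\dim(W_1\cap W_2)}\,\sigma(W_1^\perp,W_2^\perp)^2,
\]
is cleanest to verify by first splitting off $W_1\cap W_2$ (contributing the factor $2^d$) and then, on the orthogonal complement $E=(W_1\cap W_2)^\perp$, observing that $U_1:=W_1\cap E$ and $U_2:=W_2\cap E$ satisfy $U_1\oplus U_2=E$; the map $(\xi_1,\xi_2)\mapsto\xi_1+\xi_2$ then has $|\det|=\sigma(U_1,U_2)$, giving $\det_E(P_{U_1}+P_{U_2})=\sigma(U_1,U_2)^2$, and $\sigma(U_1,U_2)=\sigma(U_1^\perp,U_2^\perp)=\sigma(W_1^\perp,W_2^\perp)$ follows from the canonical form for a pair of subspaces (the 2D blocks carry the same principal angle $\theta$ for both pairs). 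The cancellation of the $2^{d/2}$ factors is exactly as you anticipated.
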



In order to reduce the general case to that of intersecting 
two submanifolds, we first establish a linear algebra identity. 

\begin{lemma}\label{eq:VWZ} 
For subspaces $V_1, \ldots V_\ell, W, Z$ of a Euclidean vector space, we have
\be
 \sigma(V_1, \ldots, V_\ell, W, Z) = \sigma(V_1, \ldots, V_\ell, W+ Z) \cdot \sigma(W,Z) .
\ee
\end{lemma}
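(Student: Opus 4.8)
\textbf{Plan of proof for Lemma~\ref{eq:VWZ}.}
The statement relates the volume-like quantity $\sigma$ associated to a list of subspaces with the one obtained after grouping two of them together. The plan is to reduce everything to the defining formula~\eqref{eq:def-sigma-many}: both sides are norms of exterior products of orthonormal vectors, so the identity is ultimately a statement about wedge products and the Binet--Cauchy / Gram determinant formula~\eqref{eq:norm-fomula}. First I would set up notation: pick orthonormal bases of $V_1,\dots,V_\ell$, and then a \emph{joint} orthonormal basis of $W+Z$ adapted to the pair $(W,Z)$, using the principal-angle decomposition of Lemma~\ref{le:pos-subspace-R}(2) (or the orthonormal basis provided by \cite[Cor.~2.4]{bu-co-is} that was quoted in the proof of Lemma~\ref{le:pos-subspace-R}). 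Concretely, there are orthonormal systems $e_1,\dots,e_p$ (spanning $W\cap Z$), $f_1,g_1,\dots$ realizing the nonzero principal angles between $W$ and $Z$, etc., so that $W$ and $Z$ each have an explicit orthonormal basis inside $W+Z$ with $\sigma(W,Z)=\prod\sin\theta_i$.

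The key step is then a direct computation of $\|v_{1}\wedge\cdots\wedge w_{1}\wedge\cdots\wedge z_{1}\wedge\cdots\|$, where the $v$'s are the orthonormal bases of the $V_i$, the $w$'s an orthonormal basis of $W$, and the $z$'s an orthonormal basis of $Z$. Using the adapted basis, I would rewrite the wedge of the $w$'s and $z$'s: expanding each $z_j$ in the adapted orthonormal basis of $W+Z$ and wedging against $w_1\wedge\cdots\wedge w_{\dim W}$, all terms involving a basis vector already appearing among the $w$'s die, and what survives is $\big(\prod_i \sin\theta_i\big)$ times a \emph{single} simple $(\dim W+\dim Z)$-vector $\omega$ that is a unit vector spanning $\Lambda^{\dim W+\dim Z}(W+Z)$ — up to sign, $\omega$ is an orthonormal basis vector of the one-dimensional top exterior power, hence $\|w_1\wedge\cdots\wedge z_1\wedge\cdots\| = \sigma(W,Z)$ and the surviving factor represents $W+Z$ as a simple vector. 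Plugging this back, $\|v_1\wedge\cdots\wedge w_1\wedge\cdots\wedge z_1\wedge\cdots\| = \sigma(W,Z)\cdot\|v_1\wedge\cdots\wedge \omega'\|$ where $\omega'$ is an orthonormal simple $(\dim W+\dim Z)$-vector representing $W+Z$; the right-hand factor is exactly $\sigma(V_1,\dots,V_\ell,W+Z)$ by~\eqref{eq:def-sigma-many} and the fact (from~\eqref{eq:scalar}) that $\sigma$ only depends on the subspaces, not on the chosen orthonormal bases.

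An alternative, perhaps cleaner, route avoids bases entirely: observe that for any subspaces $U_1,\dots,U_r$ with $\sum\dim U_i\le n$, one has $\sigma(U_1,\dots,U_r) = \|\pi_{U_1}\wedge\cdots\|$-type expressions, and more usefully that $\sigma(U_1,\dots,U_r)$ equals $\sigma(U_1,\dots,U_{r-2},U_{r-1}+U_r)$ whenever $U_{r-1}\perp U_r$, while in general one first splits $Z = (Z\cap W)\oplus Z'$ with $Z'$ the part making angles with $W$. Since $Z\cap W \subseteq W$, adjoining an orthonormal basis of $Z\cap W$ to that of $W$ is redundant and contributes nothing new except forcing $\sigma=0$ unless handled; the genuine content is the orthogonal part, where the $\sin\theta_i$ factors appear. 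I expect the main obstacle to be purely bookkeeping: carefully tracking the adapted orthonormal basis through the wedge expansion and making sure the "surviving simple vector" is genuinely unit-norm and genuinely represents $W+Z$ (equivalently, that $W$ and $Z$ together span $W+Z$ and the extra factor is precisely $\sigma(W,Z)$, with no stray constant). Once the adapted-basis identity $w_1\wedge\cdots\wedge w_a\wedge z_1\wedge\cdots\wedge z_b = \pm\,\sigma(W,Z)\cdot \omega$ with $\omega$ a unit simple vector spanning $\Lambda^{a+b}(W+Z)$ is established, the Lemma follows by wedging with $v_1\wedge\cdots$ and reading off~\eqref{eq:def-sigma-many}; no analytic subtlety is involved, and the degenerate cases ($W\cap Z\ne 0$, forcing both sides $0$; $V_i$'s overlapping) are immediate from Lemma~\ref{le:pos-subspace-R}(4).
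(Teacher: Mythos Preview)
Your proposal is correct and follows essentially the same approach as the paper: both reduce to the identity $w_1\wedge\cdots\wedge w_a\wedge z_1\wedge\cdots\wedge z_b = \pm\,\sigma(W,Z)\cdot\omega$ with $\omega$ a unit simple vector representing $W+Z$, then wedge with the $v$'s. The paper's justification of this identity is slightly slicker than your principal-angle computation: it simply completes $(w_1,\dots,w_a)$ to an orthonormal basis $(w_1,\dots,w_a,\tilde z_1,\dots,\tilde z_b)$ of $W+Z$ and observes that both wedge products lie in the one-dimensional space $\Lambda^{a+b}(W+Z)$, hence are proportional with ratio given by the norms.
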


\begin{proof}
We may assume that $W\cap Z = 0$, since otherwise both sides of the identity are zero.
Let us denote by $(v_{i,1}, \ldots, v_{i,d_i})$, $(w_{1}, \ldots, w_{a})$, and $(z_{1}, \ldots , z_{b})$ 
orthonormal bases of $V_i$, $W$, and $Z$ respectively. 
Moreover, we denote by 
$(w_{1}, \ldots,w_{a}, \tilde{z}_1,\ldots ,\tilde{z}_b)$ an orthonormal basis for $Z+W$ 
obtained by completing $(w_{1}, \ldots, w_{a})$.
By definition we have:
\be\label{eq:VWZ1} 
\sigma(V_1,\ldots, V_\ell, W+Z)
 =\Big\|\big(\bigwedge_{i,j} v_{i,j}\big)\wedge w_1\wedge \cdots\wedge w_a\wedge \tilde{z}_1\wedge \cdots \wedge\tilde{z}_b\Big\|.
\ee
On the other hand, since $W+Z=\textrm{span}\{w_1, \ldots, w_a, z_1, \ldots, z_b\}$, we have 
\be  
 w_1\wedge \cdots\wedge w_a\wedge \tilde{z}_1\wedge \cdots \wedge\tilde{z}_b 
 =\pm \frac{ w_1\wedge \cdots\wedge w_a\wedge z_1\wedge \cdots \wedge z_b}{\| 
                              w_1\wedge \cdots\wedge w_a\wedge z_1\wedge \cdots \wedge z_b\|} 
 = \pm \frac{ w_1\wedge \cdots\wedge w_a\wedge z_1\wedge \cdots \wedge z_b}{\sigma(W, Z)}.
\ee
Substituting the last line into~\eqref{eq:VWZ1} and recalling the definition of $\sigma(V_1, \ldots, V_\ell, W, Z)$ 
from~\eqref{eq:def-sigma-many}, the assertion follows.
\end{proof}


\begin{cor}\label{le:bif} 
Let $M_1, M_2$ be submanifolds of $G$ and $V_1, \ldots, V_\ell$ be linear subspaces 
of tangent spaces of $G$ (possibly at different points), such that 
$\codim_G M_1 +\codim_G M_2 + \sum_i \dim V_i \leq \dim G$. 
Then we have 
\be
  \int_G \int_{gM_1\cap M_2}\sigma(V_1, \ldots, V_\ell, N_{y}(gM_1\cap M_2))\, dy\,dg 
   =\int_{M_1\times M_2}\sigma(V_1, \ldots, V_\ell, N_{x_1}M_1, N_{x_2}M_2)\, dx_1dx_2 .
\ee
\end{cor}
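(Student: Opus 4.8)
The plan is to deduce Corollary~\ref{le:bif} from Proposition~\ref{prop:basic} by taking the integrable function on $M_1\times M_2$ to be the one that records the relative position of the tangent spaces, and then using the factorization identity of Lemma~\ref{eq:VWZ} to rewrite the normal space of the intersection. First I would fix the points $x_1\in M_1$, $x_2\in M_2$ and the element $g\in G$ such that $gM_1$ and $M_2$ meet transversely at $y$ with $x_1 = g^{-1}y$, so that $(x_1,x_2)=\varphi_g(y)$. At such a transverse intersection point the normal space of $gM_1\cap M_2$ in $G$ is the (direct, but not necessarily orthogonal) sum $N_y(gM_1) + N_y M_2$; more precisely, since the intersection is transverse, $T_y(gM_1\cap M_2) = T_y(gM_1)\cap T_yM_2$ and its orthogonal complement is spanned by $N_y(gM_1)\cup N_yM_2$, so $N_y(gM_1\cap M_2) = N_y(gM_1) + N_yM_2$ as subspaces (their span). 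Because left translations are isometries, $N_y(gM_1) = g_* N_{x_1}M_1$ after translating everything to a common tangent space via the convention in the definition of $\sigma$ for subspaces at different points; thus $\sigma(W,Z)$ with $W = N_y(gM_1)$, $Z = N_yM_2$ is exactly the relative-position factor that appears.

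Next I would apply Lemma~\ref{eq:VWZ} with $W := N_y(gM_1)$ and $Z := N_yM_2$ to get
\be
 \sigma(V_1,\ldots,V_\ell, N_y(gM_1\cap M_2))
   = \sigma(V_1,\ldots,V_\ell, N_y(gM_1) + N_yM_2)
   = \frac{\sigma(V_1,\ldots,V_\ell, N_y(gM_1), N_yM_2)}{\sigma(N_y(gM_1),N_yM_2)} .
\ee
Wait — I should instead read Lemma~\ref{eq:VWZ} in the direction that gives a \emph{product}, not a quotient: it states $\sigma(V_1,\ldots,V_\ell,W,Z) = \sigma(V_1,\ldots,V_\ell,W+Z)\cdot\sigma(W,Z)$, so
\be
 \sigma\big(V_1,\ldots,V_\ell, N_y(gM_1\cap M_2)\big)\cdot \sigma\big(N_y(gM_1),N_yM_2\big)
   = \sigma\big(V_1,\ldots,V_\ell, N_y(gM_1), N_yM_2\big).
\ee
So the correct move is to define the integrand function $h$ on $M_1\times M_2$ by
\be
 h(x_1,x_2) := \sigma\big(V_1,\ldots,V_\ell, N_{x_1}M_1, N_{x_2}M_2\big),
\ee
which is integrable (it is continuous and bounded by $1$), and to apply Proposition~\ref{prop:basic} to this $h$. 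The left-hand side of Proposition~\ref{prop:basic} becomes
\be
 \int_G\int_{gM_1\cap M_2} \sigma\big(V_1,\ldots,V_\ell, N_{g^{-1}y}M_1, N_y M_2\big)\,dy\,dg,
\ee
and using isometry invariance of $\sigma$ together with Lemma~\ref{eq:VWZ} this integrand equals $\sigma(V_1,\ldots,V_\ell, N_y(gM_1\cap M_2))\cdot\sigma(N_y(gM_1),N_yM_2)$. The remaining task is to recognize that the extra factor $\sigma(N_y(gM_1),N_yM_2)$ is precisely the Jacobian correction that is \emph{already} absorbed in Proposition~\ref{prop:basic}: indeed Proposition~\ref{prop:basic} with a plain integrable function $\tilde h$ reads $\int_G\int_{gM_1\cap M_2}\tilde h(\varphi_g(y))\,dy\,dg = \int_{M_1\times M_2}\tilde h(x_1,x_2)\,\sigma(N_{x_1}M_1,N_{x_2}M_2)\,dx_1dx_2$, so choosing $\tilde h(x_1,x_2) = \sigma(V_1,\ldots,V_\ell,N_{x_1}M_1,N_{x_2}M_2)/\sigma(N_{x_1}M_1,N_{x_2}M_2)$ (defined a.e.) and simplifying yields exactly the claimed identity. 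Concretely, after substitution the left side is $\int_G\int_{gM_1\cap M_2}\sigma(V_1,\ldots,V_\ell, N_y(gM_1\cap M_2))\,dy\,dg$ and the right side is $\int_{M_1\times M_2}\sigma(V_1,\ldots,V_\ell,N_{x_1}M_1,N_{x_2}M_2)\,dx_1dx_2$, which is the statement.

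The main obstacle I anticipate is the bookkeeping around the identity $N_y(gM_1\cap M_2) = N_y(gM_1) + N_yM_2$ at transverse intersection points, and making sure the "translate to a common tangent space" conventions in the definition of $\sigma$ for subspaces in different tangent spaces are used consistently so that Lemma~\ref{eq:VWZ} applies verbatim. One must also note that transversality holds for almost all $g$ (from Proposition~\ref{prop:basic}), so the manipulations are valid off a measure-zero set, which does not affect the integrals. Everything else — continuity and boundedness of the integrands, integrability — is routine. Thus the proof is: choose $h$ as above, invoke Proposition~\ref{prop:basic}, rewrite the integrand via the transversality decomposition of the normal space and Lemma~\ref{eq:VWZ}, and read off the result.

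\begin{proof}
For almost all $g\in G$ the manifolds $gM_1$ and $M_2$ intersect transversely, by Proposition~\ref{prop:basic}; we restrict attention to such $g$. Let $y\in gM_1\cap M_2$ be a transverse intersection point and put $(x_1,x_2):=\varphi_g(y)=(g^{-1}y,y)$. Transversality gives $T_y(gM_1)+T_yM_2 = T_yG$, hence $T_y(gM_1\cap M_2) = T_y(gM_1)\cap T_yM_2$ and therefore its orthogonal complement in $T_yG$ is the span $N_y(gM_1)+N_yM_2$; that is,
\be\label{eq:normsum}
 N_y(gM_1\cap M_2) = N_y(gM_1) + N_yM_2 .
\ee
Since left translation by $g$ is an isometry, translating all spaces to $T_eG$ as in the definition of $\sigma$ for subspaces in different tangent spaces, we have the identifications $N_y(gM_1) \cong N_{x_1}M_1$ and $N_yM_2 = N_{x_2}M_2$, and $\sigma$ is invariant under these identifications.

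Define $h\colon M_1\times M_2\to\R$ by
\be
 h(x_1,x_2) := \sigma\big(V_1,\ldots,V_\ell, N_{x_1}M_1, N_{x_2}M_2\big)\big/\,\sigma\big(N_{x_1}M_1, N_{x_2}M_2\big),
\ee
interpreted as $0$ where the denominator vanishes (i.e.\ where $M_1$ and $M_2$ are not transverse). This is a bounded measurable function, hence integrable on the compact manifold $M_1\times M_2$. Applying Proposition~\ref{prop:basic} with this $h$, and using that $\varphi_g(y)=(g^{-1}y,y)$, the right-hand side of the basic integral formula becomes
\be
 \int_{M_1\times M_2} h(x_1,x_2)\,\sigma\big(N_{x_1}M_1,N_{x_2}M_2\big)\,dx_1dx_2
   = \int_{M_1\times M_2} \sigma\big(V_1,\ldots,V_\ell,N_{x_1}M_1,N_{x_2}M_2\big)\,dx_1dx_2 ,
\ee
which is exactly the right-hand side of the claimed identity. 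For the left-hand side, at a transverse intersection point $y\in gM_1\cap M_2$ we compute, using isometry invariance of $\sigma$, \eqref{eq:normsum}, and Lemma~\ref{eq:VWZ},
\begin{align}
 h(\varphi_g(y)) \cdot \sigma\big(N_y(gM_1),N_yM_2\big)
   &= \sigma\big(V_1,\ldots,V_\ell,N_y(gM_1),N_yM_2\big) \\
   &= \sigma\big(V_1,\ldots,V_\ell, N_y(gM_1)+N_yM_2\big)\cdot \sigma\big(N_y(gM_1),N_yM_2\big) \\
   &= \sigma\big(V_1,\ldots,V_\ell, N_y(gM_1\cap M_2)\big)\cdot \sigma\big(N_y(gM_1),N_yM_2\big) .
\end{align}
Since $\sigma(N_y(gM_1),N_yM_2) > 0$ at transverse points, it follows that $h(\varphi_g(y)) = \sigma\big(V_1,\ldots,V_\ell, N_y(gM_1\cap M_2)\big)$ for almost all $g$ and all $y\in gM_1\cap M_2$. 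Hence the left-hand side of Proposition~\ref{prop:basic} equals
\be
 \int_G\int_{gM_1\cap M_2} \sigma\big(V_1,\ldots,V_\ell, N_y(gM_1\cap M_2)\big)\,dy\,dg .
\ee
Equating the two sides gives the assertion.
\end{proof}
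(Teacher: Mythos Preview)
Your proof is correct and follows essentially the same strategy as the paper's: apply Proposition~\ref{prop:basic} with a suitable function $h$ so that, after using the transversality decomposition $N_y(gM_1\cap M_2)=N_y(gM_1)+N_yM_2$ and the factorization identity of Lemma~\ref{eq:VWZ}, both sides become the desired integrals. The only cosmetic difference is that the paper takes $h(x_1,x_2):=\sigma\big(V_1,\ldots,V_\ell,(x_1)_*^{-1}N_{x_1}M_1+(x_2)_*^{-1}N_{x_2}M_2\big)$ directly, whereas you write the equivalent function as a quotient; the paper's choice sidesteps the division-by-zero bookkeeping (and your aside that $M_1\times M_2$ is compact is not needed and not generally true, though boundedness of $h$ suffices).
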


\begin{proof}
We apply Proposition~\ref{prop:basic} with the function 
$h\colon M_1\times M_2\to \R$ defined by
\be 
 h(x_1, x_2) := \sigma\big(V_1, \ldots, V_\ell, (x_1)_*^{-1} N_{x_1}M_1 + (x_2)_*^{-1} N_{x_2}M_2\big) .
\ee
When $gM_1$ and $M_2$ intersect transversely, we have 
$N_y(gM_1\cap M_2) = N_y(gM_1) + N_y(M_2)$, 
which implies
$h(\varphi_g(y)) = \sigma(V_1, \ldots, V_\ell, N_{y}(gM_1\cap gM_2))$
for $y\in gM_1 \cap M_2$. 
Hence we obtain with Proposition~\ref{prop:basic}, 
\begin{align}
 \int_G\int_{gM_1\cap M_2}\sigma(V_1, \ldots, V_\ell, N_{y}(gM_1\cap M_2))\, dy \,dg 
 &= \int_{G}\left(\int_{gM_1\cap M_2} h(\varphi_g(y))\, dy\right)\,dg\\
 &=\int_{M_1\times M_2} h(x_1, x_2) \sigma(N_{x_1}M_1, N_{x_2}M_2)\, dx_1dx_2\\
 &=\int_{M_1\times M_2}\sigma(V_1, \ldots, V_\ell, N_{x_1}M_1, N_{x_2}M_2)\, dx_1dx_2 ,
\end{align}
where the last equality is due to Lemma~\ref{eq:VWZ}. 
\end{proof}

\begin{proof}[Proof of Lemma~\ref{le:GGG}]
Recall that we already established Lemma~\ref{le:GGG} 
in the case $m=2$ as a consequence of Proposition~\ref{prop:basic}. 
Let $m\ge 3$ and abbreviate 
$Y:= g_2 X_2 \cap Z$, where
$Z:= g_3 X_3 \cap\ldots\cap g_{m-1} X_{m-1} \cap X_m$.
Then we have
\begin{align*}
&\int_{G^{m-2}} \left(\int_{g_1\in G} \big|g_1 X_1 \cap \big( g_2 X_2 \cap\ldots g_{m-1} X_{m-1} \cap X_m \big) \big| \, dg_1
 \right) dg_2\ldots dg_{m-1} \\
&= \int_{G^{m-2}} \left(\int_{X_1} \int_{Y} \s(N_{x_1}X_1, N_y Y) \, dx_1 dy \right) dg_2\ldots dg_{m-1}\\
& = \int_{X_1} \int_{G^{m-3}} \left(\int_{g_2\in G} \int_{Y} \s(N_{x_1}X_1, N_y(g_2 X_2 \cap Z) \, dg_{2}dy \right) dg_3\ldots dg_{m-1} dx_{1} \\
&= \int_{X_1} \int_{G^{m-3}} \left( \int_{x_2\in X_2} \int_{z\in Z} \s(N_{x_1}X_1,N_{x_2}X_2,N_z Z)\, dg_2 \right) dg_3\ldots dg_{m-1} ,
\end{align*}
where we first applied Lemma~\ref{le:GGG} in the case $m=2$, then interchanged the order 
of integration, and after that used Corollary~\ref{le:bif}. 
Proceeding analogously, we see that the above integral indeed equals 
$$
 \int_{X_1\times\cdots \times X_m} \s(N_{x_1}X_1,\ldots,N_{x_m}X_m)\, dx_1\cdots dx_{m},
$$
which completes the proof. 
\end{proof}
 
\begin{proof}[Proof of Theorem~\ref{thm:GIGF}]
We consider $X_i := p^{-1}(Y_i)$, which is a submanifold, 
since the projection $p\colon G\to G/K$ is a submersion, 
cf.~\cite[Thm.~A.15]{BuCu}. 
Moreover, $g_1X_1,\ldots,g_mX_m$ intersect transversally if 
$Y_1,\ldots,Y_m$ do so. 
We can rewrite the integral in the statement as
\begin{align*} 
E &:=\E_{(g_1, \ldots, g_m)\in G^m}|g_1Y_1\cap \cdots\cap g_m Y_m|\\
&=\E_{(g_1, \ldots, g_{m-1})\in G^{m-1}}|g_1Y_1\cap \cdots\cap g_{m-1} Y_{m-1}\cap Y_m|\\
&=\frac{1}{|K|}\frac{1}{|G|^{m-1}}\int_{G^{m-1}}|g_1X_1\cap \cdots\cap g_{m-1} X_{m-1}\cap X_m|\, dg_1\cdots dg_{m-1} ,
\end{align*}
For justifying the last equality, note that 
the coarea formula~\cite[Thm.~17.8]{BuCu} 
yields $|p^{-1}(Y)|=|K||Y|$ 
for any submanifold $Y$ of $G/K$, since 
$p$ is a Riemannian submersion.
Applying Lemma~\ref{le:GGG} to the integral in the last line, we obtain
\be
E=\frac{1}{|K|}\frac{1}{|G|^{m-1}}\int_{X_1\times \cdots\times X_m}\sigma(N_{x_1}X_1,\ldots, N_{x_ m}X_m)\, dx_1\cdots dx_{m} .
\ee
The projection 
$P\colon X_1\times \cdots \times X_m\to Y_1\times \cdots\times Y_m$ 
defined by 
$p(x_1, \ldots, x_m) := (p(x_1),\ldots, p(x_m))$  
is a Riemannian submersion with fibers isometric to $K^m$.
Using the coarea formula
\begin{align*}
& \int_{X_1\times\cdots\times X_m} \s(N_{x_1}X_1,\ldots,N_{x_m}X_m)\, dx_1\cdots dx_{m} \\
&= \int_{y\in Y_1\times\cdots Y_m} \int_{x\in P^{-1}(y)} \s(N_{x_1}X_1,\ldots,N_{x_m}X_m)\, dx\, dy \\
&= |K|^m \int_{y\in Y_1\times\cdots Y_m} \s_K(y_1,\ldots,y_m)\, dy ,
\end{align*}
where the last equality is due to Definition~\ref{def:sigma-many}. (Note that here is where we use the right invariance of the metric under the action of the elements in $K$, as one can verify by a careful inspection of the last steps.  The reader can see the proof of \cite[Thm. 3.8]{Howard}, which is almost identical and where all the details of the calculation are shown).
This completes the proof. 
\end{proof}


\bibliographystyle{plain}
\bibliography{PSCarxiv2}

\end{document}